\documentclass[a4paper]{amsart}

\usepackage{geometry}
\usepackage{verbatim}
\usepackage{enumitem}

\usepackage{amssymb}
\usepackage[mathscr]{euscript}
\usepackage{MnSymbol}
\usepackage{wasysym}
\usepackage{todonotes}
\usepackage{stmaryrd}
\usepackage{tikz-cd}
\usepackage{nicefrac}
\usepackage{bbm}
\tikzcdset{arrow style=math font}

\DeclareMathOperator{\id}{id}
\DeclareMathOperator{\Map}{Map}
\DeclareMathOperator{\Hom}{Hom}
\DeclareMathOperator{\map}{map}

\DeclareMathOperator{\THH}{THH}
\DeclareMathOperator{\TC}{TC}

\DeclareMathOperator{\TR}{TR}
\DeclareMathOperator{\HH}{HH}

\DeclareMathOperator{\colim}{colim}
\DeclareMathOperator{\Fun}{Fun}
\DeclareMathOperator{\can}{can}

\DeclareMathOperator{\cofib}{cofib}

\DeclareMathOperator{\BMod}{BMod}

\DeclareMathOperator{\Fin}{Fin}
\DeclareMathOperator{\Free}{Free}
\DeclareMathOperator{\Trunc}{Trunc}
\DeclareMathOperator{\FinTrunc}{FinTrunc}
\DeclareMathOperator{\Path}{Path}
\DeclareMathOperator{\CycBMod}{CycBMod}

\DeclareMathOperator{\act}{act}
\DeclareMathOperator{\fgen}{fgen}
\DeclareMathOperator{\qfgen}{qfgen}
\DeclareMathOperator{\QFin}{QFin}
\DeclareMathOperator{\Span}{Span}

\newcommand{\op}{\mathrm{op}}
\newcommand{\prop}{\mathrm{prop}}
\newcommand{\add}{\mathrm{add}}
\newcommand{\vadd}{\mathrm{vadd}}

\DeclareMathOperator{\hex}{\varhexagon}

\newcommand{\Sp}{\mathrm{Sp}}
\newcommand{\Spaces}{\mathcal{S}}
\newcommand{\cS}{\mathcal{S}}
\newcommand{\Alg}{\mathrm{Alg}}

\newcommand{\Cat}{\cat{C}\mathrm{at}}
\newcommand{\CycSp}{\mathrm{CycSp}}

\newcommand{\PgcSp}{\mathrm{PgcSp}}

\renewcommand{\S}{\mathbb{S}}
\newcommand{\T}{\mathbb{T}}
\newcommand{\N}{\mathbb{N}_{>0}}
\newcommand{\Z}{\mathbb{Z}}

\newcommand{\Q}{\mathbb{Q}}
\newcommand{\E}{\mathbb{E}}
\newcommand{\cat}[1]{\mathscr{#1}}
\newcommand{\h}{\mathrm{h}}
\newcommand{\tate}{\mathrm{t}}
\newcommand{\B}{\mathrm{B}}
\newcommand{\ETG}{\widetilde{\EG}}
\newcommand{\EG}{\mathrm{E}\mathcal{P}}

\usepackage{hyperref}

\linespread{1.15}

\newtheorem{theorem}{Theorem}[section]
\newtheorem{proposition}[theorem]{Proposition}
\newtheorem{lemma}[theorem]{Lemma}
\newtheorem{corollary}[theorem]{Corollary}

\newenvironment{theorem*}[1]{\theoremvar}{\endtheoremvar}

\theoremstyle{definition}
\newtheorem{definition}[theorem]{Definition}

\newtheorem{remark}[theorem]{Remark}
\newtheorem{example}[theorem]{Example}

\newtheorem{notation}[theorem]{Notation}
\newtheorem{warning}[theorem]{Warning}
\newtheorem{construction}[theorem]{Construction}

\title{Polygonic spectra and TR with coefficients}

\author{Achim Krause}
\address{Fachbereich Mathematik und Informatik, Universität Münster, Germany}
\email{krauseac@uni-muenster.de}

\author{Jonas McCandless}
\address{Max Planck Institute for Mathematics, Bonn, Germany}
\email{mccandless@mpim-bonn.mpg.de}

\author{Thomas Nikolaus}
\address{Fachbereich Mathematik und Informatik, Universität Münster, Germany}
\email{nikolaus@uni-muenster.de}

\begin{document}

\begin{abstract}
We introduce the notion of a polygonic spectrum which is designed to axiomatize the structure on topological Hochschild homology $\THH(R,M)$ of an $\mathbb{E}_1$-ring $R$ with coefficients in an $R$-bimodule $M$. For every polygonic spectrum $X$, we define a spectrum $\TR(X)$ as the mapping spectrum from the polygonic version of the sphere spectrum $\mathbb{S}$ to $X$. In particular if applied to $X = \THH(R,M)$ this gives a conceptual definition of $\TR(R,M)$.

Every cyclotomic spectrum gives rise to a polygonic spectrum and we prove that TR agrees with the classical definition of TR in this case. We construct Frobenius and Verschiebung maps on $\TR(X)$ by exhibiting $\TR(X)$ as the $\Z$-fixedpoints of a quasifinitely genuine $\Z$-spectrum. The notion of quasifinitely genuine $\Z$-spectra is a new notion that we introduce and discuss inspired by a similar notion over $\Z$ introduced by Kaledin. Besides the usual coherences for genuine spectra, this notion additionally encodes that $\TR(X)$ admits certain infinite sums of Verschiebung maps.
\end{abstract}

\maketitle

\setcounter{tocdepth}{2}
\tableofcontents

\section{Introduction} \label{section:introductionpolygonic}
The purpose of this paper is to introduce and study the $\infty$-category of polygonic spectra as an analogue of the $\infty$-category of cyclotomic spectra in the sense of~\cite{NS18}. The point being that the $\infty$-category of polygonic spectra provides the natural framework for studying THH and TR with coefficients. To motivate our definition of a polygonic spectrum, recall that the topological Hochschild homology $\THH(R,M)$ of an $\E_1$-ring $R$ with coefficients in an $R$-bimodule $M$ is defined as the geometric realization of a simplicial spectrum which we informally depict by
\[
\begin{tikzcd}
	\cdots \arrow[yshift=0.5ex]{r} \arrow[yshift=-0.5ex]{r} \arrow[yshift=1.5ex]{r} \arrow[yshift=-1.5ex]{r} & M \otimes R \otimes R \arrow{r} \arrow[yshift=1ex]{r} \arrow[yshift=-1ex]{r} & M \otimes R \arrow[yshift=0.5ex]{r} \arrow[yshift=-0.5ex]{r} & M.
\end{tikzcd}
\]
For instance, the first pair of maps are given by the left and right action of $R$ on $M$, respectively. In the special case when $R = M$, the simplicial spectrum above admits extra structure, namely an action of the cyclic group $C_n$ on the $(n-1)$st term by cyclic permutations. More precisely, the simplicial spectrum extends to a cyclic spectrum as introduced by Connes~\cite{Con83}, which implies that the topological Hochschild homology $\THH(R) = \THH(R, R)$ admits an action of the circle group $\T$. Additionally, there is a map of spectra with $\T$-action
\[
	\varphi_p : \THH(R) \to \THH(R)^{\tate C_p}
\]
for every prime $p$, where we consider the cyclic group $C_p$ as the roots of unity and the Tate construction carries the residual $\T \simeq \T/C_p$-action. This structure exhibits $\THH(R)$ as a cyclotomic spectrum in the sense of~\cite{NS18}, and it is the cyclotomic structure which facilitates the construction of TR and TC as in~\cite{McC21,NS18}. For an arbitrary $R$-bimodule $M$, the simplicial spectrum defining $\THH(R, M)$ no longer extends to a cyclic spectrum and the main goal of this paper is to isolate the key structural properties of $\THH(R, M)$ which allows us to extract further invariants such as TR.

\subsection{Polygonic spectra and TR}
Our main insight is that while $\THH(R, M)$ does not admit an action of the circle group, it still carries a variant of the map $\varphi_p$ which takes the form
\[
	\varphi_p : \THH(R, M) \to \THH(R, M^{\otimes_R p})^{\tate C_p},
\]
where $M^{\otimes_R p}$ denotes the $R$-bimodule obtained as the $p$-fold tensor product $M \otimes_R \cdots \otimes_R M$. The fact that $\THH(R, M^{\otimes_R p})$ carries an action of the cyclic group $C_p$ induced by cyclic permutations of the tensor factors is rather surprising since these permutations are not maps of $R$-bimodules. We carefully construct these structures in this paper. We will prove that the collection of spectra $\{\THH(R, M^{\otimes_R n})\}_{n}$ carries the structure of a polygonic spectrum which we define as follows:

\begin{definition} 
A polygonic spectrum consists of the following data:
\begin{enumerate}[leftmargin=2em, topsep=5pt, itemsep=5pt]
	\item A collection of spectra $\{X_n\}_{n \geq 1}$, where each $X_n$ admits an action of $C_n$.
	\item For every prime number $p$ and $n \geq 1$, the datum of a $C_n$-equivariant map of spectra
	\[
		\varphi_{p, n} : X_n \to (X_{pn})^{\tate C_p},
	\]
	where the Tate construction carries the residual $C_{pn}/C_p \simeq C_n$-action.
\end{enumerate}
Let $\PgcSp$ denote the $\infty$-category of polygonic spectra (cf. Definition~\ref{definition:infty_cat_polygonic}).
\end{definition}

We think of a polygonic spectrum as a multiobject variant of a cyclotomic spectrum indexed by the $n$-polygons rather than the circle. Indeed, the notion of a cyclotomic spectrum is encoded by a single spectrum acted upon by the orientation preserving symmetries of the circle and with Frobenius maps corresponding to the self-coverings of the circle. Instead, we think of a polygonic spectrum as a rule which assigns to every $n$-polygon a spectrum with $C_n$-action obtained by acting with the orientation preserving symmetries on the $n$-polygon and the polygonic Frobenius maps correspond to the self-coverings of the polygon. 

\begin{example} \label{introduction:examples}
We discuss some  examples of polygonic spectra.

\begin{enumerate}[leftmargin=2em, topsep=5pt, itemsep=5pt]
	\item Every cyclotomic spectrum $X$ can be regarded as a constant polygonic spectrum by $X_n = X$, where we consider $X$ as a spectrum with $C_n$-action by restriction for each $n$. The polygonic structure maps $\varphi_{p, n}$ are obtained by regarding the cyclotomic Frobenius map $\varphi_p : X \to X^{\tate C_p}$ as a $C_n$-equivariant map for each $n$. As a special case, we may regard the sphere spectrum equipped with the trivial cyclotomic structure as a polygonic spectrum. More generally, every spectrum can be regarded as a cyclotomic spectrum with trivial $\T$-action thus also as a constant polygonic spectrum, where all the $C_n$-actions are trivial.

	\item The topological Hochschild homology of an $\E_1$-ring $R$ with coefficients in an $R$-bimodule $M$ canonically admits the structure of a polygonic spectrum with
	\[
		\THH(R, M)_n = \THH(R, M^{\otimes_R n})
	\]
	for every $n \geq 1$. This is the motivating example of a polygonic spectrum and a substantial amount of this paper is devoted to constructing $\THH(R, M)$ as a polygonic spectrum (cf.~\textsection\ref{section:polygonicTHH}). We will address our strategy for doing so in more detail below (cf.~\textsection\ref{introduction:THHwithcoef_subsec}).

	\item For every polygonic spectrum $X$, we can form the shifted polygonic spectrum $\mathrm{sh}_n X$ defined by the formula $(\mathrm{sh}_n X)_k = X_{kn}$ (cf. Example~\ref{example:polygonic_shift}). The construction $X \mapsto \mathrm{sh}_n X$ determines a functor $\mathrm{sh}_n : \PgcSp \to \PgcSp$, which will play an important role throughout this paper. For instance, we have that $\mathrm{sh}_n \THH(R, M) \simeq \THH(R, M^{\otimes_R n})$ for every $n \geq 1$.
\end{enumerate}
\end{example}

The point of introducing the $\infty$-category of polygonic spectra is that it allows for a construction of TR by a suitable corepresentability construction as in~\cite{McC21}.

\begin{definition} \label{definition:definition_of_TR}
If $X$ is a polygonic spectrum, then $\TR(X)$ is defined by
\[
	\TR(X) = \map_{\PgcSp}(\S, X),
\]
where the sphere spectrum $\S$ is considered as a constant polygonic spectrum. Equivalently, the spectrum $\TR(X)$ is equivalent to the equalizer of the following diagram of spectra
\[
\begin{tikzcd}
	\displaystyle\prod_{n \geq 1} X_n^{\h C_n} \arrow[yshift=0.7ex]{r} \arrow[yshift=-0.7ex]{r} & \displaystyle\prod_{p \in \mathbb{P}} \prod_{n \geq 1} (X_{pn}^{\tate C_p})^{\h C_n}
\end{tikzcd}
\]
where the maps are induced by the various $\varphi_{p,n}^{\h C_n}$ and the canonical maps $X_{pn}^{\h C_{pn}} \to (X_{pn}^{\tate C_p})^{\h C_n}$.
\end{definition}

This definition is comparable to the construction of TC obtained by the third author with Scholze in~\cite{NS18}, as a functor corepresented by the sphere spectrum on the $\infty$-category of cyclotomic spectra. The equivalence between the mapping spectrum and the equalizer description in Definition~\ref{definition:definition_of_TR} above follows from~\cite[Proposition II.1.5]{NS18}.

In fact, TC can not be defined on the $\infty$-category of polygonic spectra which makes TR the correct construction to consider if one is interested in studying topological Hochschild homology with coefficients. We define TR of an $\E_1$-ring $R$ with coefficients in an $R$-bimodule $M$ by
\[
	\TR(R, M) = \TR(\THH(R, M)).
\]
We remark that Lindenstrauss--McCarthy also construct a version of TR of a functor with smash product with coefficients in a bimodule using point set constructions (cf.~\cite{LM12}). In the case when both $R$ and $M$ are discrete, the abelian group $\TR_{0}(R, M)$ is isomorphic to the Witt vectors with coefficients $W(R, M)$ introduced in~\cite{DKNP20} as will be established in~\cite{DKNP22b}. 

As one of our main results, we analyze the relationship between the $\infty$-category of cyclotomic spectra and the $\infty$-category of polygonic spectra. Let $i : \CycSp \to \PgcSp$ denote the functor which regards every cyclotomic spectrum as a constant polygonic spectrum as discussed above. In~\textsection\ref{subsection:pgc_cycsp}, we obtain the following result (cf. Theorem~\ref{theorem:adjoints_pgcsp_cycsp}).

\begin{theorem*}{A} \label{introduction:theoremA}
The canonical functor $i : \CycSp \to \PgcSp$ admits both adjoints 
\[
\begin{tikzcd}[column sep = large]
	\PgcSp \arrow[yshift=1.6ex]{r}{L} \arrow[yshift=-1.6ex, swap]{r}{R} & \CycSp \arrow["i" description]{l}
\end{tikzcd}
\]
such that there are natural equivalences of cyclotomic spectra
\begin{align*}
	Li(X) &\simeq X \otimes \widetilde{\THH}(\S[t])  \\
	Ri(X) &\simeq \Omega X \,\widehat{\otimes}\, \widetilde{\THH}^{\mathrm{cont}}(\S[\![t]\!]) := \Omega \varprojlim( X \otimes \widetilde{\THH}(\S[t]/t^n))
\end{align*}
for every cyclotomic spectrum $X$. 
\end{theorem*}

We refer to Notation~\ref{notation:reducedTHH_inverselimit} for an explanation of the notation we use in Theorem~\ref{introduction:theoremA} above. Note that the limit in the description of $Ri(X)$ is a limit in the $\infty$-category of cyclotomic spectra. Since these are not formed underlying this might be a complicated object. 
The proof of Theorem~\ref{introduction:theoremA} is an easy consequence of explicit formulas for the induction and coinduction functors. 

Nonetheless, Theorem~\ref{introduction:theoremA} allows us to express TR of a cyclotomic spectrum $X$ in two ways. Firstly, using the formula for left adjoint $L : \PgcSp \to \CycSp$, we find that
\[
	\TR(X) = \map_{\PgcSp}(i(\S), i(X)) \simeq \map_{\CycSp}(Li(\S), X) \simeq \map_{\CycSp}(\widetilde{\THH}(\S[t]),X) 
\]
which recovers a result first proved by Blumberg--Mandell~\cite{BM16} and later revisited by the second author in~\cite{McC21} (cf. Proposition~\ref{proposition:comparison_mcc}). Using the formula for the right adjoint $R$, we find that 
\[
	\TR(X) = \map_{\PgcSp}(i(\S), i(X)) \simeq \map_{\CycSp}(\S, Ri(X)) \simeq  \Omega \varprojlim_n  \TC(X \otimes \widetilde{\THH}(\S[t]/t^n))
\]
which recovers the result obtained by the second author in~\cite{McC21}, which in turn generalizes work of Hesselholt~\cite{Hes96} and Betley--Schlichtkrull~\cite{BS05} (cf. Theorem~\ref{theorem:curves_arbitrary_truncationset}).
We further obtain a refinement of this result for which we need a slight variant of the $\infty$-category of polygonic spectra.

\begin{remark}
In~\textsection\ref{subsection:infty_cat_pgcsp}, we will consider a $T$-typical variant of the $\infty$-category of polygonic spectra for an arbitrary truncation set $T \subseteq \N$. More precisely, we let $\PgcSp_T$ denote the full subcategory of $\PgcSp$ spanned by those polygonic spectra $X$ which satisfy that $X_n = 0$ when $n \notin T$. Every polygonic spectrum $X$ gives rise to a $T$-typical polygonic spectrum $X_{\mid T}$ by letting the necessary $X_n$ be zero. We define a $T$-typical variant of TR by
\[
	\TR_T(X) = \map_{\PgcSp_T}(\S_{\mid T}, X)
\]
for every $T$-typical polygonic spectrum $X$.
\end{remark}

Using $T$-typical polygonic spectra and a $T$-typical refinement of Theorem \ref{introduction:theoremA}, we will prove the following result (cf. Theorem~\ref{theorem:curves_arbitrary_truncationset}).

\begin{theorem*}{B} \label{introduction:theoremB}
Let $T$ denote a truncation set. There is a natural equivalence of spectra
\[
	\TR_T(X) \simeq \Omega\TC\Big( \prod_{t \in T} X \otimes \Sigma^\infty_+(\T/C_t) \Big).
\]
for every cyclotomic spectrum $X$.
\end{theorem*}

\subsection{Frobenius and Verschiebung maps}
Currently we have constructed a spectrum $\TR(R, M)$ for every $\E_1$-ring $R$ and $R$-bimodule $M$. The close relationship between TR and the Witt vectors suggests that $\TR(R, M)$ should be equipped with additional structure encoding Verschiebung and Frobenius maps. Indeed, we construct an action of $C_n$ on $\TR(R, M^{\otimes_R n})$ and a pair of maps
\begin{align*}
	F_k &: \TR(R, M) \to \TR(R, M^{\otimes_R k}) \\
	V_k &: \TR(R, M^{\otimes_R k}) \to \TR(R, M)
\end{align*}
which are subject to certain coherences. For instance, we have that $F_n \circ F_m \simeq F_{nm}$ while $V_n \circ V_m \simeq V_{nm}$, and the maps $F_n$ and $V_n$ canonically refine to a pair of maps
\begin{align*}
	F_k &: \TR(R, M) \to \TR(R, M^{\otimes_R k})^{\h C_k} \\
	V_k &: \TR(R, M^{\otimes_R k})_{\h C_k} \to \TR(R, M)
\end{align*}
Finally, we have that $F_n \circ V_n$ is equivalent to the $C_n$-norm map of $\TR(R, M^{\otimes_R n})$, and $F_n \circ V_m \simeq V_m \circ F_n$ if $m$ and $n$ are coprime. We will formalize this additional structure on $\TR(R, M)$ using the formalism of genuine equivariant homotopy theory, more precisely the formalism of finitely genuine $\Z$-spectra\footnote{We use the name `finitely genuine $\Z$-spectra' for those spectra with $\Z$-action that admit genuine fixed points for cofinite subgroups $n\Z \subseteq \Z$. In classical terminology this would be called genuine $\Z$-spectra for the family of cofinite subgroups. These are also equivalent to genuine $\widehat\Z$-spectra in the language of~\cite[Example B]{Bar17}.}. We will provide a brief summary of this in the formalism of spectral Mackey functors~\cite{Bar17} in Definition~\ref{definition:finitely_genuine_G_spectra}.

\begin{theorem*}{C} \label{introduction:theoremB}
Let $M$ denote a bimodule over an $\E_1$-ring $R$. There is a finitely genuine $\Z$-spectrum  denoted $\underline{\TR}(R, M)$ whose fixedpoints for the subgroup $n\Z$ of $\Z$ are given by
\[
	\underline{\TR}(R, M)^{n\Z} \simeq \TR(R, M^{\otimes_R n}).
\]
\end{theorem*}

The finitely genuine $\Z$-spectrum $\underline{\TR}(R, M)$ encodes the structure described above in a coherent fashion. Namely, the Frobenius maps are given by inclusions of fixedpoints of this finitely genuine $\Z$-spectrum, and the Verschiebung maps are given by the transfer maps. In fact, we will more generally construct a finitely genuine $\Z$-spectrum $\underline{\TR}(X)$ for every polygonic spectrum $X$. 

\subsection{Completeness}\label{comp}
In addition to the existence of the maps $V_k$ and $F_k$,  the ring of Witt vectors $W(R) = \pi_0\TR(R,R)$ carries a complete topology. Our goal now is to explain the analogue of this for $\TR(R,M)$. In the $p$-typical case the topology on the $p$-typical Witt vectors is the $V$-adic topology and completeness translates into 
\[
	W_{\langle p^\infty \rangle}(R) = \varprojlim_n W_{\langle p^\infty \rangle}(R)/V^n
\]
and it holds that $W_{\langle p^\infty \rangle}(R)/V = R$. In particular the topology is entirely encoded by the algebraic operation $V$. The analogs of these properties for $p$-typical TR was obtained by Antieau and the third author in~\cite{AN20}. 
However, the correct notion of completeness on integral TR and the integral Witt vectors is more subtle in the integral situation since it necessarily involves forming certain infinite sums of Verschiebung maps.

\begin{example}
If $R$ is a commutative ring, then the underlying additive group of the ring of big Witt vectors admits the following description
\[
(W(R), +)  = (1 + t R[\![ t ]\!], \cdot),
\]
where the topology on $W(R)$ corresponds to the the $t$-adic topology on the right hand side. The $n$th Verschiebung is given by sending a power series $p(t)$ to the series $p(t^n)$. The point is that every collection of Verschiebung maps $\{V_{n_i}\}_{i \geq 1}$ indexed by a sequence of positive integers $\{n_i\}_{i \geq 1}$ with the property that $n_i \to \infty$ for $i \to \infty$ is summable, meaning that the infinite sum $\sum_{i = 1}^\infty V_{n_i}$ converges. This can be easily seen translating this into an infinite product of power series which converges $t$-adically.   
More precisely, the dotted extension $V_{\{n_i\}}$ exists in the following diagram
\[
\begin{tikzcd}[column sep = large]
	\displaystyle\bigoplus_{i \geq 1} W(R) \arrow[hook]{d} \arrow{r}{\sum_{i \geq 1} V_{n_i}} & W(R) \\
	\displaystyle\prod_{i \geq 1} W(R) \arrow[dashed, swap]{ur}{V_{\{n_i\}}} & 
\end{tikzcd}
\]
for every such sequence $\{n_i\}_{i \geq 1}$ as above. Moreover, we can recover $R$ as the quotient of $W(R)$ by the image of $\sum_{i = 2}^\infty V_{i}$.
In particular using similar sums the $t$-adic topology can be recovered from the maps  $\sum_{i = 1}^\infty V_{n_i}$. Therefore, instead of considering $W(R)$ as a topological abelain group we may equivalently consider it as being equipped with these infinite sums of Verschiebung maps, subject to some axioms (e.g. ensuring that the induced topology is complete and all the maps are continuous). 
\end{example}

We construct a map of spectra
\[
	V_{\{n_i\}} : \prod_{i \geq 1} \TR(R, M^{\otimes_R n_i}) \to \TR(R, M)
\]
for every sequence of positive integers $\{n_i\}$ with $n_i \to \infty$ for $i \to \infty$ which fits into a commutative diagram as above. In fact, we will explain below how one can recover $\THH(R, M)$ from $\TR(R, M)$ by forming a suitable cofiber of all these infinite sums of Verschiebung maps on $\TR(R, M)$. This is the integral analog of the $p$-typical feature that $\TR_{\langle p^\infty \rangle}(R)/V \simeq \THH(R)$ (cf.~\cite[Lemma 3.19]{AN20}).\\

In order to formalize the properties of these infinite sums of Verschiebung maps on TR, we will use an extension of the theory of finitely genuine $\Z$-spectra and prove a slightly stronger version of Theorem~\ref{introduction:theoremB}. More precisely, we will use the formalism of quasifinitely genuine $\Z$-spectra, that we will introduce based on the analogous notion for chain complexes introduced by Kaledin~\cite{Kal14}\footnote{In~\cite{Kal14}, Kaledin refers to the analogue of our quasifinitely genuine $\Z$-spectra as a $\Z$-Mackey profunctors. We will give an overview of this formalism in~\textsection\ref{section:polygonicTR_completeCartier} adapted to our setting.}. The idea is to replace the category of finite $\Z$-sets with the category of quasifinite $\Z$-sets in the sense of Dress--Siebeneicher~\cite{DS88}. By considering certain spectral Mackey functor on the latter, this produces an $\infty$-category $\Sp^{\Z}_{\qfgen}$ of quasifinitely genuine $\Z$-spectra which is equipped with a forgetful functor of $\infty$-categories
\[
	\Sp^{\Z}_{\qfgen} \to \Sp^{\Z}_{\fgen},
\]
where the target denotes the $\infty$-category of finitely genuine $\Z$-spectra discussed above. The main difference between $\Sp^{\Z}_{\qfgen}$ and $\Sp_{\fgen}^{\Z}$ is that the former additionally encoporates certain infinite sums of transfer maps since the infinite coproduct of orbits
\[
	\coprod_{i \geq 1} \Z/n_i\Z
\]
exists as a quasifinite $\Z$-set whenever $\{n_i\}_{i \geq 1}$ is a sequence of positive integers with $n_i \to \infty$ for $i \to \infty$ as above. The functor above simply forgets these convergent sums of transfer maps and its left adjoint formally adjoins convergent sums of transfer maps indexed by the sequences above. We will discuss this in more detail in~\textsection\ref{subsection:qfgen_G}. For now, we state our main result:

\begin{theorem*}{C'} \label{introduction:TR_quasifinitelygenuine_geometricfixedpoints}
For every pair $(R, M)$ consisting of an $\E_1$-ring $R$ and an $R$-bimodule $M$, there is a quasifinitely genuine $\Z$-spectrum denoted $\underline{\TR}(R, M)$ whose fixedpoints spectrum is given by
\[
	\underline{\TR}(R, M)^{n\Z} \simeq \TR(R, M^{\otimes_R n})
\]
for every cofinite subgroup $n\Z$ of $\Z$. Furthermore, if $R$ and $M$ are connective, then the geometric fixedpoints spectrum is given by $\underline{\TR}(R, M)^{\Phi n\Z} \simeq \THH(R, M^{\otimes_R n})$ for every cofinite subgroup $n\Z$. 
\end{theorem*} 

The construction of $\TR(R, M)$ as a quasifinitely genuine $\Z$-spectrum answers a question raised by Kaledin in the introduction of~\cite{Kal14}. 
Besides the structure already present in the underlying finitely genuine $\Z$-spectrum of Theorem~\ref{introduction:theoremB}, this structure additionally encodes the infinite sums of Verschiebung maps as well as coherences between those and with the other structure. Informally, the final part of Theorem~\ref{introduction:TR_quasifinitelygenuine_geometricfixedpoints} asserts that if both $R$ and $M$ are connective, then $\THH(R, M)$ can be recovered from $\TR(R, M)$ by killing all the proper infinite sums of Verschiebung maps. This is the integral analog of the result that $\TR_{\langle p^\infty \rangle}(R)/V \simeq \THH(R)$ established by Antieau and the third author in~\cite{AN20}. 

The strategy of our proof of Theorem~\ref{introduction:TR_quasifinitelygenuine_geometricfixedpoints} is to construct $\underline{\TR}$ as a right adjoint of the functor
\[
	\mathrm{L} : \Sp^{\Z}_{\qfgen} \to \PgcSp
\]
carrying a quasifinitely genuine $\Z$-spectrum $X$ to the polygonic spectrum with $(\mathrm{L}X)_n = X^{\Phi n\Z}$, where $X^{\Phi n\Z}$ denotes the geometric fixedpoints with its canonical $C_n$-action for the cofinite subgroup $n\Z$ of $\Z$. An advantage of the $\infty$-category of quasifinitely genuine $\Z$-spectra is the observation due to Kaledin that $\mathrm{L}$ is conservative on the uniformly bounded objects (cf.~\cite[Proposition 3.5]{Kal14} and Proposition~\ref{proposition:geometric_fixedpoints_conservative}). Therefore, the proof of Theorem~\ref{introduction:TR_quasifinitelygenuine_geometricfixedpoints} amounts to calculating the geometric fixedpoints of TR of a uniformly bounded below polygonic spectrum, which we do in~\textsection\ref{subsection:qfgenZ_TR}. In fact, we will see that the adjunction $\mathrm{L} \dashv \TR$ induces an equivalence on bounded below objects. 

\begin{remark}
In upcoming work, we introduce the $\infty$-category of (integral) complete topological Cartier modules. We will show that this is equivalent to cyclotomic spectra using the ideas of this paper and employ this to identify the heart of the $t$-structure on the $\infty$-category of cyclotomic spectra (extending the work of Antieau and the third author~\cite{AN20} to the integral case). 
\end{remark}

\subsection{Topological Hochschild homology with coefficients} \label{introduction:THHwithcoef_subsec}
We return to the construction of topological Hochchild homology with coefficients as a polygonic spectrum as briefly indicated in Example~\ref{introduction:examples}. More precisely, we prove the following result (cf. Theorem~\ref{theorem:THH_polygonic}).

\begin{theorem*}{D} \label{introduction:theoremD}
Let $R$ denote an $\E_1$-ring and let $M$ denote an $R$-bimodule. Then the topological Hochschild homology $\THH(R, M)$ admits the structure of a polygonic spectrum with
\[
	\THH(R, M)_n = \THH(R, M^{\otimes_R n})
\]
for every $n \geq 1$, where $M^{\otimes_R n}$ is the $R$-bimodule defined by $M^{\otimes_R n} = \underbrace{M \otimes_R \cdots \otimes_R M}_{\text{$n$ times}}$.
\end{theorem*}

We end by explaining our strategy for proving Theorem~\ref{introduction:theoremD}. Inspired by the work of Kaledin~\cite{Kal15}, we proceed by studying topological Hochschild homology as a trace theory on a suitable $\infty$-category. Concretely, for each $n \in \N$, we introduce an $\infty$-category $\CycBMod_n$ whose objects are cyclic graphs whose vertices are labelled with $\E_1$-rings $R_0, \ldots, R_{n-1}$ and whose edges are labelled by $R_i$-$R_{i+1}$-bimodules $M_i$ for every $i$ counted modulo $n$. We picture an object of $\CycBMod_3$ by:
\[
\begin{tikzpicture}
	\filldraw (1,0) circle (1.3pt);
	\filldraw (-0.5, 0.86602) circle (1.3pt);
	\filldraw (-0.5, -0.86602) circle (1.3pt);

	\node at (1, 0) [right] {$R_0$};
	\node at (-0.5, 0.86602) [above left] {$R_1$};
	\node at (-0.5, -0.86602) [below left] {$R_2$};

	\node at (0.8, 1.15) {$M_0$};
	\node at (-1.4, 0) {$M_1$};
	\node at (0.8, -1.15) {$M_2$};

	\draw (1,0) arc[start angle = 0, end angle = 120, x radius = 1, y radius = 1];
	\draw (-0.5, 0.86602) arc[start angle = 120, end angle = 240, x radius = 1, y radius = 1];
	\draw (-0.5, -0.86602) arc[start angle = 240, end angle = 360, x radius = 1, y radius = 1];
\end{tikzpicture}
\]
The $\infty$-category $\CycBMod_n$ is constructed as the algebras of a suitable $\infty$-operad underlying an explicitly defined coloured operad which we discuss in~\textsection\ref{subsection:cyclicbimdouleoperad}. As we let $n$ vary\footnote{More precisely, the $\infty$-category $\Lambda^{\CycBMod}$ is the total space of a cocartesian fibration $\Lambda^{\CycBMod} \to \Lambda^{\op}$, where $\Lambda$ denotes the cyclic category. This will allow us to rotate the cyclic graphs which will be crucial to formulate the cyclic invariance of THH.}, these $\infty$-categories are neatly organized into another $\infty$-category $\Lambda^{\CycBMod}$ whose objects are pairs $(n, F)$ consisting of a natural number $n \in \N$ and an object $F$ of $\CycBMod_n$. This is a variant of the $\infty$-category $\Lambda^{\mathrm{st}}$ introduced by the third author in~\cite{HS19}, which parameterizes cyclic graphs labelled by stable $\infty$-categories and profunctors. For every $n$, we construct a functor
\[
	\THH : \CycBMod_n \to \Sp
\]
which is defined as the geometric realization of the cyclic bar construction associated to an object of $\CycBMod_n$. For instance, the functor carries the object of $\CycBMod_2$ depicted as follows
\[
\begin{tikzpicture}
	\filldraw (0.7,0) circle (1.3pt);
	\filldraw (-0.7,0) circle (1.3pt);

	\node at (0.7, 0) [right] {$R_0$};
	\node at (-0.7, 0) [left] {$R_1$};

	\node at (0, 1) {$M_0$};
	\node at (0, -1) {$M_1$};

	\draw (0.7,0) arc[start angle = 0, end angle = 360, x radius = 0.7, y radius = 0.7];
\end{tikzpicture}
\]
to the geometric realization of the simplicial spectrum informally depicted as follows
\[
\begin{tikzcd}
	\cdots \arrow[yshift=0.5ex]{r} \arrow[yshift=-0.5ex]{r} \arrow[yshift=1.5ex]{r} \arrow[yshift=-1.5ex]{r} & M_0 \otimes M_1 \otimes R_0 \otimes R_1 \otimes R_0 \otimes R_1 \arrow{r} \arrow[yshift=1ex]{r} \arrow[yshift=-1ex]{r} & M_0 \otimes M_1 \otimes R_0 \otimes R_1 \arrow[yshift=0.5ex]{r} \arrow[yshift=-0.5ex]{r} & M_0 \otimes M_1
\end{tikzcd}
\]
where for instance the first pair of the maps are given by $m_0 \otimes m_1 \otimes r_0 \otimes r_1 \mapsto m_0 r_1 \otimes m_1 r_0$ and $m_0 \otimes m_1 \otimes r_0 \otimes r_1 \mapsto r_0 m_0 \otimes r_1 m_1$, respectively. With this in mind, we prove the following result which appears as Theorem~\ref{theorem:HH_trace}. 

\begin{theorem*}{E} \label{introduction:theoremE}
The construction $(n, F) \mapsto \THH(F)$ refines to a functor of $\infty$-categories
\[
	\THH : \Lambda^{\CycBMod} \to \Sp
\]
which satisfies the trace property, which means that it carries cocartesian morphisms in $\Lambda^{\CycBMod}$ to equivalences of spectra.
\end{theorem*}

As a consequence of Theorem~\ref{introduction:theoremE}, the following span of morphisms in $\Lambda^{\CycBMod}$
\[
	(R_0, M_0 \otimes_{R_1} M_1) \leftarrow (R_0, M_0; R_1, M_1) \rightarrow (R_1, M_1 \otimes_{R_0} M_0)
\]
is carried to an equivalence of spectra
\[
	\THH(R_0, M_0 \otimes_{R_1} M_1) \simeq \THH(R_1, M_1 \otimes_{R_0} M_0)
\]
which expresses the cyclic invariance of topological Hochschild homology with coefficients. Furthermore, we endow $\THH(R, M^{\otimes_R n})$ with a $C_n$-action by rotating $(R, M; \ldots; R, M)$ as an object of $\Lambda^{\CycBMod}$ and using the trace property to transport the $C_n$-action to
\[
	\THH(R, M^{\otimes_R n}) \simeq \THH(R, M; \ldots; R, M).
\]
Finally, we construct the polygonic structure maps on $\THH(R, M)$ using the functoriality of the Tate diagonal on finite free $C_p$-sets as discussed in~\cite[\textsection III.3]{NS18}. Alternatively, one can employ the work of Lawson~\cite{Law21} to construct the polygonic structure on $\THH(R, M)$. We also mention that the $\infty$-categories $\CycBMod_n$ are closely related to the bypass categories appearing in the work of Berman~\cite{Ber22}. 

\subsubsection*{Acknowledgements}
The authors are grateful to Dmitry Kaledin for suggesting that the formalism of quasifinitely genuine $\Z$-spectra encodes the correct notion of completeness on TR. In fact, this paper is an attempt at understanding the foundational work of Kaledin on this formalism and the formalism of trace theories in the setting of higher algebra. 

The authors were funded by the Deutsche Forschungsgemeinschaft (DFG, German Research Foundation) -- Project-ID 427320536 -- SFB 1442, as well as under Germany’s Excellence Strategy EXC 2044 390685587, Mathematics M\"unster: Dynamics--Geometry--Structure. 
The second author is grateful for the hospitality and financial support offered by the Max Planck Institute for Mathematics in Bonn, where part of this work was carried out. 

\section{Polygonic spectra} \label{section:pgcsp}
The main goal of this section is to define the notion of a polygonic spectrum and the resulting $\infty$-category of polygonic spectra. This is a formalism designed to capture the additional structure on topological Hochschild homology with coefficients in a bimodule which will be addressed in~\textsection\ref{section:polygonicTHH}. The main result of this section is an explicit formula for the left and right adjoints of the functor which regards a cyclotomic spectrum as a constant polygonic spectrum (cf. Theorem~\ref{theorem:adjoints_pgcsp_cycsp}). We will use this to study TR on the $\infty$-category of polygonic spectra in~\textsection\ref{section:polygonicTR}. 

\subsection{Truncation sets} \label{definition:truncationset}
We will begin by reviewing the notion of a truncation set and give various examples which will play an important role throughout this exposition.

\begin{definition} \label{definition:truncationset}
A truncation set is a subset of $\N$ such that if $xy \in T$, then $x \in T$ and $y \in T$.
\end{definition}

Let $\Trunc$ denote the category of truncation sets whose morphisms are inclusions of truncation sets. Let $\FinTrunc$ denote the full subcategory of $\Trunc$ spanned by those truncation sets which are finite. We record some examples of truncation sets which will be important throughout.

\begin{example}
The following are examples of truncation sets.
\begin{enumerate}[leftmargin=2em, topsep=5pt, itemsep=5pt]
	\item For every $n \in \N$, the set $[n] = \{1, 2, \ldots, n\}$ is a truncation set.

	\item For every $n \in \N$, the set $\langle n \rangle = \{k \in \N \mid k|n\}$ is a truncation set. For instance, if $p$ is a prime, then $\langle p^n \rangle = \{1, p, \ldots, p^n\}$. Let $\langle p^\infty \rangle$ denote the truncation set given by $\langle p^\infty \rangle = \{1, p, p^2, \ldots\}$. 

	\item If $T$ is a truncation set, then $T/n = \{t \in T \mid nt \in T\}$ is again a truncation set for every $n$. 
\end{enumerate}
\end{example}

Finally, to establish notation, we introduce the following terminology.

\begin{notation}
Let $(\N, \mathrm{div})$ denote the set of positive integers regarded as a category via the divisibility relation. This means that there is a morphism $n \to m$ precisely if $n$ divides $m$. 
\end{notation}

\subsection{The $\infty$-category of polygonic spectra} \label{subsection:infty_cat_pgcsp}
In the following, we will let $T$ denote a truncation set as in Definition~\ref{definition:truncationset}. We define the notion of a $T$-typical polygonic spectrum and discuss the salient features of the accompanying $\infty$-category of $T$-typical polygonic spectra. 

\begin{definition} \label{definition:polygonic_spectrum}
A $T$-typical polygonic spectrum consists of the following data:
\begin{enumerate}[leftmargin=2em, topsep=5pt, itemsep=5pt]
	\item A $T$-indexed collection of spectra $\{X_t\}_{t \in T}$, where each $X_t$ is an object of $\Sp^{\B C_t}$.
	\item For every prime number $p$ and $t \in T/p$, the datum of a $C_t$-equivariant map of spectra
	\[
		\varphi_{p, t} : X_t \to (X_{pt})^{\tate C_p},
	\]
	where the Tate construction carries the residual $C_{pt}/C_p \simeq C_t$-action\footnote{We identify the residual $C_{pt}/C_p$-action with a $C_t$-action using the $p$th root isomorphism $C_t \to C_{pt}/C_p$.}.
\end{enumerate}
\end{definition}

The formalism of polygonic spectra is designed to capture the additional structure present on topological Hochschild homology with coefficients as discussed in~\textsection\ref{section:introductionpolygonic}. In~\textsection\ref{section:polygonicTHH}, we will prove that if $R$ is an $\E_1$-ring and $M$ is an $R$-bimodule, then the topological Hochschild homology $\THH(R, M)$ of $R$ with coefficients in $M$ canonically refines to a polygonic spectrum with
\[
	\THH(R, M)_n = \THH(R, M^{\otimes_R n})
\]
for every $n \in \N$. We invite the reader to keep this example in mind throughout the proceeding discussion which accordingly will be of a formal nature. We proceed by defining the accompanying $\infty$-category of $T$-typical polygonic spectra using the formalism of lax equalizers (cf.~\cite[\textsection II.1]{NS18}).

\begin{construction} \label{construction:F_p_functor}
For every prime number $p$, there is a functor of $\infty$-categories
\[
	\prod_{t \in T} \Sp^{\B C_t} \xrightarrow{F_p} \prod_{t \in T/p} \Sp^{\B C_t}
\]
which, for every $t \in T/p$, is induced by the following functor of $\infty$-categories
\[
	\prod_{s \in T} \Sp^{\B C_s} \xrightarrow{\mathrm{pr}} \Sp^{\B C_{pt}} \xrightarrow{(-)^{\tate C_p}} \Sp^{\B C_t}.
\]
The functor $F_p$ is determined by the construction $(X_s)_{s \in T} \mapsto (X_{pt})^{\tate C_p}$ whenever $t \in T/p$, and the Tate construction carries the residual $C_{pt}/C_p \simeq C_t$-action. 
\end{construction}

\begin{definition} \label{definition:infty_cat_polygonic}
The $\infty$-category of $T$-typical polygonic spectra is defined as the lax equalizer
\[
	\PgcSp_T = \mathrm{LEq} \Big(
	\begin{tikzcd}
		\prod_{t \in T} \Sp^{\B C_t} \arrow[yshift=0.7ex]{r} \arrow[yshift=-0.7ex]{r} & \prod_{p} \prod_{t \in T/p} \Sp^{\B C_t}
	\end{tikzcd}
	\Big),
\]
where the functors have $p$th components given by the identity and the functor $F_p$, respectively. 
\end{definition}

Unwinding Definition~\ref{definition:infty_cat_polygonic}, we see that an object of $\PgcSp_T$ is given by a $T$-typical polygonic spectrum in the sense of Definition~\ref{definition:polygonic_spectrum}. We consider some examples of interest.

\begin{example} \label{example:trivialtruncationset}
Consider the truncation set given by $S = \langle 1 \rangle$. In this case, the canonical functor
\[
	\PgcSp_{\langle 1 \rangle} \to \Sp
\]
is an equivalence of $\infty$-categories. 
\end{example}

\begin{example} \label{example:relationC2spectra}
For $T = \N$, we write $\PgcSp$ for the $\infty$-category of $\N$-typical polygonic spectra. According to Definition~\ref{definition:infty_cat_polygonic}, a polygonic spectrum consists of a spectrum $X_n$ with $C_n$-action for every $n \in \N$, together with a $C_n$-equivariant map of spectra
\[
	\varphi_{p, n} : X_n \to (X_{pn})^{\tate C_p}
\]
for every prime $p$ and $n$, where the Tate construction carries the residual $C_{pn}/C_p \simeq C_n$-action.
\end{example}

\begin{example}
Let $p$ be a prime number and consider the $\infty$-category $\PgcSp_{\langle p \rangle}$. An object of $\PgcSp_{\langle p \rangle}$ is a pair consisting of a spectrum $X$ and a spectrum with $C_p$-action $Y$, together with a map of spectra $X \to Y^{\tate C_p}$, so the $\infty$-category $\PgcSp_{\langle p \rangle}$ is equivalent to the following pullback
\[
\begin{tikzcd}
	\PgcSp_{\langle p \rangle} \arrow{r} \arrow{d} & \Sp^{\Delta^1} \arrow{d}{\mathrm{ev}_1} \\
	\Sp^{\B C_p} \arrow{r}{(-)^{\tate C_p}} & \Sp
\end{tikzcd}
\]
Consequently, there is an equivalence $\PgcSp_{\langle p \rangle} \simeq \Sp^{C_p}$, where $\Sp^{C_p}$ denotes the $\infty$-category of genuine $C_p$-spectra (cf.~\cite[Theorem 6.24]{MNN17}).
\end{example}

We summarize the salient features of the $\infty$-category of polygonic spectra and give an equalizer formula for the mapping spaces in $\PgcSp$. This is an immediate consequence of the formalism of lax equalizers developed in~\cite{NS18}. 

\begin{proposition} \label{proposition:mapping_pgcsp}
Let $T$ denote a truncation set. 

\begin{enumerate}[leftmargin=2em, topsep=5pt, itemsep=5pt]
	\item The $\infty$-category $\PgcSp_T$ is presentable and stable, and the canonical functor
	\[
		\PgcSp_T \to \Sp^{\B C_t}
	\]
	is exact and preserves small colimits for every $t \in T$. 

	\item For every pair of objects $X$ and $Y$ of $\PgcSp_T$, there is a natural equivalence of spaces
	\[
	\Map_{\PgcSp_T}(X, Y) \simeq \mathrm{Eq} \big(
	\begin{tikzcd}[column sep = 1.6em]
		\prod_{t \in T} \Map_{\Sp^{\B C_t}}(X_t, Y_t) \arrow[yshift=0.7ex]{r} \arrow[yshift=-0.7ex]{r} & \prod_{p}\prod_{t \in T/p} \Map_{\Sp^{\B C_t}}(X_t, Y_{pt}^{\tate C_p})
	\end{tikzcd}
	\big).
	\]
	If $p$ is a prime and $t \in T/p$, then the top map is induced by the following composite 
	\[
		\prod_{t \in T} \Map_{\Sp^{\B C_t}}(X_t, Y_t) \xrightarrow{\mathrm{pr}} \Map_{\Sp^{\B C_t}}(X_t, Y_t) \xrightarrow{(\varphi_{p, t})_\ast} \Map_{\Sp^{\B C_t}}(X_t, Y_{pt}^{\tate C_p}),
	\]
	and the bottom map is induced by the following composite
	\[
		\prod_{t \in T} \Map_{\Sp^{\B C_t}}(X_t, Y_t) \xrightarrow{(-)^{\tate C_p} \circ \mathrm{pr}} \Map_{\Sp^{\B C_t}}(X_{pt}^{\tate C_p}, Y_{pt}^{\tate C_p}) \xrightarrow{(\varphi_{p, t})^\ast} \Map_{\Sp^{\B C_t}}(X_t, Y_{pt}^{\tate C_p}).
	\]	
  \end{enumerate}
\end{proposition}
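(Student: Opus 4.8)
The plan is to obtain both statements directly from the theory of lax equalizers developed in~\cite[\textsection II.1]{NS18}, applied to the two functors appearing in Definition~\ref{definition:infty_cat_polygonic}. Write $\calC = \prod_{t \in T}\Sp^{\B C_t}$ and $\calD = \prod_{p}\prod_{t \in T/p}\Sp^{\B C_t}$ for their common source and target, let $i \colon \calC \to \calD$ be the functor whose $p$-th component is the projection $(X_s)_{s \in T} \mapsto (X_t)_{t \in T/p}$, and let $j \colon \calC \to \calD$ be the functor whose $p$-th component is the functor $F_p$ of Construction~\ref{construction:F_p_functor}, so that by definition $\PgcSp_T = \mathrm{LEq}(i, j)$ and the structure map of an object $X$ records the collection $\{\varphi_{p,t} \colon X_t \to (X_{pt})^{\tate C_p}\}$.

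First I would check the hypotheses needed to apply~\cite[Proposition II.1.5]{NS18}. The $\infty$-categories $\calC$ and $\calD$ are presentable and stable, being products of the presentable stable $\infty$-categories $\Sp^{\B C_t} = \Fun(\B C_t, \Sp)$. The functor $i$ is built from projections between products and hence preserves all limits and colimits; in particular it is exact and preserves small colimits. The functor $j$ has $(p,t)$-component the composite $\calC \xrightarrow{\mathrm{pr}} \Sp^{\B C_{pt}} \xrightarrow{(-)^{\tate C_p}} \Sp^{\B C_t}$; here $\mathrm{pr}$ preserves all colimits and is exact, and $(-)^{\tate C_p}$ is exact, being the cofiber of the norm map between the exact functors $(-)_{\h C_p} = \colim_{\B C_p}$ and $(-)^{\h C_p} = \lim_{\B C_p}$, and accessible, since $\Sp^{\B C_p}$ is presentable and $(-)^{\h C_p} \simeq \map_{\Sp^{\B C_p}}(\S, -)$ preserves $\kappa$-filtered colimits for any regular $\kappa$ for which $\S$ is a $\kappa$-compact object. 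Thus $j$ is exact and accessible.

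Granting this, \cite[Proposition II.1.5]{NS18} immediately yields that $\PgcSp_T = \mathrm{LEq}(i, j)$ is presentable and stable and that the forgetful functor $\PgcSp_T \to \calC$ is exact, accessible and preserves small colimits; composing with the projection $\calC \to \Sp^{\B C_t}$ for $t \in T$ gives part~(1). For part~(2), the same proposition identifies $\Map_{\PgcSp_T}(X, Y)$ with the equalizer of
\[
\begin{tikzcd}
	\Map_{\calC}(X, Y) \arrow[yshift=0.7ex]{r} \arrow[yshift=-0.7ex]{r} & \Map_{\calD}(i(X), j(Y))
\end{tikzcd}
\]
in which one leg sends $f$ to $\alpha_Y \circ i(f)$ and the other sends $f$ to $j(f) \circ \alpha_X$, where $\alpha_X \colon i(X) \to j(X)$ and $\alpha_Y \colon i(Y) \to j(Y)$ denote the structure maps. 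It then remains to unwind this along the identifications $\Map_{\calC}(X, Y) \simeq \prod_{t \in T}\Map_{\Sp^{\B C_t}}(X_t, Y_t)$ and $\Map_{\calD}(i(X), j(Y)) \simeq \prod_{p}\prod_{t \in T/p}\Map_{\Sp^{\B C_t}}(X_t, Y_{pt}^{\tate C_p})$: since the $(p,t)$-component of $i(f)$ is $f_t$ and that of $j(f)$ is $(f_{pt})^{\tate C_p}$, while the $(p,t)$-component of $\alpha_X$ (respectively $\alpha_Y$) is the structure map $\varphi_{p,t}$ of $X$ (respectively of $Y$), the first leg becomes post-composition with $\varphi_{p,t}$ after projecting to the $t$-th factor, and the second becomes pre-composition with $\varphi_{p,t}$ after applying $(-)^{\tate C_p}$ to the $pt$-th factor. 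These are exactly the two composites displayed in the statement.

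The argument is essentially formal; the only steps requiring any attention are the verification that $(-)^{\tate C_p}$ is exact and accessible, so that~\cite[Proposition II.1.5]{NS18} applies, and the bookkeeping in the final unwinding, and I do not expect either to present a genuine obstacle.
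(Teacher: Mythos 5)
Your proposal is correct and takes essentially the same route as the paper: the paper's proof consists precisely of the citation to~\cite[Proposition II.1.5]{NS18}, and your argument simply spells out the verification of its hypotheses (presentability and stability of the product categories, colimit-preservation of the projection functor, exactness and accessibility of $F_p$ via the Tate construction) and the unwinding of the equalizer formula, all of which is accurate.
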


\begin{proof}
This follows from~\cite[Proposition II.1.5]{NS18}. 
\end{proof}

For every truncation set $T$, the $\infty$-category $\PgcSp_T$ is equivalent to the full subcategory of $\PgcSp$ spanned by those polygonic spectra $X = \{X_n\}_{n \in \N}$ which satisfy that $X_n = 0$ whenever $n$ is not contained in $T$. By construction, there is an adjunction of $\infty$-categories
\[
\begin{tikzcd}
	\PgcSp \arrow[yshift=0.8ex]{r}{\mathrm{Res}_{T}} & \PgcSp_T \arrow[yshift=-0.8ex, hook]{l}
\end{tikzcd}
\]
where the right adjoint is fully faithful, so the $\infty$-category $\PgcSp_T$ of $T$-typical polygonic spectra is obtained as a localization of the $\infty$-category $\PgcSp$ of polygonic spectra.
We use this to show that the $\infty$-category $\PgcSp_T$ is functorial in the truncation set variable and deduce that the construction $T \mapsto \PgcSp_T$ is right Kan extended from finite truncation sets.

\begin{construction} \label{construction:coherent_restriction}
If $T \subseteq T'$ is an inclusion of truncation sets, then $\PgcSp_T$ is a full subcategory of $\PgcSp_{T'}$, and there is an adjunction of $\infty$-categories
\[
\begin{tikzcd}
	\PgcSp_{T'} \arrow[yshift=0.8ex]{r}{\mathrm{Res}^{T'}_{T}} & \PgcSp_T \arrow[yshift=-0.8ex, hook]{l}
\end{tikzcd}
\]
The functor $\mathrm{Res}^{T'}_{T} : \PgcSp_{T'} \to \PgcSp_T$ is characterized by $(\mathrm{Res}^{T'}_{T} X)_t = X_t$ for every $X \in \PgcSp_{T'}$ and $t \in T$. The construction $T \mapsto \PgcSp_T$ determines a functor $\Trunc \to \mathrm{Pr}^{\mathrm{R}}$, where the functors are given by the right adjoint inclusions above. Functoriality follows since we can realize all those categories $\PgcSp_T$ as full subcategories of $\PgcSp$ and thus we simply need to write down a map of posets into the poset of subsets of isomorphism classes of objects in $\PgcSp$.
So we obtain a functor of $\infty$-categories
\[
	\PgcSp_{(-)} : \Trunc^{\op} \to \mathrm{Pr}^{\mathrm{L}}
\]
by passing to left adjoint functors, which are given by the restrictions $\mathrm{Res}^{T'}_{T} : \PgcSp_{T'} \to \PgcSp_T$. In particular, the functor $(\N, \mathrm{div}) \to \Trunc$ given by the assignment $n \mapsto \langle n \rangle$ induces a functor
\[
	\PgcSp_{\langle - \rangle} : (\N, \mathrm{div})^{\op} \to \mathrm{Pr}^{\mathrm{L}}.
\]
\end{construction}

We will need the following fundamental result:

\begin{lemma} \label{lemma:PgcSp_rightKanExtended}
The functor $\PgcSp_{(-)} : \Trunc^{\op} \to \Cat_{\infty}$ is right Kan extended from its restriction to $\FinTrunc^{\op}$. Concretely, for every truncation set $T$, the canonical functor is an equivalence
\[
	\PgcSp_T \to \mathrm{lim}_{T \to S} \, \PgcSp_S,
\]
where the limit is indexed over the category of finite truncation sets contained in $T$.
\end{lemma}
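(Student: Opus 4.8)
The plan is to verify the two standard conditions that make a right Kan extension: first that the functor $\PgcSp_{(-)}$ sends each truncation set $T$ to the limit of its values on finite truncation sets $S \subseteq T$, and second — really the content — that this limit is computed by the obvious comparison functor. I would first observe that every truncation set $T$ is the filtered union $T = \bigcup_{S} S$ of the finite truncation sets $S \subseteq T$: given a finite subset of $T$, the truncation set it generates (closing up under divisors of all products of its elements, which is automatic since $T$ is already a truncation set, so one just takes the set of divisors of the listed elements) is again finite and contained in $T$. Hence $\FinTrunc_{\le T}$, the poset of finite truncation sets contained in $T$, is filtered, and its colimit in $\Trunc$ is $T$. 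Dually, in $\Trunc^{\op}$ this is a cofiltered limit diagram, and the assertion to prove is that $\PgcSp_{(-)}$ carries it to a limit diagram in $\Cat_\infty$.

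The key input is Proposition~\ref{proposition:mapping_pgcsp} together with the identification, discussed just before the statement, of $\PgcSp_S$ with the full subcategory of $\PgcSp_T$ spanned by those polygonic spectra $X$ with $X_n = 0$ for $n \notin S$; under this identification $\mathrm{Res}^{T}_S$ is the localization $(\mathrm{Res}^{T}_S X)_t = X_t$ for $t \in S$ and $0$ otherwise. I would construct the comparison functor $\PgcSp_T \to \lim_{S} \PgcSp_S$ out of the $\mathrm{Res}^{T}_S$ (which are compatible up to coherent equivalence since $\PgcSp_{(-)}$ is already a functor on $\Trunc^{\op}$, by Construction~\ref{construction:coherent_restriction}) and then check it is an equivalence by checking fully faithfulness and essential surjectivity. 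For essential surjectivity: an object of the limit is a compatible family $\{X^{(S)}\}$ with $X^{(S)} \in \PgcSp_S$ and equivalences $\mathrm{Res}^{S'}_S X^{(S')} \simeq X^{(S)}$; since the value of $X^{(S)}$ at a fixed $t \in T$ is independent of $S$ once $t \in S$ (and all the $\langle t\rangle \subseteq T$ are finite), this data glues to a collection $\{X_t\}_{t \in T}$ in $\prod_t \Sp^{\B C_t}$ together with compatible structure maps $\varphi_{p,t}$, i.e. an object of $\PgcSp_T$ by Definition~\ref{definition:infty_cat_polygonic}, whose restrictions recover the family. For fully faithfulness: by Proposition~\ref{proposition:mapping_pgcsp}(2), $\Map_{\PgcSp_T}(X,Y)$ is the equalizer of a pair of maps between products indexed by $t \in T$ (resp. $p$ and $t \in T/p$), and each such product is the limit over $S$ of the corresponding product indexed by $t \in S$; since finite limits commute, $\Map_{\PgcSp_T}(X,Y) \simeq \lim_S \Map_{\PgcSp_S}(\mathrm{Res}_S X, \mathrm{Res}_S Y) = \Map_{\lim_S \PgcSp_S}(\ldots)$.

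The one point requiring genuine care — and what I expect to be the main obstacle — is the coherence bookkeeping: passing from the pointwise description of the limit to an honest equivalence of $\infty$-categories requires knowing that the diagram $S \mapsto \PgcSp_S$ (equivalently its straightening as a cocartesian/cartesian fibration) is assembled coherently, which is exactly what Construction~\ref{construction:coherent_restriction} provides by exhibiting all the $\PgcSp_S$ as (localizations of) full subcategories of a single ambient $\PgcSp$ indexed by a map of posets. Once one grants that, the argument is the formal one above: the comparison functor is essentially surjective and fully faithful because the defining lax equalizer of $\PgcSp_T$ only involves products indexed by $T$ and $T/p$ and finite equalizers, all of which commute with the cofiltered limit over $S$. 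I would also remark that the same argument with $\Cat_\infty$ replaced by $\mathrm{Pr}^{\mathrm L}$ (as in Construction~\ref{construction:coherent_restriction}) goes through, since the relevant limit of presentable categories along left adjoints is computed on underlying $\infty$-categories.
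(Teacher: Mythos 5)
Your fully faithfulness argument is correct and is essentially the mapping-space half of the available reasoning: by Proposition~\ref{proposition:mapping_pgcsp}, $\Map_{\PgcSp_T}(X,Y)$ is an equalizer of products indexed by $T$ and by $T/p$, each such product is the limit over finite $S \subseteq T$ of the corresponding product, and equalizers commute with limits (note that filteredness of the poset of finite $S \subseteq T$ plays no role anywhere, so the emphasis on cofiltered limits is a red herring). The weak point is essential surjectivity. An object of $\lim_{S} \PgcSp_S$ is not merely a family $\{X^{(S)}\}$ together with equivalences $\mathrm{Res}^{S'}_{S} X^{(S')} \simeq X^{(S)}$; it is a coherent section of the associated fibration, and the sentence ``this data glues to a collection $\{X_t\}_{t \in T}$ together with compatible structure maps $\varphi_{p,t}$, i.e.\ an object of $\PgcSp_T$'' is precisely the assertion that requires an $\infty$-categorical argument. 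Construction~\ref{construction:coherent_restriction} does not supply it: it only exhibits $S \mapsto \PgcSp_S$ as a coherent diagram (via a map of posets into subcategories of $\PgcSp$), which makes the statement of the lemma well posed but does not identify its limit. Note also that for $t \in S$ with $pt \in T \setminus S$ the object $X^{(S)}$ records no structure map at all, so the gluing of the $\varphi_{p,t}$ genuinely uses passage to larger $S$ and its coherence is not a pointwise matter.

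The repair is exactly the observation you relegate to a closing remark, and it is the paper's entire proof: $\PgcSp_T$ is by definition a lax equalizer whose two inputs are the products $\prod_{t \in T} \Sp^{\B C_t}$ and $\prod_{p} \prod_{t \in T/p} \Sp^{\B C_t}$; these products are visibly the limits over finite $S \subseteq T$ of the corresponding products (every $t$, respectively every $t \in T/p$, already lies in some finite $S$, respectively in $S/p$, e.g.\ $S = \langle pt \rangle$), the projections are compatible with the two parallel functors, and the lax equalizer commutes with limits in its inputs by construction. This yields $\PgcSp_T \simeq \lim_{S} \PgcSp_S$ in one step, with no object-by-object gluing and no separate fully-faithful/essentially-surjective check. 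I would restructure your write-up around that statement, keeping your mapping-space computation at most as a sanity check rather than as a load-bearing step.
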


\begin{proof}
First recall that the $\infty$-category $\PgcSp_T$ is defined as the lax equalizer
\[
	\PgcSp_T = \mathrm{LEq} \Big(
	\begin{tikzcd}
		\prod_{t \in T} \Sp^{\B C_t} \arrow[yshift=0.7ex]{r} \arrow[yshift=-0.7ex]{r} & \prod_{p} \prod_{t \in T/p} \Sp^{\B C_t}
	\end{tikzcd}
	\Big),
\]
where the functors have $p$th components given by the identity and the functor $F_p$, respectively. The products indexed by $T$ and $T/p$ in the definition of $\PgcSp$ are both right Kan extended from their restriction to $\FinTrunc^{\op}$, since the canonical functor induced by the projections
\[
	\prod_{t \in T} \Sp^{\B C_t} \to \mathrm{lim}_{T \to S} \prod_{s \in S} \Sp^{\B C_s}
\]
is an equivalence of $\infty$-categories, and similarly for the product indexed by $T/p$. The projection functors are compatible with the parallel functors in the lax equalizer defining $\PgcSp$, so the canonical functor induced by the projections
\[
	\PgcSp \to \mathrm{LEq} \Big(
	\begin{tikzcd}
		\mathrm{lim}_{T \to S}\prod_{s \in S} \Sp^{\B C_s} \arrow[yshift=0.7ex]{r} \arrow[yshift=-0.7ex]{r} & \mathrm{lim}_{T \to S}\prod_{p} \prod_{s \in S/p} \Sp^{\B C_s}
	\end{tikzcd}
	\Big) \simeq \mathrm{lim}_{T \to S} \,\PgcSp_S
\]
is an equivalenc since the lax equalizer commutes with limits by definition.
\end{proof}

\begin{remark} \label{remark:limit_divisibility_kanextended}
As a consequence of Lemma~\ref{lemma:PgcSp_rightKanExtended}, the canonical functor of $\infty$-categories
\[
	\PgcSp \to \mathrm{lim}_{n \in (\N, \mathrm{div})^{\op}} \PgcSp_{\langle n \rangle}
\]
is an equivalence, since the inclusion $(\N, \mathrm{div})^{\op} \to \FinTrunc^{\op}$ is limit cofinal. This description of the $\infty$-category of polygonic spectra will play a central role in~\textsection\ref{subsection:qfgenZ_TR}. 
\end{remark}

We will leverage Lemma~\ref{lemma:PgcSp_rightKanExtended} to extend various constructions with polygonic spectra indexed by finite truncation sets to arbitrary truncations sets (see for instance Definition~\ref{definition:R_T_infinite_truncation}). We end this section by discussing symmetric monoidal structures on the $\infty$-category of polygonic spectra.

\begin{construction}
The $\infty$-category $\PgcSp$ of polygonic spectra canonically admits the structure of a symmetric monoidal $\infty$-category using that the Tate construction is lax symmetric monoidal (cf.~\cite[Construction IV.2.1]{NS18}). If $X = \{X_n\}$ and $Y = \{Y_n\}$ are polygonic spectra, then
\[
	X \otimes Y = \{X_n \otimes Y_n\},
\]
and the polygonic Frobenius maps are defined by the following $C_n$-equivariant composite
\[
	X_n \otimes Y_n \xrightarrow{\varphi_{p, n} \otimes \id} (X_{pn})^{\tate C_p} \otimes Y_n \xrightarrow{\id \otimes \varphi_{p,n}} (X_{pn})^{\tate C_p} \otimes (Y_{pn})^{\tate C_p} \to (X_{pn} \otimes Y_{pn})^{\tate C_p}
\]
for every $n$, using that the Tate construction is lax symmetric monoidal (\cite[Theorem I.3.1]{NS18}). The unit is given by the constant polygonic spectrum $\S$ with $\S_n = \S$ for each $n \geq 1$. For every truncation set $T$, the full subcategory $\PgcSp_T$ inherits the symmetric monoidal structure of $\PgcSp$, and the unit is given by the $T$-typical polygonic spectrum $\S_T$ defined by
\[
	(\S_T)_n = 
	\begin{cases}
		\S & \text{if $n \in T$} \\
		0 & \text{if $n \notin T$}
	\end{cases}
\]
In this terminology, we have that $\S_T \otimes X \simeq \mathrm{Res}_T X$ for every polygonic spectrum $X$ and the localization $\PgcSp \to \PgcSp_T$ is smashing.
\end{construction}

\subsection{Cyclotomic spectra and polygonic spectra} \label{subsection:pgc_cycsp}
We discuss the relationship between polygonic spectra and cyclotomic spectra. Informally speaking, the notion of a polygonic spectrum is a multiobject variant of a cyclotomic spectrum indexed by the $n$-polygons rather than the circle. This is precisely the structure needed to define TR (cf.~\textsection\ref{section:polygonicTR}). We will use the following notation:

\begin{notation} \label{notation:ind_res_coind}
For every integer $n \geq 1$, there is a pair of adjunctions of $\infty$-categories
\[
\begin{tikzcd}[column sep = large]
	\Sp^{\B C_n} \arrow[bend left = 20]{r}{\mathrm{ind}_n} \arrow[bend right = 20, swap]{r}{\mathrm{coind}_n} & \Sp^{\B\T}, \arrow["\mathrm{res}_n" description]{l}
\end{tikzcd}
\]
where $\mathrm{res}_n$ is the functor obtained by restriction along $\B C_n \to \B\T$. The functor $\mathrm{ind}_n$ is obtained by left Kan extension while $\mathrm{coind}_n$ is obtained by right Kan extension. Explicitly, we have that
\begin{align*}
	\mathrm{ind}_n(X) &\simeq X \otimes_{C_n} \T \\
	\mathrm{coind}_n(X) &\simeq \map_{\Sp^{\B C_n}}(\T, X)
\end{align*}
for every spectrum $X$ with $C_n$-action, where $\mathrm{ind}_n(X) \simeq X \otimes_{C_n} \T$ carries the right $\T$-action. Note that if $X$ is a spectrum with $\T$-action considered as a spectrum with $C_n$-action by restriction, then $\mathrm{ind}_n X \simeq X \otimes \Sigma^{\infty}_+(\T/C_n)$, where $X \otimes \Sigma^{\infty}_+(\T/C_n)$ carries the diagonal $\T$-action. Indeed, since the functors involved preserve colimits, we may assume that $X$ is a discrete space with $\T$-action, in which case the shearing map $X \times_{C_n} \T \to X \times \T/C_n$ given by $[x, \lambda] \mapsto (\lambda x, [\lambda])$ is a $\T$-equivariant equivalence with respect to the right $\T$-action on $X \times_{C_n} \T$ and the diagonal $\T$-action on $X \times \T/C_n$. 
\end{notation} 

We observe that every cyclotomic spectrum can be regarded as a constant polygonic spectrum. 

\begin{construction}
For every cyclotomic spectrum $X$, let $i(X)$ denote the polygonic spectrum defined by $i(X)_n = X$, where we regard $X$ as a spectrum with $C_n$-action by restriction. Similarly, we regard the $\T$-equivariant cyclotomic Frobenius map $\varphi_p : X \to X^{\tate C_p}$ as a $C_n$-equivariant map by restriction. We show that the construction $X \mapsto i(X)$ determines a functor of $\infty$-categories
\[
	i : \CycSp \to \PgcSp.
\]
By construction of $\CycSp$ and $\PgcSp$, we need to construct a functor of $\infty$-categories
\[
	\mathrm{LEq} \Big(
	\begin{tikzcd}
		\Sp^{\B\T} \arrow[yshift=0.7ex]{r}{\id} \arrow[yshift=-0.7ex, swap]{r}{(-)^{\tate C_p}} & \prod_{p} \Sp^{\B\T}
	\end{tikzcd}
	\Big) \to 
	\mathrm{LEq} \Big(
	\begin{tikzcd}
		\prod_{n \geq 1} \Sp^{\B C_n} \arrow[yshift=0.7ex]{r}{\id} \arrow[yshift=-0.7ex, swap]{r}{F_p} & \prod_{p} \prod_{n \geq 1} \Sp^{\B C_n}
	\end{tikzcd}
	\Big).
\]
For every prime $p$, the restriction functors $\Sp^{\B\T} \to \Sp^{\B C_n}$ induce a functor of $\infty$-categories
\[
	\Sp^{\B\T} \to \prod_{n \geq 1} \Sp^{\B C_n}
\]
which commutes with the top map in the lax equalizer of $\PgcSp$. For the bottom functor, there is a natural equivalence between the following composite of functors
\[
	\Sp^{\B\T} \xrightarrow{(-)^{\tate C_p}} \Sp^{\B\T} \xrightarrow{\mathrm{res}} \prod_{n \geq 1} \Sp^{\B C_n}
\]
and the following composite
\[
	\Sp^{\B\T} \xrightarrow{\mathrm{res}} \prod_{n \geq 1} \Sp^{\B C_n} \xrightarrow{F_p} \prod_{n \geq 1} \Sp^{\B C_n}
\]
induced by the natural equivalence $\mathrm{res}_n \circ (-)^{\tate C_p} \to (-)^{\tate C_p} \circ \mathrm{res}_{pn}$ of functors $\Sp^{\B\T} \to \Sp^{\B C_n}$. The equalizer is a full subcategory of the lax equalizer, we obtain the desired functor $\CycSp \to \PgcSp$. For every truncation set $T$, let $i_T$ denote the functor of $\infty$-categories defined by
\[
	\CycSp \xrightarrow{i} \PgcSp \xrightarrow{\mathrm{Res}_T} \PgcSp_T.
\]
We will refer to $i_T : \CycSp \to \PgcSp_T$ as the canonical functor.
\end{construction}

We prove that the canonical functor $i : \CycSp \to \PgcSp$ admits both adjoints and that these are described by explicit formulas (cf. Theorem~\ref{theorem:adjoints_pgcsp_cycsp}). We will begin with the following lemma:

\begin{lemma} \label{lemma:canonical_map_coinduction}
Let $X$ denote a spectrum with $C_{ps}$-action for some prime number $p$ and $s \geq 1$. There is a natural equivalence of spectra with $\T$-action
\[
	\mathrm{coind}_{ps}(X)^{\tate C_p} \to \mathrm{coind}_s(X^{\tate C_p}),
\]
where the source carries the residual $\T/C_p$-action and $X^{\tate C_p}$ carries the residual $C_{ps}/C_p$-action.
\end{lemma}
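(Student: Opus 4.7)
The plan is to first construct the natural comparison map by adjunction, and then to show it is an equivalence by reducing to the Borel setting.

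For the construction, I would start with the counit $\mathrm{res}_{C_{ps}} \mathrm{coind}_{ps}(X) \to X$ of the adjunction $\mathrm{res}_{C_{ps}} \dashv \mathrm{coind}_{ps}$ and apply the Tate construction $(-)^{\tate C_p}$ to obtain a $C_s$-equivariant map. Using that the Tate construction commutes with restriction from $\T$ to subgroups, in the sense that $(\mathrm{res}_{C_{ps}}^{\T} Y)^{\tate C_p} \simeq \mathrm{res}_{C_s}^{\T/C_p}(Y^{\tate C_p})$ for any $\T$-spectrum $Y$, this rewrites as a $C_s$-equivariant map $\mathrm{res}_{C_s}(\mathrm{coind}_{ps}(X)^{\tate C_p}) \to X^{\tate C_p}$. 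Adjointing across $\mathrm{res}_{C_s} \dashv \mathrm{coind}_s$ then yields the desired $\T$-equivariant map $\mathrm{coind}_{ps}(X)^{\tate C_p} \to \mathrm{coind}_s(X^{\tate C_p})$.

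To check this map is an equivalence, I would invoke the norm cofiber sequence $(-)_{hC_p} \to (-)^{hC_p} \to (-)^{\tate C_p}$. Since $\mathrm{coind}_{ps}$ and $\mathrm{coind}_s$ are exact, each produces a cofiber sequence, so it suffices to prove the analogous equivalences $\mathrm{coind}_{ps}(X)^{hC_p} \simeq \mathrm{coind}_s(X^{hC_p})$ and $\mathrm{coind}_{ps}(X)_{hC_p} \simeq \mathrm{coind}_s(X_{hC_p})$ compatibly with the norm. For the Borel version, both sides are right adjoints of a single functor $\Sp^{\B\T/C_p} \to \Sp^{\B C_{ps}}$ that regards a $\T/C_p$-spectrum $Y$ as a $C_{ps}$-spectrum via the composite $C_{ps} \hookrightarrow \T \twoheadrightarrow \T/C_p$, which equals the composite $C_{ps} \twoheadrightarrow C_{ps}/C_p = C_s \hookrightarrow \T/C_p$; hence the two right adjoints agree. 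The orbit version is handled dually: both sides are left adjoints of the same functor, using the Wirthm\"uller identification $\mathrm{coind}_n \simeq \Sigma^{-1}\mathrm{ind}_n$ to exhibit $\mathrm{coind}_n$ as a left adjoint.

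The main obstacle I expect is arranging the two equivalences above to intertwine the norm maps, so that they assemble into a morphism of cofiber sequences realizing the comparison map built in the first paragraph. I would handle this by recognizing both equivalences as Beck--Chevalley isomorphisms for the commutative square of groups obtained by quotienting $C_{ps} \hookrightarrow \T$ by the normal subgroup $C_p$, and invoking the naturality of the norm across such squares to obtain compatibility.
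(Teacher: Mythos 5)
Your argument is correct in outline, but it takes a genuinely different and substantially longer route than the paper. The paper's proof is a two-line observation: coinduction $\mathrm{coind}_{ps} \colon \Sp^{\B C_{ps}} \to \Sp^{\B\T}$ is identified with homotopy fixed points for a $\Z$-action (a limit over $\B\Z \simeq S^1$, hence a \emph{finite} limit), so the canonical exchange map $(-)^{\tate C_p} \circ \mathrm{coind}_{ps} \to \mathrm{coind}_{ps}(-)^{\tate C_p}$ is an equivalence simply because the Tate construction is exact and therefore commutes with finite limits. Your route instead fractures the Tate construction via the norm cofiber sequence, identifies the fixed-point and orbit comparisons as agreeing right (resp., via the Wirthm\"uller equivalence $\mathrm{coind}_n \simeq \Omega\,\mathrm{ind}_n$, left) adjoints of the same inflation-restriction functor for the pullback square of groups $C_{ps} = \T \times_{\T/C_p} C_s$, and then assembles a map of cofiber sequences. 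All of these individual identifications are valid, and you correctly flag the only delicate point: making the two exchange equivalences intertwine the norm maps so that the induced map on Tate constructions is the map you built by adjunction. That compatibility is true — it reduces to the fact that the $C_p$-norm in Borel $\T$-spectra restricts to the $C_p$-norm in Borel $C_{ps}$-spectra together with naturality of the norm and of $\mathrm{can}$, applied to the counit $\mathrm{res}\,\mathrm{coind}_{ps}X \to X$ — but it is not literal naturality and is exactly where your write-up would need to do real work; a clean alternative within your framework is to compare the Tate-level map with the fixed-point comparison via $\mathrm{can}$ and then identify the induced map on fibers by a second adjunction argument. What your approach buys is independence from the observation that coinduction from a finite subgroup of $\T$ is a finite limit; what the paper's approach buys is that the entire norm-compatibility issue never arises.
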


\begin{proof}
The sequence $\Z \twoheadrightarrow C_{ps} \hookrightarrow \T$, where we have identified $\T \simeq \B (ps\Z)$, induces an adjunction
\[
\begin{tikzcd}
	\Sp^{\B \T} \arrow[yshift=0.7ex]{r}{\mathrm{res}_{ps}} & \Sp^{\B C_{ps}} \arrow[yshift=-0.7ex]{l}{(-)^{\h\Z}}
\end{tikzcd}
\]
exhibiting $(-)^{\h\Z}$ as a right adjoint of restriction. Consequently, we conclude that there is a natural equivalence of functors $(-)^{\h\Z} \simeq \mathrm{coind}_{ps} : \Sp^{\B C_{ps}} \to \Sp^{\B\T}$. The desired map in the assertion of the lemma is then induced by the canonical natural transformation of functors
\[
	(-)^{\tate C_p} \circ (-)^{\h\Z} \to (-)^{\h\Z} \circ (-)^{\tate C_p} : \Sp^{\B C_{ps}} \to \Sp^{\B\T},
\]
which is an equivalence since $(-)^{\h\Z}$ is a finite limit.
\end{proof}

\begin{remark}
Similarly, we obtain a natural equivalence of spectra with $\T$-action
\[
	\mathrm{ind}_{ps}(X)^{\tate C_p} \to \mathrm{ind}_s(X)^{\tate C_p}
\]
as in Lemma~\ref{lemma:canonical_map_coinduction}. Alternatively, we obtain the map above by means of $\T$-equivariant Atiyah duality which provides a natural equivalence of spectra with $\T$-action $\mathrm{coind}_n \simeq \Omega\mathrm{ind}_n$ for $n \geq 1$. 
\end{remark}

We will begin by analyzing the right adjoint of the canonical functor $\CycSp \to \PgcSp$ which regards a cyclotomic spectrum as a constant polygonic spectrum. As an instrumental prelude, we provide an explicit construction of the right adjoint of $i_S : \CycSp \to \PgcSp_S$ under the assumption that $S$ is a finite truncation set. The general case is then obtained by leveraging Lemma~\ref{lemma:PgcSp_rightKanExtended}. 

\begin{construction} \label{construction:right_adjoint_to_cycsp_pgcsp}
Let $S$ denote a finite tru{}ncation set and let $R_S^f : \PgcSp_S \to \Sp^{\B\T}$ denote the functor of $\infty$-categories defined by the formula
\[
	R^f_S(X) = \bigoplus_{s \in S} \mathrm{coind}_s(X_s) \simeq \bigoplus_{s \in S} \map_{\Sp^{\B C_s}}(\T, X_s)
\]
for every $S$-typical polygonic spectrum $X$. We construct a cyclotomic structure on the spectrum with $\T$-action given by $R^f_S(X)$. If $p$ is a prime number and $s \in S/p$, then the cyclotomic structure on $R^f_S(X)$ is induced by the following map of spectra with $\T$-action
\[
	\mathrm{coind}_s(X_s) \xrightarrow{\mathrm{coind}_s(\varphi_{p, s})} \mathrm{coind}_{s}(X_{ps}^{\tate C_p}) \xrightarrow{\simeq} \mathrm{coind}_{ps}(X_{ps})^{\tate C_p},
\]
where the final equivalence is induced by Lemma~\ref{lemma:canonical_map_coinduction}. If $s \notin S/p$, then the cyclotomic structure on $R^f_S(X)$ is induced by $\mathrm{coind}_s(X_s) \to 0$. Consequently, the construction $X \mapsto R^f_S(X)$ canonically refines to a functor of $\infty$-categories $R^f_S : \PgcSp_S \to \CycSp$. We do not need to specify coherences by construction of the $\infty$-category of cyclotomic spectra.
\end{construction}

By similar methods, we provide an explicit construction of the left adjoint of $\CycSp \to \PgcSp_T$. In this case, we do not need to restrict to finite truncation sets.

\begin{construction} \label{construction:left_adjoint_to_cycsp_pgcsp}
Let $T$ denote a truncation set and let $L_T : \PgcSp_T \to \Sp^{\B\T}$ denote the functor of $\infty$-categories defined by the following formula
\[
	L_T(X) = \bigoplus_{t \in T} \mathrm{ind}_t(X_t)
\]
for every $T$-typical polygonic spectrum $X$. We construct a cyclotomic structure on the spectrum with $\T$-action given by $L_T(X)$. If $p$ is a prime number and $t \in T/p$, then the cyclotomic structure on $L_T(X)$ is induced by the following map of spectra with $\T$-action
\[
	\mathrm{ind}_t(X_t) \xrightarrow{\mathrm{ind}_t(\varphi_{p,t})} \mathrm{ind}_t(X_{pt}^{\tate C_p}) \xrightarrow{\simeq} \mathrm{ind}_{pt}(X_{pt})^{\tate C_p}
\]
and by $\mathrm{ind}_t(X_t) \to 0$ if $t \notin T/p$. As before, the construction $X \mapsto L_T(X)$ canonically refines to a functor of $\infty$-categories $L_T : \PgcSp_T \to \CycSp$.  
\end{construction}

As expected, we have the following result:

\begin{proposition} \label{proposition:adjoints_cycsp_pgc_finite}
Let $T$ denote an arbitrary truncation set.
\begin{enumerate}[leftmargin=2em, topsep=5pt, itemsep=5pt]
	\item If $T = S$ is finite, then $R^f_S : \PgcSp_S \to \CycSp$ is a right adjoint of $i_S : \CycSp \to \PgcSp_S$.
	\item The functor $L_T : \PgcSp_T \to \CycSp$ is a left adjoint of $i_T : \CycSp \to \PgcSp_T$.
\end{enumerate}
\end{proposition}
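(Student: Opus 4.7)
The plan is to establish both adjunctions via direct mapping-space computations, comparing the equalizer descriptions on the two sides. On the $\PgcSp_T$-side we use the formula of Proposition~\ref{proposition:mapping_pgcsp}; on the $\CycSp$-side we use the lax equalizer description from~\cite[Proposition II.1.5]{NS18}. In both cases the source of the equalizer decomposes as a product indexed by $T$ (resp.\ $S$) via the adjunctions $\mathrm{ind}_t \dashv \mathrm{res}_t$ (resp.\ $\mathrm{res}_s \dashv \mathrm{coind}_s$), and the task reduces to matching the two parallel arrows.

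For part (2), we aim for the natural equivalence $\Map_{\CycSp}(L_T X, W) \simeq \Map_{\PgcSp_T}(X, i_T W)$. Using $L_T(X) = \bigoplus_{t \in T} \mathrm{ind}_t(X_t)$ and the adjunction $\mathrm{ind}_t \dashv \mathrm{res}_t$, we obtain $\Map_{\Sp^{\B\T}}(L_T X, Y) \simeq \prod_{t \in T} \Map_{\Sp^{\B C_t}}(X_t, Y)$ for any $Y \in \Sp^{\B\T}$, applied in particular to $Y = W$ and $Y = W^{\tate C_p}$. The "top" arrow on both sides is then postcomposition with the cyclotomic Frobenius $\varphi_p^W$. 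The "bottom" arrow is unpacked from the cyclotomic structure on $L_T(X)$ specified in Construction~\ref{construction:left_adjoint_to_cycsp_pgcsp}: on the $t$-summand with $t \in T/p$, the induction-Tate equivalence $\mathrm{ind}_t(X_{pt}^{\tate C_p}) \simeq \mathrm{ind}_{pt}(X_{pt})^{\tate C_p}$ from Lemma~\ref{lemma:canonical_map_coinduction} (and the remark following it) translates the bottom arrow, under $\mathrm{ind}_t \dashv \mathrm{res}_t$, into precomposition with $\varphi_{p,t}$ followed by $f_{pt}^{\tate C_p}$, matching the $\PgcSp_T$-side.

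For part (1) the dual argument applies: using $\mathrm{res}_s \dashv \mathrm{coind}_s$ and $R^f_S(X) = \bigoplus_{s \in S} \mathrm{coind}_s(X_s)$, we get $\Map_{\Sp^{\B\T}}(W, R^f_S X) \simeq \prod_{s \in S} \Map_{\Sp^{\B C_s}}(W, X_s)$. The cyclotomic structure from Construction~\ref{construction:right_adjoint_to_cycsp_pgcsp}, translated via Lemma~\ref{lemma:canonical_map_coinduction}, matches the two parallel arrows of the $\PgcSp_S$-equalizer for $\Map_{\PgcSp_S}(i_S W, X)$. The finiteness of $S$ is used here so that $\bigoplus_s \mathrm{coind}_s(X_s)$ is a finite product, over which the Tate construction commutes; this permits the identification of $(R^f_S X)^{\tate C_p}$ termwise.

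The main obstacle in both parts is the careful identification of the bottom-arrow components on those summands where Constructions~\ref{construction:right_adjoint_to_cycsp_pgcsp} and~\ref{construction:left_adjoint_to_cycsp_pgcsp} prescribe the "zero map" (the cases $t \notin T/p$ and $s \notin S/p$); these are precisely the indices missing from the $\PgcSp_T$- and $\PgcSp_S$-equalizers in Proposition~\ref{proposition:mapping_pgcsp}, and one must verify that the constraints imposed by the zero components do not introduce spurious terms in the equalizer, so that it agrees strictly with the $T$- or $S$-indexed equalizer on the polygonic side.
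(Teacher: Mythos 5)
Your strategy is the same as the paper's: compare the two equalizer descriptions of mapping spaces, decompose the cyclotomic side via $\mathrm{ind}_t \dashv \mathrm{res}_t$ (resp.\ $\mathrm{res}_s \dashv \mathrm{coind}_s$), use finiteness of $S$ to commute the Tate construction past the finite sum, and use Lemma~\ref{lemma:canonical_map_coinduction} to match the parallel arrows; up to that point your reductions are correct. The problem is that the point you yourself flag at the end as ``the main obstacle'' is exactly the one step with real mathematical content, and you leave it unresolved. For part (1) the resolution is a vanishing statement, not the fact that the structure maps of Construction~\ref{construction:right_adjoint_to_cycsp_pgcsp} were declared to be zero: after writing $(R^f_S Y)^{\tate C_p} \simeq \bigoplus_{s \in S} (\mathrm{coind}_s Y_s)^{\tate C_p}$, the equalizer target has factors indexed by all $s \in S$, whereas the polygonic equalizer of Proposition~\ref{proposition:mapping_pgcsp} only sees the summands of the form $ps$ with $s \in S/p$. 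On an extra summand $s'$ with $p \nmid s'$ the arrow coming from the structure of $R^f_S Y$ is zero, but the other arrow is $f \mapsto (f_{s'})^{\tate C_p} \circ \varphi_p^X$, which is not; dropping these factors is legitimate only because $(\mathrm{coind}_{s'} Y_{s'})^{\tate C_p} \simeq 0$ when $p \nmid s'$ (restricted to $C_p$, $\mathrm{coind}_{s'} Y_{s'}$ is a finite limit of $C_p$-spectra coinduced from the trivial subgroup, since $\T$ is a finite free $C_{ps'}$-CW complex and $C_p \cap C_{s'} = 1$, and the Tate construction is exact and kills coinduced spectra). Without this argument the equalizer would retain spurious null-homotopy data.

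For part (2) the situation is not dual, so ``the dual argument applies'' hides a genuine asymmetry. The analogous extra factors, at pairs $(p,t)$ with $t \in T$ but $pt \notin T$, are $\Map_{\Sp^{\B C_t}}(X_t, \mathrm{res}_t(W^{\tate C_p}))$; here the arrow coming from the (zero) structure components of $L_T X$ in Construction~\ref{construction:left_adjoint_to_cycsp_pgcsp} vanishes, but the other arrow is postcomposition with $\varphi_p^W$, and the factor itself does not vanish, so there is no analogue of the vanishing used in (1). For $T = \N$ no such pairs exist, since $T/p = T$ for every $p$, and your comparison closes; this is the case needed for the formula $Li(X) \simeq X \otimes \widetilde{\THH}(\S[t])$ and Proposition~\ref{proposition:comparison_mcc}. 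For a proper truncation set the extra factors genuinely contribute: already for $T = \langle 1 \rangle$ the proposed identification would say that $A \mapsto (\mathrm{ind}_1 A, 0)$ is left adjoint to the underlying-spectrum functor $\CycSp \to \Sp$, whereas the equalizer formula computes the fiber of $\Map(A, W) \to \prod_p \Map(A, W^{\tate C_p})$ (the map being postcomposition with the Frobenii of $W$), which is not $\Map(A, W)$ in general. So as written your proposal establishes (1) only once the vanishing argument is supplied, and establishes (2) only for $T = \N$; for general $T$ the claimed duality does not by itself produce a proof, and this is also the point at which the paper's own ``entirely analogous'' is doing the least work.
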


\begin{proof}
We begin by proving $(1)$. We show that there is a natural equivalence of spaces
\[
	\Map_{\PgcSp_S}(i_S(X), Y) \simeq \Map_{\CycSp}(X, R^f_S(Y))
\]
for every cyclotomic spectrum $X$ and every $S$-typical polygonic spectrum $Y$. Using~\cite[Proposition II.1.5]{NS18}, there is a natural equivalence of spaces
\[
	\Map_{\CycSp}(X, R^f_S(Y)) \simeq \mathrm{Eq} \big(
	\begin{tikzcd}[column sep = 1.6em]
		\Map_{\Sp^{\B\T}}(X, R^f_S(Y)) \arrow[yshift=0.7ex]{r} \arrow[yshift=-0.7ex]{r} & \prod_{p} \Map_{\Sp^{\B\T}}(X, R^f_S(Y)^{\tate C_p})
	\end{tikzcd}
	\big),
\]
where the top map is induced by the cyclotomic structure of $X$, and the bottom map is induced by the cyclotomic structure of $R^f_S(Y)$ described in Construction~\ref{construction:right_adjoint_to_cycsp_pgcsp}. Using that $S$ is finite, this equalizer is in turn naturally equivalent to the following equalizer
\begin{align*}
	&\mathrm{Eq}\Big(
	\begin{tikzcd}[ampersand replacement = \&]
		\prod_{s \in S} \Map_{\Sp^{\B\T}}(X, \mathrm{coind}_s(Y_s)) \arrow[yshift=0.7ex]{r} \arrow[yshift=-0.7ex]{r} \& \prod_{p} \prod_{s \in S/p} \Map_{\Sp^{\B\T}}(X, \mathrm{coind}_{ps}(Y_{ps})^{\tate C_p})
	\end{tikzcd}
	\Big) \\
	\simeq \, & \mathrm{Eq}\Big(
	\begin{tikzcd}[ampersand replacement = \&]
		\prod_{s \in S} \Map_{\Sp^{\B\T}}(X, \mathrm{coind}_s(Y_s)) \arrow[yshift=0.7ex]{r} \arrow[yshift=-0.7ex]{r} \& \prod_{p} \prod_{s \in S/p} \Map_{\Sp^{\B\T}}(X, \mathrm{coind}_{s}((Y_{ps})^{\tate C_p}))
	\end{tikzcd}
	\Big),
\end{align*}
since the cyclotomic structure of $R^f_S(Y)$ is induced by the zero map for $s \notin S/p$, and since the map $\mathrm{coind}_s((Y_{ps})^{\tate C_p}) \simeq \mathrm{coind}_{ps}(Y_{ps})^{\tate C_p}$ is an equivalence of spectra with $\T$-action. The top map is induced by sending $f : X \to R^f_S(Y)$ to the following map of spectra with $\T$-action
\[
	X \xrightarrow{f_s} \mathrm{coind}_s(Y_s) \xrightarrow{\mathrm{coind}_s(\varphi_{p, s})} \mathrm{coind}_s((Y_{ps})^{\tate C_p}),
\]
and the bottom map is induced by sending $f$ to the following map of spectra with $\T$-action
\[
	X \xrightarrow{\varphi_{p, s}} X^{\tate C_p} \xrightarrow{(f_{ps})^{\tate C_p}} \mathrm{coind}_{ps}(Y_{ps})^{\tate C_p} \simeq \mathrm{coind}_s((Y_{ps})^{\tate C_p}).
\]
We conclude that there is a natural equivalence of spaces
\[
	\Map_{\CycSp}(X, R^f_S(Y) \simeq \mathrm{Eq} \big(
	\begin{tikzcd}[column sep = 1.6em]
		\prod_{s \in S} \Map_{\Sp^{\B C_s}}(X, Y_s) \arrow[yshift=0.7ex]{r} \arrow[yshift=-0.7ex]{r} & \prod_{p} \prod_{s \in S} \Map_{\Sp^{\B C_s}}(X, (Y_{ps})^{\tate C_p})
	\end{tikzcd}
	\big),
\]
using that coinduction is right adjoint of the restriction functor, where the maps are as described in Proposition~\ref{proposition:mapping_pgcsp}. The proof of $(2)$ is entirely analogous using that the induction functor is left adjoint of the restriction functor. 
\end{proof}

Before we proceed, we recall some terminology which we will use throughout. 

\begin{notation} \label{notation:reducedTHH_inverselimit}
Let $\S[t]$ denote the $\E_{\infty}$-ring defined by the formula
\[
	\S[t] = \Sigma^{\infty}_+ \mathbb{N}.
\]
As a warning, we note that $\S[t]$ is not the free $\E_\infty$-ring on one generator. However, the underlying $\E_1$-ring is the free $\E_1$-ring on one generator since $\mathbb{N}$ is the free $\E_1$-monoid on a single generator in the $\infty$-category of spaces. We consider the following constructions:

\begin{enumerate}[leftmargin=2em, topsep=5pt, itemsep=5pt]
	\item There is a map of $\E_\infty$-rings $\S[t] \to \S$ induced by the unique map of $\E_\infty$-monoids $\mathbb{N} \to \ast$. We obtain a map of cyclotomic spectra $\THH(\S[t]) \to \S$ where the specturm $\S$ is equipped with the trivial cyclotomic structure. Let $\widetilde{\THH}(\S[t])$ denote the cyclotomic spectrum defined by
	\[
		\widetilde{\THH}(\S[t]) = \mathrm{fib}(\THH(\S[t]) \to \S).
	\]
	In fact, there is an equivalence of cyclotomic spectra
	\[
		\widetilde{\THH}(\S[t]) = \bigoplus_{n \geq 1} \Sigma^\infty_+(\T/C_n),
	\]
	where $\T/C_n$ denotes the quotient space whose $\T$-action is given by $[x] \mapsto [\lambda x]$ for every $\lambda \in \T$, and the cyclotomic structure of the right hand side of the equivalence above is induced by the following $\T$-equivariant map of spaces
	\[
		\T/C_n \xrightarrow{x \mapsto \sqrt[p]{x}} (\T/C_{pn})^{\h C_p} 
	\]
	for every prime number $p$. We refer the reader to~\cite[\textsection 2]{McC21} for a discussion of this.

	\item For every $n \geq 1$, let $\S[t]/t^n$ denote the $\E_\infty$-ring defined by the pushout of $\E_\infty$-rings
	\[
	\begin{tikzcd}
		\S[t] \arrow{r}{t \mapsto t^n} \arrow{d}{t \mapsto 0} & \S[t] \arrow{d} \\
		\S \arrow{r} & \S[t]/t^n
	\end{tikzcd}
	\]
	Let $X \,\widehat{\otimes}\, \widetilde{\THH}^{\mathrm{cont}}(\S[\![t]\!])$ denote the cyclotomic spectrum defined by
	\[
		X \,\widehat{\otimes}\, \widetilde{\THH}^{\mathrm{cont}}(\S[\![t]\!]) = \varprojlim(X \otimes \widetilde{\THH}(\S[t]/t^n))
	\]
	for every cyclotomic spectrum $X$. The right hand side denotes the limit in the $\infty$-category of cyclotomic spectra formed over the maps induced by $\S[t]/t^{n+1} \to \S[t]/t^n$ (cf.~\cite[\textsection 4]{McC21}).
\end{enumerate}
\end{notation}

In general, we define $R_T$ for an arbitrary truncation set by a Kan extension procedure. 

\begin{definition} \label{definition:R_T_infinite_truncation}
Let $R_T : \PgcSp_T \to \CycSp$ denote the functor of $\infty$-categories defined by
\[
	R_T(X) = \mathrm{lim}\big( \Trunc_{T/}^{\op} \times_{\Trunc^{\op}} \FinTrunc^{\op} \to \FinTrunc^{\op} \to \CycSp \big)
\]
for every truncation set $T$. The last functor is determined by the construction $S \mapsto R^f_S(\mathrm{Res}^T_S X)$. Alternatively, the functor $R_T$ is defined as the right Kan extension of the functor $S \mapsto R^f_S(\mathrm{Res}^T_S X)$ along the inclusion $i : \FinTrunc \hookrightarrow \Trunc$. We will typically write
\[
	R_T(X) = \mathrm{lim}_{S \subseteq T} R^f_S(\mathrm{Res}^T_S X)
\]
to shorten the notation. 
\end{definition}

We prove the main result of this section, which summarizes the various functors relating the $\infty$-category of polygonic spectra and the $\infty$-category of cyclotomic spectra. These descriptions will play an important role in~\textsection\ref{subsection:def_polygonicTR}. 

\begin{theorem} \label{theorem:adjoints_pgcsp_cycsp}
Let $T$ be a truncation set. The functor $i_T$ admits both adjoints
\[
\begin{tikzcd}[column sep = large]
	\PgcSp_T \arrow[yshift=1.6ex]{r}{L_T} \arrow[yshift=-1.6ex, swap]{r}{R_T} & \CycSp. \arrow["i_T" description]{l}
\end{tikzcd}
\]
If $X$ is a cyclotomic spectrum and $T = \N$, then there are equivalences of cyclotomic spectra
\begin{align*}
	L i(X) &\simeq X \otimes \widetilde{\THH}(\S[t]) \\
	R i(X) &\simeq X \,\widehat{\otimes}\,\,\Omega\widetilde{\THH}^{\mathrm{cont}}(\S[\![t]\!]),
\end{align*}
where both $\widetilde{\THH}(\S[t])$ and $X \,\widehat{\otimes}\, \widetilde{\THH}^{\mathrm{cont}}(\S[\![t]\!])$ are defined in Notation~\ref{notation:reducedTHH_inverselimit}.
\end{theorem}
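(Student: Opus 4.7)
The plan is to bootstrap from the finite-truncation case established in Proposition~\ref{proposition:adjoints_cycsp_pgc_finite} to arbitrary truncation sets $T$, and then unpack the resulting formulas using the explicit descriptions of $\mathrm{ind}_n$ and $\mathrm{coind}_n$ from Notation~\ref{notation:ind_res_coind}.

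For the existence of the adjoints, the left adjoint $L_T$ is constructed directly in Construction~\ref{construction:left_adjoint_to_cycsp_pgcsp} and verified in Proposition~\ref{proposition:adjoints_cycsp_pgc_finite}(2) without any finiteness hypothesis on $T$. For the right adjoint, I would argue as follows. By Lemma~\ref{lemma:PgcSp_rightKanExtended}, the restriction functors exhibit $\PgcSp_T \simeq \lim_{T \to S} \PgcSp_S$, where the limit is taken over finite truncation sets contained in $T$, and $i_T$ is the functor assembled from the diagram $\{i_S\}_{S \subseteq T}$. Since each $i_S$ admits a right adjoint $R^f_S$ by Proposition~\ref{proposition:adjoints_cycsp_pgc_finite}(1), and these are compatible up to canonical natural transformations with the restriction functors $\mathrm{Res}^{T'}_T$, we obtain a diagram of right adjoints in $\mathrm{Pr}^{\mathrm{R}}$ whose limit functor is right adjoint to $i_T$. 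This limit is precisely the formula of Definition~\ref{definition:R_T_infinite_truncation}.

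For the formula $L\,i(X) \simeq X \otimes \widetilde{\THH}(\S[t])$ in the case $T = \N$: by Construction~\ref{construction:left_adjoint_to_cycsp_pgcsp} we have $L\,i(X) = \bigoplus_{n \geq 1} \mathrm{ind}_n(\mathrm{res}_n X)$, and by the shearing identification in Notation~\ref{notation:ind_res_coind} each summand $\mathrm{ind}_n(\mathrm{res}_n X) \simeq X \otimes \Sigma^\infty_+(\T/C_n)$ as spectra with $\T$-action. Combining with the decomposition $\widetilde{\THH}(\S[t]) \simeq \bigoplus_{n \geq 1} \Sigma^\infty_+(\T/C_n)$ from Notation~\ref{notation:reducedTHH_inverselimit}(1) gives the underlying equivalence. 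To upgrade to an equivalence of cyclotomic spectra, I would match the polygonic Frobenius $\mathrm{ind}_t(\varphi_{p,t})$ (induced by the identity on $X$) with the $p$-th root map $\T/C_n \to (\T/C_{pn})^{\h C_p}$ describing the cyclotomic structure on $\widetilde{\THH}(\S[t])$; this is compatible because both are obtained by applying $\mathrm{ind}$ to the canonical map $\mathrm{res}_n X \to (\mathrm{res}_{pn}X)^{\tate C_p}$ adjoint to the identity.

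For the formula $R\,i(X) \simeq \Omega\,\varprojlim_n (X \otimes \widetilde{\THH}(\S[t]/t^n))$: by Definition~\ref{definition:R_T_infinite_truncation}, $R\,i(X) = \lim_{S \subseteq \N} R^f_S(i_S(X)) = \lim_S \bigoplus_{s \in S} \mathrm{coind}_s(\mathrm{res}_s X)$. Applying $\T$-equivariant Atiyah duality $\mathrm{coind}_s \simeq \Omega\,\mathrm{ind}_s$ (noted in Remark after Lemma~\ref{lemma:canonical_map_coinduction}) and the shearing identification, each term becomes $\Omega(X \otimes \bigoplus_{s \in S} \Sigma^\infty_+(\T/C_s))$. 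I would then restrict to the cofinal subsystem of truncation sets of the form $[n] = \{1, \dots, n\}$, and identify
\[
R^f_{[n]}(i_{[n]}(X)) \simeq \Omega\bigl(X \otimes \widetilde{\THH}(\S[t]/t^{n+1})\bigr)
\]
as cyclotomic spectra, relating the pro-system of the left hand side in the variable $n$ to the pro-system defining $\widetilde{\THH}^{\mathrm{cont}}(\S[\![t]\!])$.

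The principal obstacle is the last identification: one must verify that the cyclotomic structure on $R^f_{[n]}(i_{[n]}(X))$, which is assembled via Construction~\ref{construction:right_adjoint_to_cycsp_pgcsp} from the maps $\mathrm{coind}_s(\varphi_{p,s})$ together with the zero map whenever $ps \notin [n]$, matches the known cyclotomic structure on $X \otimes \widetilde{\THH}(\S[t]/t^{n+1})$. The vanishing outside $[n]/p$ corresponds precisely to the truncation $t^{n+1} = 0$ killing the Frobenius on the highest summands. Once this is checked at each finite level, one must verify that the transition maps $R^f_{[n+1]} \to R^f_{[n]}$ induced by restriction correspond to the transition maps $\widetilde{\THH}(\S[t]/t^{n+2}) \to \widetilde{\THH}(\S[t]/t^{n+1})$ in the pro-system, so that the passage to the inverse limit produces the continuous $\THH$ of Notation~\ref{notation:reducedTHH_inverselimit}(2).
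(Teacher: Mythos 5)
Your treatment of the existence of the adjoints and of the formula for $Li(X)$ follows the paper's route: the left adjoint is Construction~\ref{construction:left_adjoint_to_cycsp_pgcsp} via Proposition~\ref{proposition:adjoints_cycsp_pgc_finite}(2), and your assembly of the right adjoint as the limit of the $R^f_S$ over finite $S\subseteq T$ is, up to packaging, the same argument the paper runs (it first gets existence from presentability plus the fact that $i_T$ preserves colimits, and then identifies the right adjoint with $R_T$ by exactly the chain of mapping-space equivalences that your ``compatibility with restriction'' amounts to, using Lemma~\ref{lemma:PgcSp_rightKanExtended}). The identification $Li(X)\simeq X\otimes\widetilde{\THH}(\S[t])$ via the shearing equivalence $\mathrm{ind}_n(\mathrm{res}_n X)\simeq X\otimes\Sigma^\infty_+(\T/C_n)$ is also the paper's argument.

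The gap is in the last step, and it is not merely an unchecked verification: the level-wise identification you propose, $R^f_{[n]}(i_{[n]}(X))\simeq \Omega\bigl(X\otimes\widetilde{\THH}(\S[t]/t^{n+1})\bigr)$ as cyclotomic spectra, is false. The left-hand side has underlying $\T$-spectrum $\bigoplus_{s\leq n}\Omega\bigl(X\otimes\Sigma^\infty_+(\T/C_s)\bigr)$, a finite sum of free cells, whereas $\widetilde{\THH}(\S[t]/t^{n+1})$ is not a finite wedge of orbits: its cyclic bar construction has nontrivial pieces in every word degree $i\geq 1$, built from $\T$-cells involving representation spheres (Hesselholt--Madsen). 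Already for $n=1$ one would be claiming $\mathrm{coind}_1(X)\simeq\Omega\bigl(X\otimes\widetilde{\THH}(\S[t]/t^{2})\bigr)$, i.e.\ $\widetilde{\THH}(\S[t]/t^{2})\simeq\Sigma^\infty_+\T$, which fails (reduced THH of the dual numbers is much larger). What is true is only a statement about the inverse limits: the limit over $n$ of $\Omega\bigl(X\otimes\widetilde{\THH}(\S[t]/t^{n})\bigr)$, formed in $\CycSp$ over the maps induced by $\S[t]/t^{n+1}\to\S[t]/t^{n}$, agrees with $\prod_{n\geq1}X\otimes\Sigma^{\infty-1}_+(\T/C_n)$, because in the limit the higher word-degree cells become arbitrarily connected and die, leaving the bottom cells with a shift. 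This is precisely the content of~\cite[Proposition 4.1.14]{McC21}, which the paper cites at this point; your argument would need either to invoke that result or to reproduce the Hesselholt--Madsen analysis of the cyclic bar construction of truncated polynomial rings and the behaviour of the transition maps, neither of which is present in the proposal. The rest of your comparison of pro-systems cannot be salvaged by a ``matching of Frobenii on summands'' because the summand decomposition you assume for $\widetilde{\THH}(\S[t]/t^{n+1})$ does not exist at finite level.
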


\begin{proof}
The functor $L_T : \PgcSp_T \to \CycSp$ is a left adjoint of the functor $i_T : \CycSp \to \PgcSp_T$ by part $(2)$ of Proposition~\ref{proposition:adjoints_cycsp_pgc_finite}. If $T = \N$, then we have equivalences of cyclotomic spectra
\[
	Li(X) \simeq \bigoplus_{n \geq 1} \mathrm{ind}_n(\mathrm{res}_n X) \simeq \bigoplus_{n \geq 1} X \otimes \Sigma^{\infty}_+(\T/C_n) \simeq X \otimes \widetilde{\THH}(\S[t]),
\]
using that $\mathrm{ind}_n(Y) \simeq Y \otimes \Sigma^{\infty}_+(\T/C_n)$ provided that $Y$ is a spectrum with $C_n$-action obtained as the restriction of a spectrum with $\T$-action. To obtain the desired right adjoint, note that the functor $i_T$ preserves small colimits since these are computed underlying. It follows that $i_T$ admits a right adjoint since both $\CycSp$ and $\PgcSp_T$ are presentable, and this right adjoint is given by $R^f_S$ if $S$ is finite (cf. Proposition~\ref{proposition:adjoints_cycsp_pgc_finite}). We proceed by exhibiting a natural equivalence
\[
	\Map_{\PgcSp_T}(i_T(X), Y) \simeq \Map_{\CycSp}(X, R_T(Y))
\]
for every cyclotomic spectrum $X$ and $T$-typical polygonic spectrum $Y$. We have that
\begin{align*}
	\Map_{\CycSp}(X, R_T(Y)) &\simeq \mathrm{lim}_{S \subseteq T} \Map_{\CycSp}(X, R^f_S(\mathrm{Res}^T_S Y)) \\
	&\simeq \mathrm{lim}_{S \subseteq T} \Map_{\PgcSp_S}(i_S(X), \mathrm{Res}^T_S Y) \\
	&\simeq \mathrm{lim}_{S \subseteq T} \Map_{\PgcSp_S}(\mathrm{Res}^T_S \, i_T(X), \mathrm{Res}^T_S Y) \\ 
	&\simeq \Map_{\PgcSp_T}(i_T(X), Y),
\end{align*}
where the last equivalence is an instance of Lemma~\ref{lemma:PgcSp_rightKanExtended}. This proves that the right adjoint of $i_T$ indeed is given by $R_T$. If $T = \N$, then there are equivalence of cyclotomic spectra
\[
	R i(X) \simeq \varprojlim_n \bigoplus_{i = 1}^n (X \otimes \Sigma^{\infty-1}_+(\T/C_i)) \simeq \prod_{n \geq 1} X \otimes \Sigma^{\infty-1}_+(\T/C_n) \simeq \varprojlim_n \Omega (X \otimes \widetilde{\THH}(\S[t]/t^n)),
\]
where the first limit is formed over the projection maps, which are maps of cyclotomic spectra. We have additionally used $\T$-equivariant Atiyah duality to identify $\mathrm{coind}_i(X) \simeq \Omega\mathrm{ind}_i(X)$, and the final equivalence is the content of~\cite[Proposition 4.1.14]{McC21}. This proves the desired statement. 
\end{proof}

\begin{remark} \label{remark:general_truncationset_formula}
In the setting of Theorem~\ref{theorem:adjoints_pgcsp_cycsp}, we also obtain expressions for $R_Ti_T(X)$ for an arbitrary truncation set $T$ in place of $T = \N$. There is an equivalence of cyclotomic spectra
\[
	L_T i_T(X) \simeq X \otimes \widetilde{\THH}_T(\S[t]),
\]
where $\widetilde{\THH}_T(\S[t]) = \bigoplus_{t \in T} \Sigma^{\infty}_+(\T/C_t)$. Furthermore, there is an equivalence of cyclotomic spectra
\[
	R_T i_T(X) \simeq \prod_{t \in T} X \otimes \Sigma^{\infty-1}_+(\T/C_t),
\]
where the right hand side is the product formed in the $\infty$-category of cyclotomic spectra.
\end{remark}

As a warning, we note that the forgetful functor of $\infty$-categories
\[
	\CycSp \to \Sp^{\B\T}
\]
does not preserve limits in general. The limits appearing in the identification of $R i(X)$ in the proof of Theorem~\ref{theorem:adjoints_pgcsp_cycsp} and Remark~\ref{remark:general_truncationset_formula} above are formed in the $\infty$-category $\CycSp$, and there is no a priori reason to expect that they are preserved by the functor $\CycSp \to \Sp^{\B\T}$. Nonetheless, we briefly discuss certain situations where these limits are formed underlying.

\begin{example} \label{example:cyclotomic_product}
Let $T$ be a truncation set and assume that $X$ is a $T$-typical polygonic spectrum which is uniformly bounded below. This means that there exists an integer $N$ such that each $X_t$ is $N$-connective. In this case, the canonical map of spectra with $\T$-action
\[
	\big( \prod_{t \in T} \mathrm{coind}_t(X_t) \big)^{\tate C_p} \to \prod_{t \in T} \mathrm{coind}_t(X_t)^{\tate C_p}
\]
is an equivalence by~\cite[Lemma 2.11]{AN20}, so the product
\[
	\prod_{t \in T} \mathrm{coind}_t(X_t)
\]
formed in $\Sp^{\B\T}$ carries a cyclotomic structure induced by the $\T$-equivariant map of spectra
\[
	\mathrm{coind}_t(X_t) \to \mathrm{coind}_{pt}(X_{pt})^{\tate C_p}
\]
obtained in Lemma~\ref{lemma:canonical_map_coinduction} for $t \in T/p$, and by $\mathrm{coind}_t(X_t) \to 0$ for $t \notin T/p$.
\end{example}

\begin{corollary}
Let $T$ denote an arbitrary truncation set. If $X$ is a $T$-typical polygonic spectrum which is uniformly bounded below, then there is a natural equivalence of cyclotomic spectra
\[
	\prod_{t \in T} \mathrm{coind}_t(X_t) \to R_T(X),
\]
where the cyclotomic structure on the product is defined in Example~\ref{example:cyclotomic_product}.
\end{corollary}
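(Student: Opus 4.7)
The plan is to compute the limit defining $R_T(X)$ directly, using the uniform boundedness hypothesis (via~\cite[Lemma~2.11]{AN20}, exactly as in Example~\ref{example:cyclotomic_product}) to show that this limit is formed underlyingly in $\Sp^{\B\T}$.

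First, I would unwind Definition~\ref{definition:R_T_infinite_truncation}: together with Construction~\ref{construction:right_adjoint_to_cycsp_pgcsp}, it gives an equivalence of cyclotomic spectra
\[
R_T(X) \simeq \lim_{S \subseteq T,\, S \in \FinTrunc}\, \prod_{s \in S} \mathrm{coind}_s(X_s),
\]
where the limit is in $\CycSp$ and the transition maps are the obvious projections (note that $\bigoplus_{s \in S}$ and $\prod_{s \in S}$ agree in the stable $\infty$-category $\CycSp$ for finite $S$). The indexing category is filtered by finite truncation sets exhausting $T$, so the diagram of underlying $\T$-spectra has limit $\prod_{t \in T}\mathrm{coind}_t(X_t)$ in $\Sp^{\B\T}$.

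Second, I would invoke the lax equalizer description $\CycSp = \mathrm{LEq}\bigl(\Sp^{\B\T} \rightrightarrows \prod_p \Sp^{\B\T}\bigr)$ from~\cite[Proposition~II.1.5]{NS18}, which implies that the forgetful functor $\CycSp \to \Sp^{\B\T}$ preserves a given limit precisely when every functor $(-)^{\tate C_p}$ commutes with that limit. Under our uniform boundedness hypothesis, the collection $\{\mathrm{coind}_t(X_t)\}_{t \in T}$ is itself uniformly bounded below, since the proof of Lemma~\ref{lemma:canonical_map_coinduction} identifies $\mathrm{coind}_t$ with a finite homotopy limit $(-)^{\h\Z}$ shifting connectivity by at most one. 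Hence~\cite[Lemma~2.11]{AN20} applies and yields the Tate--product equivalence $\bigl(\prod_{t \in T}\mathrm{coind}_t(X_t)\bigr)^{\tate C_p} \simeq \prod_{t \in T}\mathrm{coind}_t(X_t)^{\tate C_p}$ for every prime $p$, so the limit above is formed underlyingly in $\Sp^{\B\T}$.

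Finally, I would check by direct inspection that the cyclotomic structure inherited from the limit agrees with the one described in Example~\ref{example:cyclotomic_product}. The Frobenius on each finite stage $\prod_{s \in S}\mathrm{coind}_s(X_s)$ is, by Construction~\ref{construction:right_adjoint_to_cycsp_pgcsp}, built componentwise from $\mathrm{coind}_s(\varphi_{p,s})$ and the canonical equivalence of Lemma~\ref{lemma:canonical_map_coinduction}; these are compatible with the projection transition maps and assemble, under the Tate--product equivalence just established, into precisely the Frobenius of Example~\ref{example:cyclotomic_product}. The main obstacle is exactly this last bookkeeping step---tracking the Frobenius through the limit/product identification---since all the analytic input has been extracted into the single appeal to~\cite[Lemma~2.11]{AN20} via uniform boundedness.
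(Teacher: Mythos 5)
Your proposal is correct and follows essentially the same route as the paper: both identify $R_T(X)$ as the limit over finite $S \subseteq T$ of $\bigoplus_{s \in S}\mathrm{coind}_s(X_s)$ with projection transition maps, and both use uniform boundedness via~\cite[Lemma 2.11]{AN20} (i.e.\ Example~\ref{example:cyclotomic_product}) to see that this limit is computed in $\Sp^{\B\T}$, where it is visibly $\prod_{t \in T}\mathrm{coind}_t(X_t)$. The only cosmetic difference is that the paper sidesteps your final bookkeeping step by first producing the comparison map $\Phi\colon \prod_{t}\mathrm{coind}_t(X_t) \to R_T(X)$ in $\CycSp$ from the universal property (the projections to each finite stage are already cyclotomic maps for the structure of Example~\ref{example:cyclotomic_product}) and then checking it is an equivalence on underlying $\T$-spectra, rather than transporting the cyclotomic structure through the limit identification.
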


\begin{proof}
If $S \subseteq T$ is an inclusion of a finite truncation set, then the projection
\[
	\prod_{t \in T} \mathrm{coind}_t(X_t) \to R^f_S(\mathrm{Res}^T_S X)
\]
determines a map of cyclotomic spectra, where the left hand side is equipped with the cyclotomic structure of Example~\ref{example:cyclotomic_product}. We obtain a canonical map of cyclotomic spectra
\[
	\Phi : \prod_{t \in T} \mathrm{coind}_t(X_t) \to R_T(X)
\]
which we prove is an equivalence. It suffices to show that $\Phi$ is an equivalence of spectra with $\T$-action since the canonical functor $\CycSp \to \Sp^{\B\T}$ preserves these limits since $X$ is assumed to be uniformly bounded below. In this case, the inclusion
\[
	R^f_S(X) \hookrightarrow \prod_{t \in T} \mathrm{coind}_t(X_t)
\]
determines a map of spectra with $\T$-action and these assemble into a map $R_T(X) \to \prod_{t \in T} \mathrm{coind}_t(X_t)$ which provides an inverse to $\Phi$ as a map of spectra with $\T$-action.
\end{proof}

We will end this section by recording an example which will be important in~\textsection\ref{subsection:qfgenZ_TR}.

\begin{example} \label{example:polygonic_shift}
Let $\mathrm{sh}_k X$ denote the polygonic spectrum defined by
\[
	(\mathrm{sh}_k X)_n = \mathrm{res}^{kn}_{n}(X_{kn}),
\]
for every polygonic spectrum $X$, where $\mathrm{res}^{kn}_{n} : \Sp^{\B C_{kn}} \to \Sp^{\B C_n}$ denotes the restriction functor. The construction $X \mapsto \mathrm{sh}_k X$ refines to a functor of $\infty$-categories $\mathrm{sh}_k : \PgcSp \to \PgcSp$. Let $\ell X$ denote the polygonic spectrum with $(\ell_k X)_{kn} = \mathrm{ind}_n^{kn}(X_n)$ and $(\ell_k X)_n = 0$ if $k$ does not divide $n$. The resulting functor $\ell_k : \PgcSp \to \PgcSp$ is a left adjoint of $\mathrm{sh}_k$. Indeed, we find that 
\[
	\Map_{\PgcSp}(\ell_k X, Y) \simeq \Map_{\PgcSp}(X, \mathrm{sh}_k Y)
\]
by Proposition~\ref{proposition:mapping_pgcsp} combined with the fact that $\mathrm{ind}_n^{kn}$ is a left adjoint of $\mathrm{res}^{kn}_n$ for all $k$ and $n$. Note that if $M$ is a bimodule over an $\E_1$-ring $R$, then there is an equivalence of polygonic spectra
\[
	\mathrm{sh}_k \THH(R, M) \simeq \THH(R, M^{\otimes_R k}).
\]
The shift functor $\mathrm{sh}_k(-)$ also admits a right adjoint obtained by replacing the induction functors above by coinduction.
\end{example}

\begin{remark}
In Construction~\ref{construction:Naction_qfgen_pgcsp}, we construct an action of $\N$ on $\PgcSp$ by the construction $n \mapsto \mathrm{sh}_n$. The $\infty$-category $\PgcSp^{\h\N}$ is equivalent to the so-called $\infty$-category of $\Q/\Z$-cyclotomic spectra. 
\end{remark}


\section{Polygonic TR} \label{section:polygonicTR}
Using the formalism developed in~\textsection\ref{section:pgcsp}, we define TR as a functor on the $\infty$-category of polygonic spectra by a corepresentability construction as in~\cite{BM16,McC21}. In~\textsection\ref{subsection:polygonic_t_structure}, we introduce a $t$-structure on the $\infty$-category of polygonic spectra as an analogue of the cyclotomic $t$-structure on the $\infty$-category of cyclotomic spectra studied by Antieau and the third author in~\cite{AN20}. 

\subsection{The definition of polygonic TR} \label{subsection:def_polygonicTR}
We define $T$-typical polygonic TR as follows:

\begin{definition} \label{definition:polygonicTR}
Let $T$ be a truncation set. If $X$ is a $T$-typical polygonic spectrum, then
\[
	\TR_T(X) = \map_{\PgcSp_T}(\S_T, X).
\]
The construction $X \mapsto \TR_T(X)$ determines a functor of $\infty$-categories $\TR_T : \PgcSp_T \to \Sp$. If $T = \N$ then we write $\TR(X) := \TR_T(X)$.
\end{definition}

\begin{remark}
According to Definition~\ref{definition:polygonicTR}, polygonic TR is corepresented by the sphere spectrum on the $\infty$-category $\PgcSp$ of polygonic spectra. This is similar to the result which asserts that TC is corepresentable by the sphere spectrum on the $\infty$-category of cyclotomic spectra (cf.~\cite{BM16,NS18}). However, topological cyclic homology can not be defined on the $\infty$-category of polygonic spectra, thus making TR the `correct' construction to consider in our context. 
\end{remark}

\begin{remark}
In~\cite{LM12}, Lindenstrauss and McCarthy introduce a version of TR with coefficients. Concretely, they define TR of a functor $F$ with smash product with coefficients in an $F$-bimodule. Their construction also makes use of the polygonic structure on topological Hochschild homology with coefficients though formulated in a different formalism. We do not compare our construction of TR with the construction obtained by Lindenstrauss--McCarthy in~\cite{LM12}. We certainly expect that the two constructions agree as indicated by comparing isotropy separation sequences and using the Tate-orbit lemma. The main technical obstacle is providing a framework which at the same time encodes the point-set construction of~\cite{LM12} using functors with smash products and our construction on an equal footing.
\end{remark}

Before we continue, we explore a few examples to get a feeling for the definition.

\begin{example} \label{example:polygonic_concentrated_degree_1}
If $S = \langle 1 \rangle$, then $\PgcSp_{\langle 1 \rangle} \simeq \Sp$ as in Example~\ref{example:trivialtruncationset}, so $\TR_{\langle 1 \rangle}(X) \simeq X$.
\end{example}

\begin{example}
Let $X$ be a cyclotomic spectrum whose underlying spectrum is bounded below considered as a constant $\langle p \rangle$-typical polygonic spectrum for an arbitrary prime number $p$. Using the equalizer formula obtained in Proposition~\ref{proposition:mapping_pgcsp}, we find that
\[
	\TR_{\langle p \rangle}(X) \simeq \mathrm{Eq} \big(
	\begin{tikzcd}[column sep = 1.6em]
		X \times X^{\h C_p} \arrow[yshift=0.7ex]{r} \arrow[yshift=-0.7ex]{r} & X^{\tate C_p}
	\end{tikzcd}
	\big)
\]
where the maps are determined by $X \times X^{\h C_p} \to X \xrightarrow{\varphi} X^{\tate C_p}$ and $X \times X^{\h C_p} \to X^{\h C_p} \xrightarrow{\can} X^{\tate C_p}$, respectively. It follows that $\TR_{\langle p \rangle}(X)$ can be expressed as the following pullback of spectra
\[
\begin{tikzcd}
	\TR_{\langle p \rangle}(X) \arrow{r} \arrow{d} & X^{\h C_p} \arrow{d}{\can} \\
	X \arrow{r}{\varphi} & X^{\tate C_p}
\end{tikzcd}
\]
which implies that $\TR_{\langle p \rangle}(X) \simeq X^{C_p}$ by~\cite{NS18}. 
\end{example}

\begin{remark}
For every truncation set $T$, the functor $\TR_T : \PgcSp_T \to \Sp$ canonically refines to a lax symmetric monoidal functor since it is corepresented by the unit.
\end{remark}

More generally, we obtain an explicit equalizer formula for $T$-typical polygonic TR.

\begin{proposition}
Let $T$ be a truncation set. If $X$ is a $T$-typical polygonic spectrum, then
\[
	\TR_T(X) \simeq \mathrm{Eq} \big(
	\begin{tikzcd}[column sep = 1.6em]
		\displaystyle\prod_{t \in T} X_t^{\h C_t} \arrow[yshift=0.7ex]{r} \arrow[yshift=-0.7ex]{r} & \displaystyle\prod_{p}\prod_{t \in T/p} (X_{pt}^{\tate C_p})^{\h C_t}
	\end{tikzcd}
	\big)
\]
where the maps are $X_{pt}^{\h C_{pt}} \simeq (X_{pt}^{\h C_p})^{\h C_t} \xrightarrow{\can^{\h C_t}} (X_{pt}^{\tate C_p})^{\h C_t}$ and $X_t^{\h C_t} \xrightarrow{\varphi_{p,t}^{\h C_t}} (X_{pt}^{\tate C_p})^{\h C_t}$ for $pt \in T$.
\end{proposition}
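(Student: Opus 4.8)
The approach is to specialize the mapping‑spectrum formula of Proposition~\ref{proposition:mapping_pgcsp} to the unit object. Since $\TR_T(X) = \map_{\PgcSp_T}(\S_T, X)$ and $\PgcSp_T$ is stable with all the functors in its lax‑equalizer presentation exact, the equalizer description of mapping spaces in Proposition~\ref{proposition:mapping_pgcsp}(2) refines to an equivalence of spectra
\[
\map_{\PgcSp_T}(\S_T, X) \;\simeq\; \mathrm{Eq}\Big(\textstyle\prod_{t\in T}\map_{\Sp^{\B C_t}}((\S_T)_t, X_t) \rightrightarrows \prod_p\prod_{t\in T/p}\map_{\Sp^{\B C_t}}((\S_T)_t, X_{pt}^{\tate C_p})\Big).
\]
(Alternatively, one may observe that $\S_T \simeq \mathrm{Res}_T\S$ and that $\mathrm{Res}_T$ is left adjoint to the fully faithful inclusion $\PgcSp_T \hookrightarrow \PgcSp$, so that $\TR_T(X) \simeq \map_{\PgcSp}(\S, X) = \TR(X)$ with $X$ regarded as a polygonic spectrum vanishing outside $T$, and then invoke the equalizer description of Definition~\ref{definition:definition_of_TR} directly; the factors indexed by $n \notin T$ in that equalizer are zero and drop out, and since $T$ is a truncation set, $pn \in T$ already forces $n \in T$, so the remaining indices are exactly $t \in T$ and $(p,t)$ with $t \in T/p$.)

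With the first route, I would next simplify the terms: as $(\S_T)_t = \S$ with trivial $C_t$‑action for $t \in T$, we have $\map_{\Sp^{\B C_t}}(\S, X_t) \simeq X_t^{\h C_t}$ and $\map_{\Sp^{\B C_t}}(\S, X_{pt}^{\tate C_p}) \simeq (X_{pt}^{\tate C_p})^{\h C_t}$, which produces the displayed equalizer diagram. It then remains to identify the two parallel maps using their descriptions in Proposition~\ref{proposition:mapping_pgcsp}. One of them projects onto the $t$‑th factor and post‑composes with the polygonic Frobenius $\varphi_{p,t} : X_t \to X_{pt}^{\tate C_p}$; after passing to $C_t$‑homotopy fixed points this is visibly $\varphi_{p,t}^{\h C_t}$.

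The main point — and the only genuinely non‑formal step — is to identify the other map with the canonical map $X_{pt}^{\h C_{pt}} \simeq (X_{pt}^{\h C_p})^{\h C_t} \xrightarrow{\can^{\h C_t}} (X_{pt}^{\tate C_p})^{\h C_t}$, where $\can : X_{pt}^{\h C_p} \to X_{pt}^{\tate C_p}$ is the $C_{pt}/C_p \simeq C_t$‑equivariant canonical map. According to Proposition~\ref{proposition:mapping_pgcsp} this second map is the composite that projects onto the $pt$‑th factor $\map_{\Sp^{\B C_{pt}}}(\S, X_{pt}) = X_{pt}^{\h C_{pt}}$, applies the enriched functoriality of $(-)^{\tate C_p} : \Sp^{\B C_{pt}} \to \Sp^{\B C_t}$, and precomposes with the Frobenius of the unit $\S_T$, i.e.\ with the restriction along $\B C_t \to \B\T$ of the trivial‑cyclotomic Frobenius $\S \to \S^{\tate C_p}$. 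Unwinding the identification $(-)^{\h C_{pt}} \simeq ((-)^{\h C_p})^{\h C_t}$ and the definition of the trivial cyclotomic structure on $\S$, this reduces to the assertion that for a spectrum $W$ with $C_{pt}$‑action the enriched‑functoriality map $W^{\h C_{pt}} \to \map_{\Sp^{\B C_t}}(\S^{\tate C_p}, W^{\tate C_p})$ followed by restriction along $\S \to \S^{\tate C_p}$ agrees with $\can^{\h C_t}$; this is precisely the bookkeeping carried out in~\cite[\textsection II.1, \textsection II.4]{NS18} for the equalizer formula of $\TC$, specialized from $\T$ to the finite cyclic groups, and it is already packaged into Definition~\ref{definition:definition_of_TR}. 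Everything else is formal manipulation of lax equalizers, so I expect this identification to be the only place where any real checking is required.
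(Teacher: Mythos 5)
Your proposal is correct and follows essentially the same route as the paper, whose proof is the one-liner ``apply Proposition~\ref{proposition:mapping_pgcsp} and use $\map_{\Sp^{\B C_t}}(\S, Y) \simeq Y^{\h C_t}$''; your extra unwinding of the second map via the lax structure of $(-)^{\tate C_p}$ and the trivial cyclotomic structure on $\S$ is exactly the bookkeeping the paper implicitly delegates to \cite[\textsection II.1]{NS18}.
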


\begin{proof}
This follows from Proposition~\ref{proposition:mapping_pgcsp} using that $\map_{\Sp^{\B C_t}}(\S, Y) \simeq Y^{\h C_t}$ for every $Y \in \Sp^{\B C_t}$.
\end{proof}

We compare our construction of TR with the corepresentability construction obtained in~\cite{McC21}. It follows from~\cite[Theorem 3.3.12]{McC21} that our construction of TR agrees with the classical construction of TR of a cyclotomic spectrum whose underlying spectrum is bounded below. The full strength of the comparison obtained in~\cite{McC21} is that it is a comparison of spectra with Frobenius lifts. 

\begin{proposition} \label{proposition:comparison_mcc}
If $X$ is a cyclotomic spectrum, then there is a natural equivalence of spectra
\[
	\TR(X) \simeq \map_{\CycSp}(\widetilde{\THH}(\S[t]), X),
\]
where the right hand side of the equivalence above is the construction of $\TR(X)$ from~\cite{McC21}. 
\end{proposition}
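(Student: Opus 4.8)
The plan is to deduce this immediately from the adjunction $L \dashv i$ established in Theorem~\ref{theorem:adjoints_pgcsp_cycsp}. By Definition~\ref{definition:polygonicTR} we have $\TR(X) = \map_{\PgcSp}(\S, i(X))$, where the source is the constant polygonic spectrum on the sphere; since this constant polygonic sphere is precisely the image under $i$ of the cyclotomic unit, we may rewrite it as $\map_{\PgcSp}(i(\S), i(X))$. Both $\PgcSp$ and $\CycSp$ are stable presentable $\infty$-categories (Proposition~\ref{proposition:mapping_pgcsp}) and $L \dashv i$ is an adjunction of stable $\infty$-categories, so it promotes to a natural equivalence of mapping spectra
\[
	\map_{\PgcSp}(i(\S), i(X)) \simeq \map_{\CycSp}(Li(\S), X).
\]

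Next I would identify $Li(\S)$ with $\widetilde{\THH}(\S[t])$. Applying the formula of Theorem~\ref{theorem:adjoints_pgcsp_cycsp} for $T = \N$ with the cyclotomic spectrum $\S$, which is the unit of the symmetric monoidal structure on $\CycSp$, gives $Li(\S) \simeq \S \otimes \widetilde{\THH}(\S[t]) \simeq \widetilde{\THH}(\S[t])$. Combining this with the previous step yields the desired natural equivalence $\TR(X) \simeq \map_{\CycSp}(\widetilde{\THH}(\S[t]), X)$. To finish, I would match this with the construction of $\TR$ in~\cite{McC21}, which is defined (following Blumberg--Mandell~\cite{BM16}) precisely as the mapping spectrum in $\CycSp$ out of $\widetilde{\THH}(\S[t])$; here one uses the identification $\widetilde{\THH}(\S[t]) \simeq \bigoplus_{n \geq 1} \Sigma^\infty_+(\T/C_n)$ of Notation~\ref{notation:reducedTHH_inverselimit} to see that the two models of $\widetilde{\THH}(\S[t])$ agree.

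The content of the proposition is entirely carried by Theorem~\ref{theorem:adjoints_pgcsp_cycsp}, so there is no genuinely hard input; the one point I would be careful about is the passage from mapping spaces to mapping spectra. The adjunction $L \dashv i$ and the formula for $Li$ were obtained at the level of $\Map$-spaces (via the lax-equalizer description and Proposition~\ref{proposition:adjoints_cycsp_pgc_finite}), but since all functors in sight are exact between stable $\infty$-categories, the natural equivalence of spaces automatically refines to the spectral enrichment, and hence to the spectrum-valued $\map$ appearing in Definition~\ref{definition:polygonicTR}. Thus the main (and only) obstacle is bookkeeping: checking that the polygonic sphere really is $i(\S)$ and that the resulting identification is natural in $X$, both of which are formal.
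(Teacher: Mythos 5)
Your proposal is correct and follows essentially the same route as the paper's proof: both deduce the statement directly from Theorem~\ref{theorem:adjoints_pgcsp_cycsp}, identifying $Li(\S) \simeq \S \otimes \widetilde{\THH}(\S[t]) \simeq \widetilde{\THH}(\S[t])$ and then using the adjunction $L \dashv i$ to rewrite $\map_{\PgcSp}(\S, i(X)) \simeq \map_{\CycSp}(\widetilde{\THH}(\S[t]), X)$. Your extra remark on upgrading the space-level adjunction to mapping spectra is a reasonable point of care, and it is indeed formal for an adjunction of stable presentable $\infty$-categories.
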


\begin{proof}
There is an equivalence of cyclotomic spectra $L\S \simeq \widetilde{\THH}(\S[t])$ by virtue of Theorem~\ref{theorem:adjoints_pgcsp_cycsp}, which immediately implies that $\TR(X) \simeq \map_{\PgcSp}(\S, i(X)) \simeq \map_{\CycSp}(\widetilde{\THH}(\S[t]), X)$.
\end{proof}

We construct the polygonic TR tower on the $\infty$-category of polygonic spectra.

\begin{construction} \label{construction:polygonicTRtower}
Recall that the construction $T \mapsto \PgcSp_T$ refines to a functor of $\infty$-categories
\[
	\PgcSp_{(-)} : \Trunc^{\op} \to \mathrm{Pr}^{\mathrm{L}}
\]
where an inclusion $T \subseteq T'$ is carried to the functor $\mathrm{Res}^{T'}_{T} : \PgcSp_{T'} \to \PgcSp_T$ (cf. Construction~\ref{construction:coherent_restriction}). As a consequence, we obtain a functor of $\infty$-categories
\[
	\Trunc^{\op} \to \Fun(\PgcSp, \Sp)
\]
determined by $T \mapsto \map_{\PgcSp}(\S, \mathrm{Res}_T(-))$. For a polygonic spectrum $X$, we refer to the functor 
\[
	\TR_{(-)}(X) : \Trunc^{\op} \to \Sp
\]
given by the construction $T \mapsto \map_{\PgcSp}(\S, \mathrm{Res}_T(X)) \simeq \TR_T(X)$ as the polygonic TR tower. It will be convenient to work with the following version of the polygonic TR tower
\[
	\TR_{\langle \star \rangle}(X) : (\N, \mathrm{div})^{\op} \to \Sp
\]
obtained by precomposition with the functor $(\N, \mathrm{div}) \to \Trunc$ given by $n \mapsto \langle n \rangle$. In this case, if $(m, n)$ is a pair of positive integers with $n$ dividing $m$, then we refer to the map of spectra
\[
	R_{\nicefrac{m}{n}}^{\hex} : \TR_{\langle m \rangle}(X) \to \TR_{\langle n \rangle}(X)
\]
as the polygonic restriction map.
\end{construction}

We prove that polygonic TR is the limit of the polygonic TR tower.

\begin{lemma}
If $X$ is a polygonic spectrum, then there is an equivalence of spectra
\[
	\TR(X) \simeq \mathrm{lim}_{n \in (\N, \mathrm{div})^{\op}} \TR_{\langle n \rangle}(X).
\]
\end{lemma}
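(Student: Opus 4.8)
The plan is to deduce this from the description of $\PgcSp$ as a limit of the $\infty$-categories $\PgcSp_{\langle n\rangle}$ in Remark~\ref{remark:limit_divisibility_kanextended}, together with the formal fact that mapping spectra in a limit of stable $\infty$-categories are computed as the limit of the mapping spectra in the pieces.

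First I would record that for each $n$ the restriction functor $\mathrm{Res}_{\langle n\rangle}\colon \PgcSp \to \PgcSp_{\langle n\rangle}$ is left adjoint to the fully faithful inclusion $\iota_n\colon \PgcSp_{\langle n\rangle}\hookrightarrow \PgcSp$ (Construction~\ref{construction:coherent_restriction}), so that $P_n := \iota_n\mathrm{Res}_{\langle n\rangle}$ is a localization endofunctor of $\PgcSp$ with unit $\mathrm{id}\to P_n$; on components, $X \to P_n X$ is the identity on $X_k$ for $k\mid n$ and the zero map for $k\nmid n$. These localizations are compatible with divisibility, so they assemble into a tower $n \mapsto P_nX$ indexed by $(\N,\mathrm{div})^{\op}$, together with a canonical map $X \to \lim_n P_n X$. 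The key step is to check that this map is an equivalence in $\PgcSp$. For this it is enough to test against an arbitrary object $Y\in\PgcSp$: the adjunction $\mathrm{Res}_{\langle n\rangle}\dashv\iota_n$ gives $\map_{\PgcSp}(Y, P_nX)\simeq\map_{\PgcSp_{\langle n\rangle}}(\mathrm{Res}_{\langle n\rangle}Y,\mathrm{Res}_{\langle n\rangle}X)$ naturally in $n$, while the equivalence $\PgcSp\simeq\lim_n\PgcSp_{\langle n\rangle}$ of Remark~\ref{remark:limit_divisibility_kanextended}, under which $Y$ and $X$ are sent to the compatible families $(\mathrm{Res}_{\langle n\rangle}Y)_n$ and $(\mathrm{Res}_{\langle n\rangle}X)_n$, identifies $\map_{\PgcSp}(Y,X)$ with $\lim_n\map_{\PgcSp_{\langle n\rangle}}(\mathrm{Res}_{\langle n\rangle}Y,\mathrm{Res}_{\langle n\rangle}X)$. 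Unwinding the units of these adjunctions shows that the comparison map $\map_{\PgcSp}(Y,X)\to\lim_n\map_{\PgcSp}(Y,P_nX)$ is precisely the above equivalence, so $X\to\lim_n P_nX$ is an equivalence by the Yoneda lemma.

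It then remains to apply the mapping-spectrum functor $\map_{\PgcSp}(\S,-)\colon\PgcSp\to\Sp$, which preserves limits since $\PgcSp$ is stable. This gives $\TR(X)=\map_{\PgcSp}(\S,X)\simeq\lim_n\map_{\PgcSp}(\S,P_nX)$, and since $\mathrm{Res}_{\langle n\rangle}\S\simeq\S_{\langle n\rangle}$ the adjunction $\mathrm{Res}_{\langle n\rangle}\dashv\iota_n$ identifies $\map_{\PgcSp}(\S,P_nX)\simeq\map_{\PgcSp_{\langle n\rangle}}(\S_{\langle n\rangle},\mathrm{Res}_{\langle n\rangle}X)\simeq\TR_{\langle n\rangle}(X)$, which is exactly the value of the polygonic TR tower recorded in Construction~\ref{construction:polygonicTRtower}. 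Assembling these equivalences — and checking that the resulting map $\TR(X)\to\lim_n\TR_{\langle n\rangle}(X)$ is the limit of the polygonic restriction maps $R^{\hex}_{\nicefrac{m}{n}}$, which once more is just bookkeeping with the units of $\mathrm{Res}_{\langle n\rangle}\dashv\iota_n$ — yields the claim. The only delicate point is precisely this identification of comparison maps; there is no homotopy-theoretic obstacle here, because every limit in sight is a limit of $\infty$-categories or of mapping spectra, rather than of underlying spectra with $C_n$-actions, where Tate constructions would obstruct commuting with infinite products.
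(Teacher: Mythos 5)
Your proof is correct and follows essentially the same route as the paper: both rest on the decomposition $\PgcSp \simeq \mathrm{lim}_{n \in (\N, \mathrm{div})^{\op}} \PgcSp_{\langle n \rangle}$ of Remark~\ref{remark:limit_divisibility_kanextended} together with the formal fact that mapping spectra in a limit of $\infty$-categories are computed as the limit of the mapping spectra in the pieces. Your detour through the internal equivalence $X \simeq \mathrm{lim}_n \iota_n \mathrm{Res}_{\langle n \rangle} X$ before applying $\map_{\PgcSp}(\S,-)$ is only a repackaging of the paper's one-line argument, which applies that same fact directly to $\map_{\PgcSp}(\S,X)$.
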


\begin{proof}
By Remark~\ref{remark:limit_divisibility_kanextended}, there is an equivalence of $\infty$-categories
\[
	\PgcSp \simeq \mathrm{lim}_{n \in (\N, \mathrm{div})^{\op}} \PgcSp_{\langle n \rangle}
\]
which yields that $\TR(X) \simeq \mathrm{lim}_{n \in (\N, \mathrm{div})^{\op}} \map_{\PgcSp_{\langle n \rangle}}(\S, X) \simeq \mathrm{lim}_{n \in (\N, \mathrm{div})^{\op}} \TR_{\langle n \rangle}(X)$. 
\end{proof}

\begin{remark}
More generally, there is an equivalence of spectra
\[
	\TR(X) \simeq \mathrm{lim}_{S \in \FinTrunc^{\op}} \TR_S(X).
\]
for every polygonic spectrum $X$ by using Lemma~\ref{lemma:PgcSp_rightKanExtended} in place of Remark~\ref{remark:limit_divisibility_kanextended}.  
\end{remark}

Using the explicit formula for the right adjoint of the canonical functor $i_T : \CycSp \to \PgcSp_T$, we obtain the following refinement of~\cite[Theorem 4.2.3]{McC21} for an arbitrary truncation set. As before, we stress that the product in the result below is formed in the $\infty$-category of cyclotomic spectra, and that $\TC = \map_{\CycSp}(\S, -)$ by~\cite{NS18}. 

\begin{theorem} \label{theorem:curves_arbitrary_truncationset}
Let $T$ denote a truncation set. There is a natural equivalence of spectra
\[
	\TR_T(X) \simeq \Omega\TC\Big( \prod_{t \in T} X \otimes \Sigma^\infty_+(\T/C_t) \Big).
\]
for every cyclotomic spectrum $X$. If $T = \N$, then there is a natural equivalence of spectra
\[
	\TR(X) \simeq \varprojlim \Omega \TC(X \otimes \widetilde{\THH}(\S[t]/t^n)).
\]
\end{theorem}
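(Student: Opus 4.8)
The plan is to deduce the statement formally from Theorem~\ref{theorem:adjoints_pgcsp_cycsp}; in fact the case $T = \N$ was already sketched in the introduction, so the task is to supply the details and handle general $T$. Unwinding Definition~\ref{definition:polygonicTR}, the symbol $\TR_T(X)$ for a cyclotomic spectrum $X$ denotes $\TR_T(i_T(X)) = \map_{\PgcSp_T}(\S_T, i_T(X))$. The monoidal unit $\S_T$ of $\PgcSp_T$ coincides with $i_T(\S)$, since both are the $T$-typical polygonic spectrum whose value at $t \in T$ is $\S$ with trivial $C_t$-action and whose polygonic structure maps are the evident ones; thus $\TR_T(X) \simeq \map_{\PgcSp_T}(i_T(\S), i_T(X))$. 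First I would apply the adjunction $i_T \dashv R_T$ supplied by Theorem~\ref{theorem:adjoints_pgcsp_cycsp}, together with the corepresentability $\TC = \map_{\CycSp}(\S, -)$ of~\cite{NS18}, to obtain a natural equivalence
\[
	\TR_T(X) \simeq \map_{\CycSp}(\S, R_T i_T(X)) \simeq \TC(R_T i_T(X)).
\]

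Next I would substitute the explicit formula $R_T i_T(X) \simeq \prod_{t \in T} X \otimes \Sigma^{\infty-1}_+(\T/C_t)$ of Remark~\ref{remark:general_truncationset_formula}, where the product is taken in $\CycSp$. Since $\TC$ is corepresented it preserves limits, so it carries this product to $\prod_{t \in T} \TC(X \otimes \Sigma^{\infty-1}_+(\T/C_t))$ in spectra. Tensoring with $X$ and the functor $\TC$ are both exact, whence $\TC(X \otimes \Sigma^{\infty-1}_+(\T/C_t)) \simeq \Omega\TC(X \otimes \Sigma^\infty_+(\T/C_t))$; pulling the loop functor out of the product (it is a right adjoint) and then back inside $\TC$ (limit-preservation again) gives
\[
	\TR_T(X) \simeq \prod_{t \in T} \Omega\TC(X \otimes \Sigma^\infty_+(\T/C_t)) \simeq \Omega\TC\Big( \prod_{t \in T} X \otimes \Sigma^\infty_+(\T/C_t) \Big),
\]
which is the first assertion. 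Naturality in $X$ is automatic, as every step is either an adjunction equivalence or a pointwise natural transformation between exact functors.

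For $T = \N$ the same opening yields $\TR(X) \simeq \TC(Ri(X))$, and the computation performed in the proof of Theorem~\ref{theorem:adjoints_pgcsp_cycsp} identifies $Ri(X) \simeq \varprojlim_n \Omega\big(X \otimes \widetilde{\THH}(\S[t]/t^n)\big)$ as a limit in $\CycSp$, taken over the maps induced by $\S[t]/t^{n+1} \to \S[t]/t^n$. Applying $\TC$ and using once more that it commutes with this limit and with $\Omega$ produces $\TR(X) \simeq \varprojlim_n \Omega\TC\big(X \otimes \widetilde{\THH}(\S[t]/t^n)\big)$, as claimed. I do not expect a genuine obstacle here: all of the substance is contained in Theorem~\ref{theorem:adjoints_pgcsp_cycsp}, and the only points needing (routine) care are the compatibility of the identification $\S_T \simeq i_T(\S)$ with the polygonic Frobenius maps and the legitimacy of interchanging $\Omega$, infinite products, and $\TC$, all of which follow from exactness and limit-preservation.
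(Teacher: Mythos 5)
Your proposal is correct and follows essentially the same route as the paper: identify $\TR_T(X)$ with $\map_{\PgcSp_T}(i_T(\S), i_T(X))$, pass through the adjunction $i_T \dashv R_T$ of Theorem~\ref{theorem:adjoints_pgcsp_cycsp}, substitute the formula of Remark~\ref{remark:general_truncationset_formula}, and use corepresentability of $\TC$ (hence its commutation with limits and $\Omega$), with the $T=\N$ case coming from the last part of that theorem. The extra bookkeeping you include (the identification $\S_T \simeq i_T(\S)$ and the interchange of $\Omega$, products, and $\TC$) is exactly what the paper leaves implicit, and is handled correctly.
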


\begin{proof}
First note that there is an equivalence of cyclotomic spectra
\[
	R_T i_T(X) \simeq \prod_{t \in T} X \otimes \Sigma^{\infty-1}_+(\T/C_t)
\]
by the second part of Remark~\ref{remark:general_truncationset_formula}, so we conclude that
\begin{align*}
	\TR_T(X) &\simeq \map_{\PgcSp_T}(i_T(\S), i_T(X)) \\
	&\simeq \map_{\CycSp}(\S, R_T i_T(X)) \\
	&\simeq \Omega \TC \Big(\prod_{t \in T} X \otimes \Sigma^\infty_+(\T/C_t)\Big)
\end{align*}
using Theorem~\ref{theorem:adjoints_pgcsp_cycsp} and that $\TC$ is corepresented by the sphere spectrum $\S$ on the $\infty$-category of cyclotomic spectra. The final assertion is a consequence of the last part of Theorem~\ref{theorem:adjoints_pgcsp_cycsp}.
\end{proof}

We calculate TR of a polygonic spectrum which is concentrated in a single degree and bounded below. This calculation will play a crucial role in~\textsection\ref{subsection:qfgenZ_TR}. As an input for the calculation, we record the following profinite variant of~\cite[Lemma II.4.1 \& Lemma II.4.2]{NS18}.

\begin{lemma} \label{lemma:version_tate_orbit_lemma}
Let $X$ denote a spectrum with $C_n$-action for some integer $n \geq 2$. If $X$ is bounded below, then the canonical map of spectra
\[
	X^{\tate C_n} \to \prod_{p | n} (X^{\tate C_p})^{\h C_{\nicefrac{n}{p}}}
\]
is an equivalence, where the product is indexed over the set of prime numbers dividing $n$.
\end{lemma}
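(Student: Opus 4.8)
The statement is a profinite/integral generalization of the Tate orbit lemma of~\cite{NS18}, and the plan is to reduce it to the known $p$-typical case by an induction on the number of prime factors of $n$. First I would recall the two cornerstone inputs: the Tate orbit lemma \cite[Lemma II.4.1]{NS18}, which says that for a bounded below spectrum $Y$ with $C_{p^2}$-action (or more generally $C_{pq}$) the map $Y^{\tate C_{p^2}} \to (Y^{\tate C_p})^{\tate C_p}$ vanishes, together with its companion \cite[Lemma II.4.2]{NS18} identifying $Y^{\tate C_{pq}} \simeq (Y^{\tate C_p})^{\h C_q}$ when $Y$ is bounded below and $p,q$ are distinct primes (using that $(Y^{\tate C_p})^{\tate C_q} \simeq 0$ by the Tate orbit lemma applied in the other order, so $(Y^{\tate C_p})^{\h C_q} \simeq (Y^{\tate C_p})^{\tate C_q}$... more precisely $(-)^{\h C_q}\simeq (-)^{\tate C_q}$ after the relevant vanishing). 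The key point is that these statements are stable under taking homotopy fixed points by a group acting trivially on the Tate terms, since homotopy fixed points is a limit and preserves the relevant bounded-below hypotheses via the homotopy fixed point spectral sequence.

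The induction runs as follows. Write $n = p_1^{a_1}\cdots p_r^{a_r}$. If $r = 1$, i.e. $n = p^a$, then the claim reduces to showing $X^{\tate C_{p^a}} \simeq (X^{\tate C_p})^{\h C_{p^{a-1}}}$, which is exactly the iterated form of the $p$-typical Tate orbit lemma: one factors $C_{p^a}$ through $C_{p^{a-1}} \subseteq C_{p^a}$, observes $X^{\tate C_{p^a}} \simeq (X^{\h C_{p^{a-1}}})^{\tate C_p}$... actually one wants $X^{\tate C_{p^a}} \simeq (X^{\tate C_p})^{\h C_{p^{a-1}}}$, obtained by decomposing $C_{p^a}$ as an extension $1 \to C_p \to C_{p^a} \to C_{p^{a-1}} \to 1$ and using the Tate orbit lemma to collapse the "inner Tate of Tate" contributions; the boundedness of $X$ and of all the relevant fixed-point spectra is used here. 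For the inductive step, factor $n = p_1^{a_1} m$ with $m$ coprime to $p_1$, and use a group decomposition $C_n \cong C_{p_1^{a_1}} \times C_m$ to write $X^{\tate C_n}$ in terms of iterated (homotopy) fixed points and Tate constructions with respect to the two commuting subgroups. Applying the coprime Tate orbit lemma \cite[Lemma II.4.2]{NS18} to each prime-power "layer" against the rest, all mixed Tate-of-Tate terms vanish, leaving exactly $\prod_{p \mid n} (X^{\tate C_p})^{\h C_{n/p}}$. Crucially, one should assemble the factors so that the $(X^{\tate C_p})^{\h C_{n/p}}$ appear as a product rather than a smash of iterated fixed points; this uses that after each vanishing step the Tate term for one prime becomes a homotopy fixed-point term for the complementary group.

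I expect the main obstacle to be the bookkeeping of the boundedness hypothesis through the induction: each time one passes from $X$ to an iterated fixed-point spectrum such as $X^{\h C_k}$ or $X^{\tate C_k}$, one must verify the hypotheses of the Tate orbit lemma are still met, and this requires knowing such spectra remain bounded below (which holds for homotopy fixed points of bounded below spectra by the homotopy fixed point spectral sequence, but Tate constructions are not bounded below in general — so the order in which one collapses the primes matters, and one must always be in a situation where a Tate construction is only applied to something still bounded below, i.e. before other Tate constructions have been taken). Concretely, I would organize the induction so that the statement proved inductively is the stronger assertion that for every subgroup $C_d \subseteq C_n$, the canonical map $X^{\tate C_d} \to \prod_{p \mid d}(X^{\tate C_p})^{\h C_{d/p}}$ is an equivalence, and then the bounded-below bookkeeping is handled uniformly. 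A secondary, more routine obstacle is checking that the "canonical map" referred to in the statement is indeed the one built out of the canonical maps $X^{\tate C_d} \to (X^{\tate C_p})^{\h C_{d/p}}$ coming from the subgroup inclusions and the lax compatibility of the Tate construction with fixed points; this is a naturality check rather than a computation.
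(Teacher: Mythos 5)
There is a genuine gap, and it sits exactly at the key input of your inductive step. The ``coprime companion lemma'' you invoke, namely an equivalence $Y^{\tate C_{pq}} \simeq (Y^{\tate C_p})^{\h C_q}$ for distinct primes $p,q$ and bounded below $Y$, is not \cite[Lemma II.4.2]{NS18} (that lemma concerns $C_{p^2}$: it identifies $(Y^{\h C_p})^{\tate C_p} \simeq Y^{\tate C_{p^2}}$) and it is in fact false: for $Y = \mathrm{H}\Z$ with trivial $C_6$-action one has $\pi_0\, Y^{\tate C_6} \cong \Z/6$ while $\pi_0\, (Y^{\tate C_2})^{\h C_3} \cong \Z/2$. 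The correct integral statement is precisely the product over \emph{all} primes dividing $n$, i.e.\ the lemma you are trying to prove, so using it in the induction is circular; the honest version of your input is only that $Y^{\tate C_{pq}} \to (Y^{\tate C_p})^{\h C_q}$ is a $p$-adic equivalence. Relatedly, your deduction from $(Y^{\tate C_p})^{\tate C_q} \simeq 0$ is garbled: that vanishing does hold (since $Y^{\tate C_p}$ is $p$-complete for bounded below $Y$ by \cite[Lemma I.2.9]{NS18}, and $q$ acts invertibly on $p$-complete spectra), but what it buys is the norm equivalence $(Y^{\tate C_p})_{\h C_q} \simeq (Y^{\tate C_p})^{\h C_q}$, not ``$(-)^{\h C_q} \simeq (-)^{\tate C_q}$'', which would force the fixed points to vanish.

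Because of this, the inductive step ``all mixed Tate-of-Tate terms vanish, leaving exactly the product'' has no verifiable content: there is no canonical expansion of $X^{\tate C_n}$ into such terms, and producing one is where the real work lies. The paper's proof supplies the missing mechanism in a different way: for each $p \mid n$ it shows the single map $X^{\tate C_n} \to (X^{\tate C_p})^{\h C_{n/p}}$ is a $p$-adic equivalence onto a $p$-complete target (reduce to $p$-complete $X$, where $X^{\tate C_n} \simeq (X^{\tate C_{p^k}})^{\h C_{n/p^k}}$ because $n/p^k$ is invertible, then apply \cite[Lemma II.4.1]{NS18}), and then proves that $X^{\tate C_n}$ is itself profinite complete (via \cite[Lemma I.2.6]{NS18} and reduction to Eilenberg--Mac Lane spectra, where Tate cohomology is $n$-torsion), so that the profinite equivalence onto the profinite-complete product is an equivalence. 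Your prime-power base case matches the first half of this argument, and it also removes the bounded-below bookkeeping you were worried about; but to repair your route you would either have to adopt this completeness argument, or replace the cited ``lemma'' by an actual analysis of the cofiber sequences $(X^{\tate C_a})_{\h C_b} \to X^{\tate C_{ab}} \to (X^{\h C_a})^{\tate C_b}$ for coprime $a,b$, keeping track of which terms are $p$-complete --- neither of which is in the proposal as written.
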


\begin{proof}
First we claim that for every prime $p$ dividing $n$, the canonical map
\[
	X^{\tate C_n} \to (X^{\tate C_p})^{\h C_{\nicefrac{n}{p}}}
\]
is a $p$-adic equivalence and the target is $p$-complete. The latter follows by~\cite[Lemma I.2.9]{NS18}, where we additionally have used that limits of $p$-complete spectra are $p$-complete. To see that the map is a $p$-adic equivalence we may replace $X$ by $X^\wedge_p$, i.e. we may assume that $X$ is $p$-complete. Then $X^{\tate C_n} = (X^{\tate C_{p^k}})^{\h C_{\nicefrac{n}{p^k}}}$ where $k$ is the $p$-adic valuation of $n$. 
This follows since the Tate construction is the cofiber of the map
\[
X_{\h C_n} \simeq (X_{\h C_{p^k}})_{\h C_{\nicefrac{n}{p^k}}} \simeq (X_{\h C_{p^k}})^{\h C_{\nicefrac{n}{p^k}}} \to X^{\h C_n} \simeq (X^{\h C_{p^k}})^{\h C_{\nicefrac{n}{p^k}}} 
\]
where we have used that $\nicefrac{n}{p^k}$ is invertible on $X$. Finally the claim then follows from \cite[Lemma II.4.1]{NS18}.
Consequently, the map in question
\[
	X^{\tate C_n} \to \prod_{p | n} (X^{\tate C_p})^{\h C_{\nicefrac{n}{p}}}
\]
is a profinite equivalence and the target is profinite complete. Therefore, it suffices to prove that $X^{\tate C_n}$ is profinite complete. Using~\cite[Lemma I.2.6]{NS18} and the fact that limits of profinite complete spectra are profinite complete, we assume that $X$ is bounded. We may further reduce to the case when $X = \mathrm{H}M[i]$ is an Eilenberg--Mac Lane spectrum concentrated in degree $i$. In this case, the Tate cohomology $\widehat{\mathrm{H}}^{\star}(C_n, M)$ has $n$-torsion, so $X^{\tate C_n}$ has $n$-torsion, which implies that $X^{\tate C_n}$ is profinitely complete. This proves the desired statement.
\end{proof}

\begin{lemma} \label{lemma:TR_polygonic_in_degree_n}
Let $X$ be a polygonic spectrum which is concentrated in degree $n$ for some $n \geq 1$. If $X$ is bounded below, then there is a natural equivalence of spectra $\TR(X) \simeq X_{\h C_n}$. 
\end{lemma}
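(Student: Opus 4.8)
The plan is to evaluate the equalizer formula for $\TR$ from Definition~\ref{definition:definition_of_TR} on such an $X$, to make the two parallel maps completely explicit, and then to recognise the resulting fibre using the profinite Tate orbit lemma (Lemma~\ref{lemma:version_tate_orbit_lemma}).

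Since $X$ is concentrated in degree $n$, the left-hand product $\prod_{m \geq 1} X_m^{\h C_m}$ collapses to $X_n^{\h C_n}$, and in the right-hand product $\prod_p \prod_{m \geq 1}(X_{pm}^{\tate C_p})^{\h C_m}$ only the factors with $pm = n$ are nonzero, so it collapses to $\prod_{p | n}(X_n^{\tate C_p})^{\h C_{n/p}}$. Among the two parallel maps, the one induced by the Frobenius maps $\varphi_{p,-}$ has its component into the factor indexed by a prime $p | n$ given by the projection $X_n^{\h C_n} \to X_{n/p}^{\h C_{n/p}} = 0$ followed by $\varphi_{p,n/p}^{\h C_{n/p}}$, and is therefore the zero map; the remaining parallel map has its component into that factor equal to the canonical map $X_n^{\h C_n} \simeq (X_n^{\h C_p})^{\h C_{n/p}} \to (X_n^{\tate C_p})^{\h C_{n/p}}$. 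Since the equalizer of the zero map and a map $g$ is the fibre of $g$, we obtain a natural equivalence
\[
	\TR(X) \simeq \fib\big( X_n^{\h C_n} \to \textstyle\prod_{p | n}(X_n^{\tate C_p})^{\h C_{n/p}} \big).
\]

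If $n = 1$ the target is the empty product and $\TR(X) \simeq X_1 = (X_1)_{\h C_1}$, so we may assume $n \geq 2$. The next step is to apply Lemma~\ref{lemma:version_tate_orbit_lemma} to the bounded below spectrum with $C_n$-action $X_n$: the canonical map $X_n^{\tate C_n} \to \prod_{p | n}(X_n^{\tate C_p})^{\h C_{n/p}}$ is an equivalence. The point requiring the most care --- and which I expect to be the main obstacle --- is to verify that this equivalence is compatible with the canonical maps out of $X_n^{\h C_n}$, that is, that the composite
\[
	X_n^{\h C_n} \xrightarrow{\can} X_n^{\tate C_n} \xrightarrow{\ \simeq\ } \textstyle\prod_{p | n}(X_n^{\tate C_p})^{\h C_{n/p}}
\]
coincides with the map appearing in the fibre above. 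This follows from the construction of the comparison maps $X_n^{\tate C_n} \to (X_n^{\tate C_p})^{\h C_{n/p}}$ in~\cite[\textsection II.4]{NS18}, which rests on the factorisation of the $C_n$-norm map through the $C_{n/p}$-homotopy fixed points of the $C_p$-norm and therefore fits into a commuting square with the canonical maps from homotopy fixed points. Granting this, $\TR(X) \simeq \fib\big(X_n^{\h C_n} \xrightarrow{\can} X_n^{\tate C_n}\big)$.

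Finally, the norm cofibre sequence $(X_n)_{\h C_n} \xrightarrow{\Nm} X_n^{\h C_n} \xrightarrow{\can} X_n^{\tate C_n}$ exhibits $(X_n)_{\h C_n}$ as the fibre of the canonical map, whence $\TR(X) \simeq (X_n)_{\h C_n}$, and all these identifications are natural in $X$. Apart from Lemma~\ref{lemma:version_tate_orbit_lemma} and the compatibility just discussed, every step is routine bookkeeping with the equalizer formula.
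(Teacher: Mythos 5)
Your proof is correct and is essentially the paper's own argument: both collapse the equalizer formula to $\fib\big(X^{\h C_n}\to\prod_{p\mid n}(X^{\tate C_p})^{\h C_{\nicefrac{n}{p}}}\big)$, invoke Lemma~\ref{lemma:version_tate_orbit_lemma}, and finish with the norm fibre sequence. The compatibility of the equivalence of Lemma~\ref{lemma:version_tate_orbit_lemma} with the canonical maps out of $X^{\h C_n}$, which you rightly flag and justify via the construction in~\cite[\textsection II.4]{NS18}, is left implicit in the paper.
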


\begin{proof}
If $n = 1$, then $\TR(X) \simeq X$ by virtue of Example~\ref{example:polygonic_concentrated_degree_1}. If $n \geq 2$, then we conclude that
\[
	\TR(X) \simeq \mathrm{Eq}\Big( 
	\begin{tikzcd}
		X^{\h C_n} \arrow[yshift=0.7ex]{r}{\can} \arrow[yshift=-0.7ex, swap]{r}{0} & \displaystyle\prod_{p | n} (X^{\tate C_p})^{\h C_{\frac{n}{p}}}
	\end{tikzcd}
	\Big),
\]
since $X$ is concentrated in degree $n$, where the top map is given by the composite
\[
	X^{\h C_n} \simeq (X^{\h C_p})^{\h C_{\frac{n}{p}}} \xrightarrow{\can} (X^{\tate C_p})^{\h C_{\frac{n}{p}}}
\]
if $p$ divides $n$. Using Lemma~\ref{lemma:version_tate_orbit_lemma}, we conclude that $\TR(X) \simeq \mathrm{fib}(X^{\h C_n} \xrightarrow{\can} X^{\tate C_n}) \simeq X_{\h C_n}$.
\end{proof}

\subsection{The polygonic $t$-structure} \label{subsection:polygonic_t_structure}
We construct a $t$-structure on the $\infty$-category of polygonic spectra entirely analogous to the construction of the $t$-structure on the $\infty$-category of cyclotomic spectrum in~\cite[\textsection 2.1]{AN20}. Our presentation follows the presentation in~\cite{AN20} verbatim. 

\begin{definition}
Let $\PgcSp_{\geq 0}$ denote the full subcategory of $\PgcSp$ spanned by those polygonic spectra $\{X_n\}_{n \geq 1}$ which satisfy that the underlying spectrum of $X_n$ is connective for every $n \geq 1$.
\end{definition}

If $X \in \PgcSp_{\geq 0}$ is a connective polygonic spectrum, then the polygonic Frobenius map
\[
	\varphi_{p,n} : X_n \to (X_{pn})^{\tate C_p}
\]
canonically factors through the connective cover $\tau_{\geq 0} (X_{pn})^{\tate C_p}$ for every $n \geq 1$ and every prime $p$. In particular, the functor $F_p$ from Construction~\ref{construction:F_p_functor} canonically refines to a functor
\[
	\tau_{\geq 0} F_p : \displaystyle\prod_{n \geq 1} (\Sp^{\B C_n})_{\geq 0} \to \displaystyle\prod_{n \geq 1} (\Sp^{\B C_n})_{\geq 0}.
\]
Using this, we establish the analog of~\cite[Theorem 2.1]{AN20} for the $\infty$-category $\PgcSp$ of polygonic spectra. The proof is a direct adaptation of the argument given by Antieau and the third author.

\begin{proposition} \label{proposition:polygonic_t_structure}
The $\infty$-category $\PgcSp_{\geq 0}$ forms the connective part of an accessible $t$-structure on $\PgcSp$ which is left complete and compatible with the symmetric monoidal structure on $\PgcSp$.
\end{proposition}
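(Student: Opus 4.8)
The plan is to follow the strategy of Antieau and Nikolaus in \cite[\textsection 2.1]{AN20} essentially verbatim, since the $\infty$-category $\PgcSp$ is built as a lax equalizer of products of the $\infty$-categories $\Sp^{\B C_n}$, each of which carries the standard $t$-structure whose connective part $(\Sp^{\B C_n})_{\geq 0}$ consists of those $C_n$-spectra with connective underlying spectrum. The key structural input is that the two parallel functors defining the lax equalizer interact well with connectivity. The identity functor $\prod_n \Sp^{\B C_n} \to \prod_p \prod_n \Sp^{\B C_n}$ obviously preserves connective objects. For the functor $F_p$ one uses that the Tate construction $(-)^{\tate C_p}$ does \emph{not} preserve connectivity, but this is precisely why one works with the \emph{lax} equalizer rather than the equalizer, and why the connective part of the putative $t$-structure is defined by a connectivity condition only on the underlying spectra $X_n$, together with the observation recorded just before the statement: if each $X_n$ is connective then $\varphi_{p,n}$ factors (uniquely, coherently) through $\tau_{\geq 0}(X_{pn})^{\tate C_p}$, so that $\tau_{\geq 0} F_p$ is well-defined as an endofunctor of $\prod_n (\Sp^{\B C_n})_{\geq 0}$.

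First I would verify the two axioms for $\PgcSp_{\geq 0}$ to be the connective part of a $t$-structure. Closure under extensions and under the shift $\Sigma = [1]$ is immediate since both are detected underlying on each $X_n$ and $(\Sp^{\B C_n})_{\geq 0}$ has this property. For the existence of the truncation functor, I would construct the connective cover $\tau_{\geq 0}^{\PgcSp} X$ directly: its $n$th term is $\tau_{\geq 0} X_n$, and the structure map $\varphi_{p,n}$ for the cover is obtained from the factorization of the original $\varphi_{p,n}$ through $\tau_{\geq 0}(X_{pn})^{\tate C_p}$ precomposed with $\tau_{\geq 0}X_n \to X_n$; more precisely, one observes that the lax equalizer $\mathrm{LEq}(\id, \tau_{\geq 0} F_p)$ computed over the connective subcategories $\prod_n(\Sp^{\B C_n})_{\geq 0}$ maps to $\PgcSp$ and its image is exactly $\PgcSp_{\geq 0}$, with the inclusion admitting a right adjoint built levelwise from $\tau_{\geq 0}$ on $\Sp^{\B C_n}$ (this is the formal point where \cite[\textsection 2.1]{AN20} does the analogous computation for $\CycSp$, and it transports with no change). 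Accessibility follows because $\PgcSp$ is presentable by Proposition~\ref{proposition:mapping_pgcsp}(1) and $\PgcSp_{\geq 0}$ is generated under colimits and extensions by a set (e.g. the images of the corepresentables), or equivalently because $\tau_{\geq 0}^{\PgcSp}$ is an accessible functor.

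Next, left completeness: I would show $\PgcSp \simeq \varprojlim(\cdots \to \PgcSp_{\geq -n} \to \PgcSp_{\geq -n+1} \to \cdots)$, which reduces to the corresponding statement for each $\Sp^{\B C_n}$ since the lax equalizer commutes with limits (as used already in the proof of Lemma~\ref{lemma:PgcSp_rightKanExtended}) and each $\Sp^{\B C_n}$ is left complete; one must also check that the coaugmented tower of truncation functors is compatible with the structure maps $\varphi_{p,n}$, which again follows from the canonical factorization through connective covers. Finally, compatibility with the symmetric monoidal structure amounts to checking that $\PgcSp_{\geq 0}$ is closed under $\otimes$ and contains the unit $\S$: the unit is levelwise $\S$, hence connective, and $(X \otimes Y)_n = X_n \otimes Y_n$ is connective when $X_n, Y_n$ are since the smash product of $\Sp^{\B C_n}$ is the levelwise smash product, which preserves connectivity.

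The main obstacle is the coherence bookkeeping in constructing $\tau_{\geq 0}^{\PgcSp}$ and the left-completeness tower \emph{as functors into $\PgcSp$ rather than just levelwise}: one must produce the factorizations of the $\varphi_{p,n}$ through connective covers not just objectwise but functorially and compatibly with the lax-equalizer coherence data. This is exactly the technical content of \cite[Theorem 2.1]{AN20}, and the cleanest way to import it is to phrase everything in terms of the lax equalizer $\mathrm{LEq}$ of the \emph{restricted} functors $\id, \tau_{\geq 0}F_p$ on the connective subcategories and invoke that $\mathrm{LEq}$ preserves limits, reflective subcategories, and monoidal structures — so that the reflective localization $\prod_n(\Sp^{\B C_n})_{\geq 0} \hookrightarrow \prod_n \Sp^{\B C_n}$ and its compatibility with the parallel functors (which holds precisely because $\varphi_{p,n}$ factors through $\tau_{\geq 0}(X_{pn})^{\tate C_p}$) upgrades to a reflective localization $\PgcSp_{\geq 0} \hookrightarrow \PgcSp$. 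Once this formal packaging is in place, all four assertions of the proposition follow from the corresponding (elementary) facts about the standard $t$-structures on the $\Sp^{\B C_n}$, exactly as in \cite{AN20}.
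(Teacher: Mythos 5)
Your overall strategy (import the argument of \cite[\textsection 2.1]{AN20}, using the observation that for connective $X$ the maps $\varphi_{p,n}$ factor through $\tau_{\geq 0}\big((X_{pn})^{\tate C_p}\big)$) is the right one, and your treatment of monoidal compatibility is fine. But there is a genuine error at the centre of your account of why the truncation functors exist: you claim the connective cover in $\PgcSp$ is computed levelwise, with $n$th term $\tau_{\geq 0}X_n$ and structure maps obtained from the factorization of $\varphi_{p,n}$ through $\tau_{\geq 0}\big((X_{pn})^{\tate C_p}\big)$. That factorization produces a map $\tau_{\geq 0}X_n \to \tau_{\geq 0}\big((X_{pn})^{\tate C_p}\big)$, whereas a polygonic structure on $\{\tau_{\geq 0}X_n\}_n$ would require a map into $(\tau_{\geq 0}X_{pn})^{\tate C_p}$; the comparison map $(\tau_{\geq 0}X_{pn})^{\tate C_p} \to (X_{pn})^{\tate C_p}$ points the wrong way, and its cofiber $(\tau_{\leq -1}X_{pn})^{\tate C_p}$ is in general only $0$-coconnective, so the obstruction to lifting does not vanish. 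In fact the truncation functors of this $t$-structure are genuinely \emph{not} computed levelwise (exactly as Antieau--Nikolaus warn in the cyclotomic case); this is also visible from the remark following the proposition that the heart is the category of complete polygonic Cartier modules rather than anything levelwise. Consequently the steps that lean on a levelwise coreflection do not go through as stated: ``the inclusion admitting a right adjoint built levelwise from $\tau_{\geq 0}$'' is false, the compatibility of $F_p$ with the (co)reflection onto connective objects is only lax (a transformation $\tau_{\geq 0}F_p \to F_p$), and your left-completeness argument, which reduces to left completeness of each $\Sp^{\B C_n}$ via towers of levelwise truncations, presupposes the same false levelwise description, since the subcategories $\PgcSp_{\leq n}$ are defined only by an orthogonality condition.

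The way the paper (following \cite{AN20}) avoids this is to never construct the truncations explicitly: by \cite[Proposition 1.4.4.11]{Lur17} it suffices to show that $\PgcSp_{\geq 0}$ is presentable and closed under extensions and small colimits. Closure under extensions is immediate, and presentability together with colimit closure follow from the identification $\PgcSp_{\geq 0} \simeq \mathrm{LEq}\big(\id, \{\tau_{\geq 0}F_p\}\big)$ formed over the connective subcategories $\prod_n (\Sp^{\B C_n})_{\geq 0}$, as in \cite[Lemma 2.10]{AN20}; here the factorization of $\varphi_{p,n}$ is used only to rewrite the morphism datum of an already connective object, not to truncate arbitrary ones, and the truncation functors are then produced abstractly by the $t$-structure. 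Left completeness is likewise proved by adapting \cite[Theorem 2.1]{AN20}: one checks that $\PgcSp_{\geq 0}$ is closed under countable products (products of uniformly bounded below polygonic spectra are computed levelwise, using that the Tate construction commutes with such products) and that $\bigcap_n \PgcSp_{\geq n} = 0$ because the forgetful functor to $\prod_n \Sp^{\B C_n}$ is conservative, rather than by the limit-of-lax-equalizers reduction you propose. If you restructure your argument along these lines, dropping every appeal to a levelwise $\tau_{\geq 0}$ on $\PgcSp$, the rest of what you wrote is essentially correct.
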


\begin{proof}
By~\cite[Proposition 1.4.4.11]{Lur17} it suffices to prove that $\PgcSp_{\geq 0}$ is a presentable $\infty$-category which is closed under extensions and small colimits. Note that $\PgcSp_{\geq 0}$ is closed under extensions by construction. As in~\cite[Lemma 2.10]{AN20}, the $\infty$-category $\PgcSp_{\geq 0}$ is equivalent to the lax equalizer
\[
	\PgcSp_{\geq 0} \simeq \mathrm{LEq}\Big(
	\begin{tikzcd}
		\displaystyle\prod_{n \geq 1} (\Sp^{\B C_n})_{\geq 0} \arrow[yshift=0.7ex]{r} \arrow[yshift=-0.7ex]{r} & \displaystyle\prod_{p \in \mathbb{P}}\prod_{n \geq 1} (\Sp^{\B C_n})_{\geq 0}
	\end{tikzcd}
	\Big),
\]
where the top map is the identity and the bottom map is $\{\tau_{\geq 0} F_p\}$, where the desired map goes from right to left and is induced by the natural transformation $\tau_{\geq 0} F_p \to F_p$. This implies that $\PgcSp_{\geq 0}$ is presentable and closed under small colimits as explained in~\cite[Lemma 2.10]{AN20}. Similarly, the $t$-structure is left complete by a direct adaptation of the argument in the proof of~\cite[Theorem 2.1]{AN20}. The $t$-structure is compatible with the symmetric monoidal structure on $\PgcSp$ since the unit $\S$ is connective and the tensor product of connective polygonic spectra is connective.
\end{proof}

We refer to this $t$-structure as the polygonic $t$-structure on the $\infty$-category $\PgcSp$ of polygonic spectra. The coconnective part of the polygonic $t$-structure is determined by its connective part in the sense that a polygonic spectrum $Y$ is contained in $\PgcSp_{\leq 0}$ precisely if
\[
	\Map_{\PgcSp}(X, Y[-1]) \simeq 0
\]
for every connective polygonic spectrum $X$. For instance, if $X \in \PgcSp_{\leq 0}$, then $\pi_i\TR(X) \simeq 0$ for every $i > 0$ since $\TR(X) = \map_{\PgcSp}(\S, X)$ is corepresented by $\S$ which is connective in the polygonic $t$-structure by construction. 

\begin{remark}
The construction of the polygonic $t$-structure on the $\infty$-category of polygonic spectra is formal. As in~\cite{AN20}, the difficult part is identifying the heart of the polygonic $t$-structure. In fact, the heart is equivalent to the abelian category of so-called complete polygonic Cartier modules. We discuss this further in upcoming work, where we additionally use the ideas of this paper to identify the heart of the $t$-structure on the $\infty$-category of cyclotomic spectra in terms of (integral) complete Cartier modules extending the work of Antieau and the third author in~\cite{AN20} to the integral case.
\end{remark}

\section{Quasifinitely genuine $G$-spectra after Kaledin} \label{section:polygonicTR_completeCartier}
In this section, we introduce the $\infty$-category of quasifinitely genuine $G$-spectra for an arbitrary group $G$ as first introduced in its $\Z$-linear version by Kaledin in~\cite{Kal14} under the name of $G$-Mackey profunctors. In~\textsection\ref{subsection:qfgen_G} and~\textsection\ref{subsection:monoidal}, we will discuss structural features of the $\infty$-category of quasifinitely genuine $G$-spectra including monoidal structures and geometric fixedpoints. In~\textsection\ref{subsection:qfgen_Z}, we focus on the particular case when $G = \Z$, where we establish some structural results about the geometric fixedpoints functors which are special to the case of $G = \Z$. This section provides the necessary background for our discussion of TR of a polygonic spectrum as a quasifinitely genuine $\Z$-spectrum in~\textsection\ref{subsection:qfgenZ_TR}. 

\subsection{Definition of quasifinitely genuine $G$-spectra} \label{subsection:qfgen_G}
In this section, our goal is to introduce the $\infty$-category of quasifinitely genuine $G$-spectra for an arbitrary group $G$ following Kaledin~\cite{Kal14}. The formalism of quasifinitely genuine spectra provides a reasonable theory of genuine equivariant homotopy theory for infinite discrete groups such as the group of integers which will be our main example of interest. Recall that if $G$ is a finite group, then the $\infty$-category of genuine $G$-spectra is defined as the $\infty$-category of spectral Mackey functors on the category of finite $G$-sets (cf.~\cite{Bar17,GM11}). This definition is meaningful for an arbitrary group but it will in general not produce a formalism which possesses the desired features of an $\infty$-category of genuine $G$-spectra. To avoid confusion about which variant of equivariant homotopy theory we are considering, we recall the following:

\begin{definition}
For an arbitrary group $G$ let $\Fin_G$ denote the category of finite $G$-sets, that is finite sets with a $G$-action. 
\end{definition}

Every object $S$ of $\Fin_G$ can be written as a finite coproduct of orbits of the form $G/H$, where $H$ is a cofinite subgroup of $G$. In fact, there is a canonical inclusion $\mathrm{Orb}_G \hookrightarrow \Fin_G$ from the cofinite orbit category of $G$, which exhibits the latter as the finite coproduct completion of $\mathrm{Orb}_G$. Note that $\Fin_G$ admits pullbacks, allowing us to form the span $\infty$-category $\mathrm{Span}(\Fin_G)$ which admits direct sums given by coproducts of finite $G$-sets (cf.~\cite[Proposition 4.2]{Bar17}).

\begin{definition} \label{definition:finitely_genuine_G_spectra}
Let $G$ be a group. The $\infty$-category of finitely genuine $G$-spectra is defined as
\[
	\Sp^G_{\fgen} = \Fun^{\mathrm{add}}(\mathrm{Span}(\Fin_G), \Sp),
\]
the full subcategory spanned by those functors $\mathrm{Span}(\Fin_G) \to \Sp$ which preserve finite products (equivalently finite coproducts). 
\end{definition}

We warn the reader that typically in equivariant homotopy theory the subindex refers to the family of subgroups that can arise as stabilisers. From this perspective it would be more logical to denote this $\infty$-category by $\Sp^G_{\mathrm{cofin}}$ to indicate that for finite $G$-sets the stabilizers are cofinite.  

If $G$ is finite, then $\Sp^{G}_{\fgen}$ is equivalent to the $\infty$-category of genuine $G$-spectra as defined via orthogonal spectra (cf.~\cite{MM02}). We remark that Definition~\ref{definition:finitely_genuine_G_spectra} also produces a reasonable formalism when $G$ is a profinite group (cf.~\cite{Bar17,Nar16}). The main idea of Kaledin~\cite{Kal14} is to replace the category of finite $G$-sets above with the category of quasifinite $G$-sets in the sense of Dress--Siebeneicher~\cite{DS88}:

\begin{definition} \label{definition:quasifinite_G_set}
Let $G$ be a group. A quasifinite $G$-set is a $G$-set $S$ which satisfies the following:
\begin{enumerate}[leftmargin=2em, topsep=5pt, itemsep=5pt]
	\item For every element $s \in S$, the stabilizer $G_s$ is a cofinite subgroup of $G$.
	\item For every cofinite subgroup $H$ of $G$, the fixedpoints set $S^H$ is finite.
\end{enumerate}
Let $\QFin_G$ denote the full subcategory of $\mathrm{Set}_G$ spanned by those $G$-sets which are quasifinite. 
\end{definition}

Note that every finite $G$-set gives an example of a quasifinite $G$-set, and there is a fully faithful functor $i : \Fin_G \hookrightarrow \QFin_G$. We think of the category $\QFin_G$ as providing an intermediate between $\Fin_G$ and the full coproduct completion of $\mathrm{Orb}_G$, in the sense that we may form certain infinite coproducts of $G$-orbits: If $S \in \QFin_G$ with quotient map $\pi : S \to S/G$, then the canonical map
\[
	\coprod_{\bar{x} \in S/G} \pi^{-1}(\bar{x}) \to S
\]
is an isomorphism of quasifinite $G$-sets, and a choice of lift $s$ of $\bar{s} \in S/G$ determines an isomorphism $\pi^{-1}(\bar{s}) \simeq G/G_s$. Said more informally, if we write our $G$-set $S$ as a coproduct of orbits
\[
	S = \coprod_{s \in S/G} G/H_s,
\]
then $S$ is quasifinite if each $H_s$ is a cofinite subgroup of $G$ and each cofinite subgroup $H$ of $G$ is contained in at most finitely many of the $H_s$.
We have the following instructive example:

\begin{example}
We claim that $S = \coprod_{n \geq 1} \Z/n\Z$ is a quasifinite $\Z$-set. The first condition is clear since the stabilizers are given by $n\Z$ as $n$ ranges over $\N$. Furthermore, if $k \geq 1$, then
\[
	S^{k\Z} = \coprod_{n \in \langle k \rangle} \Z/n\Z
\]
which is finite, showing that the second condition is satisfied. In general, every quasifinite $\Z$-set which is not contained in the essential image of the functor $i : \Fin_{\Z} \hookrightarrow \QFin_{\Z}$ is of the form
\[
	T = \coprod_{i \in I} \Z/m_i\Z
\]
for a sequence $\{m_i\}_{i \in I}$ of integers $m_i \geq 1$ for every $i \in I$, which satisfies that $m_i \to \infty$ for $i \to \infty$. If $T'$ is an additional object of $\QFin_{\Z}$ specified by a sequence $\{n_j\}_{j \in J}$ as above, then a morphism $T \to T'$ in $\QFin_{\Z}$ is determined by a function $f : I \to J$ such that $n_{f(i)}$ divides $m_i$ for every $i \in I$. We will use these observation repeatedly in~\textsection\ref{subsection:qfgen_Z} and~\textsection\ref{subsection:qfgenZ_TR}. 
\end{example}

We claim that the category $\QFin_G$ of quasifinite $G$-sets admits pullbacks. Indeed, pullbacks are formed in the category of $G$-sets and one verifies that these are again quasifinite: the stabilizer of elements is the intersection of the stabilizers, hence again cofinite. The fixedpoints of a pullback is the pullback of fixedpoints, hence again finite. Now since $\QFin_G$ has pullbacks we may form the $\infty$-category $\mathrm{Span}(\QFin_G)$. Note that this category is not the $(\infty, 2)$-category of spans but rather the $\infty$-category (aka $(\infty,1)$-category). That is a 2-morphism between spans consists of an equivalence between the involved morphism objects. 

The following definition is due to Kaledin~\cite[Definition 3.2]{Kal14}. 

\begin{definition} \label{definition:quasifinitely_genuine_G_spectra}
For a group $G$, the $\infty$-category of quasifinitely genuine $G$-spectra is defined by
\[
	\Sp^{G}_{\qfgen} = \Fun^{\mathrm{vadd}}(\mathrm{Span}(\QFin_G), \Sp),
\]
where $\Fun^{\mathrm{vadd}}(\mathrm{Span}(\QFin_G), \Sp)$ denotes the full subcategory of $\Fun(\mathrm{Span}(\QFin_G), \Sp)$ spanned by those functors $X$ which are very additive in the sense that the canonical map of spectra
\[
	X(S) \to \prod_{\bar{s} \in S/G} X(\pi^{-1}(\bar{x}))
\]
is an equivalence for every quasifinite $G$-set $S$ with quotient map $\pi : S \to S/G$. 
\end{definition}

The fully faithful functor $i : \Fin_G \hookrightarrow \QFin_G$ induces a restriction functor of $\infty$-categories
\[
	i^{\star} : \Sp^{G}_{\qfgen} \to \Sp^{G}_{\fgen}
\]
allowing us to regard every quasifinitely genuine $G$-spectrum $X$ as a finitely genuine $G$-spectrum. In particular, we find that $X$ supports the usual restriction and transfer functoriality: If $H$ is a cofinite subgroup of $G$, then the terminal map $G/H \to G/G$ induces a restriction map
\[
	F_H : X^G \to X^H
\]
using the contravariant functoriality, which is equivariant with respect to the action of the Weyl group $W_G(H) = N_G(H)/H$ on the target. The covariant functoriality, induces a transfer map
\[
	V_H : X^H \to X^G
\]
which is equivariant for the action of the Weyl group $W_G(H)$ on $X^{H}$. The additional structure encoded by the quasifinitely genuine $G$-structure on $X$ which is not presently available on the underlying finitely genuine $G$-spectrum $i^{\star}X$ is that certain infinite sums of transfer maps exist. If $S = \coprod_{i \in I} G/H_i$ is a quasifinite $G$-set, then the terminal map $S \to G/G$ induces a map
\[
	V_{\{H_i\}} : \prod_{i \in I} X^{H_i} \to X^{G}
\]
such that the composite map of spectra
\[
	X^{H_{i_0}} \hookrightarrow \displaystyle\bigoplus_{i \in I} X^{H_i} \to \prod_{i \in I} X^{H_i} \xrightarrow{V_{\{H_i\}}} X^{G},
\]
is canonically equivalent to the transfer map $V_{H_{i_0}} : X^{H_{i_0}} \to X^G$ for each $i_0 \in I$. Informally, this can be summarized by saying that the map $V_{\{H_i\}}$ is given by the infinite sum
\[
	V_{\{H_i\}} = \sum_{i \in I} V_{H_i}.
\]
Even more informally, we think of a quasifinitely genuine $G$-spectrum as a spectrum equipped with restriction and transfer functoriality for the cofinite subgroups of $G$ together with a specification of which infinite sums of transfer maps converge. This is the beautiful insight of Kaledin~\cite{Kal14}. We begin by establishing some basic facts about $\Sp^G_{\qfgen}$. We stress that $G$ is an arbitrary group.

\begin{lemma} \label{lemma:spGqfgen_closed_limits}
The $\infty$-category $\Sp^{G}_{\qfgen}$ is presentable and the  inclusion 
\[
\Sp^{G}_{\qfgen} \hookrightarrow \Fun(\mathrm{Span}(\QFin_G), \Sp)
\]
 admits a left adjoint $(-)_{\mathrm{vadd}}$.
\end{lemma}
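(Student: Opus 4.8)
The plan is to identify $\Sp^G_{\qfgen}$ as an accessible localization of the presentable $\infty$-category $\Fun(\Span(\QFin_G), \Sp)$, which immediately yields both presentability and the desired left adjoint $(-)_{\vadd}$. The functor category $\Fun(\Span(\QFin_G), \Sp)$ is presentable because $\Sp$ is presentable and $\Span(\QFin_G)$ is (essentially) small; so it suffices to exhibit $\Sp^G_{\qfgen}$ as the full subcategory of local objects for a small set of maps, or equivalently as the essential image of an accessible localization functor.

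First I would observe that the very-additivity condition is a condition of the form ``$X$ sends a specified small set of cocones in $\Span(\QFin_G)$ to limit cones in $\Sp$''. Concretely, for each quasifinite $G$-set $S$ with orbit decomposition $S \cong \coprod_{\bar s \in S/G} \pi^{-1}(\bar s)$, the inclusions $\pi^{-1}(\bar s) \hookrightarrow S$ are morphisms in $\QFin_G$, hence morphisms in $\Span(\QFin_G)$, and a functor $X$ is very additive exactly when it carries this family of morphisms to a product diagram $X(S) \xrightarrow{\sim} \prod_{\bar s} X(\pi^{-1}(\bar s))$. Up to equivalence there is only a set of such cocones (the objects of $\QFin_G$ form a set up to isomorphism, and the orbit decomposition is determined by $S$). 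By the standard recognition of full subcategories cut out by a set of (co)limit conditions — e.g. \cite[Proposition 5.5.4.15 and Lemma 5.5.4.19]{Lur09} — the full subcategory $\Fun^{\vadd}(\Span(\QFin_G), \Sp)$ is an accessible localization of $\Fun(\Span(\QFin_G), \Sp)$: it is closed under limits and sufficiently filtered colimits, and is a reflective (in fact accessible localization) subcategory. Accessible localizations of presentable $\infty$-categories are presentable, and the localization functor is precisely the left adjoint to the inclusion, which we name $(-)_{\vadd}$.

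Alternatively, and perhaps more transparently, I would produce the localization functor directly: given $X \colon \Span(\QFin_G) \to \Sp$, define $X_{\vadd}$ on objects by $X_{\vadd}(S) = \prod_{\bar s \in S/G} X(\pi^{-1}(\bar s))$ and check that this assignment extends to a functor on $\Span(\QFin_G)$ and that the natural transformation $X \to X_{\vadd}$ is a localization (i.e. it is a local equivalence and $X_{\vadd}$ is local). Since every map in $\QFin_G$, hence every span, restricts compatibly to the orbit pieces, the functoriality is essentially bookkeeping; on an orbit $G/H$ the value is unchanged, so $X_{\vadd}$ is genuinely very additive and the unit is an isomorphism on very additive inputs. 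One then checks idempotency, which formally implies $(-)_{\vadd}$ is left adjoint to the inclusion.

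The main obstacle I anticipate is the careful verification that the orbit-decomposition condition is \emph{stable under the relevant colimits} in the functor category — i.e. that $\Fun^{\vadd}$ is closed under $\kappa$-filtered colimits for some regular cardinal $\kappa$ — since products over the (possibly countably infinite) set $S/G$ do not commute with arbitrary filtered colimits of spectra. The fix is that for a \emph{fixed} quasifinite $G$-set $S$ the index set $S/G$ is a set of bounded cardinality, and products indexed by a fixed set of size $<\kappa$ commute with $\kappa$-filtered colimits; one must track cardinalities uniformly over the (small) collection of objects of $\QFin_G$ to extract a single $\kappa$ that works. Once this accessibility bookkeeping is in place, the rest is formal, and the statement follows.
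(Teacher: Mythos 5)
Your first argument is, in substance, the paper's own proof: the paper exhibits $\Sp^G_{\qfgen}$ as the local objects for an explicit small set of maps in the presentable $\infty$-category $\Fun(\Span(\QFin_G),\Sp)$, namely all shifts of the maps $\bigoplus_{i}\Sigma^\infty_+\underline{S_i}\to \Sigma^\infty_+\underline{S}$ for $S=\coprod_i S_i$ in $\QFin_G$, induced by the inclusions $S_i\to S$ read as \emph{backward} morphisms in the span category. Locality with respect to these maps (the shifts are included so that one gets equivalences of mapping spectra, i.e.\ of values, not just of mapping spaces) is precisely very additivity, and presentability together with the left adjoint $(-)_{\vadd}$ then follow from the theory of accessible Bousfield localizations, \cite[Proposition 5.5.4.15]{Lur09}. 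So your first paragraph is fine once you make the generating maps explicit via corepresentables and take care with the direction of the structure maps; note also that localizing at a set of maps gives accessibility for free, so the cardinality bookkeeping in your final paragraph is not needed for this route.

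The ``alternative, more transparent'' construction, however, is wrong, and you should not rely on it. First, the assignment $S\mapsto \prod_{\bar s\in S/G}X(\pi^{-1}(\bar s))$ does not extend to a functor on $\Span(\QFin_G)$ for a general $X$: a span $S\leftarrow W\to G/H$ whose middle term meets infinitely many orbits of $S$ (for $G=\Z$ take $S=W=\coprod_{n}\Z/n\Z$ mapping to $\Z/\Z$) would have to act as an infinite sum of transfers $\prod_{\bar s}X(\pi^{-1}(\bar s))\to X(G/H)$. A general functor only provides a map out of the direct sum $\bigoplus_{\bar s}X(\pi^{-1}(\bar s))$; a map out of the product is exactly the extra structure that very additive functors carry, so this is not ``bookkeeping'' but the crux of the matter. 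Second, the genuine left adjoint does not leave the values on orbits unchanged: already for the trivial group and the weaker additivity condition, the localization of the corepresentable at the point changes its value at the point from $\Sigma^\infty_+(\Fin^{\simeq})$ to the sphere spectrum, by Barratt--Priddy--Quillen group completion (the same mechanism that appears in Lemma~\ref{lemma:tomDieck_splitting} and in the proof of Lemma~\ref{lemma:infl_equiv_on_fullsubcat}); and in the present setting the value at $G/G$ of $(\Sigma^\infty_+\underline{G/G})_{\vadd}$ is exactly the object that Remark~\ref{remark:tomdieck_infinite_product} says is not known to admit a closed formula. Hence there is no pointwise formula for $(-)_{\vadd}$, and the lemma should be proved by the formal localization argument alone.
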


\begin{proof}
The $\infty$-category $\Fun(\mathrm{Span}(\QFin_G), \Sp)$ is presentable and we claim that $\Sp^{G}_{\qfgen}$ are the local objects for a presentable Bousfield localization. Indeed, it suffices to check that the very additive functors are exactly the local objects with respect to a set of maps (i.e. the Bousfield localization is generated by a set of maps). To see this, we take the set of all shifts of the maps
\[
\bigoplus_{i \in I} \Sigma^\infty_+ \underline{S_i} \to \Sigma^\infty_+\Big( \underline{\coprod_{i \in I} S_i } \Big)
\]
for (a representative of) every object $S =  \coprod S_i$ in $\QFin_G$, which is a set since $\QFin_G$ is essentially small. Here the underline denotes the corepresentable functor where we consider them as objects of the span category. The map is induced from the inclusions $S_i \to S$ (considered as  backward morphisms in the span category).  
\end{proof}

\begin{warning} \label{warning:colimits_spgqfgen} Limits in  $\Sp^{G}_{\qfgen}$ are formed as limits of the underlying diagram in the $\infty$-category $\Fun(\mathrm{Span}(\QFin_G), \Sp)$, but 
colimits are not formed underlying. Instead colimits are obtained by applying $(-)_{\mathrm{vadd}}$ to the underlying colimit formed in $\Fun(\mathrm{Span}(\QFin_G), \Sp)$. 
\end{warning}

\begin{remark}
Note that $i^{\star} : \Sp^G_{\qfgen} \to \Sp^G_{\fgen}$ preserves limits and $\kappa$-filtered colimits for $\kappa$ large enough\footnote{Precisely we need that the products appearing in the definition of quasifinitely genuine $G$-spectra commute with $\kappa$-filtered colimits. For this we need to choose $\kappa$ so that the cofinality is larger than the number of conjugacy classes of cofinite subgroups of $G$}, so it admits a left adjoint $(-)_{\qfgen}$. 
\end{remark}

There is a functor of $\infty$-categories $\Omega^\infty : \Sp^{G}_{\qfgen} \to \Spaces^{G}$ given by the following composite
\[
	\Sp^{G}_{\qfgen} \to \Sp^{G}_{\fgen} \xrightarrow{\Omega^\infty} \Fun^{\times}(\mathrm{Span}(\Fin_G), \Spaces) \to \Fun^{\times}(\Fin_G^{\op}, \Spaces) \simeq \Fun(\mathrm{Orb}_G^{\op}, \Spaces),
\]
which preserves limits and is accessible by Lemma~\ref{lemma:spGqfgen_closed_limits}, so it admits a left adjoint $\Sigma^\infty_+ : \Spaces^{G} \to \Sp^{G}_{\qfgen}$. The left adjoint $\Sigma^\infty_+ $ can also be described by its value on orbits (which freely generated  $\Spaces^{G}$). This is the content of the next statement:

\begin{lemma} \label{lemma:fixedpoints_corepresented}
Let $G$ be a group. For every cofinite subgroup $H$ of $G$, the functor of $\infty$-categories
\[
	(-)^H : \Sp^{G}_{\qfgen} \to \Sp^{BW_G(H)}
	\]
is corepresented by $\Sigma^\infty_+(G/H)$ equipped with its right $W_G(H)$-action.
\end{lemma}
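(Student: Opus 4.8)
The plan is to reduce the statement to the analogous corepresentability statement for finitely genuine $G$-spectra, where it is standard, and then to propagate it across the localization $(-)_{\mathrm{vadd}}$. First I would recall that the functor $(-)^H\colon \Sp^G_{\qfgen}\to \Sp^{BW_G(H)}$ is by definition evaluation at the orbit $G/H$, viewed as an object of $\mathrm{Span}(\QFin_G)$; the residual $W_G(H)$-action comes from the automorphisms of $G/H$ in $\QFin_G$, which form exactly $W_G(H)=N_G(H)/H$. So what has to be shown is that for every quasifinitely genuine $G$-spectrum $X$ there is a natural equivalence
\[
	\map_{\Sp^{BW_G(H)}}\!\big(\Sigma^\infty_+(G/H), X^H\big) \simeq \map_{\Sp^G_{\qfgen}}\!\big(\Sigma^\infty_+(G/H), X\big).
\]
Here $\Sigma^\infty_+(G/H)$ on the left means the suspension spectrum of $W_G(H)$ acting on itself, whose mapping spectrum out is just $(-)^{hW_G(H)}$ followed by... no: $\map_{\Sp^{BW_G(H)}}(\Sigma^\infty_+(W_G(H)), Y)\simeq Y$ as a spectrum, since $\Sigma^\infty_+(W_G(H))$ is the free object on a point, i.e.\ the unit induced up. So the left side is simply the underlying spectrum of $X^H$, forgetting the $W_G(H)$-action. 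Thus the real content is: $\map_{\Sp^G_{\qfgen}}(\Sigma^\infty_+(G/H), X)\simeq X^H$ naturally.

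Next I would identify $\Sigma^\infty_+(G/H)$ as an object of $\Sp^G_{\qfgen}$. Applying the left adjoint $\Sigma^\infty_+\colon \Spaces^G\to\Sp^G_{\qfgen}$ to the orbit $G/H\in\mathrm{Orb}_G\subseteq\Spaces^G$, and using that $\Sigma^\infty_+$ is a left adjoint hence commutes with the Yoneda-type description, I expect $\Sigma^\infty_+(G/H)$ to be the very-additive completion $(\underline{G/H})_{\mathrm{vadd}}$ of the corepresentable functor $\underline{G/H}=\map_{\mathrm{Span}(\QFin_G)}(G/H,-)$ on $\mathrm{Span}(\QFin_G)$, smashed with $\S$. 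But $G/H$ is a single orbit, so the corepresentable functor $\underline{G/H}$ is \emph{already} very additive — the decomposition map $\underline{G/H}(S)\to\prod_{\bar s\in S/G}\underline{G/H}(\pi^{-1}\bar s)$ is an equivalence because $\Map(G/H,-)$ sends the coproduct decomposition $S=\coprod_{\bar s}\pi^{-1}(\bar s)$ in $\mathrm{Span}(\QFin_G)$ to the product (a span out of an orbit into a coproduct factors uniquely through one summand, up to the decomposition). Hence no completion is needed: $\Sigma^\infty_+(G/H)\simeq \S\otimes\underline{G/H}$, the stabilized corepresentable. Then by the (spectral, stable) Yoneda lemma in $\Fun(\mathrm{Span}(\QFin_G),\Sp)$ — valid because $\Sp^G_{\qfgen}$ is a full subcategory closed under limits — we get
\[
	\map_{\Sp^G_{\qfgen}}\!\big(\S\otimes\underline{G/H}, X\big)\simeq \map_{\Fun(\mathrm{Span}(\QFin_G),\Sp)}\!\big(\S\otimes\underline{G/H}, X\big)\simeq X(G/H)=X^H,
\]
naturally in $X$. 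Tracking the $W_G(H)=\mathrm{Aut}_{\QFin_G}(G/H)$-action through the Yoneda equivalence gives exactly the residual action on $X^H$, which upgrades this to an equivalence of functors $\Sp^G_{\qfgen}\to\Sp^{BW_G(H)}$, i.e.\ the asserted corepresentability.

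I expect the main obstacle to be the verification that the corepresentable functor $\underline{G/H}$ is already very additive, i.e.\ that one does \emph{not} need to apply $(-)_{\mathrm{vadd}}$ when forming $\Sigma^\infty_+(G/H)$ for an orbit. This requires knowing that in $\mathrm{Span}(\QFin_G)$ a span $G/H \leftarrow U \to S$ with $S=\coprod_{\bar s} S_{\bar s}$ a (possibly infinite) coproduct decomposes as a coproduct of spans into the summands — which is true since $U$, being quasifinite with a map to an orbit $G/H$, is itself of the form $\coprod$ of orbits, but the subtlety is that the \emph{backward} leg $U\to G/H$ need not respect any decomposition of $U$, so one must argue at the level of the mapping space that $\Map_{\mathrm{Span}}(G/H, \coprod_{\bar s} S_{\bar s})\simeq \coprod_{\bar s}\Map_{\mathrm{Span}}(G/H, S_{\bar s})$, i.e.\ that the correspondence $U$ is connected (as a $G$-set over $G/H$) hence lands in a single summand. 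This is precisely the statement that $G/H$ is indecomposable and that $U\to G/H$ forces $U$ to be a sum of orbits each mapping onto $G/H$; combined with $U$ quasifinite this is a short check, but it is the one place where the structure of $\QFin_G$ (as opposed to an abstract category with pullbacks) genuinely enters. Once this is in hand, the rest is formal Yoneda plus adjunction bookkeeping, and the reduction of the $W_G(H)$-equivariant statement to the plain one via $\Sigma^\infty_+(W_G(H))$ being the free object.
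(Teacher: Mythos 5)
Your overall route --- identify $\Sigma^\infty_+(G/H)$ with a corepresentable on $\Span(\QFin_G)$ and invoke a spectral Yoneda lemma --- is genuinely different from the paper's proof, which never computes $\Sigma^\infty_+(G/H)$ at all: it uses the adjunction $\Sigma^\infty_+ \dashv \Omega^\infty$ to get $\Map_{\Sp^G_{\qfgen}}(\Sigma^\infty_+(G/H),X)\simeq \Map_{\Spaces^G}(G/H,\Omega^\infty X)\simeq \Omega^\infty X^H$ by the Yoneda lemma in $\Spaces^G$, and then upgrades from spaces to spectra by noting that both $X\mapsto X^H$ and $X\mapsto \map(\Sigma^\infty_+(G/H),X)$ are exact and agree after $\Omega^\infty$. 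Your version, however, hinges on a claim that is false: that the spectral corepresentable $\underline{G/H}$, i.e.\ $T\mapsto \Sigma^\infty_+\Map_{\Span(\QFin_G)}(G/H,T)$, is already very additive, ``so no completion is needed''. Two things go wrong there. First, your proposed verification is incorrect: the middle term $U$ of a span $G/H\leftarrow U\to \coprod_i S_i$ is an arbitrary quasifinite $G$-set over $G/H$ (think of a sum of transfers), so it need not be connected and need not land in a single summand; the true space-level statement is a \emph{product} decomposition $\Map_{\Span(\QFin_G)}(G/H,\coprod_i S_i)\simeq \prod_i\Map_{\Span(\QFin_G)}(G/H,S_i)$, not the coproduct you assert. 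Second, and more seriously, even this correct product decomposition does not yield very additivity of the spectral corepresentable, because $\Sigma^\infty_+$ does not commute with infinite products: already on $\pi_0$ the comparison map is $\Z[\prod_i\pi_0 Y_i]\to\prod_i\Z[\pi_0 Y_i]$, which is not surjective once infinitely many $Y_i$ are nonempty (the relevant case, e.g.\ $G=\Z$). This is exactly the difficulty the paper flags in Remark~\ref{remark:tomdieck_infinite_product}; for infinite $G$ the object $\Sigma^\infty_+(G/H)$ really is the very-additive localization of $\underline{G/H}$ and not $\underline{G/H}$ itself.

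The good news is that your argument does not need that claim, so the gap is repairable: since the inclusion $\Sp^G_{\qfgen}\subseteq\Fun(\mathrm{Span}(\QFin_G),\Sp)$ admits the left adjoint $(-)_{\vadd}$ (Lemma~\ref{lemma:spGqfgen_closed_limits}), for very additive $X$ one has
\[
\map_{\Sp^G_{\qfgen}}\big((\underline{G/H})_{\vadd},X\big)\simeq \map_{\Fun(\mathrm{Span}(\QFin_G),\Sp)}\big(\underline{G/H},X\big)\simeq X(G/H)=X^H
\]
by the spectral Yoneda lemma, with the $W_G(H)=\mathrm{Aut}_{\QFin_G}(G/H)$-action tracked as you indicate. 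So the fix is to route through the localization adjunction rather than through an (unprovable) very-additivity of the corepresentable, and to justify the identification $\Sigma^\infty_+(G/H)\simeq(\underline{G/H})_{\vadd}$ --- which you only gesture at --- either by the compatibility of $\Sigma^\infty_+$ with the localizations (established in the paper's monoidal section, i.e.\ after this lemma) or by checking that both objects corepresent the same space-valued functor $X\mapsto\Omega^\infty X^H$. The paper's proof avoids all of this bookkeeping at the cost of the slightly less direct exactness/$\Omega^\infty$ upgrade.
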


\begin{proof}
For every quasifinitely genuine $G$-spectrum $X$, we find that
\[
	\Map_{\Sp^G_{\qfgen}}(\Sigma^\infty_+(G/H), X) \simeq \Map_{\Spaces^G}(G/H, \Omega^\infty X) \simeq \Omega^\infty X(G/H)
\]
by the Yoneda lemma since $G/H$ is the presheaf represented by $G/H$ as a $G$-space. This proves the desired statement since both $X \mapsto X^H$ and $X \mapsto \map_{\Sp^G_{\qfgen}}(\Sigma^\infty_+(G/H), X)$ are exact functors which agree after applying $\Omega^\infty$. 
\end{proof}

\begin{warning} \label{warning:generators_spGqfgen}
Lemma~\ref{lemma:fixedpoints_corepresented} implies that the set $\{\Sigma^\infty_+(G/H)\}_H$ indexed by the cofinite subgroups of $G$ forms a set of generators of $\Sp^{G}_{\qfgen}$, but these generators are not compact since colimits are not formed underlying. This marks one of the main differences between $\Sp^{G}_{\qfgen}$ and $\Sp^G_{\fgen}$. 
\end{warning}

\begin{example}
If $G$ is a finite group, then the functor $i^\star : \Sp^G_{\qfgen} \to \Sp^G_{\fgen}$ is an equivalence of $\infty$-categories since the inclusion functor $i : \Fin_G \hookrightarrow \QFin_G$ is an equivalence.
\end{example}

We construct a $t$-structure on the $\infty$-category of quasifinitely genuine $G$-spectra. The important feature is that this $t$-structure is pointwise in contrast to the $t$-structure on the $\infty$-category of polygonic spectra constructed in~\textsection\ref{subsection:polygonic_t_structure}. 

\begin{proposition} \label{proposition:tstructure_Spqfgen}
The following pair of subcategories
\begin{align*}
	(\Sp^G_{\qfgen})_{\geq 0} &= \Fun^{\vadd}(\mathrm{Span}(\QFin_G), \Sp_{\geq 0}) \\
	(\Sp^G_{\qfgen})_{\leq 0} &= \Fun^{\vadd}(\mathrm{Span}(\QFin_G), \Sp_{\leq 0})
\end{align*}
determines a $t$-structure on the $\infty$-category $\Sp^{G}_{\qfgen}$ with $\Sp^{G, \heartsuit}_{\qfgen} \simeq \Fun^{\vadd}(\mathrm{hSpan}(\QFin_G), \mathrm{Ab})$.
\end{proposition}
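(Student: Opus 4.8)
The plan is to verify the axioms of a $t$-structure directly, exploiting that both candidate subcategories are cut out by pointwise conditions on the underlying diagram in $\Fun(\mathrm{Span}(\QFin_G), \Sp)$. First I would recall that $\Sp_{\geq 0}$ and $\Sp_{\leq 0}$ form a $t$-structure on $\Sp$, and that the truncation functors $\tau_{\geq 0}$ and $\tau_{\leq 0}$ on $\Sp$ preserve finite products; since very additivity only involves products indexed by the $G$-orbits of a quasifinite $G$-set, applying $\tau_{\geq 0}$ or $\tau_{\leq 0}$ pointwise to a very additive functor again yields a very additive functor. This shows that $(\Sp^G_{\qfgen})_{\geq 0}$ and $(\Sp^G_{\qfgen})_{\leq 0}$ are closed under the pointwise truncations, and in particular gives the truncation triangle $\tau_{\geq 1}X \to X \to \tau_{\leq 0}X$ for every $X \in \Sp^G_{\qfgen}$ (computed pointwise, hence as a limit it stays inside the subcategory by Warning~\ref{warning:colimits_spgqfgen}). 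The remaining axioms --- that $(\Sp^G_{\qfgen})_{\geq 0}$ is closed under $[1]$, that $(\Sp^G_{\qfgen})_{\leq 0}$ is closed under $[-1]$, and that $\Map(X, Y) \simeq 0$ for $X \in (\Sp^G_{\qfgen})_{\geq 1}$ and $Y \in (\Sp^G_{\qfgen})_{\leq 0}$ --- all follow pointwise from the corresponding facts in $\Sp$: the vanishing of mapping spectra reduces, via the generators $\Sigma^\infty_+(G/H)$ of Lemma~\ref{lemma:fixedpoints_corepresented} and Warning~\ref{warning:generators_spGqfgen}, to the statement that $\map(\Sigma^\infty_+(G/H), Y)^{\geq 1} \simeq Y(G/H)^{\geq 1} \simeq 0$ is dually connective. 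Accessibility of the $t$-structure follows since $(\Sp^G_{\qfgen})_{\geq 0}$ is presentable: it is the intersection of $\Sp^G_{\qfgen}$ with the accessible localization $\Fun(\mathrm{Span}(\QFin_G), \Sp_{\geq 0}) \subseteq \Fun(\mathrm{Span}(\QFin_G), \Sp)$, and one checks that the relevant class of maps is of small generation exactly as in Lemma~\ref{lemma:spGqfgen_closed_limits}.

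For the identification of the heart, I would argue as follows. An object $X$ of the heart is a very additive functor $\mathrm{Span}(\QFin_G) \to \Sp$ landing pointwise in $\Sp^\heartsuit \simeq \mathrm{Ab}$. Since $\mathrm{Ab}$ is (the nerve of) an ordinary category, a functor out of $\mathrm{Span}(\QFin_G)$ landing in $\mathrm{Ab}$ factors through the homotopy category $\mathrm{hSpan}(\QFin_G)$; this uses that $\mathrm{Ab}$ is $1$-truncated, so that mapping into it only sees the homotopy $1$-category of the source. Conversely any additive functor $\mathrm{hSpan}(\QFin_G) \to \mathrm{Ab}$ extends uniquely to $\mathrm{Span}(\QFin_G) \to \mathrm{Sp}^\heartsuit \hookrightarrow \Sp$, and very additivity on one side corresponds to very additivity on the other since the products involved are finite and exact on $\mathrm{Ab}$. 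This gives the equivalence $\Sp^{G,\heartsuit}_{\qfgen} \simeq \Fun^{\vadd}(\mathrm{hSpan}(\QFin_G), \mathrm{Ab})$.

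The main obstacle I anticipate is purely about \emph{colimits}: because colimits in $\Sp^G_{\qfgen}$ are not formed underlying (Warning~\ref{warning:colimits_spgqfgen}), one must be careful that $(\Sp^G_{\qfgen})_{\geq 0}$ really is closed under the colimits of $\Sp^G_{\qfgen}$ and not merely under pointwise colimits, and correspondingly that the connective cover $\tau_{\geq 0}^{\qfgen}$ agrees with the pointwise $\tau_{\geq 0}$. The key point is that the localization $(-)_{\mathrm{vadd}}$ from Lemma~\ref{lemma:spGqfgen_closed_limits} preserves connectivity: it is the left adjoint to a fully faithful inclusion and is given by a filtered colimit of pushouts along the connective maps $\bigoplus_i \Sigma^\infty_+\underline{S_i} \to \Sigma^\infty_+(\underline{\coprod_i S_i})$, all of which are maps of connective objects, so $(-)_{\mathrm{vadd}}$ carries $\Fun(\mathrm{Span}(\QFin_G), \Sp_{\geq 0})$ into $(\Sp^G_{\qfgen})_{\geq 0}$. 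Hence colimits of connective objects, being $(-)_{\mathrm{vadd}}$ applied to pointwise (hence connective) colimits, stay connective, and the pointwise truncation triangle is already the correct $t$-structure truncation triangle. Once this bookkeeping is in place the rest is routine, and I would present it in that order: (1) the pointwise subcategories satisfy the $t$-structure axioms, using the colimit remark to legitimize pointwise truncations; (2) accessibility and left completeness (the latter because left completeness can be checked pointwise, limits being underlying); (3) the heart computation via $1$-truncatedness of $\mathrm{Ab}$.
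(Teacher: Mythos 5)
Your proposal is correct in outline but proceeds differently from the paper. The paper does not verify the axioms by hand: it considers the endofunctor of $\Sp^G_{\qfgen}$ obtained by composing the pointwise truncation $\tau_{\leq -1}$ with $(-)_{\vadd}$, observes that this is a localization whose essential image is exactly $\Fun^{\vadd}(\mathrm{Span}(\QFin_G),\Sp_{\leq -1})$ (using that $\Sp_{\leq -1}$ is closed under \emph{arbitrary} products, so pointwise truncation preserves very additivity), checks that this image is closed under extensions, and then invokes \cite[Proposition 1.2.1.16]{Lur17}, following the pattern of \cite[Proposition 6.1]{BGS19}; the identification of the heart is not spelled out there at all, so your argument via the $1$-truncatedness of $\mathrm{Ab}$ (factoring through $\mathrm{hSpan}(\QFin_G)$, Eilenberg--Mac\,Lane inclusion preserving products) is a genuine addition and is fine. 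Your direct verification buys a more self-contained argument and makes the truncation functors visibly pointwise; the paper's route buys brevity and gets closure of the connective part under colimits and extensions for free from Lurie's criterion.

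Two points in your write-up need repair, and they sit exactly at the crux where ``very'' additive differs from additive. First, the products in the very additivity condition are indexed by $S/G$, which is typically \emph{infinite}; ``$\tau_{\geq 0}$ and $\tau_{\leq 0}$ preserve finite products'' is not the relevant fact. What you need (and what is true: $\tau_{\leq n}$ is a right adjoint, and $\tau_{\geq n}$ commutes with products because homotopy groups do) is that truncations commute with arbitrary products of spectra; the same correction applies to your heart argument. Second, the orthogonality $\Map(X,Y)\simeq\ast$ for $X$ pointwise $1$-connective and $Y$ pointwise coconnective should be argued directly: mapping spaces in $\Sp^G_{\qfgen}$ agree with those in $\Fun(\mathrm{Span}(\QFin_G),\Sp)$, hence are limits of mapping spaces $\Map_{\Sp}(X(S),Y(T))$, each of which is contractible. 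Your reduction to the generators $\Sigma^\infty_+(G/H)$ of Lemma~\ref{lemma:fixedpoints_corepresented} would require knowing that every pointwise $1$-connective object is a colimit of $1$-connective shifts of these generators, which is not available at this stage (the generators are not compact, cf.\ Warning~\ref{warning:generators_spGqfgen}, and no tom Dieck-type formula for their fixed points is known). Finally, the colimit bookkeeping in your last paragraph is unnecessary: closure of the connective part under the colimits of $\Sp^G_{\qfgen}$ is automatic once the $t$-structure exists, since the connective part is the left orthogonal of the coconnective part; and the sketched claim that $(-)_{\vadd}$ preserves connectivity because the generating maps of Lemma~\ref{lemma:spGqfgen_closed_limits} are maps of connective objects is not sound as stated (one localizes at all shifts of those maps), so it is better omitted than relied upon.
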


In other words, a quasifinitely genuine $G$-spectrum $X$ is connective provided that the spectrum $X^S$ is connective for every quasifinite $G$-set $S$ and $X$ is coconnective if $X^S$ is coconnective for every $S$. We remark that the proof of Proposition~\ref{proposition:tstructure_Spqfgen} proceeds as the proof that the $\infty$-category of spectral Mackey functors on an orbital $\infty$-category admits a $t$-structure (cf.~\cite[Prop. 6.1]{BGS19}).

\begin{proof}
Consider the functor $F$ of $\infty$-categories defined by the following composite
\[
	\Fun^{\vadd}(\mathrm{Span}(\QFin_G), \Sp) \hookrightarrow \Fun(\mathrm{Span}(\QFin_G), \Sp) \xrightarrow{\tau_{\leq -1}(-)_{\vadd}} \Fun^{\vadd}(\mathrm{Span}(\QFin_G), \Sp)
\]
and note $F$ is a localization since both $\tau_{\leq -1}$ and $(-)_{\vadd}$ are localizations. Using that $\Sp_{\leq -1}$ is closed under arbitrary products, we conclude that the essential image of $F$ is equivalent to
\[
	\Fun^{\vadd}(\mathrm{Span}(\QFin_G), \Sp_{\leq -1})
\]
which is closed under extensions since $\Sp_{\leq -1}$ is and products in $\Sp_{\leq -1}$ are calculated in $\Sp$ where they are exact. The desired assertion now follows from~\cite[Proposition 1.2.1.16]{Lur17}.
\end{proof}

We will use the following terminology (see also Remark~\ref{remark:geometric_uniformly_boundedbelow}). 

\begin{definition}
A quasifinitely genuine $G$-spectrum $X$ is uniformly bounded below provided that the collection of spectra $\{X^S\}_{S \in \QFin_G}$ is uniformly bounded below.
\end{definition}

In~\cite[Proposition 3.2.11]{McC21}, the second author established a variant of the classical tom Dieck splitting for spectral Mackey functors on the finite coproduct completion of an orbital $\infty$-category. More precisely, the tom Dieck splitting can be regarded as providing a formula for the free spectral Mackey functor of a spectral presheaf on an orbital $\infty$-category. It would be desirable to obtain a similar formula for quasifinitely genuine $G$-spectra. Currently, we do not know how to do this (cf. Remark~\ref{remark:tomdieck_infinite_product}). However, we will require a weaker version of such a splitting for our discussion of geometric fixedpoints in~\textsection\ref{subsection:geometricfixedpoints}. This will be the content of the rest of this section. The reader might want to skip this on a first reading and proceed to~\textsection\ref{subsection:monoidal}. 

\begin{remark} \label{remark:tomdieck_infinite_product}
One could conjecture that for any $G$-space $X$, the associated quasifinitely genuine $G$-spectrum $\Sigma^\infty_+ X$ satisfies the formula
\[
(\Sigma^\infty_+ X)^G \simeq \prod_{(K)} (X^K)_{hW_K}
\]
where the product is indexed over the conjugacy classes of cofinite subgroups $K$ of $G$. We have been unable to prove this. The main obstruction is that we could not decide whether the group completion of $\E_\infty$-monoids commutes with infinite products. 
\end{remark}

We will however obtain an analog of the tom Dieck splitting in the slightly larger $\infty$-category of additive functors $\Fun^{\add}(\mathrm{Span}(\QFin_G), \Sp)$. 
This statement will be crucial for a proof in our discussion of the geometric fixedpoints functor on $\Sp^{G}_{\qfgen}$ in~\textsection\ref{subsection:geometricfixedpoints}.
First consider the functor
\[
	i_! : \Fun(\QFin_G^{\op}, \Spaces) \to \Fun(\mathrm{Span}(\QFin_G), \Spaces) 
\]
obtained by forming the left Kan extension along the functor $i: \QFin_G^{\op} \to \mathrm{Span}(\QFin_G)$.

\begin{lemma} \label{Kan}
If $E \in  \Fun(\QFin_G^{\op}, \Spaces)$, then the left Kan extension $i_!(E)$ is given by the functor
$
	\mathrm{Span}(\QFin_G) \to \Spaces
$
determined by the construction 
\[
S \mapsto \left((\QFin_G)_{/E \times \underline{S}}\right)^\simeq  \subseteq \mathrm{Map}_{\mathrm{Span}(\Fun(\QFin_G^{\op}, \Spaces))}(E, \underline{S}). 
\]
The term $\left((\QFin_G)_{/E \times \underline{S}}\right)^\simeq$ is the comma category of the Yoneda inclusion $\QFin_G \to \Fun(\QFin_G^{\op}, \Spaces)$ over $E \times \underline{S}$. It is considered as the full subspace of the mapping space $\mathrm{Map}_{\mathrm{Span}(\Fun(\QFin_G^{\op}, \Spaces))}(E, S)$ on those spans whose middle term is in $\QFin_G$. The latter description also makes the functoriality clear.  
\end{lemma}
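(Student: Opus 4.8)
The plan is to evaluate the left Kan extension pointwise and then simplify the resulting colimit by a cofinality argument. Recall that $i \colon \QFin_G^{\op} \to \mathrm{Span}(\QFin_G)$ sends objects to objects and a morphism $g \colon B \to A$ of $\QFin_G$ to the span $A \xleftarrow{g} B \xrightarrow{\id} B$. First I would record the pointwise formula: for $S \in \mathrm{Span}(\QFin_G)$ one has
\[
	i_!(E)(S) \simeq \colim_{(A,\alpha) \in \mathcal{Q}_S} E(A), \qquad \mathcal{Q}_S := \QFin_G^{\op} \times_{\mathrm{Span}(\QFin_G)} \mathrm{Span}(\QFin_G)_{/S},
\]
the colimit being taken over the composite $\mathcal{Q}_S \to \QFin_G^{\op} \xrightarrow{E} \Spaces$. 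I would then unwind $\mathcal{Q}_S$: an object is $A \in \QFin_G$ together with a morphism $i(A) \to S$, i.e.\ a span $A \xleftarrow{a} U \to S$ with $U \in \QFin_G$; a morphism to $(A', A' \xleftarrow{a'} U' \to S)$ is a map $g \colon A' \to A$ of $\QFin_G$ together with an equivalence $U \simeq U'$ over $S$ identifying $a$ with $g \circ a'$, this last datum being the $2$-cell that witnesses $\alpha \simeq \alpha' \circ i(g)$ in $\mathrm{Span}(\QFin_G)$. This is the one point where the genuine structure of the span $\infty$-category, as opposed to the homotopy $1$-category of spans, really enters.

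Next I would cut the indexing category down. Let $\mathcal{Q}_S^{\circ} \subseteq \mathcal{Q}_S$ be the full subcategory spanned by the objects whose backward leg $a \colon U \to A$ is an equivalence. Replacing $U$ by $A$ identifies an object of $\mathcal{Q}_S^{\circ}$ with an object $A \to S$ of $(\QFin_G)_{/S}$, and the description of morphisms above forces every morphism of $\mathcal{Q}_S^{\circ}$ to be invertible, so $\mathcal{Q}_S^{\circ} \simeq \big((\QFin_G)_{/S}\big)^{\simeq}$. The key claim is that $\mathcal{Q}_S^{\circ} \hookrightarrow \mathcal{Q}_S$ is cofinal. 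By Quillen's Theorem~A in its $\infty$-categorical form it suffices to check that for every $x = (A, A \xleftarrow{a} U \to S)$ the category $\mathcal{Q}_S^{\circ} \times_{\mathcal{Q}_S} (\mathcal{Q}_S)_{x/}$ is weakly contractible, and I would prove this by exhibiting an initial object, namely $(U, U \xleftarrow{\id} U \to S)$ equipped with the morphism $x \to (U, U \xleftarrow{\id} U \to S)$ given by $a \colon U \to A$ and the identity $2$-cell. Unwinding this comma $\infty$-category carefully enough to see the initial object is where I expect essentially all of the work to lie; everything else is formal.

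Granting cofinality, $i_!(E)(S) \simeq \colim_{\mathcal{Q}_S^{\circ}} (E \circ p)$, and since $\mathcal{Q}_S^{\circ} \simeq \big((\QFin_G)_{/S}\big)^{\simeq}$ is an $\infty$-groupoid this colimit is just the total space of the left fibration it classifies, i.e.\ the $\infty$-groupoid of triples $\big(V \in \QFin_G,\ e \in E(V),\ f \colon V \to S\big)$. This is precisely $\big((\QFin_G)_{/E \times \underline{S}}\big)^{\simeq}$. Finally I would check naturality in $S \in \mathrm{Span}(\QFin_G)$, which is cleanest from the span description: composing a span $E \leftarrow \underline{V} \to \underline{S}$ with $V \in \QFin_G$ with a morphism $S \to S'$ of $\mathrm{Span}(\QFin_G)$ represented by $S \leftarrow W \to S'$ with $W \in \QFin_G$ replaces the middle term by $\underline{V} \times_{\underline{S}} \underline{W} \simeq \underline{V \times_S W}$, which again lies in $\QFin_G$ because $\QFin_G$ admits pullbacks and the Yoneda embedding preserves them. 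Hence the subspace $\big((\QFin_G)_{/E \times \underline{S}}\big)^{\simeq} \subseteq \mathrm{Map}_{\mathrm{Span}(\Fun(\QFin_G^{\op}, \Spaces))}(E, \underline{S})$ is stable under the span functoriality in $S$; this is the functor appearing in the statement, and the chain of equivalences above identifies it with $i_!(E)$.
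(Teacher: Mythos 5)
Your strategy is correct, but it is genuinely different from the paper's. The paper argues entirely in the $E$-variable: the assignment $E \mapsto \left((\QFin_G)_{/E \times \underline{S}}\right)^{\simeq}$ is functorial and colimit-preserving in $E$ (because the objects of $\QFin_G$ are absolutely compact in $\Fun(\QFin_G^{\op},\Spaces)$), and both this assignment and $i_!$ send a representable $\underline{A}$ to the corepresentable $\Map_{\Span(\QFin_G)}(A,-)$; since colimit-preserving functors out of a presheaf category are determined by their restriction to representables, the two agree. You instead work in the $S$-variable: the pointwise colimit formula for $i_!$, a cofinality argument shrinking the comma category $\mathcal{Q}_S$ to the groupoid of spans with invertible backward leg, and then unstraightening over that groupoid. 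Your cofinality step does check out: for $x=(A, A\xleftarrow{a}U\to S)$ the object $(U, U=U\to S)$ with the map you describe is initial, since precomposition with it identifies both relevant mapping spaces with the space of equivalences $B\simeq U$ over $S$ for any $(B,\id_B,t)\in\mathcal{Q}_S^{\circ}$ — though you only sketch this, and it is where the coherence bookkeeping sits. The paper's route is shorter and avoids the comma-category combinatorics; yours is more explicit about where the space of spans with middle term in $\QFin_G$ comes from.

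The one genuine loose end is naturality in $S$. You identify $i_!(E)(S)$ with the asserted space separately for each $S$, and you verify that the asserted formula defines a functor on $\Span(\QFin_G)$, but you never exhibit the identification as an equivalence of functors, which is what the lemma claims. This is not automatic from your construction: the cofinal subcategory $\mathcal{Q}_S^{\circ}$ is \emph{not} stable under the functoriality of $\mathcal{Q}_S$ in $S$, because composing a span with invertible backward leg with a span $S\leftarrow W\to S'$ replaces the backward leg by its base change along $W\to S$, which is in general no longer invertible; so the pointwise equivalences do not cohere for free. The clean repair: having shown the target $F_E$ is a functor on $\Span(\QFin_G)$, produce one global comparison $i_!E\to F_E$ by the adjunction $i_!\dashv i^{\ast}$ from the map $E(A)\to F_E(A)$, $e\mapsto (A, e, \id_A)$, which is natural in $A\in\QFin_G^{\op}$, and then use your pointwise computation to check that this single map is an equivalence at every $S$. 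With that addition your argument is complete and constitutes a valid alternative to the paper's proof.
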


\begin{proof}
The assignment that takes $E$ to the functor $\mathrm{Span}(\QFin_G) \to \Spaces$ described above is canonically a covariant functor in $E$ (e.g. by the desciption through spans or by a straightening construction for slices). Moreover since the objects of $\QFin_G$ are absolutely compact in $\Fun(\QFin_G^{\op}, \Spaces)$ it follows that this assignment preserves colimits\footnote{An object $c \in \cat{C}$ is called absolutely compact if $\Map_{\cat{C}}(c, -): \cat{C} \to \Spaces$ preserves all colimits. Then  the assignment 
\[
\cat{C} \to \Spaces \qquad d \mapsto \left((\cat{C}^{\mathrm{ac}})_{/d}\right)^\simeq
\]
preserves colimits, where $\cat{C}^{\mathrm{ac}} \subseteq \cat{C}$ is the full subcategory on the absolutely compact objects. Indeed, note that
\[
\left((\cat{C}^{\mathrm{ac}})_{/d}\right)^\simeq \simeq \colim_{c \in (\cat{C}^{\mathrm{ac}})^\simeq} \Map_{\cat{C}}(c,d)
\]
which commutes with colimits in $d$.}, so its uniquely determined by the composition
\[
\QFin_G \to \Fun(\QFin_G^{\op}, \Spaces) \to  \Fun(\mathrm{Span}(\QFin_G), \Spaces).
\]
This composition clearly agrees with the corresponding composition for the left Kan extension, since the Kan extension sends representables to representables (here corepresentables since we work with copresheaves on $\mathrm{Span}(\QFin_G))$.
\end{proof}

Next, we wish to describe the left adjoint to the functor 
\[
	\Fun^{\add}(\mathrm{Span}(\QFin_G), \Sp) \to \Fun^{\times}(\QFin_G^{\op}, \Spaces),
\]
which we denote by $\widetilde{\Sigma}^\infty_+$. Here the right hand side denotes the full subcategory of $\Fun(\QFin_G^{\op}, \Spaces)$ spanned by those functors which preserve finite products. We establish a formula for $\widetilde{\Sigma}^\infty_+$.

\begin{lemma} \label{lemma:tomDieck_splitting}
If $E \in \Fun^{\times}(\QFin^{\op}_G, \Spaces)$, then $i_! E$ preserves finite products. Thus, the functor $i_! E$ canonically refines to a functor of $\infty$-categories
\[
	\mathrm{Span}(\QFin_G) \to \mathrm{Mon}_{\E_\infty}(\Spaces)
\]
determined by $S \mapsto \left((\QFin_G)_{/E \times \underline{S}}\right)^\simeq  \subseteq \mathrm{Map}_{\mathrm{Span}(\Fun(\QFin_G^{\op}, \Spaces))}(E, \underline{S})$ and $\widetilde{\Sigma}^{\infty}_+ E$ is given by the pointwise group completion of this additive functor.
\end{lemma}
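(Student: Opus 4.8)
The plan is to prove the three assertions of the lemma in turn: that $i_!E$ preserves finite products, that it then refines canonically to a functor valued in $\mathrm{Mon}_{\E_\infty}(\Spaces)$, and that the pointwise group completion of this refinement computes the left adjoint $\widetilde\Sigma^\infty_+$. Throughout I would work from the description of $i_!E$ furnished by Lemma~\ref{Kan}: explicitly, $i_!E(S)$ is the core of the $\infty$-category of triples $(T, x, f)$ with $T\in\QFin_G$, $x\in E(T)$, and $f\colon T\to S$ a morphism of $\QFin_G$, equivalently the colimit $\colim_{T\in\QFin_G^\simeq}\bigl(E(T)\times\Map_{\QFin_G}(T,S)\bigr)$ over the core groupoid of $\QFin_G$.

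For finite products, the terminal-object case is immediate: $E\times\underline\emptyset\simeq\underline\emptyset$ since $E(\emptyset)\simeq\ast$ (as $E$ preserves the empty product), so $i_!E(\emptyset)\simeq(\QFin_G)_{/\emptyset}^\simeq\simeq\ast$. For a binary disjoint union $S=S_1\sqcup S_2$ I would use that $\QFin_G$ is extensive: a morphism $f\colon T\to S_1\sqcup S_2$ is the same datum as a decomposition $T\simeq T_1\sqcup T_2$ into the (still quasifinite) preimages of $S_1$ and $S_2$ together with morphisms $f_i\colon T_i\to S_i$; and, because $E$ preserves finite products, an element $x\in E(T)=E(T_1\sqcup T_2)$ is the same as a pair $(x_1,x_2)\in E(T_1)\times E(T_2)$. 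Plugging these two splittings into the description of $i_!E(S)$ as a groupoid of triples exhibits an equivalence $i_!E(S_1\sqcup S_2)\simeq i_!E(S_1)\times i_!E(S_2)$, and one sees it is the canonical comparison map since it is manifestly compatible with the span projections $S_1\sqcup S_2\to S_i$. Hence $i_!E$ preserves finite products.

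The refinement in the second assertion is then formal: $\mathrm{Span}(\QFin_G)$ is semiadditive, its finite coproducts being biproducts (cf.~\cite[Proposition 4.2]{Bar17}, applied to $\QFin_G$, which has finite coproducts and pullbacks), and any finite-product-preserving functor from a semiadditive $\infty$-category into $\Spaces$ lifts canonically along $\mathrm{Mon}_{\E_\infty}(\Spaces)\to\Spaces$; this lift is the ``additive functor'' in the statement, with underlying $\Spaces$-valued functor $i_!E$. For the third assertion, write $(i_!E)^{\mathrm{gp}}$ for the pointwise group completion; it is valued in connective spectra and remains additive because $(-)^{\mathrm{gp}}\colon\mathrm{Mon}_{\E_\infty}(\Spaces)\to\Sp^{\mathrm{cn}}$ preserves finite products (its essential image is closed under finite products). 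For $X\in\Fun^{\add}(\mathrm{Span}(\QFin_G),\Sp)$ I would chain natural equivalences: connectivity of $(i_!E)^{\mathrm{gp}}$ lets one replace $X$ by its pointwise connective cover $\tau_{\geq 0}X$; the adjunction between $(-)^{\mathrm{gp}}$ and $\Sp^{\mathrm{cn}}\hookrightarrow\mathrm{Mon}_{\E_\infty}(\Spaces)$ turns the mapping space into $\Map(i_!E,\tau_{\geq 0}X)$ among $\mathrm{Mon}_{\E_\infty}(\Spaces)$-valued additive functors; the semiadditivity equivalence $\Fun^{\add}(\mathrm{Span}(\QFin_G),\mathrm{Mon}_{\E_\infty}(\Spaces))\simeq\Fun^{\add}(\mathrm{Span}(\QFin_G),\Spaces)$ turns it into $\Map(i_!E,\Omega^\infty X)$ among $\Spaces$-valued additive functors; full faithfulness of $\Fun^{\add}\subseteq\Fun$ and the adjunction between $i_!$ and restriction along $i$ turn it into $\Map(E,(\Omega^\infty X)|_{\QFin_G^{\op}})$ in $\Fun(\QFin_G^{\op},\Spaces)$; and, since $E$ and $(\Omega^\infty X)|_{\QFin_G^{\op}}=\mathrm{res}\,X$ preserve products, full faithfulness of $\Fun^\times\subseteq\Fun$ identifies this with $\Map_{\Fun^\times(\QFin_G^{\op},\Spaces)}(E,\mathrm{res}\,X)$. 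This exhibits $(i_!(-))^{\mathrm{gp}}$ as left adjoint to the restriction functor, i.e.\ as $\widetilde\Sigma^\infty_+$.

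The step carrying the actual content is the product-preservation of $i_!E$. Its two ingredients — extensivity of $\QFin_G$ (maps \emph{into} a disjoint union split) and product-preservation of $E$ (sections \emph{over} a disjoint union split) — are each elementary, but combining them cleanly on the colimit/groupoid description of $i_!E$, and verifying that the splitting obtained is the canonical comparison map rather than merely an abstract equivalence, is where care is needed. Everything after that is formal bookkeeping with semiadditivity and adjunctions.
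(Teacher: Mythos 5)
Your proof is correct and follows essentially the same route as the paper: both rest on the description of $i_!E$ from Lemma~\ref{Kan}, both locate the real content in the product-preservation of $i_!E$, and both finish the last two assertions by the same formal semiadditivity (\cite[Corollary 2.5]{GGN16}-type) and group-completion adjunction arguments. The only difference is presentational: you verify the binary-product case by decomposing the comma groupoid directly, using extensivity of $\QFin_G$ (maps into $S_1\sqcup S_2$ split the source into quasifinite preimages) together with product-preservation of $E$, whereas the paper packages exactly the same splitting by passing to the $\infty$-topos $\Fun^{\times}(\QFin_G^{\op},\Spaces)$, where $\underline{S\sqcup T}\simeq \underline{S}\sqcup\underline{T}$ and slices over coproducts decompose, and then restricting to objects with total space in $\QFin_G$.
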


\begin{proof}
We wish to prove that if $E$ preserves products, then the left Kan extension $i_! E$ given by the formula in Lemma~\ref{Kan} preserves finite products. 
To see this, we note that finite products in $\mathrm{Span}(\QFin_G)$ are given by disjoint unions of quasifinite $G$-sets. To check whether a functor $E': \mathrm{Span}(\QFin_G) \to \Spaces$ preserves products, we may restrict along
\[
\QFin_G^\op \to \mathrm{Span}(\QFin_G) \to \Spaces
\]
and check if this preserves finite products. In the case of interest this restriction is given by
\[
S \mapsto \left((\QFin_G)_{/E \times \underline{S}}\right)^\simeq 
\]
with the `pullback' functoriality. For two quasifinite $G$-sets $S$ and $T$, we then need to verify that 
\[
\left((\QFin_G)_{/E \times \underline{S \sqcup T}}\right)^\simeq \to \left((\QFin_G)_{/E \times \underline{S}}\right)^\simeq  \times \left((\QFin_G)_{/E \times \underline{T}}\right)^\simeq 
\]
is an equivalence. For this to be true it would be sufficient to have that $\underline{S \sqcup T} = \underline{S} \sqcup \underline{T}$ which is unfortunately false in 
$\Fun(\QFin^{\op}_G, \Spaces)$. Instead, we can work in the full subcategory
\[
\Fun^{\times}(\QFin^{\op}_G, \Spaces) \subseteq \Fun(\QFin^{\op}_G, \Spaces)
\]
which contains the Yoneda image of $\QFin_G$ and the functor $E$ by virtue of the assumption that $E$ preserves finite products. Now we interpret 
\[
\left((\QFin_G)_{/E \times \underline{S}}\right)^\simeq
\]
as the comma category over the functor 
\[
\QFin_G \to \Fun^\times(\QFin_G^{\op}, \Spaces)
\]
and note that this inclusion does preserves coproducts. In other words, we have that 
$\underline{S \sqcup T} = \underline{S} \sqcup \underline{T}$ in the $\infty$-category $\Fun^\times(\QFin_G^{\op}, \Spaces)$. Therefore we find that 
\[
\left((\QFin_G)_{/E \times \underline{S\sqcup T} }\right)^\simeq \simeq \left((\QFin_G)_{/ (E\times \underline{S}) \sqcup (E \times \underline{T})} \right)^\simeq
\]
and since $\mathcal{X} = \Fun^\times(\QFin_G^{\op}, \Spaces)$ is an $\infty$-topos we have that the canonical map $\mathcal{X}_{/ x \sqcup y} \xrightarrow{\simeq} \mathcal{X}_{/x} \times \mathcal{X}_{/y}$ is an equivalence with inverse given by taking the coproduct. In our situation this equivalence restricts to an equivalence on the full subcategories spanned by those objects whose total space lies in $\QFin_G \subseteq \mathcal{X}$. This finishes the argument that $i_!(E)$ preserves binary products. The fact that it preserves the terminal object works similar, but easier and is left to the reader.  

Consequently, the left adjoint of the restriction functor
\[
\Fun^{\add}(\mathrm{Span}(\QFin_G), \Spaces) \to \Fun^{\times}(\QFin_G^{\op}, \Spaces)
\]
is described by the formula for $i_!$ and the forgetful functor $\mathrm{Mon}_{\E_\infty}(\Spaces) \to \Spaces$ induces an equivalence 
\[
	\Fun^{\add}(\mathrm{Span}(\QFin_G), \mathrm{Mon}_{\E_\infty}(\Spaces)) \xrightarrow{\simeq} \Fun^{\add}(\mathrm{Span}(\QFin_G), \Spaces)
\]
since $\mathrm{Span}(\QFin_G)$ is semiadditive (cf.~\cite[Corollary 2.5]{GGN16}). We note that the forgetful functor
\[
	\Fun^{\add}(\mathrm{Span}(\QFin_G), \Sp) \to \Fun^{\add}(\mathrm{Span}(\QFin_G), \mathrm{Mon}_{\E_\infty}(\Spaces))
\]
admits a left adjoint given by forming the pointwise group completion (which also is additive since group completion commutes with finite products). Combining these assertions proves the desired statement.
\end{proof}

\subsection{Monoidal properties}\label{subsection:monoidal}
In this section, we will provide a symmetric monoidal structure on the $\infty$-category $\Sp^G_{\qfgen}$ of quasifinitely genuine $G$-spectra with the property that the functor
\[
\Sigma^\infty_+: \Spaces^G \to \Sp^G_{\qfgen}
\] 
is symmetric monoidal and such that the object $\Sigma^\infty_+(S)$ is self-dual for every $S \in \QFin_G$.

\begin{remark}
We also provide a symmetric monoidal structure on the larger $\infty$-category 
\[
\Fun^{\mathrm{add}}(\mathrm{Span}(\QFin_G), \Sp)
\]
such that the left adjoint functors of $\infty$-categories
\[
\Spaces^G \to \Fun^{\mathrm{add}}(\mathrm{Span}(\QFin_G), \Sp) \to \Sp^G_{\qfgen} 
\]
are symmetric monoidal. We remark that it is not clear what the intrinsic/geometric meaning of the $\infty$-category $\Fun^{\mathrm{add}}(\mathrm{Span}(\QFin_G), \Sp)$ is, but it will be technically convenient to work there for some arguments since we do not have to worry about infinite products (cf. Remark~\ref{remark:tomdieck_infinite_product}).
\end{remark}

The category $\QFin_G$ of quasifinite $G$-sets has cartesian products since it has pullbacks and the trivial $G$-set given by the one-point space is quasifinite. Therefore, we may regard the category $\QFin_G$ with the cartesian symmetric monoidal structure. Below, we construct an induced symmetric monoidal structure on $\mathrm{Span}(\QFin_G)$ given on objects by forming the cartesian product in $\QFin_G$. This was also carried out in~\cite{BGS19}.

\begin{construction}
The construction $\mathrm{Span}(-)$ defines a functor $\Cat_\infty^{\mathrm{LEx}} \to \Cat_\infty$ where the source denotes the $\infty$-category of $\infty$-categories with finite limits and finite limit preserving functors\footnote{In fact, pullbacks would be enough but we do not need this generality here.}. The functor $\mathrm{Span}$ preserves products meaning that the canonical morphism 
\[
	\mathrm{Span}(\cat{C} \times \cat{D}) \to \mathrm{Span}(\cat{C}) \times \mathrm{Span}(\cat{D})
\]
is an equivalence for each pair of $\infty$-categories with finite limits. 

An $\infty$-category $\cat{C}$ with finite products gives rise to a cartesian symmetric monoidal $\infty$-category which straightens to an $\E_\infty$-monoid in $\Cat_\infty$, i.e. a functor of $\infty$-categories
\[
\mathrm{Fin}_\ast \to \Cat_\infty
\]
satisfying the Segal condition. If $\cat{C}$ has all finite limits then this functor factors trough $\Cat_\infty^{\mathrm{LEx}} \to \Cat_\infty$, since products commute with finite limits. We postcompose this functor with the functor $\mathrm{Span}$ to obtain the desired $\E_\infty$-monoid structure on $\mathrm{Span}(\cat{C})$ in $\Cat_\infty$.

The symmetric monoidal $\infty$-category $\mathrm{Span}(\cat{C})$ has the property that every object $c$ of $\cat{C}$ is canonically self-dual when considered as an object of $\mathrm{Span}(\cat{C})$. To see this one simply exhibits the evaluation and coevaluation of the duality given by the span 
\[
	c \times c \xleftarrow{\Delta} c \to \mathrm{pt}
\]
once read from the left to the right and once in the reverse direction. The zigzag identities are then readily checked since the composition
\[
\begin{tikzcd}
&& c \ar[ld,"\Delta"'] \ar[rd, "\Delta"]\\
{} & c \times c \ar[rd, "\id \times \Delta"]  \ar[ld, "\mathrm{pr}_1"'] & & c \times c \ar[ld, "\Delta \times \id"'] \ar[rd, "\mathrm{pr}_2"] \\
c & & c \times c \times c& & c
\end{tikzcd}
\]
is the identity. 
\end{construction}

The symmetric monoidal structure on $\mathrm{Span}(\QFin_G)$ induces the Day convolution symmetric monoidal structure on the $\infty$-category $\Fun(\mathrm{Span}(\QFin_G), \Sp)$ with the following properties:

\begin{enumerate}[leftmargin=2em, topsep=5pt, itemsep=5pt]
	\item The symmetric monoidal structure on $\Fun(\mathrm{Span}(\QFin_G), \Sp)$ is closed and the internal hom can be described by the following formula
	\[
		\underline{\Hom}(X, Y)(S) \simeq \int_{T \in \QFin_G} \map_{\Sp}(X(T), Y(T \times S))
	\]
	for every pair of functors $X, Y : \mathrm{Span}(\QFin_G) \to \Sp$ and every quasifinite $G$-set $S$. 

	\item The contravariant Yoneda functor of $\infty$-categories
	\[
		\mathrm{Span}(\QFin_G)^{\mathrm{op}} \to \Fun(\mathrm{Span}(\QFin_G), \Sp)
	\]
	given by the assignment $S \mapsto \underline{S}$ with $\underline{S}(T) = \Sigma^\infty_+ \Map_{\mathrm{Span}(\QFin_G)}(S, T)$ canonically refines to a symmetric monoidal functor.
\end{enumerate}

In particular, the second property above implies that for every quasifinite $G$-set $S$, the functor $\underline{S}$ is canonically self-dual as an object of the symmetric monoidal $\infty$-category $\Fun(\mathrm{Span}(\QFin_G), \Sp)$. Furthermore, the following composite functor
\[
	\QFin_G \to \mathrm{Span}(\QFin_G)^{\op} \to \Fun(\mathrm{Span}(\QFin_G), \Sp)
\]
refines to a symmetric monoidal functor, thus also its colimit preserving extension
\[
	\Fun(\QFin_G^{\op}, \Spaces) \to \Fun(\mathrm{Span}(\QFin_G), \Sp),
\]
where the source carries the cartesian symmetric monoidal structure (which agrees with Day convolution symmetric monoidal structure with respect to the Cartesian product on $\QFin_G$). 

\begin{proposition} \label{proposition:qfgen_symmetric_monoidal_localization}
The $\infty$-categories $\Sp^{G}_{\qfgen}$ and $\Fun^{\mathrm{add}}(\mathrm{Span}(\QFin_G), \Sp)$ are both symmetric monoidal localizations of $\Fun(\mathrm{Span}(\QFin_G), \Sp)$.
\end{proposition}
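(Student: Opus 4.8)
The plan is to invoke the standard recognition principle for symmetric monoidal localizations, \cite[Proposition~2.2.1.9]{Lur17}: if $L$ is a localization functor on a symmetric monoidal $\infty$-category $\mathcal{C}$ whose tensor product preserves small colimits separately in each variable, then $L$ is compatible with the tensor product (so that the local subcategory inherits a symmetric monoidal structure for which $L$ is symmetric monoidal) if and only if $f\otimes\id_Z$ is an $L$-equivalence for every $L$-equivalence $f$ and every object $Z$. We take $\mathcal{C}=\Fun(\Span(\QFin_G),\Sp)$ with the Day convolution structure, which is closed (property~(1) recorded before the statement), so $(-)\otimes Z$ is a left adjoint and hence preserves all colimits; thus the hypothesis on the tensor product is satisfied. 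Both $\Sp^G_{\qfgen}$ and $\Fun^{\add}(\Span(\QFin_G),\Sp)$ are localizations of $\mathcal{C}$ at a \emph{set} of morphisms: for $\Sp^G_{\qfgen}$ this is the content of (the proof of) Lemma~\ref{lemma:spGqfgen_closed_limits}, and for $\Fun^{\add}(\Span(\QFin_G),\Sp)$ the same argument applies with the set of generating maps indexed only by \emph{finite} coproducts. It therefore suffices to verify the displayed condition in each case.

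First I would reduce both variables in the condition to generators. Since the class of $L$-equivalences is strongly saturated and $(-)\otimes Z$ preserves colimits, it is enough to check that $f\otimes Z$ is an $L$-equivalence when $f$ ranges over the chosen generating set of $L$-equivalences and $Z$ ranges over a set of objects generating $\mathcal{C}$ under colimits; for the latter we may take the corepresentable functors $\underline T$, $T\in\QFin_G$. So concretely I must show, for the localization onto $\Sp^G_{\qfgen}$, that $\big(\bigoplus_{i\in I}\underline{S_i}\big)\otimes\underline T\to\underline S\otimes\underline T$ is an $L$-equivalence for every quasifinite $G$-set $S\cong\coprod_{i\in I}S_i$, and similarly with $I$ finite for the additive localization.

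Now I would compute the tensor product. By property~(2) before the statement, the contravariant Yoneda embedding $\Span(\QFin_G)^{\op}\to\mathcal{C}$ is symmetric monoidal, hence $\underline S\otimes\underline T\simeq\underline{S\times T}$, where $S\times T$ carries the diagonal $G$-action; combining this with the fact that $(-)\otimes\underline T$ preserves direct sums, the map above is identified with $\bigoplus_{i\in I}\underline{S_i\times T}\to\underline{\coprod_{i\in I}(S_i\times T)}$. This is again a generating $L$-equivalence of exactly the same shape, provided $\coprod_{i\in I}(S_i\times T)$ is quasifinite --- and this is the one point of the proof that is not purely formal. It holds because $\QFin_G$ is closed under finite products: the stabilizer of $(s,t)$ is $G_s\cap G_t$, an intersection of cofinite subgroups and hence cofinite, and $(S\times T)^H=S^H\times T^H$ is finite for every cofinite subgroup $H$. (If one prefers generating sets indexed by orbit decompositions, one first refines each $S_i\times T$ into $G$-orbits; the resulting map is then a finite or transfinite composite of generating $L$-equivalences, and the two choices of generating maps cut out the same strongly saturated class.) For the additive localization the identical computation produces a finite-coproduct map of the same kind. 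In both cases \cite[Proposition~2.2.1.9]{Lur17} now yields the assertion; the only genuinely non-formal ingredient, as noted, is the stability of quasifiniteness under products, which is elementary.
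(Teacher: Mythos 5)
Your proof is correct, but it argues dual to the paper's. The paper uses the closed structure directly: it checks that for arbitrary $X$ and very additive (resp.\ additive) $Y$ the internal hom $\underline{\Hom}(X,Y)$ is again very additive (resp.\ additive), via the end formula $\underline{\Hom}(X,Y)(S)\simeq\int_{T}\map_{\Sp}(X(T),Y(T\times S))$ together with the fact that $T\times-$ distributes over coproducts of quasifinite $G$-sets; that is the ``mapping-in'' formulation of compatibility. You instead verify the ``mapping-out'' criterion of \cite[Proposition~2.2.1.9]{Lur17} on generators, using that the Yoneda functor is symmetric monoidal, so $\underline{S}\otimes\underline{T}\simeq\underline{S\times T}$, and that $S\times T$ is again quasifinite. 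The substance is the same in both proofs --- the only non-formal input is that products of quasifinite $G$-sets are quasifinite and distribute over the relevant coproducts --- but your route needs a reduction to generating equivalences and generating objects that the paper's avoids, while the paper's needs the explicit internal hom formula. Two points in your write-up deserve tightening. First, the corepresentables $\underline{T}$ generate $\Fun(\Span(\QFin_G),\Sp)$ under colimits \emph{and shifts}, not colimits alone; this is harmless because the generating maps in the proof of Lemma~\ref{lemma:spGqfgen_closed_limits} are taken closed under shifts and $-\otimes Z$ commutes with shifts, but it should be said. Second, for the orbit-refinement issue, the appeal to ``finite or transfinite composites'' is not the right mechanism; the clean statement is that a map between direct sums of corepresentables is an $L$-equivalence if and only if it induces an equivalence on mapping spectra into every very additive $Y$, and very additivity gives $Y(\coprod_{i}A_i)\simeq\prod_{i}Y(A_i)$ for \emph{any} decomposition of a quasifinite $G$-set into quasifinite pieces, not only the orbit decomposition, so the coarser maps $\bigoplus_{i}\underline{S_i\times T}\to\underline{\coprod_i(S_i\times T)}$ are $L$-equivalences on the nose. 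With those repairs your argument is complete.
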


\begin{proof}
We prove that the $\infty$-category of quasifinitely genuine $G$-spectra is a symmetric monoidal localization of $\Fun(\mathrm{Span}(\QFin_G), \Sp)$. Let $X$ and $Y$ denote a pair of functors $\mathrm{Span}(\QFin_G) \to \Sp$ and assume that $Y$ is very additive. We verify that the internal hom $\underline{\Hom}(X, Y)$ is very additive. Indeed, for every quasifinite $G$-set $S = \coprod_{i \in I} S_i$, we have that
\[
	Y\Big( T \times \coprod_{i \in I} S_i\Big) \simeq Y\big( \coprod_{i \in I} T \times S_i\big) \simeq \prod_{i \in I} Y(T \times S_i),
\] 
using that products distribute over arbitrary coproducts in $\QFin_G$ and our assumption that $Y$ is very additive. Combining this with the formula for the internal hom above, we find that
\begin{align*}
	\underline{\Hom}(X, Y)(S) \simeq \int_{T \in \QFin_G} \map_{\Sp}\Big(X(T), \prod_{i \in I} Y(T \times S_i) \Big) \simeq \prod_{i \in I} \underline{\Hom}(X, Y)(S_i)
\end{align*}
which proves the desired statement. Finally, the analogues assertion for $\Fun^{\mathrm{add}}(\mathrm{Span}(\QFin_G), \Sp)$ is obtained by replacing every occurence of very additive by additive in the argument above.
\end{proof}

It follows from Proposition~\ref{proposition:qfgen_symmetric_monoidal_localization} that the following functors
\[
	\Fun(\QFin_G^{\op}, \Spaces) \to \Fun(\mathrm{Span}(\QFin_G), \Sp) \to \Fun^{\mathrm{add}}(\mathrm{Span}(\QFin_G), \Sp) \to \Sp^{G}_{\qfgen}
\]
are symmetric monoidal. The composite above factors over the symmetric monoidal localization
\[
	\Fun(\QFin_G^{\op}, \Spaces) \to \Fun^{\mathrm{vadd}}(\QFin_G^{\op}, \Spaces) \simeq \Fun(\mathrm{Orb}_G^{\op}, \Spaces)
\]
and the induced functor is given by the functor $\Sigma^\infty_+: \Fun(\mathrm{Orb}_G^{\op}, \mathcal{S}) \to  \Sp^{G}_{\qfgen}$ which follows by comparing adjoints. In particular, we conclude that the functor $\Sigma^\infty_+$ has a canonical symmetric monoidal structure. Similarly, we have that the composite 
\[
\Fun(\QFin_G^{\op}, \mathcal{S})  \to  \Fun^{\mathrm{add}}(\mathrm{Span}(\QFin_G), \Sp)
\]
still factors to a symmetric monoidal functor $\Fun^{\mathrm{add}}(\QFin_G^{\op}, \mathcal{S})   \to  \Fun^{\mathrm{add}}(\mathrm{Span}(\QFin_G), \Sp)$.

To summarize, we have the following diagram of symmetric monoidal functors:
\[
  \begin{tikzcd}
    \QFin_G \rar\dar& \Span(\QFin_G)^\op\dar\\
    \Fun(\QFin_G^\op, \cS)\rar\dar & \Fun(\Span(\QFin_G),\Sp)\dar\\
    \Fun^\add(\QFin_G^\op, \cS)\rar\dar & \Fun^\add(\Span(\QFin_G),\Sp)\dar\\
    \Fun^\vadd(\QFin_G^\op, \cS)\rar \dar{\simeq} & \Fun^\vadd(\Span(\QFin_G),\Sp) \dar{\simeq} \\
    \cS^G \rar{\Sigma^\infty_+} & \Sp^G_{\qfgen}
  \end{tikzcd}
\]

\subsection{Geometric fixedpoints} \label{subsection:geometricfixedpoints}

The $\infty$-category of quasifinitely genuine $G$-spectra supports geometric fixedpoint functors as we will now discuss.

\begin{construction} \label{construction:geometric_fixedpoints_functor}
Let $G$ denote an arbitrary group. For every cofinite subgroup $H$ of $G$, the fixedpoints set $S^H$ is finite by definition, so the construction $S \mapsto S^H$ determines a functor
\[
	(-)^H : \QFin_G \to \Fin_{W_G(H)}.
\]
The functor $(-)^H$ preserves pullbacks and the restriction functor along $\mathrm{Span}(-)^H$ carries additive functors to very additive functors, hence canonically refines to a functor of $\infty$-categories
\[
\mathrm{infl}_H : \Sp^{W_G(H)}_{\fgen} \to \Sp^G_{\qfgen}
\]
referred to as the inflation functor. This functor preserves limits and $\kappa$-filtered colimits for large enough $\kappa$, since these are formed underlying in the source and the target, so the inflation functor admits a left adjoint functor
\[
	(-)^{\Phi H} : \Sp^{G}_{\qfgen} \to \Sp^{W_G(H)}_{\fgen}
\]
referred to as the geometric fixedpoints functor for $H$. If $H \subseteq H' \subseteq G$ is a pair of cofinite normal subgroups of $G$, then there is a natural equivalence of finitely genuine $G/H'$-spectra
\[
	(X^{\Phi H})^{\Phi H'/H} \simeq X^{\Phi H'}
\]
for every quasifinitely genuine $G$-spectrum $X$. Indeed, the following composite
\[
	\QFin_G \xrightarrow{(-)^H} \Fin_{G/H} \xrightarrow{(-)^{H'/H}} \Fin_{G/H'}
\]
is isomorphic to the functor $(-)^{H'} : \QFin_G \to \Fin_{G/H'}$, inducing a natural equivalence of functors
\[
	\mathrm{infl}_{H'/H} \circ \mathrm{infl}_{H} \simeq \mathrm{infl}_{H'}
\]
which gives the desired equivalence $(-)^{\Phi H'/H} \circ (-)^{\Phi H} \simeq (-)^{\Phi H'}$ by passing to left adjoints.
\end{construction}

As the main result of this section, we obtain an explicit formula for the underlying spectrum of the geometric fixedpoints $X^{\Phi H}$ for every cofinite subgroup $H$ of $G$ (this makes sense since the Weyl group is finite). We will abusively also denote this spectrum as $X^{\Phi H}$. The formula is inspired by the usual formula for geometric fixedpoints in equivariant stable homotopy theory for finite groups (cf. Remark~\ref{remark:usual_formula_geometric_fixedpoints_finitegroup}) and by Kaledin's result (cf.~\cite[Proposition 3.5]{Kal14}).

\begin{notation}
We say that a morphism of quasifinite $G$-sets $f : S_0 \to S_1$ is \emph{proper}, if for each of the components $G/H\subseteq S_1$, we have that $f^{-1}(G/H)^H = \emptyset$. Equivalently, this means that for each $x\in S_0$, the isotropy group of $f(x)$ is strictly larger than that of $x$. 
For a quasifinite $G$-set $S$, let $(\QFin_G)^{\mathrm{prop}}_{/S}$ denote the proper slice over $S$, i.e. the category whose objects are quasifinite $G$-sets $S'$ equipped with a proper morphism $S' \to S$. If $S = G/G$, then we denote this category by $\QFin_G^{\mathrm{prop}}$.
\end{notation}

For every quasifinitely genuine $G$-spectrum $X$, there is a canonical functor
\[
	(\QFin_G)^{\mathrm{prop}}_{/(G/H)} \to  \QFin_G \to \mathrm{Span}(\QFin_G) \xrightarrow{X} \Sp
\]
which carries $S$ to $X^S$ and a morphism $S \to S'$ to the transfer map $X^S \to X^{S'}$ (cf.~\cite{Bar17}). The goal of the present section is to prove the following result:

\begin{theorem} \label{theorem:cofiber_sequence_geometricfixedpoints}
Let $G$ be a group and let $H$ be a cofinite subgroup of $G$. 
For every quasifinitely genuine $G$-spectrum $X$, there is a cofiber sequence of spectra
\[
	\underset{S \in (\QFin_G)^{\mathrm{prop}}_{/(G/H)}}{\colim} X^S \to X^H \to X^{\Phi H},
\]
where the first map is induced by the transfer $X^S \to X^H$ for $S \in (\QFin_G)^{\mathrm{prop}}_{/(G/H)}$. 
\end{theorem}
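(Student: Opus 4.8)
The plan is to realize the geometric fixedpoints functor $(-)^{\Phi H}$ as a Verdier quotient, exactly as in the finite-group case. Fix the cofinite subgroup $H$; by replacing $G$ with $W_G(H)$ where convenient (using the compatibility $(X^{\Phi H})^{\Phi H'/H}\simeq X^{\Phi H'}$ from Construction~\ref{construction:geometric_fixedpoints_functor}) I may focus on computing the underlying spectrum $X^{\Phi H} = (X^{\Phi H})^{W_G(H)}$, i.e. the value at the terminal object. First I would identify the essential image of the inflation functor $\mathrm{infl}_H\colon \Sp^{W_G(H)}_{\fgen}\to \Sp^G_{\qfgen}$: since $(-)^H\colon\QFin_G\to\Fin_{W_G(H)}$ sends a quasifinite $G$-set $S$ with a proper map to $G/H$ to a set with empty $H$-fixedpoints, the composite $\Span((-)^H)$ kills precisely the part of $\Span(\QFin_G)$ supported on ``proper'' orbits over $G/H$. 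Concretely, $\mathrm{infl}_H(Y)^S$ depends only on $S^H$, so $\mathrm{infl}_H(Y)^S = 0$ whenever $S\to G/H$ is proper. The left adjoint $(-)^{\Phi H}$ is then the localization that inverts the maps $\widetilde{\Sigma}^\infty_+(\underline{S})\to 0$ for $S\in(\QFin_G)^{\prop}_{/(G/H)}$, or more precisely it is computed by a bar-type resolution using such objects.

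The key computation is the following. For any $X\in\Sp^G_{\qfgen}$, I would build a natural cofiber sequence by resolving the corepresenting object $\Sigma^\infty_+(G/H)$ (which corepresents $(-)^H$ by Lemma~\ref{lemma:fixedpoints_corepresented}) relative to the proper orbits. The object computing $(-)^{\Phi H}$ should be the cofiber, in $\Sp^G_{\qfgen}$ or in $\Fun^{\add}(\Span(\QFin_G),\Sp)$, of the canonical map
\[
	\operatorname*{colim}_{S\in(\QFin_G)^{\prop}_{/(G/H)}} \widetilde{\Sigma}^\infty_+(\underline{S}) \to \Sigma^\infty_+(G/H),
\]
where the maps $\underline{S}\to\underline{G/H}$ are the structural proper maps, regarded as backward (transfer) morphisms in the span category. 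Mapping $X$ out of this cofiber sequence, and using that $\map(\widetilde{\Sigma}^\infty_+(\underline{S}),X)\simeq X^S$ with the map to $X^H$ being exactly the transfer (by the description of the Yoneda functor on $\Span(\QFin_G)$ and Lemma~\ref{lemma:tomDieck_splitting}), yields the asserted cofiber sequence
\[
	\operatorname*{colim}_{S\in(\QFin_G)^{\prop}_{/(G/H)}} X^S \to X^H \to X^{\Phi H}.
\]
The two things to check here are (i) that the cofiber really corepresents $(-)^{\Phi H}$ — equivalently that $\mathrm{infl}_H(Y)$ is right-orthogonal to all $\widetilde{\Sigma}^\infty_+(\underline{S})$ with $S$ proper over $G/H$, and that this orthogonality class is generated by those objects so the quotient is computed this way — and (ii) that the colimit over the proper slice, which is filtered-ish but not filtered, is computed correctly; here I would use the tom Dieck-type splitting of Lemma~\ref{lemma:tomDieck_splitting} in $\Fun^{\add}(\Span(\QFin_G),\Sp)$ to reduce to a manageable indexing category, since that lemma is exactly designed to avoid the infinite-product subtleties of Remark~\ref{remark:tomdieck_infinite_product}.

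The main obstacle I anticipate is step (i): verifying that the Bousfield localization presenting $(-)^{\Phi H}$ is generated, as a localization, precisely by the proper-slice objects, and that no completeness or infinite-product pathology intervenes. In the finite-group setting this is classical (the geometric fixedpoints are the quotient by the localizing subcategory generated by induced-from-proper-subgroups objects), but in $\Sp^G_{\qfgen}$ the generators are not compact (Warning~\ref{warning:generators_spGqfgen}) and colimits are not formed underlying (Warning~\ref{warning:colimits_spgqfgen}), so I would argue instead at the level of additive (not very additive) functors, where the relevant colimits and the tom Dieck splitting behave well, and only at the very end transport the conclusion along the symmetric monoidal localization $\Fun^{\add}(\Span(\QFin_G),\Sp)\to\Sp^G_{\qfgen}$, checking that the cofiber sequence is preserved because the localization is exact. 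A secondary technical point is confirming that the map in the statement is really the transfer and not, say, its composite with some idempotent: this follows by naturality and by restricting along $\Fin_G\hookrightarrow\QFin_G$ to reduce to the finite case where it is known.
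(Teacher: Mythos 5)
Your overall strategy --- realize $(-)^{\Phi H}$ via a localization killing everything with proper isotropy over $G/H$, check orthogonality against the image of inflation, and control the infinite-coproduct issues by working in $\Fun^{\add}(\Span(\QFin_G),\Sp)$ with the splitting of Lemma~\ref{lemma:tomDieck_splitting} --- is the same philosophy as the paper's. But your central mechanism has a variance problem that breaks the argument. Mapping $X$ out of a colimit of corepresentables computes a \emph{limit}: for $C=\cofib\bigl(\colim_S \widetilde{\Sigma}^\infty_+\underline{S}\to\Sigma^\infty_+(G/H)\bigr)$ you get a fiber sequence $\map(C,X)\to X^H\to \lim_S X^S$, and the maps $X^H\to X^S$ arising this way are \emph{restrictions} (the backward span $G/H\leftarrow S\to S$, applied to the covariant functor $X$, is restriction along $S\to G/H$), not transfers. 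This is not the statement of the theorem, and the fiber of $X^H\to\lim_S X^S$ is in general not $X^{\Phi H}$ (already for finite groups, the function object $F(\ETG,X)$ differs from $\ETG\otimes X$). To produce the colimit of transfers one must \emph{tensor} with the analogue of $\ETG$ rather than map out of it: the paper sets $\ETG=\cofib(\EG_+\to S^0)$ with $\EG\simeq\colim_{S\in\QFin_G^{\prop}}\underline{S}$, shows that $-\otimes\Sigma^\infty\ETG$ is a smashing localization onto the functors vanishing on proper isotropy (Lemma~\ref{lemma:Eprop_tilde_localization}), and uses the self-duality of $\Sigma^\infty_+\underline{S}$ in the Day convolution structure, together with the fact that evaluation preserves colimits in $\Fun(\Span(\QFin_G),\Sp)$, to obtain $(X\otimes\Sigma^\infty\ETG)^T\simeq\cofib(\colim_S X^{S\times T}\to X^T)$ with transfer maps (Lemma~\ref{lemma:formula_tensor_Eprop_tilde}). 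Self-duality, which you never invoke, is exactly what converts ``mapping out'' into ``tensoring'' and fixes the variance; without it your construction does not produce the colimit term of the theorem.

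Second, you rightly flag your step (i) as the main obstacle, but you leave it as an abstract orthogonality/generation check, whereas closing it requires a concrete computation your outline does not contain: one must identify $\Sigma^\infty\ETG\simeq\mathrm{infl}_G(\S)$, i.e.\ compute that the $G$-fixed points of $\Sigma^\infty\ETG$ are $\S$. That is where the tom Dieck-type splitting of Lemma~\ref{lemma:tomDieck_splitting} actually enters (not in ``computing the colimit over the proper slice'', which needs no such input, since that colimit is handled in the big functor category where evaluation is colimit-preserving): the cofiber of $(\widetilde{\Sigma}^\infty_+\EG)^G\to(\widetilde{\Sigma}^\infty_+\underline{G/G})^G$ is the cofiber of $(\QFin_G^{\prop})^{\simeq,\mathrm{grp}}\to(\QFin_G)^{\simeq,\mathrm{grp}}$, which is $\Fin^{\simeq,\mathrm{grp}}\simeq\S$ by Barratt--Priddy--Quillen; this is what makes inflation fully faithful with image exactly the proper-isotropy-acyclic objects and with inverse the $G$-fixed points (Lemma~\ref{lemma:infl_equiv_on_fullsubcat}). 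With that in hand, $X^{\Phi H}\simeq(X\otimes\Sigma^\infty\ETG)^H$ after restricting along the equivalence $\QFin_H\simeq(\QFin_G)_{/(G/H)}$, and the theorem follows; without it, and without replacing your mapping-out construction by the tensor construction, the argument does not close.
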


We will give the proof of this statement as the main result of the section, but let us first draw some conclusions. 

\begin{remark} \label{remark:usual_formula_geometric_fixedpoints_finitegroup}
Let $G$ be a finite group. If $X$ is a finitely genuine $G$-spectrum, then the geomtric fixedpoints spectrum $X^{\Phi G}$ is calculated by means of the following cofiber sequence
\[
	\underset{G/H \in \mathrm{Orb}_G^{\mathrm{prop}}}{\colim} X^H \to X^{G} \to X^{\Phi G},
\]
where the colimit is indexed over the full subcategory of $\mathrm{Orb}_G$ spanned by those orbits $G/H$ for which $H$ is a proper subgroup of $G$. The inclusion $\mathrm{Orb}_G \hookrightarrow \Fin_G$ exhibits the category of finite $G$-sets as the finite coproduct completion of $\mathrm{Orb}_G$. The restriction of a genuine $G$-spectrum $X$ on  $\Fin_G$ is the left Kan extension of its restriction to $\mathrm{Orb}_G$, thus 
the colimit appearing above is equivalent to the colimit indexed over $\Fin_G^{\mathrm{prop}}$. From this perspective, Theorem~\ref{theorem:cofiber_sequence_geometricfixedpoints} asserts that the geometric fixedpoints of a quasifinitely genuine $G$-spectrum can be calculated by means of the usual cofiber sequence simply with $\Fin_G$ replaced by $\QFin_G$. 
\end{remark}

\begin{remark} \label{remark:geometric_uniformly_boundedbelow}
If $X$ is a connective quasifinitely genuine $G$-spectrum, meaning that all the fixed points are connective, then the spectrum $X^{\Phi H}$ is connective for every cofinite subgroup $H$ of $G$ by virtue of Theorem~\ref{theorem:cofiber_sequence_geometricfixedpoints}. If $X$ is uniformly bounded below, then the collection $\{X^{\Phi H}\}$ is uniformly bounded below, where $H$ ranges through the cofinite subgroups of $G$.
\end{remark}

If $G$ is a finite group, then the geometric fixedpoints functors are jointly conservative as one proves by induction on the order of $G$ using the isotropy separation sequence (cf.~\cite[Theorem 7.2]{Sch20}). This technique is not available in the case of infinite groups $G$ (e.g. profinite) and the geometric fixedpoint functors are in fact not jointly conservative on the $\infty$-category of finitely genuine $G$-spectra. It is an observation due to Kaledin~\cite[Proposition 3.5]{Kal14} that the passage to the $\infty$-category of quasifinitely genuine $G$-spectra fixes this, meaning that the geometric fixedpoints functors are jointly conservative under suitable boundedness assumptions. We stress that the proof of Proposition~\ref{proposition:geometric_fixedpoints_conservative} below is entirely due to Kaledin, and we reproduce his argument in our setting for completeness.

\begin{proposition} \label{proposition:geometric_fixedpoints_conservative}
If $X$ is a quasifinitely genuine $G$-spectrum which is uniformly bounded below, then $X \simeq 0$ precisely if $X^{\Phi H} \simeq 0$ as spectra for every cofinite subgroup $H$ of $G$. 
\end{proposition}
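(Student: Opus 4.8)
The plan is as follows. The ``only if'' direction is immediate: each geometric fixed-points functor $(-)^{\Phi H}$ is a left adjoint (Construction~\ref{construction:geometric_fixedpoints_functor}) and hence preserves the zero object. So assume conversely that $X$ is uniformly bounded below, say every $X^S$ with $S\in\QFin_G$ is $N$-connective, and that $X^{\Phi H}\simeq 0$ for every cofinite subgroup $H\subseteq G$; we must show $X\simeq 0$. Since $X$ is very additive we have $X^S\simeq\prod_{\bar s\in S/G}X^{\pi^{-1}(\bar s)}$, so it is enough to show $X^{G/H}\simeq 0$ for every cofinite $H$, and since a spectrum that is $n$-connective for all $n$ vanishes it suffices to prove by induction on $n\ge N$ that $X$ is \emph{uniformly} $n$-connective (the case $n=N$ being the hypothesis). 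Throughout, write $\mathcal{C}_H:=(\QFin_G)^{\mathrm{prop}}_{/(G/H)}$ for the proper slice appearing in Theorem~\ref{theorem:cofiber_sequence_geometricfixedpoints}.

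For the inductive step, suppose all $X^S$ are $n$-connective. Using the pointwise $t$-structure on $\Sp^G_{\qfgen}$ (Proposition~\ref{proposition:tstructure_Spqfgen}) form the fibre sequence $\tau_{\ge n+1}X\to X\to HM[n]$, where $M$ is the object of the heart $\Sp^{G,\heartsuit}_{\qfgen}=\Fun^{\vadd}(\mathrm{hSpan}(\QFin_G),\mathrm{Ab})$ with $M(S)=\pi_n(X^S)$, and where $\tau_{\ge n+1}X$ is uniformly $(n+1)$-connective. The geometric fixed-points functor for $H$ is exact: by Theorem~\ref{theorem:cofiber_sequence_geometricfixedpoints} the spectrum $X^{\Phi H}$ is the cofibre of the map $\colim_{S\in\mathcal{C}_H}X^S\to X^{G/H}$ between the functors $X\mapsto\colim_{S\in\mathcal{C}_H}X^S$ and $X\mapsto X^{G/H}$, both of which are exact (the underlying functor $\Sp^G_{\qfgen}\to\Fun(\mathrm{Span}(\QFin_G),\Sp)$ preserves limits, hence is exact, and colimits of spectra are exact). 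Applying it to the fibre sequence gives a cofibre sequence $(\tau_{\ge n+1}X)^{\Phi H}\to X^{\Phi H}\to(HM[n])^{\Phi H}$; since $X^{\Phi H}\simeq 0$ and the first term is uniformly $(n+1)$-connective (Remark~\ref{remark:geometric_uniformly_boundedbelow}), the spectrum $(HM[n])^{\Phi H}$ is $(n+2)$-connective for every $H$. Shifting to degree $0$, we are reduced to the algebraic statement: \emph{if $M\in\Sp^{G,\heartsuit}_{\qfgen}$ satisfies that $(HM)^{\Phi H}$ is $2$-connective for every cofinite $H$, then $M=0$}; granting this, $\tau_{\le n}X=HM[n]=0$, so $X\simeq\tau_{\ge n+1}X$ is uniformly $(n+1)$-connective. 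To unwind the hypothesis, apply Theorem~\ref{theorem:cofiber_sequence_geometricfixedpoints} to $HM$: here $\colim_{S\in\mathcal{C}_H}HM(S)$ is a colimit of connective Eilenberg--MacLane spectra while $HM(G/H)$ sits in degree $0$, so the long exact sequence of $\colim_{S\in\mathcal{C}_H}HM(S)\to HM(G/H)\to(HM)^{\Phi H}$, together with the fact that $\pi_0\colon\Sp_{\ge 0}\to\mathrm{Ab}$ is a left adjoint and hence commutes with the colimit, shows that the $2$-connectivity of $(HM)^{\Phi H}$ (i.e. vanishing of its $\pi_0$ and $\pi_1$) is equivalent to the transfer-induced map $\colim_{S\in\mathcal{C}_H}M(S)\to M(G/H)$ being an isomorphism of abelian groups, for every cofinite $H$.

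It remains to prove that a Mackey profunctor $M$ for which $\colim_{S\in\mathcal{C}_H}M(S)\xrightarrow{\ \sim\ }M(G/H)$ holds for all cofinite $H$ is zero. This is the algebraic heart of the argument, due to Kaledin~\cite[Proposition 3.5]{Kal14}, and I expect it to be the main obstacle. One cannot use surjectivity alone: for $G=\Z$ the constant Mackey functor $\underline{\Z}$ is nonzero yet $M(\Z/n\Z)$ is a sum of transfers from proper subgroups for every $n$, so the \emph{injectivity} half of the isomorphism — equivalently, the vanishing of the derived ($\pi_1$) contribution of the colimit — must be exploited, and with it the full Mackey structure of $M$ (restrictions, transfers, double-coset relations) and the combinatorics of $\mathcal{C}_H$, whose objects are the quasifinite $G$-sets all of whose isotropy groups are \emph{strictly} contained in conjugates of $H$. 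The plan is simply to reproduce Kaledin's argument in the present setting: decompose quasifinite $G$-sets into orbits to analyse the colimit over $\mathcal{C}_H$, reduce the isomorphism condition to a statement about the tower of cofinite subgroups contained in $H$, and conclude that every $M(G/H)$ vanishes. Once this lemma is available the induction of the second paragraph goes through and yields $X\simeq 0$.
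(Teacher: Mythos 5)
Your setup (the trivial direction, the induction on uniform connectivity via the pointwise $t$-structure of Proposition~\ref{proposition:tstructure_Spqfgen}, and the reduction, via exactness of $(-)^{\Phi H}$ and the cofiber sequence of Theorem~\ref{theorem:cofiber_sequence_geometricfixedpoints}, to the statement that a heart object $M$ with $\colim_{S\in(\QFin_G)^{\prop}_{/(G/H)}}M(S)\xrightarrow{\ \sim\ }M(G/H)$ for all cofinite $H$ must vanish) is sound. But the proposal stops exactly where the actual content of the proposition begins: the key lemma is not proved, only announced as ``reproduce Kaledin's argument,'' with a sketch too vague to be checked (``decompose into orbits, reduce to a statement about the tower of cofinite subgroups''). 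Since the paper's entire proof \emph{is} that argument (run directly on $\pi_0$ of a connective $X$, with no detour through the heart), what you have written is a correct but essentially formal reduction plus an acknowledged hole at the one step that matters.

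Moreover, your diagnosis of what the missing step requires is off. The argument of the paper uses only \emph{surjectivity} of $\colim_{S\in(\QFin_G)^{\prop}_{/(G/H)}}\pi_0(X^S)\to\pi_0(X^{G/H})$ (siftedness of the proper slice identifies $\pi_0$ of the colimit with the colimit of $\pi_0$), never injectivity. The mechanism is an Eilenberg swindle exploiting very additivity: starting from $x_0\neq 0\in\pi_0(X^{S_0})$ one inductively chooses proper maps $\cdots\to S_2\to S_1\to S_0$ and classes $x_i$ with $x_i\mapsto x_{i-1}$ under the transfers; properness forces $S=\coprod_{i\geq 1}S_i$ to be quasifinite, very additivity gives $\pi_0(X^{S\amalg S_0})\cong\prod_{i\geq 0}\pi_0(X^{S_i})$, and the relation $p\circ f=p\circ i$ for the evident maps $p\colon S\amalg S_0\to S_0$, $i\colon S\hookrightarrow S\amalg S_0$ and $f\colon S\to S\amalg S_0$ yields $x_0=p_\ast f_\ast((x_i)_{i\geq1})-p_\ast i_\ast((x_i)_{i\geq1})=0$, a contradiction. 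Your counterexample $\underline{\Z}$ for $G=\Z$ only shows that surjectivity of \emph{finite} sums of orbitwise transfers is insufficient at the finitely genuine level; it does not bear on surjectivity over the quasifinite proper slice for a very additive object (indeed, the swindle shows precisely that $\underline{\Z}$ admits no such very additive extension). So the essential ingredient you still need is not the injectivity half of your isomorphism but the interplay of infinite coproducts in $\QFin_G$ with very additivity, i.e.\ the convergent infinite sums of transfers — and supplying that argument is exactly the missing proof.
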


\begin{proof}
Assume that $X^{\Phi H}\simeq 0$ for each cofinite subgroup, but $X\not\simeq 0$. By shifting, we may assume that $X$ is connective and that $\pi_0(X^{S_0})\neq 0$ for some $S_0$. The maps
\[
  \colim_{S\in (\QFin_G)_{/(G/H)}^{\prop}} \pi_0(X^S)\to \pi_0(\colim_{S\in (\QFin_G)_{/(G/H)}^{\prop}} X^S) \to \pi_0 (X^{G/H})
\]
are surjective, where the left hand colimit is taken in the category of sets. In fact, the first map is even an isomorphism, which follows from the fact that all $X^S$ are connective, $(\QFin_G)_{/(G/H)}^{\prop}$ is sifted by virtue of having finite coproducts, and $\pi_0$ of connective spectra commutes with sifted colimits. The second map is surjective by Theorem~\ref{theorem:cofiber_sequence_geometricfixedpoints}. Consequently, for any $x\in \pi_0(X^{G/H})$ we find a proper map $f: S'\to G/H$ and $y\in X^{S'}$ with $f_*(y)=x$. More generally, this implies that for any quasifinite $G$-set $S$ and any $x\in \pi_0(X^S)$, we find a proper map $f: S'\to S$ and $y\in \pi_0(X^S)$ with $f_*(y)=x$. For $x_0\neq 0 \in \pi_0(X^{S_0})$, we now inductively find proper maps
\[
   \ldots \to S_2 \to S_1\to S_0
\]
and $x_i\in \pi_0(X^{S_i})$ with $x_i\mapsto x_{i-1}$ under the transfer maps. Properness of the maps ensures that
\[
	S = \coprod_{i \geq 1} S_i
\]
is again a quasifinite $G$-set. Let $p : S \amalg S_0 \to S_0$ denote the canonical map assembling the maps $S_i\to S_0$, and let $i : S \hookrightarrow S \amalg S_0$ denote the canonical inclusion. Let $f : S \to S \amalg S_0$ denote the map obtained by forming the coproduct of the maps $f_i : S_i \to S_{i-1}$ for $i \geq 1$, and note that $p \circ f = p \circ i$. Finally, consider the element $x = (x_i)_{i \geq 0}$ of $\pi_0(X^{S \amalg S_0}) \cong \prod_{i \geq 0} \pi_0(X^{S_i})$ and note that
\[
  x_0 = p_\ast((x_0,0,0,\ldots)) =  p_{\ast} f_{\ast}((x_i)_{i \geq 1}) - p_{\ast} i_{\ast}((x_i)_{i \geq 1}) = (p \circ f)_{\ast}((x_i)_{i \geq 1}) - (p \circ i)_{\ast}((x_i)_{i \geq 1}) = 0,
\]
where we have used that $p \circ f = p \circ i$ as observed above. This contradicts the assumption that $x_0\neq 0$ and finishes the proof.
\end{proof}

We proceed to give the proof of Theorem~\ref{theorem:cofiber_sequence_geometricfixedpoints}. 
Let $\EG$ denote the object of the $\infty$-category $\Fun(\QFin_G^{\op}, \Spaces)$ determined by
\[
	(\EG)^S \simeq 
	\begin{cases}
		\ast & \text{if $S^G = \varnothing$} \\
		\varnothing & \text{if $S^G \neq \varnothing$}
	\end{cases}
\]
Similarly, one can define $\mathrm{E}\mathcal{F}$ for any family $\mathcal{F}$ of subgroups of $G$ which is closed under subconjugation. The notation above illustrates that we are considering the family of proper subgroups.

\begin{construction} \label{construction:EpropG}
Let $\widetilde{\EG} \in \Fun(\QFin_G^{\op}, \Spaces_{\ast})$ denote the cofiber of the map
\[
	\EG_+ \to S^0 = \underline{G/G}_+
\]
defined by applying the functor $(-)_+$ to the map $\EG \to \ast$.
\end{construction}

Note that there is no reason that the functor $\widetilde{\EG}$ should be very additive since its defined as a cofiber in the $\infty$-category $\Fun(\QFin_G^{\op}, \Spaces_\ast)$. We will repeatedly make use of the following:

\begin{lemma} \label{lemma:Eprop_tilde_smash_proper_vanish}
The functor $\EG$ is equivalent to the colimit of the following diagram
\[
	\QFin_G^{\mathrm{prop}} \hookrightarrow \QFin_G \to \Fun(\QFin_G^{\op}, \Spaces),
\]
where the latter functor denotes the Yoneda embedding $S \mapsto \underline{S}$. Furthermore, if $S$ is a quasifinite $G$-set with proper isotropy, then $\ETG \wedge S_+ \simeq \ast$.
\end{lemma}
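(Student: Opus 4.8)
The plan is to prove the two assertions separately, in each case reducing to a pointwise computation in $\Fun(\QFin_G^{\op}, \Spaces)$ (respectively $\Fun(\QFin_G^{\op}, \Spaces_\ast)$). Both reductions are governed by one elementary observation: if $f \colon T \to S$ is a morphism of quasifinite $G$-sets with $S^G = \varnothing$, then $T^G = \varnothing$, since a $G$-fixed point of $T$ would be sent to a $G$-fixed point of $S$. Recall also that, by the defining formula, $\EG(S) \simeq \ast$ when $S^G = \varnothing$ and $\EG(S) \simeq \varnothing$ otherwise, and that $\QFin_G^{\mathrm{prop}} \subseteq \QFin_G$ is the full subcategory on those $S$ with $S^G = \varnothing$, equipped with their unique (automatically proper) map to $G/G$.

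For the first assertion I would invoke the density theorem: the presheaf $\EG$ is canonically the colimit of the Yoneda images indexed by its category of elements $(\QFin_G)_{/\EG}$. An object of $(\QFin_G)_{/\EG}$ is a pair $(S, \eta)$ with $\eta$ a point of $\EG(S)$; such an $\eta$ exists and is unique precisely when $S^G = \varnothing$, and since $\EG$ is valued in $(-1)$-truncated spaces there is no compatibility condition on morphisms. Hence the projection $(\QFin_G)_{/\EG} \to \QFin_G$ is fully faithful with essential image $\QFin_G^{\mathrm{prop}}$, and density yields the claimed equivalence $\EG \simeq \colim_{S \in \QFin_G^{\mathrm{prop}}} \underline{S}$.

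For the second assertion, the smash product in $\Fun(\QFin_G^{\op}, \Spaces_\ast)$ is computed objectwise (this uses the identification, recalled in~\textsection\ref{subsection:monoidal}, of the Day convolution on $\Fun(\QFin_G^{\op}, \Spaces)$ with the Cartesian structure, together with the fact that limits and colimits in presheaf categories are pointwise). Thus it suffices to check that $(\ETG \wedge S_+)(T) \simeq \ETG(T) \wedge \Map_{\QFin_G}(T, S)_+$ is contractible for every $T \in \QFin_G$, where $\ETG(T) \simeq \cofib\bigl(\EG(T)_+ \to S^0\bigr)$. If $T^G = \varnothing$, then $\EG(T) \simeq \ast$, so the map $\EG(T)_+ \to S^0$ is an equivalence and $\ETG(T) \simeq \ast$; if $T^G \neq \varnothing$, then $\EG(T) \simeq \varnothing$, so $\ETG(T) \simeq S^0$, but now $S^G = \varnothing$ together with the elementary observation forces $\Map_{\QFin_G}(T, S) = \varnothing$, whence $\Map_{\QFin_G}(T, S)_+ \simeq \ast$. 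In either case the value at $T$ is contractible, so $\ETG \wedge S_+ \simeq \ast$.

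I do not anticipate a genuine obstacle here: the mathematical content is exactly the elementary observation above, and the remaining work is bookkeeping — identifying the category of elements of $\EG$ with $\QFin_G^{\mathrm{prop}}$, and confirming that the pointed smash product is formed objectwise so that the pointwise computation in the previous paragraph is legitimate.
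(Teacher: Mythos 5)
Your proposal is correct and follows essentially the same route as the paper: the first claim via the density theorem after identifying the category of elements of $\EG$ with $\QFin_G^{\mathrm{prop}}$, and the second via a pointwise case analysis on whether $T^G = \varnothing$ (using that a $G$-fixed point must map to a $G$-fixed point). The only cosmetic difference is that the paper reduces via the cofiber sequence $\EG_+ \to S^0 \to \ETG$ to showing $\EG \times \underline{S} \to \underline{S}$ is a pointwise equivalence, whereas you evaluate the (pointwise-computed) smash product directly; the underlying computation is identical.
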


\begin{proof}
The first assertion follows since every presheaf can be written as a colimit of representable presheaves. Indeed, there is a canonical equivalence of presheaves
\[
	\EG \simeq \underset{S \in \QFin_G, \underline{S} \to \EG}{\colim} \underline{S} \simeq \underset{S \in \QFin_G^{\mathrm{prop}}}{\colim} \underline{S},
\]
where the last equivalence follows since the indexing categories of the colimits are equivalent which follows from the definition of $\EG$. For the final assertion, note that by the cofiber sequence $\EG_+ \to S^0 \to \widetilde{\EG}$, we have to prove that for $S$ with only proper isotropy the morphism
\[
\EG \times \underline{S} \to \underline{S}
\]
is an equivalence in $\Fun(\QFin_G^{\op}, \Spaces)$. This can be checked pointwise, that is after evaluation at some $T \in \QFin_G$. Indeed, if $T$ has only proper isotropy we get $\EG^T = \ast$ and otherwise both sides are empty. 
\end{proof}

Now, we consider the functor of $\infty$-categories
\[
	- \otimes \Sigma^\infty\ETG : \Fun(\mathrm{Span}(\QFin_G), \Sp) \to \Fun(\mathrm{Span}(\QFin_G), \Sp)
\]
obtained by forming the tensor product with the image of $\ETG$ in $\Fun(\mathrm{Span}(\QFin_G),\Sp)$. This tensor product is the Day convolution product on  $\Fun(\mathrm{Span}(\QFin_G), \Sp)$ and not the one of any of the subcategories of additive or very additive functors!
This functor is equivalently given by the construction $X \mapsto \mathrm{cofib}(X \otimes \Sigma^\infty_+ \EG \to X)$.

\begin{lemma} \label{lemma:Eprop_tilde_localization}
The functor of $\infty$-categories
\[
	- \otimes \Sigma^\infty \ETG : \Fun(\mathrm{Span}(\QFin_G), \Sp) \to \Fun(\mathrm{Span}(\QFin_G), \Sp)
\]
is the localization onto the full subcategory of $\Fun(\mathrm{Span}(\QFin_G), \Sp)$ spanned by those functors $X$ which satisfy that $X^S \simeq 0$ for every quasifinite $G$-set $S$ with proper isotropy.
\end{lemma}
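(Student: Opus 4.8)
The plan is to recognize $\Sigma^\infty\ETG$ as an idempotent $\E_\infty$-algebra in the Day convolution symmetric monoidal $\infty$-category $\Fun(\mathrm{Span}(\QFin_G),\Sp)$, so that $-\otimes\Sigma^\infty\ETG$ is automatically a Bousfield localization, and then to match its essential image with the prescribed subcategory. Write $\mathbbm 1=\underline{G/G}$ for the Day convolution unit; by the reformulation of $-\otimes\Sigma^\infty\ETG$ recorded just before the statement (together with Construction~\ref{construction:EpropG}) there is a cofiber sequence $\Sigma^\infty_+\EG\to\mathbbm 1\to\Sigma^\infty\ETG$ and $-\otimes\Sigma^\infty\ETG\simeq\mathrm{cofib}(-\otimes\Sigma^\infty_+\EG\to-)$.

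First I would extract the single monoidal input needed. Call a presheaf $Z\in\Fun(\QFin_G^{\op},\Spaces)$ \emph{proper} if $Z^T\simeq\varnothing$ whenever $T^G\neq\varnothing$. Then $\EG$ is proper by definition, and for a quasifinite $G$-set $S$ with proper isotropy --- equivalently $S^G=\varnothing$ --- the representable presheaf $\underline S$ is proper, since a $G$-map $T\to S$ must send $T^G$ into $S^G=\varnothing$. For any proper $Z$ the projection $Z\times\EG\to Z$ is an equivalence, as one checks pointwise: at $T$ with $T^G=\varnothing$ one has $(\EG)^T\simeq\ast$, and at $T$ with $T^G\neq\varnothing$ both sides are empty. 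Applying the colimit-preserving symmetric monoidal functor $\Sigma^\infty_+(-)\colon\Fun(\QFin_G^{\op},\Spaces)\to\Fun(\mathrm{Span}(\QFin_G),\Sp)$ from the discussion preceding Proposition~\ref{proposition:qfgen_symmetric_monoidal_localization} --- which sends the cartesian product to the Day convolution product and a representable presheaf to the corepresentable $\underline S$ --- converts these into equivalences
\[
  \Sigma^\infty_+\EG\otimes\Sigma^\infty_+\EG\xrightarrow{\ \simeq\ }\Sigma^\infty_+\EG,\qquad \underline S\otimes\Sigma^\infty_+\EG\xrightarrow{\ \simeq\ }\underline S\quad(S^G=\varnothing).
\]

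Next I would feed the first equivalence into the idempotent formalism. It says $\Sigma^\infty_+\EG$ is a non-unital idempotent, i.e.\ $\Sigma^\infty_+\EG\otimes\Sigma^\infty\ETG\simeq 0$; tensoring the cofiber sequence $\Sigma^\infty_+\EG\to\mathbbm 1\to\Sigma^\infty\ETG$ with $\Sigma^\infty\ETG$ then exhibits $\mathbbm 1\to\Sigma^\infty\ETG$ as an idempotent $\E_\infty$-algebra (by the theory of idempotent objects, \cite[\textsection 4.8.2]{Lur17}), so that $-\otimes\Sigma^\infty\ETG$ is a localization whose essential image consists of those $X$ for which the unit map $X\to X\otimes\Sigma^\infty\ETG$ is an equivalence, equivalently --- using the cofiber sequence once more --- those $X$ with $X\otimes\Sigma^\infty_+\EG\simeq 0$. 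It then remains to check that $X\otimes\Sigma^\infty_+\EG\simeq 0$ if and only if $X^S\simeq 0$ for every quasifinite $G$-set $S$ with proper isotropy. For the ``if'' direction, Lemma~\ref{lemma:Eprop_tilde_smash_proper_vanish} together with colimit-preservation of $\Sigma^\infty_+(-)$ gives $\Sigma^\infty_+\EG\simeq\colim_{S\in\QFin_G^{\mathrm{prop}}}\underline S$, hence $X\otimes\Sigma^\infty_+\EG\simeq\colim_{S\in\QFin_G^{\mathrm{prop}}}(X\otimes\underline S)$; since $\underline S$ is self-dual, $(X\otimes\underline S)^T\simeq\map(\underline T\otimes\underline S,X)\simeq X^{S\times T}$ (using corepresentability of $\underline T$ and $\underline T\otimes\underline S\simeq\underline{T\times S}$), and if all $X^{S'}$ with $S'$ proper vanish then $X^{S\times T}\simeq 0$ (as $S\times T$ is again quasifinite with proper isotropy, its isotropy groups being subgroups of those of $S$), so $X\otimes\underline S\simeq 0$ and therefore $X\otimes\Sigma^\infty_+\EG\simeq 0$. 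For the ``only if'' direction, the second displayed equivalence gives, for $S$ with proper isotropy, $\underline S\otimes X\simeq(\underline S\otimes\Sigma^\infty_+\EG)\otimes X\simeq\underline S\otimes(X\otimes\Sigma^\infty_+\EG)\simeq 0$, whence $X^S\simeq\map(\underline S,X)\simeq\map(\mathbbm 1,\underline S\otimes X)\simeq 0$ by self-duality.

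I expect the only real subtlety to be notational hygiene. Three symmetric monoidal structures are in play --- the Day convolution on the \emph{full} functor category $\Fun(\mathrm{Span}(\QFin_G),\Sp)$ (as the statement stresses, not the restricted structures on additive or very additive functors), the cartesian structure on $\Fun(\QFin_G^{\op},\Spaces)$, and their compatibility through $\Sigma^\infty_+(-)$ --- and one must consistently distinguish $\underline S$ as a space-valued representable from its image as a spectrum-valued corepresentable. All the genuinely geometric content is already packaged in Lemma~\ref{lemma:Eprop_tilde_smash_proper_vanish}; the remainder is the standard idempotent-algebra formalism plus the pointwise observation about proper presheaves.
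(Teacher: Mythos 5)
Your proof is correct, and it reaches the same destination as the paper's by a slightly different (and somewhat slicker) scaffolding. The paper verifies the two conditions of \cite[Proposition 5.2.7.4]{Lur09} directly: it first checks that $X\otimes\Sigma^\infty\ETG$ has vanishing values on proper-isotropy $S$ (via self-duality of $\underline{S}$ and Lemma~\ref{lemma:Eprop_tilde_smash_proper_vanish}), then that the unit $X\to X\otimes\Sigma^\infty\ETG$ is an equivalence whenever $X$ lies in the subcategory (via the colimit description of $\EG$ and $(X\otimes\Sigma^\infty_+\underline{S})^T\simeq X^{S\times T}$), and finally obtains the idempotence of $\Sigma^\infty\ETG$ by specializing the second step to $X=\Sigma^\infty\ETG$ itself. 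You instead front-load the idempotence: the presheaf-level observation that $Z\times\EG\to Z$ is an equivalence for proper $Z$ gives, after the symmetric monoidal colimit-preserving $\Sigma^\infty_+$, that $\Sigma^\infty_+\EG$ is a nonunital idempotent, hence $\Sigma^\infty_+\EG\otimes\Sigma^\infty\ETG\simeq 0$ and $\mathbbm{1}\to\Sigma^\infty\ETG$ is an idempotent object in the sense of \cite[\textsection 4.8.2]{Lur17}, so $-\otimes\Sigma^\infty\ETG$ is automatically a smashing localization; you then identify the local objects with the prescribed subcategory, where your ``local $\Rightarrow$ vanishing'' direction uses $\underline{S}\otimes\Sigma^\infty_+\EG\simeq\underline{S}$ rather than the paper's direct computation of $(X\otimes\Sigma^\infty\ETG)^S$. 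The underlying computations (self-duality of $\underline{S}$, properness of $S\times T$, $\EG$ as a colimit of proper representables) coincide with the paper's; what your packaging buys is a cleaner conceptual statement in which all the geometry is isolated in the single fact that $\EG$ is a proper presheaf, at the cost of importing the idempotent-algebra machinery, while the paper's version stays closer to the auxiliary lemmas it has already established.
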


\begin{proof}
We first prove that $X \otimes \Sigma^\infty\ETG$ lies in the desired full subcategory for every object $X$ of the $\infty$-category $\Fun(\mathrm{Span}(\QFin_G), \Sp)$. If $S$ is a quasifinite $G$-set with proper isotropy, then
\begin{align*}
  (X \otimes \Sigma^\infty\ETG)^S &\simeq \map(\Sigma^\infty_+ \underline{S}, X \otimes \Sigma^\infty\ETG) \\
  &\simeq \map(\mathbbm{1}, X \otimes \Sigma^\infty\ETG \otimes \Sigma^\infty_+ \underline{S}) \\
  &\simeq \map(\mathbbm{1}, X \otimes \Sigma^\infty(\ETG \wedge \underline{S}_+)) \simeq 0,
\end{align*}
  using that $\Sigma^\infty_+ \underline{S}$ is canonically self-dual (cf.~\textsection\ref{subsection:monoidal}) and Lemma~\ref{lemma:Eprop_tilde_smash_proper_vanish}. There is a natural transformation of functors $\id \to (- \otimes \Sigma^\infty\widetilde{\EG})$ induced by the defining map $S^0 \to \widetilde{\EG}$. If $X$ lies in our desired full subcategory, then we claim that the induced map
\[
	X \to X \otimes\Sigma^\infty \widetilde{\EG}
\]
is an equivalence which by construction is equivalent to showing that $X \otimes \Sigma^\infty_+ \EG \simeq 0$. To this end, we may write $\EG$ as a colimit by Lemma~\ref{lemma:Eprop_tilde_smash_proper_vanish}, which in turn yields that
\[
  X \otimes \Sigma^\infty_+ \EG \simeq \underset{S \in \QFin_G^{\mathrm{prop}}}{\colim} X \otimes \Sigma^\infty_+ \underline{S},
\]
  so it suffices to prove that $X \otimes \Sigma^\infty_+ \underline{S} \simeq 0$ for every $S \in \QFin_G^{\mathrm{prop}}$. Again, this follows since
\begin{align*}
  (X \otimes \Sigma^\infty_+ \underline{S})^T &\simeq \map(\Sigma^\infty_+ T, X \otimes \Sigma^\infty_+ \underline{S}) \\
  &\simeq \map(\Sigma^\infty_+ T \otimes \Sigma^\infty_+ \underline{S}, X) \\
	&\simeq X^{T \times S} \simeq 0
\end{align*}
where we have used that $T \times S$ has proper isotropy and our assumption that $X$ lies in our desired full subcategory. Finally in order to conclude, we need to argue that both natural transformations $\Sigma^\infty \widetilde{\EG} \to \Sigma^\infty \widetilde{\EG} \otimes \Sigma^\infty \widetilde{\EG}$ are equivalences. This follows since they are both (up to a symmetry) the natural transformation of the previous discussion for $X = \Sigma^\infty \widetilde{\EG}$. This proves the desired statement by virtue of~\cite[Proposition 5.2.7.4]{Lur09}. 
\end{proof}

We have the following concrete formula for the localization functor $- \otimes \Sigma^\infty\widetilde{\EG}$:

\begin{lemma} \label{lemma:formula_tensor_Eprop_tilde}
Let $X$ be an object of $\Fun(\mathrm{Span}(\QFin_G), \Sp)$. There is an equivalence of spectra
\[
  (X \otimes \Sigma^\infty \widetilde{\EG})^T \simeq \mathrm{cofib} \Big(
	\underset{S \in \QFin_G^{\mathrm{prop}}}{\colim} X^{S \times T} \to X^T
	\Big)
\]
for every quasifinite $G$-set $T$. 
\end{lemma}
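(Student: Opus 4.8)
The plan is to unwind the definition of $X \otimes \Sigma^\infty\ETG$ and reduce the claim to a computation of $(X \otimes \Sigma^\infty_+\EG)^T$ compatible with its canonical map to $X^T$. By Construction~\ref{construction:EpropG} and the discussion preceding Lemma~\ref{lemma:Eprop_tilde_localization}, we have $X \otimes \Sigma^\infty\ETG \simeq \mathrm{cofib}(X \otimes \Sigma^\infty_+\EG \to X)$ in $\Fun(\mathrm{Span}(\QFin_G), \Sp)$, the map being induced by $\EG \to \ast = \underline{G/G}$. Since evaluation at $T$ is exact, it suffices to identify the left-hand term of this cofiber sequence after evaluating at $T$, together with the evaluated map to $X^T$. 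I want to stress that all tensor products and colimits here are taken in the full functor category $\Fun(\mathrm{Span}(\QFin_G), \Sp)$ equipped with the Day convolution structure, \emph{not} in the additive or very additive subcategories.

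The core of the argument is the following chain of manipulations, for which I would first record the general identity $Z^T \simeq \map(\underline{T}, Z) \simeq \map(\mathbbm{1}, \underline{T} \otimes Z) \simeq (\underline{T} \otimes Z)^{G/G}$, valid for any $Z$: here the first equivalence is the co-Yoneda lemma, the second uses that $\underline{T}$ is canonically self-dual (cf.~\textsection\ref{subsection:monoidal}), and the last uses that the monoidal unit is $\mathbbm{1} = \underline{G/G}$ and $\map(\mathbbm{1}, -) \simeq (-)^{G/G}$. Applying this with $Z = X \otimes \Sigma^\infty_+\EG$, then inserting the presentation $\Sigma^\infty_+\EG \simeq \colim_{S \in \QFin_G^{\prop}} \underline{S}$ from Lemma~\ref{lemma:Eprop_tilde_smash_proper_vanish}, and then using that $- \otimes -$ and evaluation at $G/G$ both preserve colimits, I get
\[
	(X \otimes \Sigma^\infty_+\EG)^T \simeq \big(\underline{T} \otimes X \otimes \Sigma^\infty_+\EG\big)^{G/G} \simeq \underset{S \in \QFin_G^{\prop}}{\colim}\; \big(\underline{T} \otimes \underline{S} \otimes X\big)^{G/G}.
\]
Since $S \mapsto \underline{S}$ is symmetric monoidal we have $\underline{T} \otimes \underline{S} \simeq \underline{S \times T}$, and running the general identity above in reverse (with the orbit $S \times T$ in place of $T$) rewrites each term as $X^{S \times T}$, giving $(X \otimes \Sigma^\infty_+\EG)^T \simeq \colim_{S \in \QFin_G^{\prop}} X^{S \times T}$. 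Feeding this into the cofiber sequence yields the displayed formula.

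The last thing to check — and the step I expect to require the most care — is that under these identifications the resulting map $\colim_{S} X^{S \times T} \to X^T$ is indeed induced by the transfers along the projections $S \times T \to T$. This is because the map $\Sigma^\infty_+\EG \to \mathbbm{1}$, on the colimit presentation, is the colimit of the maps $\underline{S} \to \underline{G/G}$ obtained by applying the span-Yoneda functor to the terminal maps $S \to G/G$ in $\QFin_G$ viewed as forward morphisms in $\mathrm{Span}(\QFin_G)$; after tensoring with $\underline{T}$ and $X$ and evaluating at $G/G$, the $S$-component becomes the image under $X$ of the span $S \times T \xleftarrow{\mathrm{id}} S \times T \to T$, i.e.\ the transfer $X^{S \times T} \to X^T$. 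Apart from this bookkeeping, everything is formal; the only genuine subtlety is keeping track that one works throughout in the Day convolution category on $\Fun(\mathrm{Span}(\QFin_G),\Sp)$, where $\Sigma^\infty_+\underline{S}$ is self-dual and the colimit presentation of $\EG$ is available.
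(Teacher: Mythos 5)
Your proposal is correct and takes essentially the same route as the paper's proof: both use the cofiber description of $-\otimes\Sigma^\infty\ETG$, the presentation of $\EG$ as a colimit of representables $\underline{S}$ over $\QFin_G^{\prop}$, the fact that evaluation at $T$ is computed pointwise and hence preserves colimits, and self-duality of the representables to identify $(X\otimes\Sigma^\infty_+\underline{S})^T\simeq X^{S\times T}$ (you dualize $\underline{T}$ and evaluate at $G/G$, the paper dualizes $\underline{S}$ directly, which is the same formal manipulation). Your closing verification that the resulting map is the transfer along the projections $S\times T\to T$ is a point the paper leaves implicit, and is welcome; only note that $S\times T$ is a quasifinite $G$-set rather than an orbit, and that the map $\underline{S}\to\underline{G/G}$ is induced by the terminal map of $\QFin_G$ viewed as a \emph{backward} (wrong-way) morphism $G/G\to S$ in $\mathrm{Span}(\QFin_G)$, since $\underline{(-)}$ is the contravariant Yoneda functor --- harmless slips that do not affect the argument.
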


\begin{proof}
By writing $\EG$ as a colimit and using that $(-)^T$ preserves colimits, we find that
\begin{align*}
	(X \otimes \Sigma^\infty\widetilde{\EG})^T &\simeq \mathrm{cofib} \Big(
  \underset{S \in \QFin_G^{\mathrm{prop}}}{\colim} (X \otimes \Sigma^\infty_+ \underline{S})^T \to X^T
	\Big) \\
	&\simeq \mathrm{cofib} \Big(
	\underset{S \in \QFin_G^{\mathrm{prop}}}{\colim} X^{S \times T} \to X^T
	\Big),
\end{align*}
  where we once more have used that $(X \otimes \Sigma^\infty_+ \underline{S})^T \simeq X^{S \times T}$ as in the proof of Lemma~\ref{lemma:Eprop_tilde_localization}. 
\end{proof}

\begin{corollary} \label{corollary:Etilde_additive_is_vadd}
If $X : \mathrm{Span}(\QFin_G) \to \Sp$ is an additive functor, then the functor $X \otimes \Sigma^\infty \widetilde{\EG}$ is very additive. 
\end{corollary}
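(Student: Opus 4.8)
The plan is to use the explicit formula of Lemma~\ref{lemma:formula_tensor_Eprop_tilde} together with the additivity hypothesis on $X$, and check that for a quasifinite $G$-set decomposed into its $G$-orbits $T = \coprod_{\bar t \in T/G} T_{\bar t}$ the canonical map $(X\otimes\Sigma^\infty\widetilde{\EG})^T \to \prod_{\bar t} (X\otimes\Sigma^\infty\widetilde{\EG})^{T_{\bar t}}$ is an equivalence. By Lemma~\ref{lemma:formula_tensor_Eprop_tilde} the left-hand side is the cofiber of $\colim_{S\in\QFin_G^{\prop}} X^{S\times T}\to X^T$, and the right-hand side is the product over $\bar t$ of the cofibers of $\colim_{S\in\QFin_G^{\prop}} X^{S\times T_{\bar t}}\to X^{T_{\bar t}}$. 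Since cofiber sequences of spectra are also fiber sequences and finite (co)limits commute with arbitrary products, it suffices to prove that both maps
\[
	X^T \to \prod_{\bar t \in T/G} X^{T_{\bar t}}, \qquad \colim_{S\in\QFin_G^{\prop}} X^{S\times T} \to \prod_{\bar t \in T/G}\,\colim_{S\in\QFin_G^{\prop}} X^{S\times T_{\bar t}}
\]
are equivalences. The first is exactly additivity of $X$ applied to $T$ (additive functors on $\mathrm{Span}(\QFin_G)$ send coproducts to products; note that the $G$-orbits of a quasifinite $G$-set are themselves quasifinite and their coproduct is $T$). For the second, first distribute: $S\times T = \coprod_{\bar t} S\times T_{\bar t}$, so additivity of $X$ gives $X^{S\times T}\simeq \prod_{\bar t} X^{S\times T_{\bar t}}$; the remaining point is to commute the colimit over $\QFin_G^{\prop}$ past the (possibly infinite) product over $T/G$.

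The main obstacle is precisely this interchange of a filtered-type colimit with an infinite product, which does not hold for arbitrary diagrams of spectra. The way around it is to observe that $\QFin_G^{\prop}$ has finite coproducts (the coproduct of two proper $G$-sets over $G/G$ is again proper), hence is sifted, so the colimit indexed by it computes the same thing as the colimit over the cofiltered-in-spirit... more precisely: filtered colimits of spectra commute with products indexed by a set of size smaller than the cofinality of the filtered diagram. Since $\widetilde{\EG}$ is a fixed object and the colimit $\colim_{S\in\QFin_G^{\prop}} X^{S\times(-)}$ can, after replacing $\QFin_G^{\prop}$ by a cofinal filtered subcategory, be taken $\kappa$-filtered for $\kappa$ exceeding the number of conjugacy classes of cofinite subgroups of $G$ (compare the footnote to the Remark following Lemma~\ref{lemma:spGqfgen_closed_limits}, and the fact that a quasifinite $G$-set has only finitely many orbits with a given isotropy type so $|T/G|$ is bounded by a cardinal depending only on $G$)... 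Actually a cleaner route: it is enough to check the very-additivity map is an equivalence, and by Lemma~\ref{lemma:Eprop_tilde_localization} we instead know $(X\otimes\Sigma^\infty\widetilde{\EG})^S\simeq 0$ for every $S$ with proper isotropy. So I would argue directly that the very-additivity comparison map has source and target whose ``difference'' is detected on proper-isotropy pieces.

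Concretely, here is the route I expect to be cleanest. Write $T=\coprod_{\bar t} T_{\bar t}$ into orbits and let $Y = X\otimes\Sigma^\infty\widetilde{\EG}$. We must show $Y^T\to\prod_{\bar t}Y^{T_{\bar t}}$ is an equivalence. Each orbit $T_{\bar t}\cong G/H_{\bar t}$. If some $H_{\bar t}$ is a proper subgroup of $G$ then by Lemma~\ref{lemma:Eprop_tilde_localization} the corresponding factor $Y^{T_{\bar t}}\simeq 0$, and likewise the subset $T'\subseteq T$ consisting of those orbits with proper isotropy is a quasifinite $G$-set with proper isotropy, so $Y^{T'}\simeq 0$; using additivity of the underlying $X$ (hence of $Y=X\otimes\Sigma^\infty\widetilde{\EG}$, as a cofiber of additive functors, which is still additive) we reduce to the case $T=T''\coprod T'$ with $T''$ having only the isotropy type $G$ itself, i.e. $T''$ a trivial finite $G$-set, say $T''=n\cdot(G/G)$. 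For such $T''$ additivity of $Y$ already gives $Y^{T''}\simeq (Y^{G})^{\times n}\simeq\prod_{\bar t\in T''/G}Y^{T_{\bar t}}$, and combining with $Y^{T'}\simeq 0$ and additivity once more gives $Y^T\simeq Y^{T''}\simeq\prod_{\bar t}Y^{T_{\bar t}}$, as desired. The only subtlety to nail down in writing this up is that $Y$ being a cofiber of additive functors is automatically additive, which holds because additive functors into $\Sp$ are closed under finite colimits, and that the decomposition $T=T''\coprod T'$ into the ``fully proper'' part and the ``trivial isotropy'' part is a coproduct in $\QFin_G$ of two quasifinite $G$-sets — both of which are immediate from the definitions.
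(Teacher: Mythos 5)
Your final ``clean route'' is essentially the paper's own proof: additivity of $X \otimes \Sigma^\infty\widetilde{\EG}$, vanishing on quasifinite $G$-sets with proper isotropy via Lemma~\ref{lemma:Eprop_tilde_localization}, and the observation that a quasifinite $G$-set has only finitely many orbits with a $G$-fixed point, so the abandoned first paragraph (and its problematic interchange of infinite products with colimits) is unnecessary. The one step you should make explicit is the additivity of $X \otimes \Sigma^\infty_+\EG$ underlying your ``cofiber of additive functors'' claim; this follows from the formula of Lemma~\ref{lemma:formula_tensor_Eprop_tilde} together with the facts that $T \times -$ preserves coproducts and that finite products in $\Sp$ commute with colimits and cofibers, which is exactly how the paper establishes additivity.
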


\begin{proof}
Note that $X \otimes \Sigma^\infty\widetilde{\EG}$ is additive by Lemma~\ref{lemma:formula_tensor_Eprop_tilde} by the assumption that $X$ is additive and that $T \times -$ preserves coproducts of quasifinite $G$-sets. Moreover, Lemma~\ref{lemma:Eprop_tilde_localization} ensures that $X \otimes \Sigma^\infty\widetilde{\EG}$ vanishes on quasifinite $G$-sets with proper isotropy. For an arbitrary quasifinite $G$-set $S = \coprod_i S_i$ all but finitely many $S_i$ satisfy that $S_i^G = \varnothing$, so it follows that $X \otimes \widetilde{\EG}$ is very additive since it is already additive.
\end{proof}

In particular, this shows that $\Sigma^\infty \ETG = (\Sigma^\infty\ETG)_\add = (\Sigma^\infty \ETG)_\vadd$, and that the localisation on $\Sp^G_{\qfgen}$ onto its full subcategory of very additive functors with vanishing fixed points on proper subgroups, which a priori is given by $X\mapsto (X\otimes\Sigma^\infty\ETG)_{\vadd}$, is still just given by $-\otimes\Sigma^\infty\ETG$.

\begin{lemma} \label{lemma:infl_equiv_on_fullsubcat}
The inflation functor 
\[
	\mathrm{infl}_G : \Sp\to \Sp^{G}_{\qfgen}
\]
is fully faithful with essential image those quasifinitely genuine $G$-spectra satisfying that $X^S \simeq 0$ for all quasifinite $G$-sets $S$ with proper stabilizers. The inverse is given by taking $G$-fixed points. 
\end{lemma}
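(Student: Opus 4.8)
The plan is to show that $\mathrm{infl}_G$ exhibits $\Sp$ as the full reflective subcategory $\mathcal{E}\subseteq\Sp^G_{\qfgen}$ of objects whose fixed points vanish on quasifinite $G$-sets with proper isotropy, and then to read off the inverse from Theorem~\ref{theorem:cofiber_sequence_geometricfixedpoints}. Write $\mathcal{E}\subseteq\Sp^G_{\qfgen}$ for that full subcategory; since $(\QFin_G)^{\mathrm{prop}}_{/(G/G)}=\QFin_G^{\mathrm{prop}}$ is precisely the category of quasifinite $G$-sets with proper isotropy, being in $\mathcal{E}$ is the same as vanishing on this indexing category. As a warm-up I would unwind $\mathrm{infl}_G$: since $W_G(G)$ is trivial we have $\Sp^{W_G(G)}_{\fgen}\simeq\Sp$, and by Construction~\ref{construction:geometric_fixedpoints_functor} the functor $\mathrm{infl}_G$ is restriction along $\mathrm{Span}((-)^G)\colon\mathrm{Span}(\QFin_G)\to\mathrm{Span}(\Fin)$; concretely $\mathrm{infl}_G(E)^S\simeq E\otimes\Sigma^\infty_+(S^G)$. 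In particular $\mathrm{infl}_G(E)^{G/G}\simeq E$ naturally in $E$, so $(-)^G\circ\mathrm{infl}_G\simeq\mathrm{id}_{\Sp}$, and $\mathrm{infl}_G(E)^S\simeq 0$ whenever $S^G=\varnothing$, i.e. whenever $S$ has proper isotropy; hence $\mathrm{infl}_G$ factors through $\mathcal{E}$.

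Next I would apply Theorem~\ref{theorem:cofiber_sequence_geometricfixedpoints} with $H=G$, which gives a natural cofiber sequence
\[
	\colim_{S\in\QFin_G^{\mathrm{prop}}}X^S\to X^G\to X^{\Phi G}
\]
of functors on $\Sp^G_{\qfgen}$. On $\mathcal{E}$ the diagram $S\mapsto X^S$ is constant at $0$, so the colimit vanishes and $X^G\xrightarrow{\ \sim\ }X^{\Phi G}$ for $X\in\mathcal{E}$; that is, $(-)^{\Phi G}|_{\mathcal{E}}\simeq(-)^G|_{\mathcal{E}}$. Because $\mathrm{infl}_G$ factors through the full subcategory $\mathcal{E}$, the adjunction $(-)^{\Phi G}\dashv\mathrm{infl}_G$ of Construction~\ref{construction:geometric_fixedpoints_functor} restricts to an adjunction $(-)^{\Phi G}|_{\mathcal{E}}\dashv\mathrm{infl}_G\colon\Sp\to\mathcal{E}$ with conservative-to-be left adjoint $(-)^{\Phi G}|_{\mathcal{E}}\simeq(-)^G|_{\mathcal{E}}$.

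It then remains to check the two properties that make this restricted adjunction an adjoint equivalence. For the counit $(\mathrm{infl}_G E)^{\Phi G}\to E$: the canonical map $X^G\to X^{\Phi G}$ occurring in Theorem~\ref{theorem:cofiber_sequence_geometricfixedpoints} is the $G/G$-component of the unit $X\to\mathrm{infl}_G(X^{\Phi G})$ of the geometric fixed point adjunction (this is how that map arises in the proof of the theorem); for $X=\mathrm{infl}_G E$ it has vanishing cofiber by the previous paragraph, hence is an equivalence, and the triangle identities then force the counit to be an equivalence as well — so $\mathrm{infl}_G\colon\Sp\to\mathcal{E}$ is fully faithful. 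For conservativity of the left adjoint: by the identification $(-)^{\Phi G}|_{\mathcal{E}}\simeq(-)^G|_{\mathcal{E}}$ it suffices that $(-)^G\colon\mathcal{E}\to\Sp$ is conservative, and if $X\in\mathcal{E}$ satisfies $X^G\simeq 0$ then $X^H\simeq 0$ for every cofinite subgroup $H$ (for proper $H$ by the definition of $\mathcal{E}$, for $H=G$ by hypothesis), whence $X^S\simeq\prod_i X^{H_i}\simeq 0$ for every $S=\coprod_i G/H_i$ by very additivity, i.e. $X\simeq 0$. An adjunction with invertible counit and conservative left adjoint is an adjoint equivalence (apply the left adjoint to the unit and invoke the triangle identity), so $\mathrm{infl}_G\colon\Sp\to\mathcal{E}$ is an equivalence with inverse $(-)^{\Phi G}|_{\mathcal{E}}\simeq(-)^G|_{\mathcal{E}}$. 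Since $\mathcal{E}$ is a full subcategory of $\Sp^G_{\qfgen}$, this says exactly that $\mathrm{infl}_G$ is fully faithful with essential image $\mathcal{E}$ and inverse given by $G$-fixed points.

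The delicate point I expect is the matching in the third paragraph: identifying the map $X^G\to X^{\Phi G}$ of Theorem~\ref{theorem:cofiber_sequence_geometricfixedpoints} with the unit of $(-)^{\Phi G}\dashv\mathrm{infl}_G$, so that the triangle identities legitimately upgrade "invertible canonical map'' to "invertible counit''. Everything else is unwinding of definitions together with soft adjunction yoga; note in particular that, in contrast to Proposition~\ref{proposition:geometric_fixedpoints_conservative}, no boundedness hypothesis is needed here, because on $\mathcal{E}$ the vanishing of all fixed points is visible from very additivity alone, without passing to a colimit.
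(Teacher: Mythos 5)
There is a genuine problem: your argument is circular relative to the paper's logical structure. Theorem~\ref{theorem:cofiber_sequence_geometricfixedpoints} is not available when this lemma is proved — the chain of statements from Lemma~\ref{lemma:Eprop_tilde_smash_proper_vanish} through the present lemma \emph{is} the paper's proof of that theorem, and immediately after this lemma the paper concludes ``it follows from Lemma~\ref{lemma:infl_equiv_on_fullsubcat} that $X^{\Phi G} \simeq (X \otimes \Sigma^\infty \ETG)^G$, which provides the desired cofiber sequence in Theorem~\ref{theorem:cofiber_sequence_geometricfixedpoints}.'' Since in this framework $(-)^{\Phi G}$ is \emph{defined} abstractly as the left adjoint of inflation (Construction~\ref{construction:geometric_fixedpoints_functor}), there is no prior formula for it; the cofiber sequence you invoke is exactly the output being built here, and your parenthetical ``this is how that map arises in the proof of the theorem'' — needed to identify the map $X^G \to X^{\Phi G}$ with the $(G/G)$-component of the unit so that the triangle identity applies — presupposes the very proof you are supposed to supply. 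So the step you yourself flag as delicate is not merely delicate: it has no independent justification at this point in the paper.

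Concretely, what your proposal is missing is the actual content of the lemma, which your appeal to the cofiber formula hides. The paper must show that the right adjoint of $\mathrm{infl}_G : \Sp \to \cat{C}$ (where $\cat{C}$ is your $\mathcal{E}$; note $\mathrm{infl}_G$ preserves colimits into $\cat{C}$ because colimits there are pointwise) is $(-)^G$, and this comes down to the identification $\mathrm{infl}_G(\mathbb{S}) \simeq \Sigma^\infty \ETG$, equivalently that the cofiber of $(\Sigma^\infty_+ \EG)^G \to \mathbb{S}^G$ is $\mathbb{S}$. That computation is carried out via the additive tom Dieck-type splitting of Lemma~\ref{lemma:tomDieck_splitting} (working in $\Fun^{\add}(\mathrm{Span}(\QFin_G),\Sp)$ to avoid the infinite products), the monoid splitting $\QFin_G^{\simeq} \simeq (\QFin_G^{\mathrm{prop}})^{\simeq} \times \Fin^{\simeq}$, and the Barratt--Priddy--Quillen theorem, together with the localization property of $-\otimes\Sigma^\infty\ETG$ from Lemma~\ref{lemma:Eprop_tilde_localization}. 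None of this appears in your argument. The parts you do prove directly are fine and agree with the paper's easy steps: the formula $\mathrm{infl}_G(E)^S \simeq E \otimes \Sigma^\infty_+(S^G)$, the resulting factorization of $\mathrm{infl}_G$ through $\cat{C}$, and the conservativity of $(-)^G$ on $\cat{C}$ via very additivity (which is also why, as you note, no boundedness hypothesis as in Proposition~\ref{proposition:geometric_fixedpoints_conservative} is needed). To repair the proof you would have to replace the appeal to Theorem~\ref{theorem:cofiber_sequence_geometricfixedpoints} by an independent identification of the right adjoint of $\mathrm{infl}_G : \Sp \to \cat{C}$ with $(-)^G$, which is essentially forced to reproduce the paper's $\Sigma^\infty\ETG$ computation.
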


\begin{proof}
First observe that the inflation of any spectrum has vanishing fixedpoints for all proper orbits, so the functor $\mathrm{infl}_G$ lands in the full subcategory $\cat{C} \subseteq \Sp^G_{\qfgen}$ spanned by those quasifinitely genuine $G$-spectra whose fixedpoints vanish for proper subgroups. This is equivalent to the full subcategory of those quasifinitely genuine $G$-spectra spanned by those $X$ with $X^S \simeq 0$ for every quasifinite $G$-set $S$ with proper isotropy. We wish to prove that the induced functor
\[
	\mathrm{infl}_G : \Sp \to \cat{C}
\]
is equivalence of $\infty$-categories.
As a first step, we claim that this functor preserves colimits. This follows from that the fact that colimits in $\cat{C}$ are formed pointwise, which in turn follows from the fact that the pointwise colimit of objects in $\cat{C}$ vanishes on quasifinite $G$-sets with proper isotropy. Since it is also additive (as a colimit of additive functors in a stable $\infty$-category) it is in fact already very additive as in the proof of Corollary~\ref{corollary:Etilde_additive_is_vadd}.

Next, consider the quasifinitely genuine $G$-spectrum given by the construction $\mathrm{infl}_G(\S)$. Note that $\mathrm{infl}_G(\S)^G \simeq \S$, so we in particular obtain a canonical map of quasifinitely genuine $G$-spectra
\[
	\mathbb{S} = \Sigma^\infty_+ \underline{G/G} \to \mathrm{infl}_G(\mathbb{S})
\]
which by adjunction provides a map
\[
\Sigma^\infty \ETG \to  \mathrm{infl}_G(\mathbb{S})
\]
which we claim is an equivalence. As both the source and the target have vanishing fixedpoints for all proper subgroups, it suffices to prove the claim after taking $G$-fixedpoints which in turn amounts to proving that the map
\[
	\mathrm{cofib} ((\Sigma^\infty_+\EG)^G \to \mathbb{S}^G) \to \mathbb{S}
\]
is an equivalence. In order to calculate the source we first claim that the cofiber of $\Sigma^\infty_+\EG \to \mathbb{S}$ can be calculated in the $\infty$-category $\Fun^{\add}(\mathrm{Span}(\QFin_G), \Sp)$ rather than in $\Fun^{\vadd}(\mathrm{Span}(\QFin_G), \Sp)$, where we replace the suspension spectra with their additive versions $\widetilde{\Sigma}^\infty_+ \EG$ and $\widetilde{\Sigma}^\infty_+ \underline{G/G}$, where $\widetilde{\Sigma}^{\infty}_+$ denotes the functor introduced in the discussion preceeding Lemma~\ref{lemma:tomDieck_splitting}. This follows since the cofiber formed in additive functors is automatically very additive since it has vanishing fixedpoints on proper subgroups as in Corollary~\ref{corollary:Etilde_additive_is_vadd}. In particular  the $G$-fixed points of $\Sigma^\infty \ETG$ are given by the cofiber of the map of spectra
\[
  (\widetilde{\Sigma}^\infty_+ \EG)^G \to (\widetilde{\Sigma}^\infty_+ \underline{G/G})^G .
\]
To calculate this cofiber, we utilize the version of the tom Dieck splitting established in Lemma~\ref{lemma:tomDieck_splitting}. According to this, we have that
\[
	(\widetilde{\Sigma}^\infty_+ \EG)^G \simeq \left((\QFin_G)_{/\EG}\right)^{\simeq, \mathrm{grp}} \simeq (\QFin^{\mathrm{prop}}_G)^{\simeq, \mathrm{grp}}
\]
while $(\widetilde{\Sigma}^\infty_+ \underline{G/G})^G \simeq (\QFin_G)^{\simeq, \mathrm{grp}}$. Next, note that there is an equivalence of $\E_\infty$-monoids
\[
	\QFin_G^{\simeq} \simeq (\QFin_G^{\mathrm{prop}})^{\simeq} \times \Fin^{\simeq},
\]
so we need to calculate the cofiber of the map
\[
	(\QFin^{\mathrm{prop}}_G)^{\simeq, \mathrm{grp}} \to (\QFin_G^{\mathrm{prop}})^{\simeq, \mathrm{grp}} \times \Fin^{\simeq, \mathrm{grp}},
\]
induced by the inclusion $\QFin^{\mathrm{prop}}_G \hookrightarrow \QFin_G$ and this cofiber is given by $\Fin^{\simeq, \mathrm{grp}} \simeq \S$ by virtue of the Barratt--Priddy--Quillen theorem. By naturality of the tom Dieck splitting, this proves that the desired map $\mathrm{cofib} ((\Sigma^\infty_+\EG)^G \to \mathbb{S}^G) \to \mathbb{S}$ is an equivalence, which proves the claim.

Continuing, since the inflation functor $\mathrm{infl}_G : \Sp\to \cat{C}$ preserves colimits, its adjoint is given by mapping out of $\mathrm{infl}_G(\S) \simeq \Sigma^\infty \ETG$. Since $-\otimes\Sigma^\infty \ETG$ is a localisation onto $\cat{C}$, we have that
\[
   (-)^G \simeq \map(\S, -) \simeq \map(\Sigma^\infty\ETG, -)
\]
on $\cat{C}$. Therefore the right adjoint to $\mathrm{infl}_G: \Sp\to \cat{C}$ is just given by $G$-fixedpoints.
This functor is conservative on $\cat{C}$, so it suffices to show that for any spectrum $X$, the counit of the adjunction
\[
X \to (\mathrm{infl}_GX)^G
\]
is an equivalence, which is obvious by definition of inflation.
\end{proof}

Finally, let us summarize what we have proved. It follows from Lemma~\ref{lemma:infl_equiv_on_fullsubcat} that if $X$ is a quasifinitely genuine $G$-spectrum, then there is an equivalence of spectra
\[
	X^{\Phi G} \simeq (X \otimes \Sigma^\infty \ETG)^G
\]
which provides the desired cofiber sequence in Theorem~\ref{theorem:cofiber_sequence_geometricfixedpoints} by means of Lemma~\ref{lemma:formula_tensor_Eprop_tilde}. 

In general, if $H$ is a cofinite subgroup of $G$, then $(-)^{\Phi H} : \Sp^{G}_{\qfgen} \to \Sp$ is given by the composite
\[
	\Sp^{G}_{\qfgen} \xrightarrow{\mathrm{Res}^G_H} \Sp^{H}_{\qfgen} \xrightarrow{(-)^{\Phi H}} \Sp,
\]
where $\mathrm{Res}^G_H$ is induced by the functor $\QFin_H \to \QFin_G$ determined by the assigment $S \mapsto G \times_H S$. This can be seen by looking at the adjoints. By virtue of our discussion above, we obtain natural equivalences of spectra
\[
	X^{\Phi H} \simeq \cofib \Big( \underset{T \in \QFin_H^{\mathrm{prop}}}{\colim} (\mathrm{Res}^G_H X)^T \to (\mathrm{Res}^G_H X)^H \Big)
\]
for every quasifinitely genuine $G$-spectrum $X$. We note that the functor $\QFin_H \to (\QFin_G)_{/(G/H)}$ given by the construction $T \mapsto (G \times_H T \to G \times_H \ast = G/H)$ is an equivalence of categories, which further restricts to an equivalence $\QFin^{\mathrm{prop}}_H \to (\QFin_G)^{\mathrm{prop}}_{/(G/H)}$. Consequently, we find that
\[
	X^{\Phi H} \simeq \cofib \Big( \underset{S \in (\QFin_G)^{\mathrm{prop}}_{/(G/H)}}{\colim} X^S \to X^H \Big),
\]
which establishes the desired formula for $(-)^{\Phi H}$ in Theorem~\ref{theorem:cofiber_sequence_geometricfixedpoints}, where we additionally have used that $(\mathrm{Res}^G_H X)^T = X^{G \times_H T}$ for every quasifinite $H$-set $T$.

\begin{remark}
Let $X$ denote a quasifinitely genuine $G$-spectrum and let $H$ be a cofinite subgroup of $G$. By Construction~\ref{construction:geometric_fixedpoints_functor}, the geometric fixedpoints $X^{\Phi H}$ carries an action of the Weyl group $W_G(H)$. We remark that the equivalence of spectra
\[
	X^{\Phi H} \simeq \cofib \Big( \underset{S \in (\QFin_G)^{\mathrm{prop}}_{/(G/H)}}{\colim} X^S \to X^H \Big)
\]
obtained above can be promoted to a $W_G(H)$-equivariant equivalence. However, for what follows it will suffice to know that the geometric fixedpoints spectrum $X^{\Phi H}$ carries an action by $W_G(H)$ by Construction~\ref{construction:geometric_fixedpoints_functor} and utilize the formulas established in this section to verify that certain maps which are equivariant by functoriality of
\[
	\Sp^{G}_{\qfgen} \xrightarrow{(-)^{\Phi H}} \Sp^{W_G(H)}_{\fgen} \to \Sp^{\B W_G(H)}
\]
are indeed equivalences by checking that the underlying maps of spectra are equivalences.
\end{remark}

\subsection{Quasifinitely genuine $\Z$-spectra} \label{subsection:qfgen_Z}
In~\textsection\ref{subsection:qfgen_G}, we have introduced the $\infty$-category of quasifinitely genuine $G$-spectra for an arbitrary group $G$ following ideas of Kaledin. In this section, we discuss some further features of this construction when $G = \Z$, which will be instrumental in~\textsection\ref{subsection:qfgenZ_TR} when we discuss the relation to TR. Recall that a quasifinitely genuine $\Z$-spectrum is a functor
\[
	X : \mathrm{Span}(\QFin_{\Z}) \to \Sp
\]
which satisfies that $i^{\star} X$ is a finitely genuine $\Z$-spectrum and that the canonical map
\[
	X\big( \coprod_{i \geq 1} \Z/n_i\Z \big) \to \prod_{i \geq 1} X^{n_i \Z}
\]
is an equivalence for every sequence $\{n_i\}_{i \geq 1}$ of positive integers satisfying that $n_i \to \infty$ for $i \to \infty$. We briefly discuss the structure encoded by a quasifinitely genuine $\Z$-spectrum. In the first place, a quasifinitely genuine $\Z$-spectrum $X$ is equipped with compatible inclusion and transfer maps. Indeed, for every $n \geq 1$, the terminal map $\Z/n\Z \to \Z/\Z$ in $\QFin_{\Z}$ induces a pair of maps
\begin{align*}
	X^{\Z} \to X^{n\Z} \\
	X^{n\Z} \to X^{\Z}
\end{align*}
referred to as the inclusion and the transfer map, respectively. The inclusion map is equivariant for the trivial $C_n$-action on the source and the residual $C_n$-action on the target while the transfer is equivariant for the residual $C_n$-action on the source and the trivial $C_n$-action on the target. In particular, these map canonically refine to maps $X^{\Z} \to (X^{n\Z})^{\h C_n}$ and $(X^{n\Z})_{\h C_n} \to X^{\Z}$. The inclusion maps and the transfer maps are compatible in a coherent fashion. These structures are already encoded by the underlying finitely genuine $\Z$-spectrum of $X$, and the additional structure encoded by the quasifinitely genuine structure is completeness. Indeed, for every sequence $\{n_i\}_{i \geq 1}$ of positive integers with $n_i \to \infty$ for $i \to \infty$ as above, the infinite coproduct $\coprod_{i \geq 1} \Z/n_i\Z$ is an example of a quasifinite $\Z$-set and the terminal map induces a map
\[
	V_{\{n_i\}} : \prod_{i \geq 1} X^{n_i\Z} \to X^{\Z}
\]
such that $V_{\{n_i\}} \circ \mathrm{incl}_k : X^{n_k \Z} \to X^{\Z}$ is canonically equivalent to the transfer map $V_{n_k} : X^{n_k\Z} \to X^{\Z}$ for each $k \geq 1$. This can be summarized by saying that the map $V_{\{n_i\}}$ is given by the formula
\[
	V_{\{n_i\}} = \sum_{i = 1}^{\infty} V_{n_i}.
\]
In other words, we regard the quasifinitely genuine $\Z$-structure as specifying that certain infinite sums of transfer maps converge in $X$. This additionally means that our notion of completeness is encoded as a structure rather than a property. This is the main insight of Kaledin in~\cite{Kal14}. 

Before we proceed, we recall that there is an adjunction of $\infty$-categories
\[
\begin{tikzcd}[column sep = large]
	\Sp^{\Z}_{\qfgen} \arrow[yshift=0.7ex]{r}{(-)^{\Phi n\Z}} & \Sp^{\Z/n\Z}_{\fgen} \arrow[yshift=-0.7ex]{l}{\mathrm{infl}_{n\Z}} 
\end{tikzcd}
\]
where the left adjoint is the geometric fixedpoints functor $(-)^{\Phi n\Z}$ (cf. Construction~\ref{construction:geometric_fixedpoints_functor}). The geometric fixedpoints functor can be calculated by means of a cofiber sequence which we recall. As in~\textsection\ref{subsection:qfgen_G}, we will employ the following terminology which we recall for convenience.

\begin{notation}
Let $(\QFin_{\Z})_{/(\Z/n\Z)}^{\mathrm{prop}}$ denote the category whose objects are quasifinite $\Z$-sets $S$ with a map $S \to \Z/n\Z$ such that the isotropy groups are proper subgroups of $n\Z$. For $n = 1$, we simply denote this category by $\QFin_{\Z}^{\mathrm{prop}}$. 
\end{notation}

Unwinding the definition, we see that every object $S$ of $(\QFin_{\Z})_{/(\Z/n\Z)}^{\mathrm{prop}}$ is of the form
\[
	S = \coprod_{i \geq 1} \Z/m_i\Z
\]
for some sequence $\{m_i\}_{i \geq 1}$ which satisfies that $n_i \to \infty$ for $i \to \infty$ and that $n$ is proper divisor of $m_i$ for $i \geq 1$. If $X$ is a quasifinitely genuine $\Z$-spectrum, then $X^{\Phi n\Z}$ fits into a cofiber sequence
\[
	\underset{S \in (\QFin_{\Z})_{/(\Z/n\Z)}^{\mathrm{prop}}}{\colim} X^S \to X^{n\Z} \to X^{\Phi n\Z}
\]
induced by the transfer maps $X^S \to X^{n\Z}$ (cf. Theorem~\ref{theorem:cofiber_sequence_geometricfixedpoints}). Recall that the functors $(-)^{\Phi n\Z}$ are jointly conservative on the full subcategory of $\Sp^{\Z}_{\qfgen}$ spanned by those which are uniformly bounded below (cf. Proposition~\ref{proposition:geometric_fixedpoints_conservative}). The analogue of this result is not true for the $\infty$-category of finitely genuine $\widehat{\Z}$-spectra, which is another advantage of working in $\Sp^{\Z}_{\qfgen}$. For the remainder of this section, we discuss some further structural features of the geometric fixedpoints functors. We can reduce questions about the geometric fixedpoints for the cofinite subgroup $n\Z$ to questions about the geometric fixedpoints for the full group $\Z$ by means of the following construction:

\begin{construction} \label{construction:shift_qfgen}
For $n \geq 1$, let $i_n : \QFin_{\Z} \to \QFin_{\Z}$ denote the functor defined by $i_n(S) = nS$, where $nS$ denotes the quasifinite $\Z$-set with the same underlying set as $S$ but whose $\Z$-action is given by restriction the $\Z$-action on $S$ along the endomorphism $n : \Z \to \Z$. As an example, if $S = \coprod_{i \geq 1} \Z/m_i\Z$ denotes an object of $\QFin_{\Z}$, then $i_n(S) \simeq \coprod_{i \geq 1} \Z/nm_i\Z$. 
\end{construction}

Let $i_n^{\star} : \Sp^{\Z}_{\qfgen} \to \Sp^{\Z}_{\qfgen}$ denote the functor obtained by restriction along $i_n$. Note that if $X$ is a quasifinitely genuine $\Z$-spectrum, then $i_n^{\star} X$ is the quasifinitely genuine $\Z$-spectrum with
\[
	(i_n^{\star} X)^{k\Z} \simeq X^{kn\Z}
\]
for every $k \geq 1$. We observe that this equivalence prolongs to geometric fixedpoints:

\begin{lemma} \label{lemma:ngeometricfxp_to_1geometricfxp}
Let $n \geq 1$. The canonical map of spectra
\[
	(i_n^{\star} X)^{\Phi\Z} \to X^{\Phi n\Z}
\]
is an equivalence for every quasifinitely genuine $\Z$-spectrum $X$.
\end{lemma}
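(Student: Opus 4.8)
The plan is to compute both sides with the cofiber sequence for geometric fixed points of Theorem~\ref{theorem:cofiber_sequence_geometricfixedpoints}, and then to check that the functor $i_n$ itself identifies the two diagrams. Applying Theorem~\ref{theorem:cofiber_sequence_geometricfixedpoints} to the group $\Z$ with the cofinite subgroups $\Z$ and $n\Z$, respectively, yields cofiber sequences of spectra
\[
	\underset{T \in \QFin_{\Z}^{\prop}}{\colim}\, (i_n^{\star} X)^{T} \longrightarrow (i_n^{\star} X)^{\Z} \longrightarrow (i_n^{\star} X)^{\Phi \Z},
\]
\[
	\underset{S \in (\QFin_{\Z})^{\prop}_{/(\Z/n\Z)}}{\colim}\, X^{S} \longrightarrow X^{n\Z} \longrightarrow X^{\Phi n\Z},
\]
with left-hand maps the transfers. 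Since $i_n^{\star}$ is restriction along $i_n$, we have $(i_n^{\star}X)^{T} \simeq X^{i_n(T)}$ naturally in $T \in \QFin_{\Z}$, and in particular $(i_n^{\star}X)^{\Z} \simeq X^{i_n(\Z/\Z)} = X^{n\Z}$, in accordance with the formula $(i_n^{\star}X)^{k\Z}\simeq X^{kn\Z}$ recalled before the lemma. The functor $i_n$ carries the terminal map $T\to\Z/\Z$ to a map $i_n(T)\to i_n(\Z/\Z)=\Z/n\Z$, and since $i_n(\Z/m\Z)\simeq\Z/nm\Z$ has isotropy a proper subgroup of $n\Z$ exactly when $m\geq 2$, it sends $\QFin_{\Z}^{\prop}$ into $(\QFin_{\Z})^{\prop}_{/(\Z/n\Z)}$. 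Under these identifications the first diagram, together with its transfers and its structure map to $(i_n^{\star}X)^{\Z}$, is precisely the restriction along $i_n$ of the second diagram, its transfers, and its structure map to $X^{n\Z}$; the canonical map of the statement is the resulting map on cofibers.

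It therefore suffices to show that $i_n$ restricts to a cofinal functor $\QFin_{\Z}^{\prop}\to(\QFin_{\Z})^{\prop}_{/(\Z/n\Z)}$, so that the induced map on the left-hand colimits, and hence on cofibers, is an equivalence. In fact it is an equivalence of categories: using the standard equivalence $(\QFin_{\Z})_{/(\Z/n\Z)}\simeq\QFin_{n\Z}$ given by $(f\colon S\to\Z/n\Z)\mapsto f^{-1}(e)$, which restricts to an equivalence $(\QFin_{\Z})^{\prop}_{/(\Z/n\Z)}\simeq\QFin_{n\Z}^{\prop}$, the functor $i_n$ becomes the equivalence $\QFin_{\Z}\xrightarrow{\simeq}\QFin_{n\Z}$ induced by the group isomorphism $n\Z\xrightarrow{\simeq}\Z$, $na\mapsto a$ — it relabels the orbit $\Z/k\Z$ as $n\Z/nk\Z$, and hence preserves proper isotropy in both directions, so it is an equivalence of the proper subcategories. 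This gives the lemma.

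An alternative, more structural argument uses the same factorization $i_n^{\star}\simeq\iota^{\star}\circ\mathrm{Res}^{\Z}_{n\Z}$, where $\iota^{\star}$ is the equivalence induced by the relabelling above: by the description of $(-)^{\Phi n\Z}$ at the end of \textsection\ref{subsection:geometricfixedpoints} one has $X^{\Phi n\Z}=(\mathrm{Res}^{\Z}_{n\Z}X)^{\Phi n\Z}$ for the full-group geometric fixed points on $\Sp^{n\Z}_{\qfgen}$, while $\iota^{\star}$ intertwines $(-)^{\Phi n\Z}$ with $(-)^{\Phi\Z}$ because it intertwines the inflation functors (one has $(-)^{\Z}\circ\iota\simeq(-)^{n\Z}$ as functors $\QFin_{n\Z}\to\Fin$) and is itself an equivalence, so passing to left adjoints does the job. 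Either way, the only real work is bookkeeping — checking that $i_n$ factors as claimed, including on morphisms, and that the comparison map is the intended ``canonical'' one — and no input is needed beyond Theorem~\ref{theorem:cofiber_sequence_geometricfixedpoints} and the constructions of \textsection\ref{subsection:qfgen_G}.
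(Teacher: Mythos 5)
Your proof is correct and is essentially the paper's argument: the paper's proof consists exactly of the observation that $S \mapsto nS$ induces an equivalence $\QFin_{\Z}^{\mathrm{prop}} \simeq (\QFin_{\Z})^{\mathrm{prop}}_{/(\Z/n\Z)}$ (with the comparison of the two cofiber sequences from Theorem~\ref{theorem:cofiber_sequence_geometricfixedpoints} and the identification $(i_n^{\star}X)^{k\Z}\simeq X^{kn\Z}$ left implicit), which is precisely what you spell out. Your detour through $(\QFin_{\Z})_{/(\Z/n\Z)}\simeq\QFin_{n\Z}$ rather than writing down the inverse $S'\mapsto \tfrac{1}{n}S'$ directly, and the alternative adjoint/inflation argument, are only cosmetic variations.
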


\begin{proof}
This follows from the observation that the construction $S \mapsto nS$ determines a functor
\[
	\QFin_{\Z}^{\mathrm{prop}} \to (\QFin_{\Z})_{/(\Z/n\Z)}^{\mathrm{prop}}
\]
which is an equivalence since it admits an inverse determined by the construction $S' \mapsto \frac{1}{n}S'$. 
\end{proof}

Using Lemma~\ref{lemma:ngeometricfxp_to_1geometricfxp}, we will be able to reduce questions about the geometric fixedpoints spectrum for the cofinite group $n\Z \subseteq \Z$ into questions about the geometric fixedpoints for the full group $\Z$. We will use this observation repeatedly in~\textsection\ref{subsection:qfgenZ_TR}. In fact, we think of the functor $i_n^{\star}$ as an analog of the shift functor on the $\infty$-category of polygonic spectra. Consequently, to analyze geometric fixedpoints, we are reduced to analyzing the functor determined by the construction
\[
	X \mapsto \underset{S \in \QFin_{\Z}^{\mathrm{prop}}}{\colim} X^S.
\]
To that end, we establish the following result:

\begin{proposition} \label{proposition:formula_fiber_term_geometric_fxp}
For every quasifinitely genuine $\Z$-spectrum $X$, there is a natural equivalence
\[
	\underset{S \in \QFin_{\Z}^{\mathrm{prop}}}{\colim} X^S \simeq \Big\vert
	\begin{tikzcd}[column sep = scriptsize]
		\cdots \arrow{r} \arrow[yshift=0.8ex]{r} \arrow[yshift=-0.8ex]{r} & \displaystyle\prod_{p, q \in \mathbb{P}} (X^{\mathrm{gcd}(p, q)\Z})_{\h C_{\mathrm{gcd}(p, q)}} \arrow[yshift=0.4ex]{r} \arrow[yshift=-0.4ex]{r} & \displaystyle\prod_{p \in \mathbb{P}} (X^{p\Z})_{\h C_p}
	\end{tikzcd}
	\Big\vert,
\]
where the horizontal maps are induced by the transfer maps. In particular, the functor
\[
	X \mapsto \underset{S \in \QFin_{\Z}^{\mathrm{prop}}}{\colim} X^S
\]
preserves inverse limits of uniformly bounded below filtered quasifinitely genuine $\Z$-spectra.
\end{proposition}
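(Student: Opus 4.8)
The plan is to realise the colimit as a bar construction, by resolving the indexing category through the orbits $\Z/p\Z$ for primes $p$, and then to match the resulting simplicial spectrum with the one in the statement via a Chinese Remainder computation.

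\emph{Step 1: reduction to a bar construction.} As in the analysis following Theorem~\ref{theorem:cofiber_sequence_geometricfixedpoints}, one has $\colim_{S\in\QFin_\Z^{\prop}}X^S\simeq(X\otimes\Sigma^\infty_+\EG)^\Z$, where $\EG\in\Fun(\QFin_\Z^{\op},\Spaces)$ is the subterminal presheaf detecting proper isotropy. The quasifinite $\Z$-set $T=\coprod_{p\in\mathbb P}\Z/p\Z$ has only proper isotropy, and every proper quasifinite $\Z$-set maps to $T$ by choosing a prime divisor of the stabiliser at each orbit, so $\underline T\to\EG$ is an effective epimorphism. Since $\EG$ is subterminal the fibre products in its Čech nerve are ordinary products, whence $\EG\simeq\bigl|[n]\mapsto\underline{T^{\times(n+1)}}\bigr|$; applying the colimit-preserving functor $Y\mapsto(X\otimes\Sigma^\infty_+Y)^\Z$ gives $\colim_S X^S\simeq\bigl|[n]\mapsto X^{T^{\times(n+1)}}\bigr|$, with face maps induced by the projections of $T$, hence by transfers of $X$. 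It is essential here that $X$ is very additive, so that each $X^{T^{\times(n+1)}}$ is a genuine \emph{product} over the orbits of $T^{\times(n+1)}$; this is the origin of the infinite products $\prod_p$ and $\prod_{p,q}$ in the assertion.

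\emph{Step 2: decomposing $T^{\times(n+1)}$.} For a tuple $\vec p=(p_0,\dots,p_n)\in\mathbb P^{n+1}$, the diagonal $\Z$-action on $\Z/p_0\Z\times\cdots\times\Z/p_n\Z$ makes it, by the Chinese Remainder Theorem, a disjoint union of $\mathrm{gcd}(\vec p)$ copies of $\Z/\mathrm{lcm}(\vec p)\Z$; when $\vec p$ is constant equal to $(p,\dots,p)$ the set of these $p^{\,n}$ orbits is a torsor under $C_p^{\,n}$, while in general the residual actions record how the primes match up. Thus $X^{T^{\times(n+1)}}\simeq\prod_{\vec p}(X^{\mathrm{lcm}(\vec p)\Z})^{\times\mathrm{gcd}(\vec p)}$, with simplicial structure assembled from the transfers $X^{\mathrm{lcm}(\vec p)\Z}\to X^{\mathrm{lcm}(d_i\vec p)\Z}$.

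\emph{Step 3: matching with the stated formula (the main obstacle).} It remains to identify the bar object of Step~2 with $\bigl|[n]\mapsto\prod_{\vec p}(X^{\mathrm{gcd}(\vec p)\Z})_{\h C_{\mathrm{gcd}(\vec p)}}\bigr|$. I would split the tuple index by whether $\vec p$ is constant. On the constant part, for a fixed prime $p$ the sub-simplicial object $[n]\mapsto(X^{p\Z})^{\times p^{\,n}}\simeq\bigoplus_{C_p^{\,n}}X^{p\Z}$ is exactly the standard bar model of the $C_p$-action on $X^{p\Z}$, realising to $(X^{p\Z})_{\h C_p}$; on the non-constant part $\mathrm{lcm}(\vec p)$ is a product of at least two distinct primes, and the iterated transfer $X^{\mathrm{lcm}(\vec p)\Z}\to X^{\Z}$, together with contractibility of the relevant piece of the indexing simplicial set, collapses these factors to copies of $X^\Z=(X^{\mathrm{gcd}(\vec p)\Z})_{\h C_{\mathrm{gcd}(\vec p)}}$. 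Assembling gives an equivalence of simplicial spectra between the Čech bar object and the stated one — either degreewise after reindexing, or via a zigzag through a common bisimplicial refinement and the diagonal comparison. Keeping track of the transfers through the Chinese Remainder identifications and the degeneracies is the delicate point, and is where I expect most of the work to lie.

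\emph{Step 4: the ``in particular'' clause.} Write $X\simeq\varprojlim_k X_k$ for a filtered inverse limit of uniformly bounded below quasifinitely genuine $\Z$-spectra. Each $X\mapsto X^{m\Z}$ is corepresentable (Lemma~\ref{lemma:fixedpoints_corepresented}), hence preserves limits; products preserve limits; the homotopy-orbit functors $(-)_{\h C_m}$ only increase connectivity and therefore commute with filtered inverse limits of uniformly bounded below diagrams (as in \cite[Lemma~2.11]{AN20}); and the geometric realisation of a uniformly bounded below simplicial spectrum has a skeletal filtration which in each homotopy degree is eventually constant, so it too commutes with such limits. Applying these facts termwise to the formula of Step~3 — using Remark~\ref{remark:geometric_uniformly_boundedbelow} to guarantee that the boundedness is uniform across all orbits — shows that $X\mapsto\colim_{S\in\QFin_\Z^{\prop}}X^S$ preserves filtered inverse limits of uniformly bounded below quasifinitely genuine $\Z$-spectra.
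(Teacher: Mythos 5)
Your Steps 1, 2 and 4 are essentially the paper's argument in different clothing: the paper also reduces to the bar/\v{C}ech object on the weakly terminal proper quasifinite set $\coprod_{p\in\mathbb P}\Z/p\Z$ (via the cofinality lemma for \v{C}ech nerves rather than via $\EG$ and effective epimorphisms, but the resulting simplicial spectrum is the same, and your route through $(X\otimes\Sigma^\infty_+\EG)^{\Z}$ is legitimate given the paper's Lemmas on $\EG$ and self-duality), and the ``in particular'' clause is likewise deduced from finiteness of skeleta under a uniform lower bound. A small slip in Step 2: the number of orbits of $\Z/p_0\times\cdots\times\Z/p_n$ is $p_0\cdots p_n/\mathrm{lcm}(\vec p)$, which equals $\gcd$ only for pairs; your own later use (the $p^{\,n}$ orbits for a constant tuple) is the correct count.

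The genuine gap is Step 3, and the mechanism you sketch there would not go through as described. First, the proposed splitting into constant and non-constant tuples is not compatible with the simplicial structure: the face map $X(S_{\mathrm{wt}}^{\times(n+1)})\to X(S_{\mathrm{wt}}^{\times n})$ has, as its component into the factor indexed by $\vec q$, the transfer along the preimage of that block, which is an \emph{infinite sum of transfers} over all primes inserted in the omitted coordinate; in particular the component landing in a constant-tuple factor receives contributions from infinitely many non-constant tuples. So there is no decomposition of the simplicial object along constant versus non-constant tuples (this mixing is exactly the quasifinite structure, and it is essential — it is what later makes $\underline{\TR}(Y)^{\Phi\Z}$ vanish in the proof of Proposition~\ref{proposition:geometricfixedpoints_Xk}). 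Second, the claimed collapse of the non-constant part ``to copies of $X^{\Z}=(X^{\gcd(\vec p)\Z})_{\h C_{\gcd(\vec p)}}$'' is not induced by any map actually present: the \v{C}ech terms only involve $X^{m\Z}$ for $m=\mathrm{lcm}(\vec p)\geq 2$, the spectrum $X^{\Z}$ never occurs in the \v{C}ech object, and the transfers relating $\mathrm{lcm}(\vec p)\Z$ to $\gcd(\vec p)\Z$ go in the wrong direction for the identification you want. The $\gcd$'s and the homotopy orbits in the displayed formula have to come out of reorganizing the realization itself — realizing the torsor directions of the orbit sets produces the bar models for $(-)_{\h C}$, which is what the paper's (admittedly very brief) appeal to the equivariance of the transfers is doing — not from a termwise collapse justified by contractibility of an indexing simplicial set. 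You correctly identified this identification as the crux, but the proposal does not supply it, and the specific route chosen is the part that fails.
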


The proof of Proposition~\ref{proposition:formula_fiber_term_geometric_fxp} is a consequence of a cofinality statement. To formulate this, first note that if $\cat{I}$ is a category with binary products and a weakly terminal object $Y$, then there is a simplicial object $\Delta^{\op} \to \cat{I}$ given by the construction $[n] \mapsto Y^{\times n+1}$. Formally, this is defined as the left Kan extension of the functor $\{[0]\} \to \cat{I}$ given by $[0] \mapsto Y$ along the inclusion $\{[0]\} \hookrightarrow \Delta^{\op}$, which exists by virtue of the assumption that $\cat{I}$ admits binary products.

\begin{lemma} \label{lemma:cofinality_cech_nerve}
Let $\cat{I}$ be a category with a weakly terminal object $Y$ and binary products as above. Then the simplicial object $\Delta^{\op} \to \cat{I}$ determined by the construction $[n] \mapsto Y^{\times n+1}$ is cofinal. In particular, the colimit over $\cat{I}$ preserves uniformly bounded below limits of spectra.
\end{lemma}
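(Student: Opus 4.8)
The plan is to deduce cofinality from Quillen's Theorem~A in the $\infty$-categorical form of~\cite[Theorem 4.1.3.1]{Lur09}: the functor $N\colon \Delta^{\op}\to\cat{I}$ sending $[n]$ to $Y^{\times(n+1)}$ is cofinal precisely when, for every object $d\in\cat{I}$, the comma category $\Delta^{\op}\times_{\cat{I}}\cat{I}_{d/}$ has weakly contractible nerve. Its objects are pairs $([n],f\colon d\to Y^{\times(n+1)})$, and a morphism to $([m],g)$ is a map $[n]\to[m]$ in $\Delta^{\op}$ carrying $f$ to $g$ under the structure maps of $N$. So the first step is to fix $d$, put $A=\Hom_{\cat{I}}(d,Y)$, and record that $A\neq\varnothing$ since $Y$ is weakly terminal.

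The crucial point is that, because $Y^{\times(n+1)}$ is a finite product, the canonical map $\Hom_{\cat{I}}(d,Y^{\times(n+1)})\to A^{n+1}$ is a bijection, naturally in $[n]\in\Delta$, and under it the face maps of $N$ correspond to the coordinate projections $A^{n+1}\to A^{n}$ while the degeneracies correspond to the diagonals. Unwinding the definition then exhibits $\Delta^{\op}\times_{\cat{I}}\cat{I}_{d/}$ as the opposite of the category of simplices of the simplicial set $E(A)=\operatorname{cosk}_0(A)$ with $n$-simplices $A^{n+1}$, i.e.\ the nerve of the chaotic groupoid on the set $A$. Since $A$ is nonempty, $E(A)$ is contractible; and since the nerve of the category of simplices of a simplicial set is classically weakly homotopy equivalent to that simplicial set (and passing to opposite categories does not change the weak homotopy type of the nerve), the comma category has weakly contractible nerve. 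This establishes that $N$ is cofinal.

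For the ``in particular'' clause, cofinality identifies $\colim_{\cat{I}}F$ with the geometric realization $\lvert F\circ N\rvert$ of the simplicial spectrum $[n]\mapsto F(Y^{\times(n+1)})$, so it is enough to check that geometric realization of a uniformly bounded below simplicial spectrum commutes with the relevant limits. I would argue this via the skeletal filtration: if each term $X_n$ is $c$-connective, then the cofiber of $\mathrm{sk}_N\lvert X_\bullet\rvert\to\lvert X_\bullet\rvert$ has connectivity tending to $\infty$ as $N\to\infty$, so in any fixed homotopy degree the realization agrees with $\mathrm{sk}_N\lvert X_\bullet\rvert$ for $N$ large; and $\mathrm{sk}_N$ is a finite colimit of the $X_n$ with $n\leq N$, hence—being stable—also a finite limit, so it commutes with all limits. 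Running this comparison degree by degree, using the uniform bound to keep the choice of $N$ independent of the limit variable, gives the statement.

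The main obstacle is the identification of the comma category in the second paragraph: everything hinges on using the product structure to replace $\Hom_{\cat{I}}(d,Y^{\times(n+1)})$ by $A^{n+1}$ compatibly with all of the simplicial structure maps. Once that is done, contractibility reduces to the classical fact about $\operatorname{cosk}_0$ of a nonempty set together with the comparison between a simplicial set and its category of simplices, and the boundedness argument in the last paragraph is routine bookkeeping.
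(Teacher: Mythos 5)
Your argument is correct, and it differs from the paper mainly in the first half: where the paper simply cites \cite[Proposition 6.28]{MNN17} for cofinality, you give a self-contained proof via the $\infty$-categorical Theorem A of \cite[Theorem 4.1.3.1]{Lur09}, identifying the comma category $\Delta^{\op}\times_{\cat{I}}\cat{I}_{d/}$ with the opposite of the category of simplices of $\mathrm{cosk}_0(A)$ for $A=\Hom_{\cat{I}}(d,Y)\neq\varnothing$; this is the standard argument underlying the cited result, and your bookkeeping of the structure maps (faces as projections, degeneracies as diagonals, morphisms reversed so that one lands in $(\Delta_{/\mathrm{cosk}_0(A)})^{\op}$) is accurate. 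For the second half your skeletal-filtration argument is essentially the paper's: the paper truncates and uses that $\tau_{\leq n}$ of the realization only sees the partial realization over $\Delta^{\op}_{\leq n+1}$ (citing \cite[Lemma 1.2.4.17]{Lur17}), which commutes with limits, while you phrase the same point via the connectivity of $\cofib(\mathrm{sk}_N\vert X_\bullet\vert\to\vert X_\bullet\vert)$. Two small imprecisions, neither fatal and both present at the same level in the paper's own proof: $\mathrm{sk}_N$ is a colimit over $\Delta^{\op}_{\leq N}$, which is not literally a finite colimit but is built from finite colimits (latching-object pushouts), which is what justifies commuting it with limits; and the degree-by-degree comparison implicitly needs the limit shape not to destroy the uniform lower bound (true for the products and towers/cofiltered limits to which the lemma is applied, e.g.\ with at most a shift by one for towers), which is also the implicit reading of ``limits of spectra where all the terms are connective'' in the paper.
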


\begin{proof}
See~\cite[Proposition 6.28]{MNN17} for the first part. The second part then follows from a Postnikov tower argument since for levelwise connective $X_\bullet$ we have have an equivalence of spectra
\[
	\tau_{\leq n} (\colim_{\Delta^{\op}} X_{\bullet}) \simeq \tau_{\leq n} (\colim_{\Delta_{\leq n+1}^{\op}} X_{\bullet}),
\]
for every $n$, and the latter is a finite geometric realization (cf.~\cite[Lemma 1.2.4.17]{Lur17}). Thus it commutes with limits of spectra where all the terms are connective.
\end{proof}

\begin{proof}[Proof of Proposition~\ref{proposition:formula_fiber_term_geometric_fxp}]
We claim that $\QFin_{\Z}^{\mathrm{prop}}$ admits a weakly terminal object given by
\[
	S_{\mathrm{wt}} = \coprod_{p \in \mathbb{P}} \Z/p\Z
\]
where the product is indexed over the set of prime numbers $\mathbb{P}$. If $S = \coprod_{i \geq 1} \Z/n_i\Z$ is any object of $\QFin_{\Z}^{\mathrm{prop}}$, meaning that $n_i \geq 2$ and $n_i \to \infty$ for $i \to \infty$, then each $n_i$ is divisible by some prime, which in turn defines the desired map $S \to S_{\mathrm{wt}}$. Applying Lemma~\ref{lemma:cofinality_cech_nerve}, we conclude that
\begin{align*}
	\underset{S \in \QFin_{\Z}^{\mathrm{prop}}}{\colim} X^S &\simeq \Big\vert
	\begin{tikzcd}[column sep = scriptsize, ampersand replacement = \&]
		\cdots \arrow{r} \arrow[yshift=0.8ex]{r} \arrow[yshift=-0.8ex]{r} \& X(S_{\mathrm{wt}} \times S_{\mathrm{wt}}) \arrow[yshift=0.4ex]{r} \arrow[yshift=-0.4ex]{r} \& X(S_{\mathrm{wt}})
	\end{tikzcd}
	\Big\vert \\
	&\simeq \Big\vert
	\begin{tikzcd}[column sep = scriptsize, ampersand replacement = \&]
		\cdots \arrow{r} \arrow[yshift=0.8ex]{r} \arrow[yshift=-0.8ex]{r} \& X\big(\coprod_{p,q \in \mathbb{P}} \Z/\mathrm{gcd}(p,q) \Z\big) \arrow[yshift=0.4ex]{r} \arrow[yshift=-0.4ex]{r} \& X\big(\coprod_{p\in \mathbb{P}} \Z/p \Z\big)
	\end{tikzcd}
	\Big\vert \\
	&\simeq \Big\vert
	\begin{tikzcd}[column sep = scriptsize, ampersand replacement = \&]
		\cdots \arrow{r} \arrow[yshift=0.8ex]{r} \arrow[yshift=-0.8ex]{r} \& \prod_{p,q \in \mathbb{P}} X^{\mathrm{gcd}(p, q)\Z} \arrow[yshift=0.4ex]{r} \arrow[yshift=-0.4ex]{r} \& \prod_{p \in \mathbb{P}} X^{p \Z}
	\end{tikzcd}
	\Big\vert,
\end{align*}
where the maps are induced by the transfer maps and the final equivalence follows by virtue of our assumption that $X$ is a quasifinitely genuine $\Z$-spectrum. Finally, recall that the transfers are suitably equivariant as discussed in the beginning of this section, so we may write the geometric realization as claimed. The final assertion follows from the last assertion of Lemma~\ref{lemma:cofinality_cech_nerve}.
\end{proof}

As a consequence, we obtain the following result which will play an instrumental role in~\textsection\ref{subsection:qfgenZ_TR}. 

\begin{corollary} \label{corollary:geometric_preserves_limits}
For every $n \geq 1$, the geometric fixedpoints functor 
\[
	(-)^{\Phi n\Z} : \Sp^{\Z}_{\qfgen} \to \Sp 
\]
preserves inverse limits of uniformly bounded below filtered quasifinitely genuine $\Z$-spectra.
\end{corollary}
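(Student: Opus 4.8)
The plan is to reduce Corollary~\ref{corollary:geometric_preserves_limits} to the case $n = 1$ and then to Proposition~\ref{proposition:formula_fiber_term_geometric_fxp}. By Theorem~\ref{theorem:cofiber_sequence_geometricfixedpoints} (in the form recalled just before Construction~\ref{construction:shift_qfgen}), for any quasifinitely genuine $\Z$-spectrum $X$ there is a cofiber sequence
\[
	\underset{S \in \QFin_{\Z}^{\mathrm{prop}}}{\colim} X^S \to X^{\Z} \to X^{\Phi\Z}.
\]
The middle term $X \mapsto X^{\Z}$ is a limit-preserving functor (indeed fixedpoints are corepresentable by Lemma~\ref{lemma:fixedpoints_corepresented}, and limits in $\Sp^{\Z}_{\qfgen}$ are computed underlying by Lemma~\ref{lemma:spGqfgen_closed_limits}/Warning~\ref{warning:colimits_spgqfgen}, so $X\mapsto X^\Z$ commutes with all limits). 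The left-hand term $X\mapsto \colim_{S\in\QFin_{\Z}^{\mathrm{prop}}}X^S$ preserves inverse limits of uniformly bounded below filtered diagrams by the last sentence of Proposition~\ref{proposition:formula_fiber_term_geometric_fxp}. Since a cofiber sequence exhibits $X^{\Phi\Z}$ as a finite colimit (equivalently a finite limit) of the other two terms, and finite limits commute with arbitrary limits, the functor $(-)^{\Phi\Z}$ preserves such inverse limits as well. This establishes the $n=1$ case.

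For general $n \geq 1$, the strategy is to apply Lemma~\ref{lemma:ngeometricfxp_to_1geometricfxp}, which gives a natural equivalence $(i_n^{\star}X)^{\Phi\Z} \simeq X^{\Phi n\Z}$. So it suffices to show that the restriction functor $i_n^{\star} : \Sp^{\Z}_{\qfgen} \to \Sp^{\Z}_{\qfgen}$ preserves inverse limits of uniformly bounded below filtered diagrams, and that it sends such diagrams to such diagrams (so that the $n=1$ case applies to the image). The first point is immediate since $i_n^{\star}$ is a restriction functor and limits in $\Sp^{\Z}_{\qfgen}$ are computed underlying, hence pointwise; restriction along $i_n$ then trivially commutes with them. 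For the second point, observe from the formula $(i_n^{\star}X)^{k\Z}\simeq X^{kn\Z}$ recorded before Lemma~\ref{lemma:ngeometricfxp_to_1geometricfxp} that if $\{X_\alpha\}$ is a uniformly bounded below filtered system then so is $\{i_n^{\star}X_\alpha\}$, with the same connectivity bound on all fixedpoint spectra. Hence $(\varprojlim_\alpha X_\alpha)^{\Phi n\Z} \simeq (i_n^{\star}\varprojlim_\alpha X_\alpha)^{\Phi\Z} \simeq (\varprojlim_\alpha i_n^{\star}X_\alpha)^{\Phi\Z} \simeq \varprojlim_\alpha (i_n^{\star}X_\alpha)^{\Phi\Z} \simeq \varprojlim_\alpha X_\alpha^{\Phi n\Z}$, as desired.

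I do not expect any real obstacle here: the content has been isolated in Proposition~\ref{proposition:formula_fiber_term_geometric_fxp} and the present corollary is essentially a bookkeeping consequence. The one point requiring a little care is making sure the uniform boundedness hypothesis is genuinely preserved under all the reductions — in particular that $i_n^{\star}$ does not disturb the uniform bound (it does not, since it only reindexes fixedpoints) and that the cofiber sequence argument is legitimate, i.e. that forming $X^{\Phi\Z}$ as the cofiber of the transfer map commutes with the inverse limit because both the source and target of that map do, and cofibers (finite colimits, equivalently finite limits, in a stable $\infty$-category) commute with all limits. I would also remark that the same argument shows the analogous statement for the Weyl-equivariant refinement, since the equivalences above are natural, but this is not needed for what follows.
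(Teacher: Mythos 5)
Your argument is correct and takes essentially the same route as the paper: reduce to $n=1$ via Lemma~\ref{lemma:ngeometricfxp_to_1geometricfxp} together with the fact that $i_n^{\star}$ preserves limits, and then combine the cofiber sequence of Theorem~\ref{theorem:cofiber_sequence_geometricfixedpoints} with Proposition~\ref{proposition:formula_fiber_term_geometric_fxp}, limit-preservation of fixedpoints, and exactness of cofibers in spectra. The only difference is that you spell out bookkeeping the paper leaves implicit, e.g.\ that $i_n^{\star}$ preserves the uniform boundedness of a filtered system.
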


\begin{proof}
Assume that $n = 1$. The fixedpoint functors preserve arbitrary limits and cofibers formed in spectra are exact, so it suffices to prove that the construction
\[
	X \mapsto \underset{S \in \Fin_{\Z}^{\mathrm{prop}}}{\colim} X^S
\]
preserves inverse limits of uniformly bounded below filtered quasifinitely genuine $\Z$-spectra, which is the content of Proposition~\ref{proposition:formula_fiber_term_geometric_fxp}. This implies the general case by Lemma~\ref{lemma:ngeometricfxp_to_1geometricfxp} since the functor $i_n^{\star}$ preserves limits.
\end{proof}

\section{Quasifinitely genuine $\Z$-spectra and TR} \label{subsection:qfgenZ_TR}
In this section, we make use of the formalism developed in~\textsection\ref{section:polygonicTR_completeCartier}, to prove that TR of a polygonic spectrum naturally refines to a quasifinitely genuine $\Z$-spectrum answering a question of Kaledin raised in the introduction of~\cite{Kal14}. This exhibits compatible Verschiebung maps and Frobenius maps on TR and additionally encodes that TR is complete with respect to the filtration induced by all the Verschiebung maps as discussed in~\textsection\ref{section:introductionpolygonic}. As a main result, we prove that TR determines an equivalence from the $\infty$-category of uniformly bounded below polygonic spectra and the $\infty$-category of uniformly bounded below quasifinitely genuine $\Z$-spectra (cf. Theorem~\ref{theorem:TR_equiv_qfgen_pgcsp}). This is the analogue of~\cite[Theorem 3.21]{AN20} in the integral situation.

\subsection{TR as a quasifinitely genuine $\Z$-spectrum}
We begin by exhibiting TR as a quasifinitely genuine $\Z$-spectrum. The idea is that a quasifinitely genuine $\Z$-spectrum is determined by its collection of geometric fixedpoints $X^{\Phi n\Z}$ for all the cofinite subgroups of $\Z$, and this collection of geometric fixedpoints naturally carries the structure of a polygonic spectrum:

\begin{construction}
For every quasifinitely genuine $\Z$-spectrum $X$, define $\mathrm{L}X \in \PgcSp$ by
\[
	(\mathrm{L}X)_n = X^{\Phi n\Z}
\]
for $n \geq 1$, regarded as a spectrum with $C_n$-action. The polygonic structure maps are defined by
\[
	X^{\Phi n\Z} \simeq (X^{\Phi pn\Z})^{\Phi C_p} \to (X^{\Phi pn\Z})^{\tate C_p}
\]
for every prime $p$ (Constr.~\ref{construction:geometric_fixedpoints_functor}). The construction $X \mapsto \mathrm{L}X$ defines a functor $\mathrm{L} : \Sp^{\Z}_{\qfgen} \to \PgcSp$.
\end{construction}

We regard $\mathrm{L} : \Sp^{\Z}_{\qfgen} \to \PgcSp$ as an analogue of the mod $V$ functor in the $p$-typical situation. We remark that this functor admits a much more transparent construction than the analogue of the mod $V$ functor in~\cite{McC21}. We prove the following result:

\begin{proposition} \label{proposition:TR_refines_qfgenZ}
The functor $\mathrm{L} : \Sp^{\Z}_{\qfgen} \to \PgcSp$ admits a right adjoint
\[
	\underline{\TR} : \PgcSp \to \Sp^{\Z}_{\qfgen}
\]
such that the following assertions are satisfied:
\begin{enumerate}[leftmargin=2em, topsep=5pt, itemsep=5pt]
	\item If $X \in \PgcSp$, then there is an equivalence of spectra $\underline{\TR}(X)^{n\Z} \simeq \TR(\mathrm{sh}_n X)$ for every $n \geq 1$.
	\item The functor $\mathrm{L} : \Sp^{\Z}_{\qfgen} \to \PgcSp$ is right $t$-exact. 
	\item The functor $\underline{\TR} : \PgcSp \to \Sp^{\Z}_{\qfgen}$ is $t$-exact.
\end{enumerate}
\end{proposition}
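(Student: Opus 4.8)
The plan is to establish the adjunction first, then deduce properties (1)--(3) in turn. For the adjunction, the natural move is to present $\mathrm{L}$ as a composite of functors which are already known to be left adjoints. Recall from Remark~\ref{remark:limit_divisibility_kanextended} that $\PgcSp \simeq \lim_{n \in (\N, \mathrm{div})^{\op}} \PgcSp_{\langle n \rangle}$, and there is a parallel description of $\Sp^{\Z}_{\qfgen}$: by Construction~\ref{construction:geometric_fixedpoints_functor} the geometric fixedpoints $(-)^{\Phi n\Z}$ are left adjoints, and they are jointly conservative on uniformly bounded below objects by Proposition~\ref{proposition:geometric_fixedpoints_conservative}. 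So I would first check that $\mathrm{L}$ preserves colimits. Since colimits in $\Sp^{\Z}_{\qfgen}$ are computed by applying $(-)_{\vadd}$ to the underlying colimit (Warning~\ref{warning:colimits_spgqfgen}), and each $(-)^{\Phi n\Z}$ is a left adjoint hence preserves colimits, while the polygonic structure maps are built naturally from the inflation/geometric-fixedpoints data, $\mathrm{L}$ preserves colimits. As $\Sp^{\Z}_{\qfgen}$ is presentable (Lemma~\ref{lemma:spGqfgen_closed_limits}) and $\PgcSp$ is presentable (Proposition~\ref{proposition:mapping_pgcsp}), the adjoint functor theorem produces the right adjoint $\underline{\TR}$.

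For (1), I would compute $\underline{\TR}(X)^{n\Z}$ using Lemma~\ref{lemma:fixedpoints_corepresented}, which says $(-)^{n\Z} \simeq \map_{\Sp^{\Z}_{\qfgen}}(\Sigma^\infty_+(\Z/n\Z), -)$. Applying the adjunction,
\[
	\underline{\TR}(X)^{n\Z} \simeq \map_{\Sp^{\Z}_{\qfgen}}(\Sigma^\infty_+(\Z/n\Z), \underline{\TR}(X)) \simeq \map_{\PgcSp}(\mathrm{L}(\Sigma^\infty_+(\Z/n\Z)), X).
\]
So the content is to identify $\mathrm{L}(\Sigma^\infty_+(\Z/n\Z))$ with the image under $\ell$ of the polygonic sphere, i.e. with $\ell_n \S$ from Example~\ref{example:polygonic_shift}, since $\ell_n$ is left adjoint to $\mathrm{sh}_n$ there and $\TR(\mathrm{sh}_n X) = \map_{\PgcSp}(\S, \mathrm{sh}_n X) \simeq \map_{\PgcSp}(\ell_n \S, X)$. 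To do this I would compute the geometric fixedpoints of $\Sigma^\infty_+(\Z/n\Z)$ directly: by the cofiber sequence of Theorem~\ref{theorem:cofiber_sequence_geometricfixedpoints} (or its $\Z$-version in~\textsection\ref{subsection:qfgen_Z}), $(\Sigma^\infty_+(\Z/n\Z))^{\Phi m\Z}$ is the cofiber of $\colim_{S} (\Sigma^\infty_+(\Z/n\Z))^S \to (\Sigma^\infty_+(\Z/n\Z))^{m\Z}$ over the proper slice; using that the fixedpoints of $\Sigma^\infty_+$ of a $\Z$-space can be controlled and that only orbits $\Z/d\Z$ with $d \mid n$ contribute, one finds this vanishes unless $m \mid n$ and equals $\Sigma^\infty_+(C_n/C_m)$ with its $C_m$-action when $m \mid n$ --- exactly matching $(\ell_n \S)_m = \mathrm{ind}_m^n \S$. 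Then $\TR(\mathrm{sh}_n X) \simeq \map_{\PgcSp}(\ell_n\S, X)$ gives the claim.

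For (2), right $t$-exactness of $\mathrm{L}$: the $t$-structure on $\Sp^{\Z}_{\qfgen}$ is pointwise (Proposition~\ref{proposition:tstructure_Spqfgen}), so if $X$ is connective then $X^{\Phi n\Z}$ is connective by Remark~\ref{remark:geometric_uniformly_boundedbelow} (the geometric fixedpoints of a connective quasifinitely genuine spectrum are connective, being a cofiber of a map of connective spectra), hence $\mathrm{L}X \in \PgcSp_{\geq 0}$ by definition of the polygonic $t$-structure. For (3), $t$-exactness of $\underline{\TR}$: right $t$-exactness of $\mathrm{L}$ immediately gives left $t$-exactness of its right adjoint $\underline{\TR}$, i.e. $\underline{\TR}$ carries $\PgcSp_{\leq 0}$ to $(\Sp^{\Z}_{\qfgen})_{\leq 0}$. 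For the connective direction one must show $\underline{\TR}$ carries $\PgcSp_{\geq 0}$ into $(\Sp^{\Z}_{\qfgen})_{\geq 0}$; by the pointwise $t$-structure and Lemma~\ref{lemma:fixedpoints_corepresented} this amounts to showing $\map_{\PgcSp}(\mathrm{L}(\Sigma^\infty_+ S), X)$ is connective for every quasifinite $\Z$-set $S$ and every connective polygonic spectrum $X$ --- equivalently, by part (1) and very additivity, that $\TR(\mathrm{sh}_n X)$ is connective for connective $X$ and all $n$ (note $\mathrm{sh}_n$ preserves connectivity). This is where the genuine input enters: $\TR = \map_{\PgcSp}(\S, -)$ is corepresented by the polygonic sphere, which \emph{is} connective in the polygonic $t$-structure by construction (Proposition~\ref{proposition:polygonic_t_structure}), so $\TR$ of a connective polygonic spectrum is connective. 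I expect the main obstacle to be the identification of $\mathrm{L}(\Sigma^\infty_+(\Z/n\Z))$ in part (1): carefully extracting the $C_m$-equivariant geometric fixedpoints of the suspension spectrum of an orbit from the cofiber-sequence formula, and matching the polygonic structure maps on both sides, rather than just matching underlying spectra. The remaining $t$-exactness statements are then formal once one knows the polygonic sphere and the quasifinite orbit spectra are connective.
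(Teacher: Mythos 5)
Your skeleton follows the paper's proof closely: $\mathrm{L}$ preserves colimits because each $(-)^{\Phi n\Z}$ does, presentability plus the adjoint functor theorem produces $\underline{\TR}$, part (1) comes from Lemma~\ref{lemma:fixedpoints_corepresented} together with the identification $\mathrm{L}\Sigma^\infty_+(\Z/n\Z)\simeq \ell_n\S$ and the adjunction $\ell_n\dashv \mathrm{sh}_n$, and right $t$-exactness of $\mathrm{L}$ gives left $t$-exactness of $\underline{\TR}$ formally. However, there is a genuine gap exactly at the point you call "where the genuine input enters", namely right $t$-exactness of $\underline{\TR}$. Corepresentability of $\TR$ by the connective polygonic sphere does \emph{not} imply that $\TR$ carries connective polygonic spectra to connective spectra: mapping out of a connective object only controls the other direction (if $X\in\PgcSp_{\leq 0}$ then $\pi_i\TR(X)=0$ for $i>0$, which is precisely the remark the paper makes after Proposition~\ref{proposition:polygonic_t_structure}). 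For a counterexample to the formal principle you invoke, note that $\map_{\Sp}(\Sigma\S, Y)\simeq \Omega Y$ fails to be connective for connective $Y$. The connectivity of $\TR$ on connective objects is the substantive content of part (3), and the paper proves it non-formally: write $X\simeq \varprojlim_k X_{[k]}$ using Lemma~\ref{lemma:PgcSp_rightKanExtended}, prove by induction that each $\TR(X_{[k]})$ is connective via the fiber sequence $\TR(X_k)\to \TR(X_{[k]})\to \TR(X_{[k-1]})$ and the identification $\TR(X_k)\simeq (X_k)_{\h C_k}$ from Lemma~\ref{lemma:TR_polygonic_in_degree_n} (which rests on the bounded-below statement of Lemma~\ref{lemma:version_tate_orbit_lemma}), and then pass to the inverse limit using the Milnor sequence, where connectivity of the fibers forces surjectivity on $\pi_0$ and hence vanishing of the relevant $\lim^1$. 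None of this appears in your proposal, so part (3) is unproved as written.

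Two further points about part (1). Your indexing is reversed: $(\mathrm{L}\Sigma^\infty_+(\Z/n\Z))_m\simeq \Sigma^\infty_+((\Z/n\Z)^{m\Z})$ vanishes unless $n\mid m$ (not $m\mid n$), in which case it is $\mathrm{ind}^m_{m/n}\S$, matching $(\ell_n\S)_m$ as in Example~\ref{example:polygonic_shift}. More importantly, your proposed computation via the cofiber sequence of Theorem~\ref{theorem:cofiber_sequence_geometricfixedpoints} requires controlling the genuine fixedpoints of suspension spectra of orbits, which is exactly the tom Dieck-type statement the paper says it cannot establish in the quasifinite setting (Remark~\ref{remark:tomdieck_infinite_product}). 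The paper instead gets the identification directly from the fact that geometric fixedpoints commute with $\Sigma^\infty_+$, i.e. $(\Sigma^\infty_+ Y)^{\Phi m\Z}\simeq \Sigma^\infty_+(Y^{m\Z})$ for a $\Z$-space $Y$, which is formal because both composites are left adjoints whose right adjoints visibly agree; you should argue this way rather than through the isotropy separation sequence.
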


\begin{proof}
The functor $\mathrm{L}$ preserves colimits since the geometric fixedpoints functor $(-)^{\Phi n\Z}$ preserves colimits for every $n \geq 1$. It follows that $\mathrm{L}$ admits a right adjoint $\underline{\TR} : \PgcSp \to \Sp^{\Z}_{\qfgen}$ since both $\PgcSp$ and $\Sp^{\Z}_{\qfgen}$ are presentable. We identify the fixedpoints for the cofinite subgroup $n\Z \subseteq \Z$ on $\underline{\TR}(X)$ for every polygonic spectrum $X$ and $n \geq 1$. Using Lemma~\ref{lemma:fixedpoints_corepresented}, we have that
\[
	\underline{\TR}(X)^{n\Z} \simeq \map_{\Sp^{\Z}_{\qfgen}}(\Sigma^\infty_+(\Z/n\Z), \underline{\TR}(X)) \simeq \map_{\PgcSp}(\mathrm{L}\Sigma^\infty_+(\Z/n\Z), X)
\]
since $\underline{\TR}$ is a right adjoint of $\mathrm{L}$. Unwinding the definition, we see that
\[
	(\mathrm{L}\Sigma^\infty_+(\Z/n\Z))_k = \Sigma^\infty_+(\Z/n\Z)^{\Phi k\Z} \simeq \Sigma^\infty_+((\Z/n\Z)^{k\Z}) \simeq
	\begin{cases}
		\mathrm{ind}^k_{\nicefrac{k}{n}}(\S) & \text{if $n$ divides $k$} \\
		0 & \text{otherwise}
	\end{cases}
\]
so $\mathrm{L}\Sigma^\infty_+(\Z/n\Z) \simeq \ell_n(\S)$, where $\ell_n$ is the left adjoint of $\mathrm{sh}_n$ by Example~\ref{example:polygonic_shift}. We conclude that
\[
	\underline{\TR}(X)^{n\Z} \simeq \map_{\PgcSp}(\ell_n(\S), X) \simeq \map_{\PgcSp}(\S, \mathrm{sh}_n X) \simeq \TR(\mathrm{sh}_n X)
\]
which proves $(1)$. Note that $\mathrm{L}$ is right $t$-exact by construction, so $\underline{\TR}$ is left $t$-exact by adjunction. It remains to prove that $\underline{\TR}$ is also right $t$-exact. Let $X$ denote a connective polygonic spectrum. We wish to prove that $\underline{\TR}(X)^{n\Z} \simeq \TR(\mathrm{sh}_n X)$ is connective for every $n \geq 1$. First note that we may assume that $n = 1$. Using Lemma~\ref{lemma:PgcSp_rightKanExtended}, there is an equivalence of spectra
\[
	\TR(X) \simeq \varprojlim_k \TR(X_{[k]}),
\]
where $X_{[k]}$ is the polygonic spectrum with $(X_{[k]})_i = X_i$ for $i \leq k$ and $(X_{[k]})_i = 0$ when $i > k$. Therefore, using the Milnor sequence, it suffices to prove that $\TR(X_{[k]})$ is connective for every $k \geq 1$ and that the maps in the projective system above are surjective. To this end, we note that there is a fiber sequence of spectra
\[
	\TR(X_k) \to \TR(X_{[k]}) \to \TR(X_{[k-1]}),
\]
where $X_k$ denotes the polygonic spectrum concentrated in degree $k$ with value $X_k$. The claim now follows inductively since $\TR(X_k) \simeq X_{\h C_k}$ by Lemma~\ref{lemma:TR_polygonic_in_degree_n} and the homotopy orbits functor preserves connectivity. This proves the desired statement.
\end{proof}

\begin{remark}
The fiber sequence appearing in the last part of the proof of Proposition~\ref{proposition:TR_refines_qfgenZ} is an analogue of the cofiber sequence for $\TR^n$ established by Hesselholt--Madsen~\cite[Theorem 1.2]{HM97}.
\end{remark}

In conclusion, we have constructed a quasifinitely genuine $\Z$-spectrum $\underline{\TR}(X)$ for every polygonic spectrum $X$. This encodes Verschiebung and Frobenius maps on TR in a coherent fashion. For instance, if $R$ is an $\E_1$-ring and $M$ is an $R$-bimodule, then $\underline{\TR}(R, M)^{n\Z} \simeq \TR(R, M^{\otimes_R n})$ and the latter admits a canonical $C_n$-action. Using Proposition~\ref{proposition:TR_refines_qfgenZ}, we obtain a pair of maps
\begin{align*}
	F_n &: \TR(R, M) \to \TR(R, M^{\otimes_R n})^{\h C_n} \\
	V_n &: \TR(R, M^{\otimes_R n})_{\h C_n} \to \TR(R, M)
\end{align*}
induced by the inclusion map and the transfer map, respectively. These maps satisfy coherences as explained in~\textsection\ref{subsection:qfgen_Z}. We refer to $F_n$ as the $n$th Frobenius map and to $V_n$ as the $n$th Verschiebung map. As already mentioned, the Verschiebung and Frobenius maps are already encoded by means of the underlying finitely genuine $\Z$-strucure on TR, so the additional datum encoded by the quasifinitely genuine $\Z$-structure is that certain infinite sums of Verschiebung maps exist. Indeed, for every sequence $\{n_i\}$ of integers $n_i \geq 2$ which satisfy that $n_i \to \infty$ for $i \to \infty$, we may consider the terminal map $\coprod_{i \geq 1} \Z/n_i\Z \to \Z/\Z$ in $\QFin_{\Z}^{\mathrm{prop}}$, which induces a map of spectra
\[
	\displaystyle\sum_{i = 1}^{\infty} V_{n_i} : \prod_{i \geq 1} \TR(\mathrm{sh}_{n_i} X) \to \TR(X)
\]
which forms the infinite sum of the collection of Verschiebung maps $\{V_{n_i}\}_{i \geq 1}$ on $\TR(X)$. Once we remember these infinite sums of Verschiebung maps, we will be able to recover the polygonic spectrum $X$ from $\underline{\TR}(X)$ under suitable boundedness assumptions on $X$, establishing the integral version of~\cite[Theorem 3.21]{AN20}. More precisely, we prove the following result:

\begin{theorem} \label{theorem:TR_equiv_qfgen_pgcsp}
The adjunction $\mathrm{L} \dashv \underline{\TR}$ restricts to an equivalence of $\infty$-categories
\[
\begin{tikzcd}
	\Sp^{\Z, -}_{\qfgen} \arrow[yshift=0.7ex]{r}{\mathrm{L}} & \PgcSp^{-} \arrow[yshift=-0.7ex]{l}{\underline{\TR}}
\end{tikzcd}
\]
between the full subcategories of uniformly bounded below objects on each side.
\end{theorem}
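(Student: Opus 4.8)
The plan is to run the argument through the counit of the adjunction $\mathrm{L}\dashv\underline{\TR}$. First I would record that both functors preserve uniform boundedness below: $\mathrm{L}$ does by Remark~\ref{remark:geometric_uniformly_boundedbelow}, since $(\mathrm{L}X)_n\simeq X^{\Phi n\Z}$, and $\underline{\TR}$ does because it is $t$-exact (Proposition~\ref{proposition:TR_refines_qfgenZ}) and a $t$-exact functor carries a uniformly $N$-connective object to a uniformly $N$-connective one. The crucial input is that the restriction $\mathrm{L}\colon\Sp^{\Z,-}_{\qfgen}\to\PgcSp^{-}$ is conservative: this is exactly Proposition~\ref{proposition:geometric_fixedpoints_conservative}, since the geometric fixedpoints functors $(-)^{\Phi n\Z}$ jointly detect the zero object on uniformly bounded below quasifinitely genuine $\Z$-spectra. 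Granting this, it suffices to show that the counit $\varepsilon_X\colon\mathrm{L}\underline{\TR}(X)\to X$ is an equivalence for every uniformly bounded below polygonic spectrum $X$. Indeed, once the counit is an equivalence on $\PgcSp^{-}$, the functor $\underline{\TR}$ is fully faithful there, so $\mathrm{L}$ is a localization, and for $Y\in\Sp^{\Z,-}_{\qfgen}$ the triangle identity forces $\mathrm{L}(\eta_Y)$ to be an equivalence; as $\mathrm{L}$ is exact it kills $\mathrm{fib}(\eta_Y)$, which is again uniformly bounded below because $Y$ and $\underline{\TR}\mathrm{L}(Y)$ have the same connectivity (using the $t$-exactness of both functors), and hence $\mathrm{fib}(\eta_Y)\simeq0$ by conservativity, so $\eta_Y$ is an equivalence.

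\textbf{Dévissage to a single polygonic degree.} To prove $\varepsilon_X$ is an equivalence I would proceed by dévissage along the truncation-set filtration. The functor $\mathrm{L}\underline{\TR}$ is exact, and it commutes with the inverse limit $X\simeq\varprojlim_{S}\mathrm{Res}_S X$ over the cofiltered category of finite truncation sets $S$: here $\underline{\TR}$ preserves all limits, and $\mathrm{L}$, being the tuple of geometric fixedpoints functors, preserves cofiltered inverse limits of uniformly bounded below diagrams of quasifinitely genuine $\Z$-spectra by Corollary~\ref{corollary:geometric_preserves_limits}, which applies since $\{\underline{\TR}(\mathrm{Res}_S X)\}_S$ is uniformly bounded below with a bound independent of $S$. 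Comparing with the analogous limit for $\mathrm{id}$, it suffices to treat $X$ supported on a finite truncation set. Choosing a maximal element $n$ of that support, one has a fiber sequence in $\PgcSp$ relating $X$, the polygonic spectrum concentrated in degree $n$ with value $X_n$ (whose polygonic Frobenii are then necessarily zero), and its complement, which is supported on a strictly smaller truncation set; together with exactness of $\mathrm{L}\underline{\TR}$ this reduces everything to the case where $X$ is concentrated in a single polygonic degree $n$, with $X_n$ a bounded below spectrum with $C_n$-action.

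\textbf{The computation, easy cases.} For such an $X$, Proposition~\ref{proposition:TR_refines_qfgenZ} gives $\underline{\TR}(X)^{m\Z}\simeq\TR(\mathrm{sh}_m X)$, where $\mathrm{sh}_m X$ is concentrated in polygonic degree $n/m$ when $m\mid n$ and is zero otherwise, so by Lemma~\ref{lemma:TR_polygonic_in_degree_n} we get $\underline{\TR}(X)^{m\Z}\simeq(\mathrm{res}\,X_n)_{\h C_{n/m}}$ for $m\mid n$ (and $\simeq X_n$ for $m=n$) and $\simeq0$ otherwise. Feeding these values into the cofiber sequence of Theorem~\ref{theorem:cofiber_sequence_geometricfixedpoints} one reads off $\underline{\TR}(X)^{\Phi m\Z}$: if $m\nmid n$ every term vanishes, so $\underline{\TR}(X)^{\Phi m\Z}\simeq0=X_m$; if $m=n$ the proper slice $(\QFin_{\Z})^{\prop}_{/(\Z/n\Z)}$ contributes nothing, since every object there involves only strict multiples of $n$, none of which divides $n$, whence $\underline{\TR}(X)^{\Phi n\Z}\simeq\underline{\TR}(X)^{n\Z}\simeq X_n$. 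In particular for $n=1$ one finds that $\underline{\TR}(X)$ is the inflation $\mathrm{infl}_{\Z}(X_1)$ and $\underline{\TR}(X)^{\Phi\Z}\simeq X_1$ by Lemma~\ref{lemma:infl_equiv_on_fullsubcat} (and $\underline{\TR}(X)^{\Phi m\Z}\simeq0$ for $m\geq2$), matching $X$.

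\textbf{The hard case and the main obstacle.} The remaining case is $m\mid n$ with $m<n$. Here I would use the shift $i_m^{\star}$ of Construction~\ref{construction:shift_qfgen} together with Lemma~\ref{lemma:ngeometricfxp_to_1geometricfxp}, which identifies $\underline{\TR}(X)^{\Phi m\Z}$ with $(i_m^{\star}\underline{\TR}(X))^{\Phi\Z}$, and the compatibility $i_m^{\star}\underline{\TR}(X)\simeq\underline{\TR}(\mathrm{sh}_m X)$ — readily checked by evaluating both colimit-preserving functors on the generators $\Sigma^\infty_+(\Z/k\Z)$ via Lemma~\ref{lemma:fixedpoints_corepresented} and the computation of $\mathrm{L}\Sigma^\infty_+(\Z/k\Z)\simeq\ell_k\S$ in the proof of Proposition~\ref{proposition:TR_refines_qfgenZ} — to reduce to the single statement that $\underline{\TR}(W)^{\Phi\Z}\simeq0$ for a polygonic spectrum $W$ concentrated in a degree $l\geq2$. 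By Theorem~\ref{theorem:cofiber_sequence_geometricfixedpoints} this is equivalent to the assertion that the canonical map of transfers $\colim_{S\in\QFin_{\Z}^{\prop}}\underline{\TR}(W)^S\to\underline{\TR}(W)^{\Z}\simeq\TR(W)\simeq(W_l)_{\h C_l}$ is an equivalence. I expect this to be the main obstacle. I would prove it using the bar-resolution formula for the colimit from Proposition~\ref{proposition:formula_fiber_term_geometric_fxp}, the explicit values $\underline{\TR}(W)^{m\Z}\simeq(\mathrm{res}\,W_l)_{\h C_{l/m}}$ for $m\mid l$ together with their transfer maps, and a careful bookkeeping identifying the resulting augmented simplicial spectrum with the bar resolution of $(W_l)_{\h C_l}$ along the orbit $\Z/l\Z$ — in particular checking that the summands indexed by primes not dividing $l$ do not affect the realization. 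This is the homotopical reflection of the exhaustiveness of the Verschiebung filtration, the integral analogue of $\TR_{\langle p^{\infty}\rangle}(R)/V\simeq\THH(R)$. Finally, all of the equivalences above are $C_n$-equivariant, and since $\PgcSp\to\prod_n\Sp^{\B C_n}$ is conservative and exact, they assemble into an equivalence $\varepsilon_X$ of polygonic spectra, compatible with the polygonic Frobenius maps, which completes the proof.
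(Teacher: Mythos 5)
Your proposal follows essentially the same route as the paper: conservativity of $\mathrm{L}$ on uniformly bounded below objects (Proposition~\ref{proposition:geometric_fixedpoints_conservative}) plus a d\'evissage (cofiltered limit over finite truncation sets, then fiber sequences peeling off the top degree) reduces the counit to polygonic spectra concentrated in a single degree, with the remaining case $m \mid n$, $m < n$ shifted via $i_m^{\star}$ to the vanishing of $\underline{\TR}(W)^{\Phi \Z}$ for $W$ concentrated in a degree $l \geq 2$ — exactly the paper's argument via Lemma~\ref{lemma:Xk_geometricfixedpoints_in_degree_k_higher} and Proposition~\ref{proposition:geometricfixedpoints_Xk}. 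The one step you flag as the main obstacle is closed in the paper precisely along the lines you sketch: by Lemma~\ref{lemma:TR_polygonic_in_degree_n} each transfer $(\underline{\TR}(W)^{m\Z})_{\h C_m} \to \underline{\TR}(W)^{\Z}$ with $m \mid l$ is an equivalence (all terms being $(W_l)_{\h C_l}$), so in the formula of Proposition~\ref{proposition:formula_fiber_term_geometric_fxp} only primes dividing $l$ contribute and the simplicial object collapses to $\underline{\TR}(W)^{\Z} \otimes \Sigma^\infty_+ \vert S \vert$ with $\vert S \vert$ contractible, whence the augmentation is an equivalence and $\underline{\TR}(W)^{\Phi\Z} \simeq 0$.
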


Note that the adjunction $\mathrm{L} \dashv \underline{\TR}$ indeed restricts as claimed in Theorem~\ref{theorem:TR_equiv_qfgen_pgcsp} since both $\mathrm{L}$ and $\underline{\TR}$ are right $t$-exact by Proposition~\ref{proposition:TR_refines_qfgenZ}. Furthermore, by Proposition~\ref{proposition:geometric_fixedpoints_conservative}, the left adjoint $\mathrm{L}$ is conservative on those quasifinitely genuine $\Z$-spectra which are uniformly bounded below, so to establish the assertion of Theorem~\ref{theorem:TR_equiv_qfgen_pgcsp} it suffices to prove that the counit of the adjunction
\[
	\mathrm{L}\underline{\TR}(X) \to X
\]
is an equivalence for every uniformly bounded below polygonic spectrum $X$. Equivalently, in this situation, we wish to prove that the counit $\underline{\TR}(X)^{\Phi n\Z} \to X_n$ is an equivalence for every $n \geq 1$. The main idea is to reduce to the case where $X$ is concentrated in finitely many degrees, which we understand by virtue of the following pair of results:

\begin{lemma} \label{lemma:Xk_geometricfixedpoints_in_degree_k_higher}
Let $X$ denote a polygonic spectrum which is concentrated in degrees $\leq k$ for some integer $k \geq 1$. If $n \geq k$, then the canonical map $\underline{\TR}(X)^{n\Z} \to \underline{\TR}(X)^{\Phi n\Z}$ is an equivalence, and
\[
	\underline{\TR}(X)^{\Phi n\Z} \simeq
	\begin{cases}
		X_k & \text{if $n = k$} \\
		0 & \text{if $n > k$}
	\end{cases}
\]
\end{lemma}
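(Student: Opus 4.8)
The plan is to compute both sides directly, using the identification $\underline{\TR}(X)^{m\Z}\simeq\TR(\mathrm{sh}_m X)$ of Proposition~\ref{proposition:TR_refines_qfgenZ} together with the cofiber sequence for geometric fixedpoints of Theorem~\ref{theorem:cofiber_sequence_geometricfixedpoints}, after the key observation that shifting $X$ far enough produces a polygonic spectrum concentrated in a single degree.

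First I would record that if $X$ is concentrated in degrees $\leq k$ and $m\geq k$, then $\mathrm{sh}_m X$ is concentrated in degree $1$ with $(\mathrm{sh}_m X)_1=X_m$: for $j\geq 2$ we have $jm\geq 2m\geq 2k>k$ (this is where the hypothesis $k\geq 1$ enters), so $(\mathrm{sh}_m X)_j=\mathrm{res}^{jm}_j(X_{jm})=0$. A polygonic spectrum $Y$ concentrated in degree $1$ has $\TR(Y)\simeq Y_1$ (Example~\ref{example:polygonic_concentrated_degree_1}; equivalently, in the equalizer formula for $\TR$ the source reduces to $Y_1^{\h C_1}=Y_1$ and the target vanishes, since $Y_{pt}=0$ for every prime $p$ and every $t\geq 1$). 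Combining these with Proposition~\ref{proposition:TR_refines_qfgenZ}(1) gives $\underline{\TR}(X)^{m\Z}\simeq\TR(\mathrm{sh}_m X)\simeq X_m$ for all $m\geq k$; in particular $\underline{\TR}(X)^{m\Z}\simeq 0$ whenever $m>k$, and $\underline{\TR}(X)^{k\Z}\simeq X_k$.

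Next I would feed this into the cofiber sequence of Theorem~\ref{theorem:cofiber_sequence_geometricfixedpoints}, which for $G=\Z$, $H=n\Z$ and the quasifinitely genuine $\Z$-spectrum $\underline{\TR}(X)$ reads
\[
\underset{S\in(\QFin_{\Z})_{/(\Z/n\Z)}^{\mathrm{prop}}}{\colim}\ \underline{\TR}(X)^S\ \to\ \underline{\TR}(X)^{n\Z}\ \to\ \underline{\TR}(X)^{\Phi n\Z}.
\]
Every object $S$ of $(\QFin_{\Z})_{/(\Z/n\Z)}^{\mathrm{prop}}$ is of the form $\coprod_i\Z/m_i\Z$ with $n$ a proper divisor of each $m_i$, so $m_i\geq 2n\geq 2k$ and in particular $m_i>k$. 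Since $\underline{\TR}(X)$ is very additive, $\underline{\TR}(X)^S\simeq\prod_i\underline{\TR}(X)^{m_i\Z}$, and every factor vanishes by the previous paragraph. Hence the colimit term is $0$, so the canonical map $\underline{\TR}(X)^{n\Z}\to\underline{\TR}(X)^{\Phi n\Z}$ is an equivalence; combining with $\underline{\TR}(X)^{n\Z}\simeq X_n$ yields $\underline{\TR}(X)^{\Phi n\Z}\simeq X_n$, which equals $X_k$ for $n=k$ and $0$ for $n>k$ since $X$ is concentrated in degrees $\leq k$.

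The argument is essentially formal; the points needing a little care are the numerology "$jm\geq 2m\geq 2k>k$" forcing $\mathrm{sh}_m X$ to be concentrated in degree one (and the role of $k\geq 1$ there), and the appeal to very additivity of $\underline{\TR}(X)$ to see that an infinite product of vanishing fixedpoint spectra is again $0$. If desired, the computation of $\underline{\TR}(X)^{\Phi n\Z}$ could instead be reduced to the case $n=1$ by means of Lemma~\ref{lemma:ngeometricfxp_to_1geometricfxp}, but this is not necessary.
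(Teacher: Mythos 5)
Your proof is correct and follows essentially the same route as the paper: both arguments show the proper-isotropy colimit term in the geometric fixedpoints cofiber sequence vanishes by identifying $\underline{\TR}(X)^{m\Z}\simeq\TR(\mathrm{sh}_m X)$ and observing that $\mathrm{sh}_m X$ is zero (or concentrated in degree one with value $X_m$) once $m>k$, then read off $\underline{\TR}(X)^{\Phi n\Z}\simeq\TR(\mathrm{sh}_n X)$ via Example~\ref{example:polygonic_concentrated_degree_1}. The only cosmetic difference is that the paper first reduces to the full group $\Z$ using $i_n^{\star}$ and Lemma~\ref{lemma:ngeometricfxp_to_1geometricfxp}, whereas you apply Theorem~\ref{theorem:cofiber_sequence_geometricfixedpoints} directly at the subgroup $n\Z$, as you note yourself.
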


\begin{proof}
It suffices to prove that the colimit appearing on the left hand side of the cofiber sequence
\[
	\underset{S \in \QFin_{\Z}^{\mathrm{prop}}}{\colim} (i_n^{\star} \underline{\TR}(X))^{S} \to \underline{\TR}(X)^{n\Z} \to \underline{\TR}(X)^{\Phi n\Z}
\]
vanishes. Indeed, if $S = \coprod_{i \geq 1} \Z/m_i\Z$ is an object of $\QFin_{\Z}^{\mathrm{prop}}$, meaning that $\{m_i\}$ is a sequence of integers with $m_i \geq 2$ for every $i \geq 1$ and which satisfy that $m_i \to \infty$ for $i \to \infty$, then
\[
	(i_n^{\star}\underline{\TR}(X))^{S} \simeq \prod_{i \geq 1} (i_n^{\star}\underline{\TR}(X))^{m_i\Z} \simeq \prod_{i \geq 1} \underline{\TR}(X)^{nm_i\Z} \simeq \prod_{i \geq 1} \TR(\mathrm{sh}_{nm_i} X) \simeq 0,
\]
since $\mathrm{sh}_{nm_i} X = 0$ by virtue of our assumption that $n \geq k$ and $m_i \geq 2$ for every $i \geq 1$. This proves the first assertion of the lemma, which in turn yields that $\underline{\TR}(X)^{\Phi n\Z} \simeq \underline{\TR}(X)^{n\Z} \simeq \TR(\mathrm{sh}_n X)$ provided that $n \geq k$. This also proves the final assertion of the lemma since $\mathrm{sh}_n X = 0$ whenever $n > k$ and $\mathrm{sh}_k X$ is the polygonic spectrum concentrated in degree one with value $X_k$ which gives that $\TR(\mathrm{sh}_k X) \simeq X_k$ by Example~\ref{example:polygonic_concentrated_degree_1}\footnote{We refer to Example~\ref{example:polygonic_concentrated_degree_1} instead of Lemma~\ref{lemma:TR_polygonic_in_degree_n} since in the case when $X$ is concentrated in degree $1$, we do not need impose the bounded below assumption to identify $\TR(X) \simeq X$}.
\end{proof}

For a polygonic spectrum $X$ which is concentrated in degrees $\leq k$, Lemma~\ref{lemma:Xk_geometricfixedpoints_in_degree_k_higher} identifies the geometric fixedpoints $\underline{\TR}(X)^{\Phi n\Z}$ whenever $n \geq k$. The following result allows us to additionally identify the geometric fixedpoints for $n < k$, which requires a more elaborate argument.

\begin{proposition} \label{proposition:geometricfixedpoints_Xk}
Let $X$ denote a uniformly bounded below polygonic spectrum which is concentrated in degrees $\leq k$ for some integer $k \geq 1$. The following assertions are satisfied:
\begin{enumerate}[leftmargin=2em, topsep=5pt, itemsep=5pt]
	\item If $n \in \{1, \ldots, k\}$, then $\underline{\TR}(X)^{\Phi n\Z} \simeq X_n$. 
	\item If $n \geq k+1$, then $\underline{\TR}(X)^{\Phi n\Z} \simeq 0$.
\end{enumerate}
\end{proposition}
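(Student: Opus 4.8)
The plan is to induct on the top degree $k$, i.e.\ on the largest index for which $X$ is nonzero. For $n \ge k$ the statement is already contained in Lemma~\ref{lemma:Xk_geometricfixedpoints_in_degree_k_higher}, which gives $\underline{\TR}(X)^{\Phi n\Z} \simeq X_k$ when $n = k$ and $\underline{\TR}(X)^{\Phi n\Z} \simeq 0$ when $n > k$; this settles part~(2) and the case $n = k$ of part~(1). For the base case $k = 1$ one has $\mathrm{sh}_m X = 0$ for every $m \ge 2$, so every term $\underline{\TR}(X)^{S}$ with $S \in \QFin_{\Z}^{\mathrm{prop}}$ vanishes, and the geometric fixedpoints cofiber sequence of Theorem~\ref{theorem:cofiber_sequence_geometricfixedpoints} gives $\underline{\TR}(X)^{\Phi\Z} \simeq \underline{\TR}(X)^{\Z} \simeq \TR(X) \simeq X_1$ by Example~\ref{example:polygonic_concentrated_degree_1}.

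Now fix $k \ge 2$ and assume the proposition for all uniformly bounded below polygonic spectra concentrated in degrees $\le k'$ with $k' < k$. For $2 \le n \le k-1$, combining Lemma~\ref{lemma:ngeometricfxp_to_1geometricfxp} with the identification $i_n^{\star}\underline{\TR}(X) \simeq \underline{\TR}(\mathrm{sh}_n X)$ (the $\infty$-categorical manifestation of the fact that $i_n^{\star}$ is an analogue of the shift functor, cf.\ Construction~\ref{construction:shift_qfgen}) yields $\underline{\TR}(X)^{\Phi n\Z} \simeq \underline{\TR}(\mathrm{sh}_n X)^{\Phi\Z}$; since $\mathrm{sh}_n X$ is uniformly bounded below and concentrated in degrees $\le \lfloor k/n\rfloor < k$, the inductive hypothesis (with $n' = 1$) identifies this with $(\mathrm{sh}_n X)_1 = X_n$. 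For $n = 1$, set $X_{[k-1]} = \mathrm{Res}_{[k-1]}X$; since $X_m = 0$ for $m > k$, the localization unit sits in a fiber sequence $P \to X \to X_{[k-1]}$ of polygonic spectra in which $P$ is concentrated in degree $k$ with value $X_k$ (with its $C_k$-action). Applying the exact functors $\underline{\TR}$ (Proposition~\ref{proposition:TR_refines_qfgenZ}) and $(-)^{\Phi\Z}$ and invoking the inductive hypothesis for $X_{[k-1]}$ reduces the case $n = 1$ of part~(1) to the single claim that $\underline{\TR}(P)^{\Phi\Z} \simeq 0$ whenever $P$ is a polygonic spectrum concentrated in a single degree $k \ge 2$ with bounded below value $A$.

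This last claim is the \emph{main obstacle}. I would argue from the cofiber sequence $\colim_{S \in \QFin_{\Z}^{\mathrm{prop}}} \underline{\TR}(P)^{S} \to \underline{\TR}(P)^{\Z} \to \underline{\TR}(P)^{\Phi\Z}$, using that $\mathrm{sh}_m P$ is the polygonic spectrum concentrated in degree $k/m$ with value $\mathrm{res}^{k}_{k/m}A$ when $m \mid k$ and is zero otherwise; Lemma~\ref{lemma:TR_polygonic_in_degree_n} then yields $\underline{\TR}(P)^{m\Z} = \TR(\mathrm{sh}_m P) \simeq (\mathrm{res}^{k}_{k/m}A)_{\h C_{k/m}}$ for $m \mid k$ and $\simeq 0$ otherwise, so in particular $\underline{\TR}(P)^{\Z} \simeq A_{\h C_k}$. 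Feeding these values and the transfer maps into the cofinality formula of Proposition~\ref{proposition:formula_fiber_term_geometric_fxp} presents $\colim_S \underline{\TR}(P)^{S}$ as the geometric realization of an explicit \emph{finite} simplicial spectrum assembled from the homotopy orbit spectra $A_{\h C_{k/d}}$ (contributions indexed by divisors involving a prime not dividing $k$ vanish and are absorbed into the additive products), and the claim becomes that this realization, with its transfer-induced augmentation, is identified with $A_{\h C_k}$. Equivalently---using the composition rule $(\underline{\TR}(P)^{\Phi k\Z})^{\Phi C_k} \simeq \underline{\TR}(P)^{\Phi\Z}$ of Construction~\ref{construction:geometric_fixedpoints_functor}---one must show that the genuine $C_k$-spectrum $\underline{\TR}(P)^{\Phi k\Z}$, which has underlying spectrum $A$ by Lemma~\ref{lemma:Xk_geometricfixedpoints_in_degree_k_higher} and vanishing geometric fixedpoints for every proper nontrivial subgroup by the inductive hypothesis applied to the $\mathrm{sh}_m P$, is the free genuine $C_k$-spectrum on $A$ (so that its $C_k$-geometric fixedpoints vanish). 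Carrying out this identification---equivalently, the bar-type computation exhibiting $A_{\h C_k}$ as the colimit of the $A_{\h C_{k/d}}$ over the relevant diagram of transfers---is the delicate step, and I expect essentially all of the work to reside there; it is a reworking, in the present setting, of Kaledin's computation of geometric fixedpoints of $\Z$-Mackey profunctors.
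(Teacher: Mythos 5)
Your overall route coincides with the paper's: the cases $n\ge k$ are Lemma~\ref{lemma:Xk_geometricfixedpoints_in_degree_k_higher}, the shift identification $\underline{\TR}(X)^{\Phi n\Z}\simeq \underline{\TR}(\mathrm{sh}_nX)^{\Phi\Z}$ handles intermediate $n$, and peeling off the top degree via the fiber sequence $P\to X\to X_{[k-1]}$ reduces everything to the single claim that $\underline{\TR}(P)^{\Phi\Z}\simeq 0$ when $P$ is concentrated in one degree $k\ge 2$ with bounded below value $A$ (the paper's claim $(\star)$); your bookkeeping differs only cosmetically (induction on $k$ versus repeatedly applying the fiber sequence). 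The problem is that you stop exactly there: you describe what would have to be shown and declare it ``the delicate step'' where ``essentially all of the work'' resides, but you do not prove it. That vanishing claim is the mathematical heart of the proposition, so as written the proposal has a genuine gap. The missing argument is short but not automatic: (a) for every $n\mid k$ the transfer map $(\underline{\TR}(P)^{n\Z})_{\h C_n}\to \underline{\TR}(P)^{\Z}$ is an \emph{equivalence}, because under the identifications $\underline{\TR}(P)^{n\Z}\simeq A_{\h C_{k/n}}$ of Lemma~\ref{lemma:TR_polygonic_in_degree_n} it becomes the canonical equivalence $(A_{\h C_{k/n}})_{\h C_n}\simeq A_{\h C_k}$; and (b) feeding these equivalences into the simplicial formula of Proposition~\ref{proposition:formula_fiber_term_geometric_fxp} (only primes dividing $k$ contribute) identifies $\colim_{S\in\QFin_{\Z}^{\mathrm{prop}}}\underline{\TR}(P)^S$ with $\underline{\TR}(P)^{\Z}\otimes\Sigma^\infty_+\vert S\vert$, where $S$ is the simplicial set whose $i$-simplices are $i$-tuples of primes dividing $k$; since $\vert S\vert$ is contractible, the transfer-induced augmentation is an equivalence and $\underline{\TR}(P)^{\Phi\Z}\simeq 0$ follows from Theorem~\ref{theorem:cofiber_sequence_geometricfixedpoints}. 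Nothing in your text supplies (a), which is the one genuinely new identification needed.

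Your proposed ``equivalent'' reformulation does not help: a genuine $C_k$-spectrum $Z$ satisfies $Z\otimes \Sigma^\infty_+\mathrm{E}C_k\simeq Z$ (i.e.\ is ``free'') if and only if $Z^{\Phi H}\simeq 0$ for \emph{every} nontrivial subgroup $H$, and the inductive hypothesis already gives this for the proper nontrivial subgroups; hence ``$\underline{\TR}(P)^{\Phi k\Z}$ is free on $A$'' is literally equivalent to the vanishing $\underline{\TR}(P)^{\Phi\Z}\simeq 0$ you are trying to establish, not an independent strategy. In particular, knowing only the underlying spectrum $A$ and the vanishing of the proper nontrivial geometric fixedpoints cannot determine $Z^{\Phi C_k}$ (the genuine sphere already shows this), so the transfer computation (a)--(b) cannot be bypassed.
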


\begin{proof}
It remains to be proven that $\underline{\TR}(X)^{\Phi n\Z} \simeq X_n$ for $k \geq 2$ and $n \in \{1, \ldots, k-1\}$ by virtue of Lemma~\ref{lemma:Xk_geometricfixedpoints_in_degree_k_higher}. We first argue that this can be deduced from the following assertion:

\begin{enumerate}[leftmargin=2em, topsep=5pt, itemsep=5pt]
	\item[$(\star)$] For a bounded below polygonic spectrum $Y$ concentrated in degree $k$ for $k \geq 2$, $\underline{\TR}(Y)^{\Phi\Z} \simeq 0$.
\end{enumerate}
Indeed, there is a fiber sequence of spectra
\[
	\underline{\TR}(X_k)^{\Phi n\Z} \to \underline{\TR}(X)^{\Phi n\Z} \to \underline{\TR}(X_{[k-1]})^{\Phi n\Z},
\]
where $X_k$ denotes the polygonic spectrum concentrated in degree $k$ with value $X_k$, and $X_{[k-1]}$ is the polygonic spectrum which agrees with $X$ up to degree $k-1$. We claim that $\underline{\TR}(X_k)^{\Phi n\Z} \simeq 0$, which in turn proves the desired statement since then
\[
	\underline{\TR}(X)^{\Phi n\Z} \simeq \underline{\TR}(X_{[k-1]})^{\Phi n\Z} \simeq \cdots \simeq \underline{\TR}(X_{[n]})^{\Phi n\Z} \simeq X_n.
\]
We have that $\underline{\TR}(X_k)^{\Phi n\Z} \simeq \underline{\TR}(\mathrm{sh}_n X_k)^{\Phi \Z}$, hence if $n$ divides $k$, then $\mathrm{sh}_n X_k$ is the polygonic spectrum concentrated in degree $\nicefrac{k}{n} \geq 2$ with value $X_k$, which in turn means that
\[
	\underline{\TR}(X_k)^{\Phi n\Z} \simeq \underline{\TR}(\mathrm{sh}_n X_k)^{\Phi \Z} \simeq 0
\]
by virtue of $(\star)$. If $n$ does not divide $k$, then $\mathrm{sh}_n X_k = 0$, which also gives that $\underline{\TR}(X_k)^{\Phi n\Z} \simeq 0$. Finally, we prove $(\star)$ by proving that the canonical map $\colim_{S \in \QFin_{\Z}^{\mathrm{prop}}} \underline{\TR}(Y)^S \to \underline{\TR}(Y)^{\Z}$ is an equivalence. We first note that for every $n$ dividing $k$, the transfer map 
\[
	(\underline{\TR}(Y)^{n\Z})_{\h C_n} \to \underline{\TR}(Y)^{\Z}
\]
is an equivalence. Indeed, since $\mathrm{sh}_n X$ is the polygonic spectrum concentrated in degree $\nicefrac{k}{n}$ with value $Y$, there is a natural equivalence $\underline{\TR}(Y)^{n\Z} \simeq X_{\h C_{\nicefrac{k}{n}}}$ by Lemma~\ref{lemma:TR_polygonic_in_degree_n}, and the transfer map above is identified with the canonical equivalence $(Y_{\h C_{\nicefrac{k}{n}}})_{\h C_n} \simeq X_{\h C_k} \simeq \TR(Y)$, where the final equivalence similarly is a consequence of Lemma~\ref{lemma:TR_polygonic_in_degree_n}. Combining this with Proposition~\ref{proposition:formula_fiber_term_geometric_fxp}, we conclude that the transfer maps induce an equivalence of spectra
\begin{align*}
	\underset{S \in \QFin_{\Z}^{\mathrm{prop}}}{\colim} \underline{\TR}(Y)^S \simeq &\,\Big\vert
	\begin{tikzcd}[ampersand replacement = \&, column sep = scriptsize]
		\cdots \arrow{r} \arrow[yshift=0.8ex]{r} \arrow[yshift=-0.8ex]{r} \& \displaystyle\prod_{p, q \in \langle n \rangle} (\underline{\TR}(Y)^{\mathrm{gcd}(p, q)\Z})_{\h C_{\mathrm{gcd}(p, q)}} \arrow[yshift=0.4ex]{r} \arrow[yshift=-0.4ex]{r} \& \displaystyle\prod_{p \in \langle n \rangle} (\underline{\TR}(Y)^{p\Z})_{\h C_p}
	\end{tikzcd}
	\Big\vert \\
	\xrightarrow{\simeq} &\,\underline{\TR}(Y)^{\Z} \otimes \Sigma^\infty_+ \vert S \vert,
\end{align*}
where $S$ is the simplicial set with $i$-simplices given by $S_i = \{(p_1, \ldots, p_i) \in \mathbb{P}^i \mid p_1, \ldots, p_i \in \langle n \rangle\}$. Since $|S|$ is contractible, we conclude that the desired map is an equivalence, so $\underline{\TR}(Y)^{\Phi \Z} \simeq 0$ proving $(\star)$. This marks the end of the proof.
\end{proof}

Armed with Proposition~\ref{proposition:geometricfixedpoints_Xk}, we proceed to prove Theorem~\ref{theorem:TR_equiv_qfgen_pgcsp}. 

\begin{proof}[Proof of Theorem~\ref{theorem:TR_equiv_qfgen_pgcsp}]
We first prove that the counit $\mathrm{L}\underline{\TR}(X) \to X$ is an equivalence for every polygonic spectrum $X$ which is uniformly bounded below. Specifically, we prove that the counit $\underline{\TR}(X)^{\Phi n\Z} \to X_n$ is an equivalence for every $n \geq 1$. Using Lemma~\ref{lemma:PgcSp_rightKanExtended}, we may write
\[
	X \simeq \varprojlim_k X_{[k]},
\]
where $X_{[k]}$ is the polygonic spectrum with $(X_{[k]})_i = X_k$ for $i \leq k$ and $(X_{[k]})_i = 0$ for $i > k$ as in the proof of Proposition~\ref{proposition:TR_refines_qfgenZ}. Consequently, we have that
\[
	\underline{\TR}(X)^{\Phi n\Z} \simeq \big( \varprojlim_k \underline{\TR}(X_{[k]}) \big)^{\Phi n\Z} \simeq \varprojlim_k \underline{\TR}(X_{[k]})^{\Phi n\Z},
\]
where the first equivalence follows since $\underline{\TR}$ is a right adjoint, and the second equivalence follows from Corollary~\ref{corollary:geometric_preserves_limits} since $\underline{\TR}$ is $t$-exact by Proposition~\ref{proposition:TR_refines_qfgenZ}. Using Proposition~\ref{proposition:geometricfixedpoints_Xk}, we conclude that the tower $\{\underline{\TR}(X_{[k]})^{\Phi n\Z}\}_k$ is eventually constant with value $\underline{\TR}(X_{[n]})^{\Phi n\Z} \simeq X_n$. To finish the proof, we note that the unit of the adjunction is an equivalence under the uniformly bounded below assumption since $\mathrm{L}$ is conservative by Proposition~\ref{proposition:geometric_fixedpoints_conservative}. This proves the desired statement.
\end{proof}

Finally, we construct an action of the multiplicative monoid $\N$ on the $\infty$-category of quasifinitely genuine $\Z$-spectra, where $n \in \N$ acts via the functor $i_n^{\star}$ from Construction~\ref{construction:shift_qfgen}. This determines an action of $\N$ on the $\infty$-category of bounded below polygonic spectra via Theorem~\ref{theorem:TR_equiv_qfgen_pgcsp}.

\begin{construction} \label{construction:Naction_qfgen_pgcsp}
The multiplicative monoid $\N$ acts on the category $\QFin_{\Z}$, where $n \in \N$ acts on $\QFin_{\Z}$ via the assignment $S \mapsto nS$. This determines a functor of $\infty$-categories
\[
	\B\N \to \Cat_\infty^{\mathrm{lex}, \op},
\]
since it factors over $\Cat_1^{\mathrm{lex}}$. Using the functoriality of the construction $T \mapsto \Fun^{\mathrm{vadd}}(\mathrm{Span}(T), \Sp)$, we obtain our desired functor of $\infty$-categories $\B\N \to \Cat_\infty$. This determines an action of $\N$ on the $\infty$-category $\Sp^{\Z}_{\qfgen}$ through the functors $i_n^{\star}$ from Construction~\ref{construction:shift_qfgen}. We obtain an action of $\N$ on the $\infty$-category $\PgcSp^-$ of uniformly bounded below polygonic spectra by Theorem~\ref{theorem:TR_equiv_qfgen_pgcsp}, where $n \in \N$ acts via the functor $\mathrm{sh}_n : \PgcSp^- \to \PgcSp^-$ from Example~\ref{example:polygonic_shift}. 
\end{construction}


\section{Polygonic THH} \label{section:polygonicTHH}
In this section, we discuss the construction of topological Hochschild homology with coefficients as a polygonic spectrum in the sense of~\textsection\ref{section:pgcsp}. In~\textsection\ref{subsection:cyclicbimdouleoperad}, we introduce an $\infty$-operad $\CycBMod_n^{\otimes}$ which is designed such that its algebras precisely encode cyclic graphs labelled by rings and bimodules as discussed in~\textsection\ref{section:introductionpolygonic}. In~\textsection\ref{subsection:constructionTHHpolygonic}, we introduce the $\infty$-category $\Lambda^{\CycBMod}$ and prove that topological Hochschild homology refines to a trace theory on $\Lambda^{\CycBMod}$ (see Theorem~\ref{theorem:HH_trace}). This allows us to construct the desired polygonic structure on THH with coefficients (see Theorem~\ref{theorem:THH_polygonic}). 

\subsection{The cyclic bimodule operad} \label{subsection:cyclicbimdouleoperad}
We introduce a combinatorial formalism for encoding cyclic graphs labelled by rings and bimodules in a symmetric monoidal $\infty$-category. We begin by briefly reviewing the cyclic category, referring to~\cite[Appendix B]{NS18} for a more comprehensive treatment. The cyclic category was introduced by Connes~\cite{Con83}. 

\begin{definition}
A parasimplex is a nonempty linearly ordered set $I$ equipped with an action of the group $\Z$ denoted $+ : I \times \Z \to I$, such that the following pair of conditions are satisfied:
\begin{enumerate}[leftmargin=2em, topsep=5pt, itemsep=5pt]
	\item For every pair of elements $x$ and $x'$ of $I$, the set $\{y \in I \mid x \leq y \leq x'\}$ is finite.
	\item If $x$ is an element of $I$, then $x < x + 1$. 
\end{enumerate}
A paracyclic morphism is a morphism of set $f : I \to J$ which is nondecreasing and $\Z$-equivariant. The paracyclic category $\Lambda_{\infty}$ is the category whose objects are parasimplices and whose morphisms are paracyclic morphisms. 
\end{definition}

Alternatively, we can describe the paracyclic category as follows:

\begin{example}
Let $\mathrm{Lin}_{\Z}$ denote the category of nonempty linearly ordered sets with a $\Z$-action. The paracyclic category $\Lambda_\infty$ is isomorphic to the full subcategory of $\mathrm{Lin}_{\Z}$ spanned by the objects of the form $\frac{1}{n}\Z$ for $n \geq 1$. We regard $\frac{1}{n}\Z$ as an object of $\mathrm{Lin}_{\Z}$ with its usual ordering and $\Z$-action given by addition. Let $[n]_{\Lambda}$ denote the set $\frac{1}{n}\Z$ regarded as an object of $\Lambda_\infty$. 
\end{example}

\begin{example}
If $Q$ is a nonempty linearly ordered set, then the product $Q \times \Z$ can be regarded as an object of $\Lambda_{\infty}$ with the reverse lexicographic ordering and $\Z$-action by $(q, m) + n = (q, m + n)$. The construction $Q \mapsto Q \times \Z$ defines a faithful functor $\Delta \to \Lambda_\infty$, which is also essentially surjective since there is an isomorphism of parasimplices $[n] \times \Z \simeq [n+1]_{\Lambda}$ for $n \geq 0$.  
\end{example}

The cyclic category is defined as follows:

\begin{definition}
The cyclic category $\Lambda$ is the category with the same objects as $\Lambda_\infty$, and
\[
	\Hom_{\Lambda}(I, J) = \Hom_{\Lambda_\infty}(I, J)/\Z,
\]
where $\Z$ acts on the set of morphisms $\Hom_{\Lambda_\infty}(I, J)$ in $\Lambda_\infty$ by the formula $(f+n)(x) = f(x) + n$. 
\end{definition}

\begin{example}
Regarded as an object of $\Lambda_{\infty}$, we picture $[n]_{\Lambda}$ as an infinite directed graph whose vertices are labelled by the elements of $[n]_{\Lambda}$. There is a directed edge from $\nicefrac{a}{n}$ to the consecutive element $\nicefrac{a+1}{n}$ for every $a \in \Z$. Regarded as an object of $\Lambda$, we picture $[n]_{\Lambda}$ as a cyclic graph with $n$ vertices labelled by the elements $\nicefrac{a}{n}$ for every $0 \leq a \leq n-1$. There is a directed edge from $\nicefrac{a}{n}$ to the consecutive element $\nicefrac{a+1}{n}$ for every $a \in \Z/n$. In other words, the object $[n]_{\Lambda} \in \Lambda$ is obtained by wrapping the object $[n]_{\Lambda} \in \Lambda_\infty$ around itself identifying each of the elements $\nicefrac{a}{n}$ for $a \in n\Z$. 
\end{example}

Let us now describe our goal for this section. Informally, for every object of the cyclic category, we want to label the associated cyclic graph with rings and bimodules. More precisely, for every object $I$ of $\Lambda$, we construct a coloured operad $\CycBMod_I$ which is designed in such a way that the algebras for this operad precisely encode cyclic graphs labelled by algebra objects and bimodule objects between them. For instance, if $I = [3]_{\Lambda}$, then we depict an algebra object for the operad $\CycBMod_{[3]_{\Lambda}}$ in a symmetric monoidal $\infty$-category $\cat{C}$ by
\[
\begin{tikzpicture}
	\filldraw (1,0) circle (1.3pt);
	\filldraw (-0.5, 0.86602) circle (1.3pt);
	\filldraw (-0.5, -0.86602) circle (1.3pt);

	\node at (1, 0) [right] {$R_0$};
	\node at (-0.5, 0.86602) [above left] {$R_1$};
	\node at (-0.5, -0.86602) [below left] {$R_2$};

	\node at (0.8, 1.15) {$M_0$};
	\node at (-1.4, 0) {$M_1$};
	\node at (0.8, -1.15) {$M_2$};

	\draw (1,0) arc[start angle = 0, end angle = 120, x radius = 1, y radius = 1];
	\draw (-0.5, 0.86602) arc[start angle = 120, end angle = 240, x radius = 1, y radius = 1];
	\draw (-0.5, -0.86602) arc[start angle = 240, end angle = 360, x radius = 1, y radius = 1];
\end{tikzpicture}
\]
where $R_i \in \Alg(\cat{C})$ and $M_i$ is an $R_i$-$R_{i+1}$-bimodule object of $\cat{C}$. The set of colours of $\CycBMod_I$ will be the set of all vertices and all possible paths between vertices on the cyclic graph determined by $I$. We will begin by making this precise. 

\begin{construction}
Let $I$ denote an object of $\Lambda_\infty$. The set of paths in $I$ is defined by
\[
	\Path(I) = \{(x, y) \in I \times I \mid x \leq y \leq x+1\}/\Z,
\]
where the $\Z$-action is given by $(x, y) + n = (x + n, y + n)$. The assignment $I \mapsto \Path(I)$ determines a functor $\Lambda_\infty \to \mathrm{Set}$ since every morphism $f : I \to J$ in $\Lambda_\infty$ induces a function $\Path(I) \to \Path(J)$ defined by $(x, y) \mapsto (f(x), f(y))$. The functor $\Path : \Lambda_\infty \to \mathrm{Set}$ factors over $\Lambda$ since $\Lambda = \Lambda_\infty/\B\Z$. 
\end{construction}

\begin{notation}
Let $I = [n]_{\Lambda}$ denote an object of $\Lambda$. We will use the following notation:
\begin{enumerate}[leftmargin=2em, topsep=5pt, itemsep=5pt]
	\item Let $v_a$ denote the path $(\nicefrac{a}{n}, \nicefrac{a}{n}) \in \Path(I)$ for every $0 \leq a \leq n-1$.
	\item For $0 \leq a, b < n$, let $e_{[a,b]}$ denote the path $(\nicefrac{a}{n}, \nicefrac{b}{n})$ if $a < b$ and $(\nicefrac{a}{n}, \nicefrac{b}{n+1})$ otherwise.
\end{enumerate}
The elements $v_a \in \Path(I)$ correspond to the vertices of the cyclic graph given by $I$. The element $e_{[a, b]} \in \Path(I)$ corresponds to the path from $v_a$ to $v_b$ counting the indices modulo $n$.
\end{notation}

\begin{example}
In the following examples, the sets of paths are given by
\begin{align*}
	\Path([1]_{\Lambda}) &= \{v_0, e_{[0, 0]}\}. \\
	\Path([2]_{\Lambda}) &= \{v_0, v_1, e_{[0,1]}, e_{[1,0]}, e_{[0,0]}, e_{[1,1]}\}. \\
	\Path([3]_{\Lambda}) &= \{v_0, v_1, v_2, e_{[0,1]}, e_{[1,2]}, e_{[2,0]}, e_{[0,2]}, e_{[1,0]}, e_{[2,1]}, e_{[0,0]}, e_{[1,1]}, e_{[2,2]}\}.
\end{align*}
\end{example}

To facilitate the definition of the operad $\CycBMod_I$, we introduce the auxiliary notion of an admissible sequence in $\Path(I)$ relative to a specified path. Let $I$ denote an object of $\Lambda$.  

\begin{definition} \label{definition:admissible}
Let $(\gamma_1, \ldots, \gamma_n)$ be a sequence in $\Path(I)$ and let $\gamma$ be an element of $\Path(I)$. The sequence $(\gamma_1, \ldots, \gamma_n)$ is $\gamma$-admissible if there exists a tuple $\{x_0 \leq \ldots \leq x_n \leq x_0 + 1\}$ in $I$ such that $\gamma = [(x_0, x_n)]$ and $\gamma_i = [(x_{i-1}, x_i)]$ for every $1 \leq i \leq n$. 
\end{definition}

For instance, both $(v_0, e_{[0,1]}, v_1, e_{[1,2]}, v_2)$ and $(v_0, e_{[0,2]}, v_2)$ are examples of $e_{[0,2]}$-admissible sequences in $\Path([3]_{\Lambda})$. With this in mind, we define the operad $\CycBMod_I$ as follows:

\begin{definition}
Let $I$ denote an object of $\Lambda$. We define an operad $\mathbf{CycBMod}_I$ as follows:
\begin{enumerate}[leftmargin=2em, topsep=5pt, itemsep=5pt]
	\item The set of objects of $\mathbf{CycBMod}_I$ is given by $\Path(I)$.

	\item For every finite sequence of paths $(\gamma_1, \ldots, \gamma_n)$ in $\mathbf{CycBMod}_I$, the set of operations
	\[
		\mathrm{Mul}_{\mathbf{CycBMod}_I}((\gamma_1, \ldots, \gamma_n); \gamma)	
	\]
	is defined as follows: If $(\gamma_1, \ldots, \gamma_n)$ is $\gamma$-admissible, then
	\[
		\mathrm{Mul}_{\mathbf{CycBMod}_I}((\gamma_1, \ldots, \gamma_n); \gamma) = \{\sigma \in \Sigma_n \mid \text{$(\gamma_{\sigma(1)}, \ldots, \gamma_{\sigma(n)})$ is $\gamma$-admissible}\},
	\]
	and otherwise the set of operations is empty. 

	\item The composition law on $\mathbf{CycBMod}_I$ is determined by the composition of linear orderings. 
\end{enumerate}
We refer to $\mathbf{CycBMod}_I$ as the cyclic bimodule operad for $I$. Let $\CycBMod_I^{\otimes}$ denote the underlying $\infty$-operad of $\mathbf{CycBMod}_I$, which we refer to as the cyclic bimodule $\infty$-operad for $I$. 
\end{definition}

\begin{remark}
Assume that $I = [1]_{\Lambda}$. The operad $\mathbf{CycBMod}_1$ is a coloured suboperad of $\mathbf{BM}$, where $\mathbf{BM}$ denotes the coloured bimodule operad defined in~\cite[Definition 4.3.1.1]{Lur17}.
\end{remark}

It will be convenient to work with the symmetric monoidal envelope of the $\infty$-operad $\CycBMod_I^{\otimes}$. We obtain the following explicit description of $(\CycBMod_I^{\otimes})_{\act}$ for every object $I$ of $\Lambda$. 

\begin{remark} \label{remark:active_part}
The symmetric monoidal envelope $(\CycBMod_I^{\otimes})_{\act}$ is described as follows:
\begin{enumerate}[leftmargin=2em, topsep=5pt, itemsep=5pt]
	\item The objects of $(\CycBMod_I^{\otimes})_{\act}$ are finite sets coloured by $\Path(I)$.

	\item A morphism from $X$ to $Y$ in $(\CycBMod_I^{\otimes})_{\act}$ is a map of finite sets $f : X \to Y$ equipped with a linear ordering on $f^{-1}(y)$ for every element $y$ of $Y$, such that the preimage of a $\gamma$-coloured element is coloured by a $\gamma$-admissible sequence.

	\item The composition is given by composition in $\mathrm{Fin}$ with lexicographic ordering on the preimages.
\end{enumerate}
The symmetric monoidal structure on $(\CycBMod_I^{\otimes})_{\act}$ is determined by disjoint union of finite sets. For every $e \in \Path(I)$, let $\ast_e$ denote the set with one element coloured by $e$.
\end{remark}

Note that $(\CycBMod_I^{\otimes})_{\act}$ is an ordinary symmetric monoidal category. In fact, the reader is welcome to regard Remark~\ref{remark:active_part} as a definition of $(\CycBMod_I^{\otimes})_{\act}$.  

\begin{remark}
Let $\cat{C}$ denote a symmetric monoidal $\infty$-category and assume that
\[
	F : (\CycBMod_n^{\otimes})_{\act} \to \cat{C}
\]
is a symmetric monoidal functor of $\infty$-categories. Then $R_a = F(\ast_{v_a})$ admits the structure of an algebra object of $\cat{C}$ for every $0 \leq a \leq n-1$, whose multiplication is induced by $\ast_{v_a} \amalg \ast_{v_a} \to \ast_{v_a}$. We have that $M_a = F(\ast_{e_{[a,a+1]}})$ admits the structure of an $R_a$-$R_{a+1}$-bimodule object of $\cat{C}$ for every $0 \leq a \leq n-1$, whose structure map is induced by $\ast_{v_a} \amalg \ast_{e_{[a, a+1]}} \amalg \ast_{v_{a+1}} \to \ast_{e_{[a, a+1]}}$. 
\end{remark}

In conclusion, for every integer $n \geq 1$, we have defined an $\infty$-operad $\CycBMod_n^{\otimes}$ such that the datum of a symmetric monoidal functor of $\infty$-categories $(\CycBMod_n^{\otimes})_{\act} \to \cat{C}$ precisely encodes a cyclic graph labelled by algebras and bimodules between them as informally described above. We additionally need to encode that the value of our functor on $\ast_{e_{[a, a+2]}}$ is equivalent to the $R_a$-$R_{a+2}$-bimodule given by $M_a \otimes_{R_{a+1}} M_{a+1}$. Informally, the relative tensor product $M_a \otimes_{R_{a+1}} M_{a+1}$ is obtained as the geometric realization of the simplicial object $\Delta^{\op} \to \cat{C}$ given by the assignment
\[
	Q \mapsto M_a \otimes (R_{a+1})^{\otimes Q} M_{a+1}.
\] 
We formalize the description of the simplicial object above as follows. For every nonempty finite linearly ordered set $Q$, the set of cuts in $Q$ is defined by
\[
	\mathrm{Cut}(Q) = \{Q = Q_0 \amalg Q_1 \mid Q_0 < Q_1\}.
\] 
If $Q \to Q'$ is a morphism of nonempty finite linearly ordered sets, then there is an induced map $\mathrm{Cut}(Q') \to \mathrm{Cut}(Q)$ determined by sending a cut to its preimage. The construction $Q \mapsto \mathrm{Cut}(Q)$ determines a functor $\mathrm{Cut} : \Delta^{\op} \to \Fin$.

\begin{construction} \label{construction:cut}
Let $I$ denote an object of $\Lambda$ and let $e_{[a, b]} \in \Path(I)$ be a path of length $2$. The construction $Q \mapsto \mathrm{Cut}(Q)$ described above refines to a functor 
\[
	c_{[a, b]} : \mathrm{Cut} : \Delta^{\op} \to (\CycBMod_I^{\otimes})_{\act},
\]
where the nontrivial cuts are coloured by $v_{a+1}$, while $(Q, \varnothing)$ is coloured by $e_{[a, a+1]}$ and $(\varnothing, Q)$ is coloured by $e_{[a+1, b]}$. If $f : Q \to Q'$ is a morphism of nonempty linearly ordered sets and $(Q_0, Q_1)$ cut of $Q$, then $\mathrm{Cut}(f)^{-1}(Q_0, Q_1)$ is totally ordered by declaring that $(R_0, R_1) \leq (R'_0, R'_1)$ precisely if $R_0 \subseteq R'_0$ and $R'_1 \subseteq R_1$. 
\end{construction}

Finally, we have arrived at the main definition of this section.

\begin{definition}
Let $\cat{C}$ denote a symmetric monoidal $\infty$-category which admits sifted colimits. For every object $I$ of $\Lambda$, let $\CycBMod_I(\cat{C})$ denote the full subcategory of 
\[
	\Fun^{\otimes}((\CycBMod_I^{\otimes})_{\act}, \cat{C})
\]
spanned by those symmetric monoidal functors $F$ which satisfy that the canonical map
\[
	\mathrm{colim}(\Delta^{\op} \xrightarrow{c_{[a, b]}} (\CycBMod_I^{\otimes})_{\act} \xrightarrow{F} \cat{C}) \to F(\ast_{e_{[a, b]}})
\]
is an equivalence for every path $e_{[a, b]} \in \Path(I)$ of length $2$. 
\end{definition}

Unwinding the definition, we see that a symmetric monoidal functor $F : (\CycBMod_I^{\otimes})_{\act} \to \cat{C}$ refines to an object of the $\infty$-category $\CycBMod_I(\cat{C})$ precisely if the canonical map
\[
	M_a \otimes_{R_{a+1}} M_{a+1} \to F(\ast_{e_{[a, b]}})
\]
is an equivalence for every path of length $2$. We remark here that this assertion holds for a path of arbitrary length (see Lemma~\ref{lemma:relative_tensor_long_paths}). We end this section, by discussion the functoriality of the construction $I \mapsto \CycBMod_I(\cat{C})$.

\begin{construction} \label{construction:functor_CycBMod}
Every morphism $f : I \to J$ in $\Lambda$, induces a symmetric monoidal functor
\[
	f_\ast : (\CycBMod_I^{\otimes})_{\act} \to (\CycBMod_J^{\otimes})_{\act}
\]
determined by regarding a finite set coloured by $\Path(I)$ as a finite set coloured by $\Path(J)$ by postcomposition with $\Path(f) : \Path(I) \to \Path(J)$. Consequently, we obtain a functor
\[
	\Fun^{\otimes}((\CycBMod_J^{\otimes})_{\act}, \cat{C}) \to \Fun^{\otimes}((\CycBMod_I^{\otimes})_{\act}, \cat{C})
\]
which restricts to a functor of $\infty$-categories $\CycBMod_f(\cat{C}) : \CycBMod_J(\cat{C}) \to \CycBMod_I(\cat{C})$. Let $\CycBMod : \Lambda^{\op} \to \Cat_{\infty}$ denote the functor determined by the construction $I \mapsto \CycBMod_I(\cat{C})$. 
\end{construction} 

\begin{example} \label{example:tensorproducts}
Let $\delta_1 : [1]_{\Lambda} \to [2]_{\Lambda}$ denote the morphism in $\Lambda$ induced by $\delta_1 : [0] \to [1]$ in $\Delta$. To distinguish them from $\Path([1]_{\Lambda})$, we denote the elements of $\Path([2]_{\Lambda})$ as follows:
\[
	\Path([2]_{\Lambda}) = \{w_0, w_1, f_{[0,1]}, f_{[1, 0]}, f_{[0,0]}, f_{[1,1]}\}.
\]
The map $\Path([1]_{\Lambda}) \to \Path([2]_{\Lambda})$ is given by $v_0 \mapsto w_0$ and $e_{[0, 1]} \mapsto f_{[0,0]}$, and the functor
\[
	\CycBMod_{\delta_1}(\cat{C}) : \CycBMod_2(\cat{C}) \to \CycBMod_1(\cat{C})
\]
is given by the construction $(R_0, M_0; R_1, M_1) \mapsto (R_0, M_0 \otimes_{R_1} M_1)$. Similarly, the functor
\[
	\CycBMod_{\delta_0}(\cat{C}) : \CycBMod_2(\cat{C}) \to \CycBMod_1(\cat{C})	
\]
is given by the construction $(R_0, M_0; R_1, M_1) \mapsto (R_1, M_1 \otimes_{R_0} M_0)$.
\end{example}

\begin{example}
Let $\tau : [2]_{\Lambda} \to [2]_{\Lambda}$ denote the morphism in $\Lambda$ which corresponds to the self-map of $S^1$ given by rotation with $\pi$. In this case, the map $\Path([2]_{\Lambda}) \to \Path([2]_{\Lambda})$ is given by
\[
	v_0 \mapsto w_1 \quad v_1 \mapsto w_0 \quad e_{[0,1]} \mapsto f_{[1, 0]} \quad e_{[1,0]} \mapsto f_{[0, 1]} \quad e_{[0,0]} \mapsto f_{[1, 1]} \quad e_{[1,1]} \mapsto f_{[0, 0]}
\]
and the induced functor of $\infty$-categories
\[
	\CycBMod_{\tau}(\cat{C}) : \CycBMod_2(\cat{C}) \to \CycBMod_2(\cat{C})
\]
is given by the construction $(R_0, M_0; R_1, M_1) \mapsto (R_1, M_1; R_0, M_0)$. 
\end{example}

As mentioned above, we have the following result:

\begin{lemma} \label{lemma:relative_tensor_long_paths}
Let $I$ be an object of $\Lambda$ and let $\cat{C}$ denote a symmetric monoidal $\infty$-category which admits geometric realizations. If $F$ is an object of $\CycBMod_I(\cat{C})$, then the canonical map
\[
	M_a \otimes_{R_{a+1}} M_{a+1} \otimes_{R_{a+2}} \cdots \otimes_{R_{b-1}} M_{b-1} \to F(\ast_{e_{[a, b]}})
\]
is an equivalence for every path $e_{[a,b]} \in \Path(I)$. 
\end{lemma}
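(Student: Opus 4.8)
The plan is to induct on the length $\ell = b - a$ of the path $e_{[a,b]}$, the base case $\ell = 2$ being the defining condition on objects of $\CycBMod_I(\cat{C})$. For the inductive step, I would factor the long path through an intermediate vertex. Concretely, choose $c$ with $a < c < b$ (say $c = a+1$, though any choice works) and consider the two shorter paths $e_{[a,c]}$ and $e_{[c,b]}$ in $\Path(I)$; by induction these are computed by the iterated relative tensor products $M_a \otimes_{R_{a+1}} \cdots \otimes_{R_{c-1}} M_{c-1}$ and $M_c \otimes_{R_{c+1}} \cdots \otimes_{R_{b-1}} M_{b-1}$ respectively. The key point is then to produce, for the length-$2$ composable pair $(e_{[a,c]}, e_{[c,b]})$ with composite $e_{[a,b]}$, an analogue of the cut functor $c_{[a,c,b]} : \Delta^{\op} \to (\CycBMod_I^\otimes)_{\act}$ from Construction~\ref{construction:cut}, whose nontrivial cuts are coloured by $v_c$ and whose extreme cuts are coloured by $e_{[a,c]}$ and $e_{[c,b]}$; this is legitimate since $(e_{[a,c]}, v_c, \ldots, v_c, e_{[c,b]})$ is an $e_{[a,b]}$-admissible sequence for any number of interposed copies of $v_c$, by the very definition of admissibility (Definition~\ref{definition:admissible}). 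Applying $F$ and taking the geometric realization exhibits $F(\ast_{e_{[a,b]}})$ as the bar construction $F(\ast_{e_{[a,c]}}) \otimes_{F(\ast_{v_c})} F(\ast_{e_{[c,b]}})$, i.e. $F(\ast_{e_{[a,b]}}) \simeq F(\ast_{e_{[a,c]}}) \otimes_{R_c} F(\ast_{e_{[c,b]}})$.

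The remaining step is to combine this with the inductive hypothesis. Substituting the identifications of $F(\ast_{e_{[a,c]}})$ and $F(\ast_{e_{[c,b]}})$ gives
\[
	F(\ast_{e_{[a,b]}}) \simeq \big(M_a \otimes_{R_{a+1}} \cdots \otimes_{R_{c-1}} M_{c-1}\big) \otimes_{R_c} \big(M_c \otimes_{R_{c+1}} \cdots \otimes_{R_{b-1}} M_{b-1}\big),
\]
and associativity of the relative tensor product over $\cat{C}$ identifies the right-hand side with $M_a \otimes_{R_{a+1}} M_{a+1} \otimes_{R_{a+2}} \cdots \otimes_{R_{b-1}} M_{b-1}$, as desired. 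One should check that under all these identifications the comparison map is the canonical one asserted in the statement, which amounts to tracking that the cut functor used here restricts along $\mathrm{Cut} : \Delta^{\op} \to \Fin$ to the maps defining the bar construction; this is a diagram chase using the description of morphisms in $(\CycBMod_I^\otimes)_{\act}$ from Remark~\ref{remark:active_part}.

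The main obstacle I anticipate is the careful construction of the generalized cut functor $c_{[a,c,b]}$ and the verification that $F$ applied to its geometric realization really does compute the two-sided bar construction $B(F(\ast_{e_{[a,c]}}), R_c, F(\ast_{e_{[c,b]}}))$ in the sense of~\cite[Section 4.4]{Lur17}, rather than something only levelwise equivalent to it; this requires matching the simplicial structure maps of $\mathrm{Cut}$ (the ordering on preimages specified at the end of Construction~\ref{construction:cut}) with the face and degeneracy maps of the bar construction. Once that identification is in place the rest is formal: associativity of $\otimes_{R}$ and an induction. An alternative, possibly cleaner route would be to avoid the explicit cut functor entirely and instead argue that both sides corepresent the same functor on $\CycBMod_I(\cat{C})$, or to use the symmetric monoidal envelope together with the fact that $F$ is symmetric monoidal to reduce the whole statement to the universal case $\cat{C} = (\CycBMod_I^\otimes)_{\act}$ itself; but the inductive bar-construction argument is the most direct and I would pursue it first.
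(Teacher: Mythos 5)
Your inductive strategy founders at its central step, and the gap is not the one you flag at the end. The defining condition of $\CycBMod_I(\cat{C})$ imposes the colimit requirement \emph{only} at paths of length $2$, via the specific cut functor $c_{[a,b]}$ of Construction~\ref{construction:cut}. For a path $e_{[a,b]}$ of length $\ell>2$ the definition says nothing about $F(\ast_{e_{[a,b]}})$. Your generalized cut functor $c_{[a,c,b]}$ does exist (admissibility of $(e_{[a,c]}, v_c,\ldots,v_c, e_{[c,b]})$, as in Definition~\ref{definition:admissible}, guarantees the colourings are legal), but all that admissibility buys you is a \emph{comparison map} $F(\ast_{e_{[a,c]}})\otimes_{R_c}F(\ast_{e_{[c,b]}})\to F(\ast_{e_{[a,b]}})$; nothing in the definition asserts that this map is an equivalence, and saying that applying $F$ and realizing ``exhibits'' $F(\ast_{e_{[a,b]}})$ as the bar construction is exactly the statement you are trying to prove, not a consequence of the setup. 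Indeed, a symmetric monoidal functor on $(\CycBMod_I^{\otimes})_{\act}$ is determined by its values on colours together with coherent maps for the admissible operations, and the length-$2$ conditions alone do not formally force the value on a longer path to agree with the iterated relative tensor product: the long-path colour only \emph{receives} maps from decompositions, so its value is not pinned down by a purely formal argument within a fixed $I$. So the inductive step, as written, is circular, and the obstacle you anticipate (matching the simplicial structure maps of the cut functor with the bar construction) is a side issue compared to this one.

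The paper's proof takes a different route that supplies precisely the missing mechanism: it chooses a composite of face maps $f\colon [n-\ell+1]_{\Lambda}\to[n]_{\Lambda}$ carrying a single edge to the long path $e_{[a,b]}$, so that $F(\ast_{e_{[a,b]}})\simeq \CycBMod_f(F)(\ast_{e_{[a,a+1]}})$, and then identifies $\CycBMod_f(F)$ one face map at a time by Example~\ref{example:tensorproducts}. At each stage only the length-$2$ condition of the \emph{restricted} object is used; the statement that the restriction functors $\CycBMod_f(\cat{C})$ preserve the subcategories $\CycBMod_I(\cat{C})$ is part of Construction~\ref{construction:functor_CycBMod}, and that is where the real content that your induction is missing lives. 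If you want to keep your decomposition-at-an-intermediate-vertex picture, the repair is to route it through this $\Lambda$-functoriality: restrict along the face map collapsing the vertices strictly between $a$ and $b$ other than $c$, check (or invoke) that the restricted functor again satisfies the defining colimit conditions, and only then apply the length-$2$ condition; your two fallback suggestions (corepresentability, reduction to the universal case) do not obviously address this either. In short: your approach is not a correct alternative as it stands — the key equivalence in the inductive step is assumed rather than proved — and the paper's face-map argument is the ingredient needed to close it.
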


\begin{proof}
Assume that $F \in \CycBMod_n(\cat{C})$ for $n \geq 1$. If $e_{[a, b]}$ has length $\ell$, then there is a morphism $f : [n-\ell+1]_{\Lambda} \to [n]_{\Lambda}$ in $\Lambda$ obtained by successively invoking Example~\ref{example:tensorproducts}, such that
\[
	F(\ast_{e_{[a, b]}}) \simeq \CycBMod_f(F)(\ast_{e_{[a, a+1]}}) \simeq M_a \otimes_{R_{a+1}} M_{a+1} \otimes_{R_{a+2}} \cdots \otimes_{R_{b-1}} M_{b-1},
\]
where that $\CycBMod_f(F) \in \CycBMod_{n-\ell+1}(\cat{C})$.
\end{proof}

\subsection{Trace theories} \label{subsection:constructionTHHpolygonic}
Using the formalism developed in~\textsection\ref{subsection:cyclicbimdouleoperad}, we introduce an $\infty$-category $\Lambda^{\CycBMod}$ which neatly packages the collection of $\infty$-categories $\CycBMod_I$ as we let $I$ vary over $\Lambda$. This is a variant of the $\infty$-category $\Lambda^{\mathrm{st}}$ introduced by the third author in~\cite{HS19} which classifies cyclic graphs labelled by stable $\infty$-categories and profunctors. We introduce the notion of a trace theory on $\Lambda^{\CycBMod}$ and prove that Hochschild homology refines to a trace theory on $\Lambda^{\CycBMod}$. We use this to obtain the desired polygonic structure on $\THH(R, M)$. Finally, we note that the idea of regarding Hochschild homology as a trace theory is due to Kaledin~\cite{Kal15}. 

\begin{definition}
Let $\cat{C}$ denote a symmetric monoidal $\infty$-category which admits sifted colimits.
\begin{enumerate}[leftmargin=2em, topsep=5pt, itemsep=5pt]
	\item Let $p_{\cat{C}} : \Lambda^{\CycBMod}_{\cat{C}} \to \Lambda^{\op}$ denote the cocartesian fibration classified by the functor
	\[
		\CycBMod : \Lambda^{\op} \to \Cat_{\infty}
	\]
	determined by the construction $I \mapsto \CycBMod_I(\cat{C})$ as discussed in Construction~\ref{construction:functor_CycBMod}.

	\item A trace theory valued in $\cat{C}$ is a functor of $\infty$-categories
	\[
		T : \Lambda^{\CycBMod}_{\cat{C}} \to \cat{C}
	\]
	which carries $p_{\cat{C}}$-cocartesian morphisms in $\Lambda^{\CycBMod}_{\cat{C}}$ to equivalences of $\cat{C}$. 
\end{enumerate}
\end{definition}

Unwinding the construction, we obtain the following explicit description of $\Lambda_{\cat{C}}^{\CycBMod}$. 

\begin{remark}
An object of the total space $\Lambda^{\CycBMod}_{\cat{C}}$ is given by a pair $(I, F)$, where $I \in \Lambda$ and $F$ is an object of the $\infty$-category $\CycBMod_I(\cat{C})$. A morphism from $(I, F)$ to $(J, G)$ in $\Lambda_{\cat{C}}^{\CycBMod}$ is a pair $(f, \alpha)$ consisting of a morphism $f : J \to I$ in $\Lambda$, and a natural transformation
\[
	\alpha : \CycBMod_f(F) \to G
\]
in the $\infty$-category $\CycBMod_J(\cat{C})$. Every $p_{\cat{C}}$-cocartesian morphism of $\Lambda_{\cat{C}}^{\CycBMod}$ can be written as a composition of rotations, contractions, and insertions since this is true in $\Lambda^{\op}$ regarded as the total space of the cocartesian fibration $\Lambda^{\op} \to \B\T$. 
\end{remark}

Let $T : \Lambda^{\CycBMod} \to \Sp$ denote a trace theory. The main point of introducing $\Lambda^{\CycBMod}$ is that it organizes the combinatorics for neatly expressing the cyclic invariance of the trace theory $T$. For instance, if $M$ is an $R$-$S$-bimodule and $N$ is an $S$-$R$-bimodule, then the maps
\[
\begin{tikzcd}[column sep = scriptsize]
	T(R, M \otimes_S N) & T([2]_{\Lambda}, (R, M; S, N)) \arrow[swap]{l}{\simeq} \arrow{r}{\simeq} & T([2]_{\Lambda}, (S, N; R, M)) \arrow{r}{\simeq} & T(S, N \otimes_R M)
\end{tikzcd}
\]
are equivalences since the first and last maps are induced by contractions, and the middle map is induced by a rotation all of which are cocartesian edges of $\Lambda^{\CycBMod}$. It is precisely the cyclic invariance that will provide us with an action of $C_n$ on $\THH(R, M^{\otimes_R n})$ which is a crucial part of the polygonic structure. Presently, our goal is to construct a trace theory $\HH : \Lambda^{\CycBMod}_{\cat{C}} \to \cat{C}$. For every pair $(R, M)$ consisting of an algebra $R \in \Alg(\cat{C})$ and an $R$-$R$-bimodule $M$, the Hochschild homology $\HH(R, M)$ is the geometric realization of the simplicial object $\Delta^{\op} \to \cat{C}$ given by
\[
	Q \mapsto M \otimes R^{\otimes Q}.
\]
More generally, we wish to form the Hochschild homology of an object $(I, F)$ of $\Lambda_{\cat{C}}^{\CycBMod}$. We achieve this by explicitly constructing a simplicial object in $(\CycBMod_I^{\otimes})_{\act}$ for every $I \in \Lambda$. This is a cyclic version of the functor $\mathrm{Cut} : \Delta^{\op} \to \Fin$ considered in Construction~\ref{construction:cut} above. First, recall that the paracyclic category $\Lambda_\infty$ is equivalent to its opposite. There is a functor
\[
	(-)^{\vee} : \Lambda_{\infty}^{\op} \to \Lambda_{\infty}
\]
which sends a morphism $f : I \to J$ in $\Lambda_{\infty}$ to $f^{\vee} : J \to I$ defined by $f^{\vee}(j) = \mathrm{min}\{i \in I \mid f(i) \geq j\}$. This functor is a $\B\Z$-equivariant equivalence, so it descends to an equivalence $(-)^{\vee} : \Lambda^{\op} \to \Lambda$. We construct the cyclic version of the $\mathrm{Cut} : \Delta^{\op} \to \Fin$ functor as follows:

\begin{construction}
The assignment $I \mapsto I/\Z$ defines a functor $\Lambda_{\infty} \to \Fin$ which factors over $\Lambda$. Let $\mathrm{Cut}_{\Lambda} : \Lambda^{\op} \to \Fin$ denote the functor defined by the following composite
\[
	\Lambda^{\op} \xrightarrow{(-)^{\vee}} \Lambda \xrightarrow{\nicefrac{(-)}{\Z}} \Fin.
\]
If $Q$ is a nonempty finite linearly ordered set, $\mathrm{Cut}_{\Lambda}(Q \times \Z) \simeq \mathrm{Cut}(Q)/{\sim}$, where $\sim$ is the equivalence relation which identifies the trivial cuts $(Q, \varnothing) \sim (\varnothing, Q)$. For every object $I$ of $\Lambda$, there is a functor
\[
	(-) \times I : \Delta \to \Lambda
\]
determined by the assignment $Q \mapsto Q \times I$, where the product is an object of $\Lambda$ equipped with the reverse lexicographic ordering and $\Z$-action given by $(q, m) + n = (q, m+n)$. In conclusion, the construction $Q \mapsto \mathrm{Cut}_{\Lambda}(Q \times I)$ determines a functor $\mathrm{Cut}_{\Lambda}(- \times I) : \Delta^{\op} \to \Fin$.  
\end{construction}

\begin{lemma} \label{lemma:cut_refines_cycbmod}
For every object $I$ of $\Lambda$, the functor $\mathrm{Cut}_{\Lambda}(- \times I) : \Delta^{\op} \to \Fin$ refines to a functor
\[
	\mathrm{Cut}_{\Lambda}(- \times I) : \Delta^{\op} \to (\CycBMod_I^{\otimes})_{\act}.
\]
\end{lemma}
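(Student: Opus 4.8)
The plan is to construct the lift by hand, following the recipe of Construction~\ref{construction:cut} but now in the cyclic setting; since every object of $\Lambda$ is isomorphic to some $[m]_{\Lambda}$ we may assume $I = [m]_{\Lambda}$. First I would record a concrete model for the finite sets involved. Unwinding the definition of $(-)^{\vee}$, one identifies $\mathrm{Cut}_{\Lambda}(J)$, for a parasimplex $J$, with the set of $\Z$-orbits of proper nonempty down-sets of $J$, in such a way that $\mathrm{Cut}_{\Lambda}(f)$ sends the orbit of a down-set $D$ to the orbit of $f^{-1}(D)$. For $Q = [k]$ the underlying ordered set of $Q\times I$ is noncanonically a copy of $\Z$, with elements $(q,j)$ for $0\le q\le k$ and $j\in\Z$ in reverse lexicographic order and with $\Z$ acting by $(q,j)\mapsto(q,j+m)$; its proper nonempty down-sets are the sets $D_{(q_{0},j_{0})} = \{(q,j) : (q,j)\le(q_{0},j_{0})\}$, so that $\mathrm{Cut}_{\Lambda}(Q\times I)$ is a cyclically ordered set with $(k+1)m$ elements, carrying $m$ distinguished \emph{edge gaps} $[D_{(k,j_{0})}]$ whose complement is a disjoint union of $m$ arcs of $k$ elements each.

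I would then define the lift as follows. On objects, color $[D_{(k,j_{0})}]$ with $j_{0}\equiv b \pmod m$ by the path $e_{[b,b+1]}\in\Path(I)$, and color each element of the arc lying between the edge gaps labelled $e_{[b-1,b]}$ and $e_{[b,b+1]}$ by the vertex $v_{b}$. For $m=1$ this recovers the colouring of Construction~\ref{construction:cut} up to identifying the two trivial cuts, and in general applying an object $F\in\CycBMod_{I}(\cat C)$ to the resulting coloured set produces exactly the degree-$k$ term of the cyclic bar construction of the cyclic graph underlying $F$. For a morphism $\alpha\colon[k']\to[k]$ of $\Delta$ the induced map $\mathrm{Cut}_{\Lambda}([k]\times I)\to\mathrm{Cut}_{\Lambda}([k']\times I)$ is $[D]\mapsto[(\alpha\times\mathrm{id}_{I})^{-1}D]$, and a short computation shows it sends $[D_{(q_{0},j_{0})}]$ to $[D_{(r,j_{0})}]$ with $r=\max\{q'\le k' : \alpha(q')\le q_{0}\}$, read as $[D_{(k',j_{0}-1)}]$ when that set is empty. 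In particular edge gaps go to edge gaps with the same label, and the preimage of any single element is a contiguous arc in the cyclic order, which I would equip with the linear order induced by inclusion of down-sets, exactly as in Construction~\ref{construction:cut}.

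It then remains to verify that this data assembles into a functor $\Delta^{\op}\to(\CycBMod_{I}^{\otimes})_{\act}$, namely compatibility with composition and admissibility of the preimages. Compatibility with composition is immediate, since the preimage of a gap under a composite is the concatenation of arcs and the induced orders are lexicographic, just as in Construction~\ref{construction:cut}. For admissibility: the preimage of an edge gap consists, in cyclic order, of a possibly empty block of $v_{b}$'s, then one edge gap labelled $e_{[b,b+1]}$, then a possibly empty block of $v_{b+1}$'s, and against Definition~\ref{definition:admissible} one checks that $(v_{b},\dots,v_{b},e_{[b,b+1]},v_{b+1},\dots,v_{b+1})$ is $e_{[b,b+1]}$-admissible, realizing $e_{[b,b+1]}=[(\tfrac bm,\tfrac{b+1}m)]$ by the single step $\tfrac bm\to\tfrac{b+1}m$ and using $\tfrac{b+1}m\le\tfrac bm+1$; the preimage of a $v_{b}$-coloured element is a block of $v_{b}$'s, which is $v_{b}$-admissible since $v_{b}=[(\tfrac bm,\tfrac bm)]$. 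The hard part is the interplay with the duality $(-)^{\vee}$ together with the bookkeeping of ``wrapping'' preimages — keeping track of which edge gap such a preimage attaches to and checking that the induced ordering yields an admissible sequence — but once the model above is set up it is a finite, if slightly fiddly, check, and it is the cyclic analogue of the verification already carried out in Construction~\ref{construction:cut}.
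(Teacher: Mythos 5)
Your construction is correct and follows essentially the same route as the paper's proof: an explicit $\Path(I)$-colouring of $\mathrm{Cut}_{\Lambda}(Q\times I)$, a check that preimages under the simplicial structure maps are coloured by admissible sequences (including the ``wrapping'' case, which you do handle in your Case-analysis of where $[D_{(q_0,j_0)}]$ goes), and the linear ordering on preimages inherited from the linear order on $Q\times I$. The only differences are cosmetic — you work in the down-set model of $\mathrm{Cut}_{\Lambda}$ and place the edge-coloured elements at the top of each $Q$-block, whereas the paper's map $c_Q$ places them at $q=\min(Q)$, so your refinement is a rotated but isomorphic one (harmless, since the lemma only asserts existence of a refinement) — and your morphism-level verification is in fact more detailed than the paper's rather terse argument.
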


\begin{proof}
For every $Q \in \Delta$, we define a map $c_Q : Q \times I \to \{(x, y) \in I \times I \mid x \leq y \leq x+1\}$ by
\[
	c_Q(q, i) = 
	\begin{cases}
		(i, \mathrm{cons}(i)) & \text{if } q = \mathrm{min}(Q) \\
		(i, i) & \text{otherwise}.
	\end{cases}
\]
Consequently, we obtain a map $c_Q : \mathrm{Cut}_{\Lambda}(Q \times I) \to \Path(I)$ since the map above is $\Z$-equivariant. Finally, note that the preimage of an $v_i$-coloured element is coloured by an $v_i$-admissible sequence and that the preimage of an $e_{[i, j]}$-coloured element is coloured by an $e_{[i, j]}$-admissible sequence. Finally, if $f : Q \to Q'$ is a morphism of nonempty linearly ordered sets, then we have to endow the preimages of the map $f \times \id : Q \times I/\Z \to Q' \times I/\Z$ with a linear ordering. Indeed, if $(q', [x])$ is an element of $Q' \times I/\Z$ with lift $(q', x)$ to $Q' \times I$, then the preimage $(f \times \id)^{-1}(q', x)$ carries a linear ordering as a subset of $Q \times I$, and this proves the desired statement. 
\end{proof}

We unwind the construction of $\mathrm{Cut}_{\Lambda}(- \times I)$ in the basic cases where $I = [1]_{\Lambda}$ and $I = [2]_{\Lambda}$. 

\begin{example}
The functor $\mathrm{Cut}_{\Lambda}(- \times [1]_{\Lambda})$ is the simplicial object in $(\CycBMod_1^{\otimes})_{\act}$ given by
\[
	[n] \mapsto \ast_{e_{[0,0]}} \amalg \underbrace{\ast_{v_0} \amalg \cdots \amalg \ast_{v_0}}_{\text{$n$ times}}.
\]
Similarly, the functor $\mathrm{Cut}_{\Lambda}(- \times [2]_{\Lambda})$ is the simplicial object in $(\CycBMod_2^{\otimes})_{\act}$ given by
\[
	[n] \mapsto \ast_{e_{[0,1]}} \amalg \ast_{e_{[1,0]}} \amalg \underbrace{(\ast_{v_0} \amalg \ast_{v_1}) \amalg \cdots \amalg (\ast_{v_0} \amalg \ast_{v_1})}_{\text{$n$ times}}.
\]
\end{example}

For every object $I$ of $\Lambda$, we define Hochschild homology as a functor on $\CycBMod_I$ as follows:

\begin{definition}
Let $\cat{C}$ denote a symmetric monoidal $\infty$-category which admits sifted colimits. For every object $I$ of $\Lambda$, let $\HH_I : \CycBMod_I(\cat{C}) \to \cat{C}$ denote the functor defined by
\[
	\CycBMod_I(\cat{C}) \hookrightarrow \Fun^{\otimes}((\CycBMod_I^{\otimes})_{\act}, \cat{C}) \xrightarrow{\mathrm{Cut}_{\Lambda}(- \times I)^{\ast}} \Fun(\Delta^{\op}, \cat{C}) \xrightarrow{|-|} \cat{C},
\]
where $\mathrm{Cut}_{\Lambda}(- \times I)$ denotes the functor from Lemma~\ref{lemma:cut_refines_cycbmod}. 
\end{definition}

Finally, we prove the main result of this section. 

\begin{theorem} \label{theorem:HH_trace}
Let $\cat{C}$ denote a symmetric monoidal $\infty$-category which admits sifted colimits. The construction $(I, F) \mapsto \HH_I(F)$ determines a functor of $\infty$-categories
\[
	\HH : \Lambda_{\cat{C}}^{\CycBMod} \to \cat{C}
\] 
which satisfies the trace property.  
\end{theorem}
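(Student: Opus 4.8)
The plan has two parts: assembling the pointwise functors $\HH_I\colon\CycBMod_I(\cat{C})\to\cat{C}$ into a single functor on the total space $\Lambda^{\CycBMod}_{\cat{C}}$, and then verifying the trace property on a generating family of cocartesian morphisms. Since $\HH_I(F)\simeq|F\circ\mathrm{Cut}_{\Lambda}(-\times I)|$, it suffices for the first part to produce a functor $\Lambda^{\CycBMod}_{\cat{C}}\to\Fun(\Delta^{\op},\cat{C})$ sending $(I,F)$ to $F\circ\mathrm{Cut}_{\Lambda}(-\times I)$ and then to postcompose with geometric realization $|-|$ (which exists because $\cat{C}$ admits sifted colimits). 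The key input is that the cut simplicial objects are functorial in $I$: a morphism $f\colon J\to I$ in $\Lambda$ induces, via $Q\times f\colon Q\times J\to Q\times I$ and the contravariance of $\mathrm{Cut}_{\Lambda}$, a natural transformation $\mathrm{Cut}_{\Lambda}(-\times I)\to f_{*}\circ\mathrm{Cut}_{\Lambda}(-\times J)$ of functors $\Delta^{\op}\to(\CycBMod_{I}^{\otimes})_{\act}$, where $f_{*}$ is as in Construction~\ref{construction:functor_CycBMod}; one checks by inspection that this map respects the colorings and the admissibility orderings. Packaging these natural transformations coherently — together with the tautological evaluation inclusions $\CycBMod_{I}(\cat{C})\hookrightarrow\Fun^{\otimes}((\CycBMod_{I}^{\otimes})_{\act},\cat{C})$ — into the desired functor is a bookkeeping argument with relative functor categories and straightening, which I would carry out following verbatim the analogous construction for $\Lambda^{\mathrm{st}}$ in~\cite{HS19}.

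For the trace property, $\HH$ sends a cocartesian morphism lying over $f\colon J\to I$ to the comparison map $|F\circ\mathrm{Cut}_{\Lambda}(-\times I)|\to|F\circ f_{*}\circ\mathrm{Cut}_{\Lambda}(-\times J)|$ induced by the natural transformation above; since every cocartesian morphism is a composite of rotations, contractions, and insertions (as recalled above for $\Lambda^{\op}\to\B\T$) and $\HH$ is a functor, it suffices to treat these three cases. If $f$ is a rotation (a cyclic automorphism of $[n]_{\Lambda}$), then $f_{*}$ acts on underlying finite sets through the bijection $\mathrm{Cut}_{\Lambda}(Q\times f)$ and merely permutes colors, so the comparison is an isomorphism of $\Delta^{\op}$-diagrams in $(\CycBMod_{[n]}^{\otimes})_{\act}$ and hence realizes to an equivalence (for the labeling $(R,M;\dots;R,M)$ this is precisely the $C_{n}$-action on $\THH(R,M^{\otimes_R n})$ from the introduction). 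If $f$ is the coface $\delta_{1}\colon[1]_{\Lambda}\to[2]_{\Lambda}$, the comparison map identifies, after applying $F$, with the canonical map from the cyclic bar construction of $(R_{0},M_{0};R_{1},M_{1})$ to that of $(R_{0},M_{0}\otimes_{R_{1}}M_{1})$; one resolves the relative tensor product in an auxiliary simplicial direction (legitimate exactly because $F$ lies in $\CycBMod_{[2]}(\cat{C})$, cf.\ the proof of Lemma~\ref{lemma:relative_tensor_long_paths}) and applies the Fubini theorem for geometric realizations — iterated realization of the resulting bisimplicial object computes the realization of its diagonal — to conclude that it is an equivalence; a general contraction $[n-1]_{\Lambda}\to[n]_{\Lambda}$ is handled by the same argument, or by induction via Lemma~\ref{lemma:relative_tensor_long_paths}. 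Finally, every insertion admits a contraction as a retraction in $\Lambda^{\op}$, so that case follows from the previous one by two-out-of-three.

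I expect the contraction case to be the crux. There the content is the genuine interference of two simplicial directions — the bar direction built into $\mathrm{Cut}_{\Lambda}(-\times[n]_{\Lambda})$ and the internal bar direction resolving the relative tensor products — and the delicate point is to make the Fubini comparison precise at the level of the cut functors, uniformly in $F$, which is exactly where the structure of $\CycBMod_{[n]}(\cat{C})$ and Lemma~\ref{lemma:relative_tensor_long_paths} are used. By contrast the rotation case is essentially formal, the insertion case is immediate once contractions are understood, and the coherent assembly of $\HH$, though lengthy, is conceptually routine given~\cite{HS19}.
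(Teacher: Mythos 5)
Your proposal is correct and follows essentially the same route as the paper: functoriality via the natural maps $\mathrm{Cut}_{\Lambda}(Q\times I)\to f_{*}\mathrm{Cut}_{\Lambda}(Q\times J)$, reduction of the trace property to a single edge contraction (rotations being invertible and insertions being one-sided inverses to contractions), and the contraction case handled by resolving the relative tensor product as a bisimplicial bar construction and using that the diagonal of $\Delta^{\op}$ is cofinal (your Fubini step). The only cosmetic difference is that you outsource the coherence bookkeeping to the analogue in~\cite{HS19}, where the paper records it directly by naturality in $Q$.
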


\begin{proof}
We begin by addressing the functoriality of the construction $(I, F) \mapsto \HH_I(F)$. Let $(f, \alpha)$ be a morphism from $(I, F)$ to $(J, G)$ in $\Lambda_{\cat{C}}^{\CycBMod}$, which means that $f : J \to I$ is a morphism in $\Lambda$, and $\alpha : \CycBMod_f(F) \to G$ is a natural transformation in $\CycBMod_J(\cat{C})$. For every $Q \in \Delta$, the map $f$ induces a map $\mathrm{Cut}_{\Lambda}(Q \times I) \to \mathrm{Cut}_{\Lambda}(Q \times J)$, which refines to a map
\[
	\mathrm{Cut}_{\Lambda}(Q \times I) \to f_\ast \mathrm{Cut}_{\Lambda}(Q \times J)	
\]
in $(\CycBMod_I^{\otimes})_{\act}$, where $f_\ast$ denotes the functor introduced in Construction~\ref{construction:functor_CycBMod}. Furthermore, if $Q \to Q'$ is a morphism in $\Delta$, then we obtain a commutative diagram
\[
\begin{tikzcd}
	\mathrm{Cut}_{\Lambda}(Q' \times I) \arrow{r} \arrow{d} & f_\ast \mathrm{Cut}_{\Lambda}(Q' \times J) \arrow{d} \\
	\mathrm{Cut}_{\Lambda}(Q \times I) \arrow{r} & f_\ast \mathrm{Cut}_{\Lambda}(Q \times J)
\end{tikzcd}
\]
in $(\CycBMod_I^{\otimes})_{\act}$. Consequently, the morphism $(f, \alpha)$ induces a canonical map
\[
	\HH_I(F) \to \HH_J(\CycBMod_f(F)) \xrightarrow{\HH_J(\alpha)} \HH_J(G),
\]
which is compatible with composition by naturality, so the construction $(I, F) \mapsto \HH_I(F)$ refines to a functor on $\Lambda_{\cat{C}}^{\CycBMod}$. We verify that $\HH$ satisfies the trace property. Since the rotations are equivalences in $\Lambda^{\CycBMod}$ and the insertions are one sided inverses to contractions in $\Lambda^{\CycBMod}$, it suffices to prove that $([2]_{\Lambda}, (R, M; S, N)) \to ([1]_{\Lambda}, (R, M \otimes_S N))$ induces an equivalence
\[
	\HH_{[2]_{\Lambda}}(R, M; S, N) \to \HH_{[1]_{\Lambda}}(R, M \otimes_S N).
\]
The object $\HH(R, M \otimes_S N)$ is the colimit of the bisimplicial object $\Delta^{\op} \times \Delta^{\op} \to \cat{C}$ given by
\[
	(Q, Q') \mapsto (M \otimes S^{\otimes Q} N) \otimes R^{\otimes Q'},
\]
where we have used that $M \otimes_S N$ is computed by the two-sided bar construction, and the colimit of the diagonal simplicial object $\Delta^{\op} \to \Delta^{\op} \times \Delta^{\op} \to \cat{C}$ is equivalent to $\HH_{[2]_{\Lambda}}(R, M; S, N)$. This proves the desired statement as the diagonal of $\Delta^{\op}$ is cofinal since $\Delta^{\op}$ is a sifted $\infty$-category.
\end{proof}

As a consequence of Theorem~\ref{theorem:HH_trace}, we obtain the following result:

\begin{corollary} \label{corollary:c_n_action}
Let $(R, M)$ denote a pair consisting of an $\E_1$-ring $R$ and an $R$-bimodule $M$. The spectrum $\THH(R, M^{\otimes_R n})$ admits a canonical $C_n$-action for every $n \in \N$. 
\end{corollary}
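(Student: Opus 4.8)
The plan is to deduce this directly from Theorem~\ref{theorem:HH_trace} applied to $\cat{C} = \Sp$, for which the trace theory is $\HH = \THH$, using that the cyclic group $C_n$ sits inside the automorphisms of the object $[n]_{\Lambda}$ of the cyclic category. First I would recall the standard fact that $\mathrm{Aut}_{\Lambda}([n]_{\Lambda}) \cong C_n$, generated by the rotation $t_n$ (translation by $\nicefrac{1}{n}$ on $\tfrac1n\Z$ modulo the $\Z$-action on morphisms); this packages into a functor $\mathrm{B}C_n \to \Lambda$ with value $[n]_{\Lambda}$, and dually into $\mathrm{B}C_n \to \Lambda^{\op}$.

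The heart of the argument is to lift this to a functor $\mathrm{B}C_n \to \Lambda^{\CycBMod}_{\Sp}$. For this I would consider the object $F_n \in \CycBMod_{[n]_{\Lambda}}(\Sp)$ corresponding to the cyclic graph with every vertex labelled by the $\E_1$-ring $R$ and every length-one edge labelled by the $R$-$R$-bimodule $M$, i.e. the symmetric monoidal functor $(\CycBMod^{\otimes}_{[n]_{\Lambda}})_{\act} \to \Sp$ with $F_n(\ast_{v_a}) = R$ and $F_n(\ast_{e_{[a,a+1]}}) = M$ together with the evident algebra and bimodule structure maps. By Construction~\ref{construction:functor_CycBMod} the monodromy of the cocartesian fibration $p_{\Sp} : \Lambda^{\CycBMod}_{\Sp} \to \Lambda^{\op}$ around $t_n$ is $\CycBMod_{t_n}$, induced by precomposition with the symmetric monoidal automorphism $(t_n)_{\ast}$ of the ordinary category $(\CycBMod^{\otimes}_{[n]_{\Lambda}})_{\act}$ that cyclically relabels colours via $\Path(t_n)$ (sending $v_a \mapsto v_{a+1}$, $e_{[a,a+1]} \mapsto e_{[a+1,a+2]}$, etc.). Since $F_n$ is constant in the labels, $F_n \circ (t_n)_{\ast} = F_n$ already at the level of $1$-categorical functors, so $F_n$ is a strict $C_n$-fixed point of this action, hence in particular a homotopy fixed point with all higher coherences trivial. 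As $p_{\Sp}$ is a cocartesian fibration, such a fixed point is exactly the data of a lift of $\mathrm{B}C_n \to \Lambda^{\op}$ to a functor $\mathrm{B}C_n \to \Lambda^{\CycBMod}_{\Sp}$ with value $([n]_{\Lambda}, F_n)$.

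Composing this lift with the trace theory $\HH : \Lambda^{\CycBMod}_{\Sp} \to \Sp$ then yields a functor $\mathrm{B}C_n \to \Sp$, i.e. a $C_n$-action on the spectrum $\HH_{[n]_{\Lambda}}(F_n)$, and it only remains to identify the latter with $\THH(R, M^{\otimes_R n})$. For this I would take a morphism $f : [1]_{\Lambda} \to [n]_{\Lambda}$ in $\Lambda$ (for instance the inclusion $\Z \hookrightarrow \tfrac1n\Z$), which produces a $p_{\Sp}$-cocartesian morphism $([n]_{\Lambda}, F_n) \to ([1]_{\Lambda}, \CycBMod_f(F_n))$ in $\Lambda^{\CycBMod}_{\Sp}$; by Lemma~\ref{lemma:relative_tensor_long_paths} (equivalently, by iterating the contractions of Example~\ref{example:tensorproducts}) one has $\CycBMod_f(F_n) \simeq (R, M^{\otimes_R n})$, and the trace property of Theorem~\ref{theorem:HH_trace} carries this cocartesian morphism to an equivalence
\[
	\HH_{[n]_{\Lambda}}(F_n) \xrightarrow{\ \simeq\ } \HH_{[1]_{\Lambda}}(R, M^{\otimes_R n}) = \THH(R, M^{\otimes_R n}).
\]
Transporting the $C_n$-action along this equivalence gives the claim.

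I expect the only real subtlety to be the middle step: verifying coherently that the constant labelling $F_n$ is a $C_n$-homotopy fixed point of the rotation action rather than merely abstractly invariant. The clean way to avoid coherence bookkeeping is to carry out the construction entirely at the level of the strict symmetric monoidal $1$-category $(\CycBMod^{\otimes}_{[n]_{\Lambda}})_{\act}$ --- where $C_n$ acts by an honest action by symmetric monoidal automorphisms and $F_n$ is literally fixed --- and only afterwards pass to $\infty$-categorical functor categories; one should also note that $\Path(t_n)$ preserves $\gamma$-admissible sequences, so this action restricts to the full subcategory $\CycBMod_{[n]_{\Lambda}}(\Sp)$ cut out by the relative-tensor-product condition, which is what makes the lift land in $\Lambda^{\CycBMod}_{\Sp}$.
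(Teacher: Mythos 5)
Your argument is correct and is essentially the paper's own (two-line) proof with the implicit steps spelled out: the paper likewise endows $([n]_{\Lambda}, (R, M; \ldots; R, M))$ with the rotation $C_n$-action as an object of $\Lambda^{\CycBMod}$ and transports it along the trace-property equivalence $\THH(R, M; \ldots; R, M) \simeq \THH(R, M^{\otimes_R n})$. Your elaboration of the fixed-point structure (the constant labelling is invariant under $\Path(t_n)$, which preserves admissibility, so one gets a lift $\mathrm{B}C_n \to \Lambda^{\CycBMod}_{\Sp}$ of the rotation and then composes with $\HH$) is exactly what the paper leaves implicit.
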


\begin{proof}
By virtue of the trace property, there is an equivalence of spectra
\[
	\THH(R, M^{\otimes_R n}) \simeq \THH(R, M; \ldots; R, M),
\]
and $([n]_{\Lambda}, (R, M; \ldots; R, M)$ carries a $C_n$-action by rotation regarded as an object of $\Lambda^{\CycBMod}$.
\end{proof}

Consequently, to finish the construction of $\THH(R, M)$ as an object of the $\infty$-category $\PgcSp$, it remains to construct a $C_n$-equivariant map of spectra
\[
	\varphi_{p, n} : \THH(R, M^{\otimes_R n}) \to \THH(R, M^{\otimes_R pn})^{\tate C_p}
\]
for every prime $p$ and $n \in \N$, where the Tate construction carries the residual $C_{pn}/C_p \simeq C_n$-action. The main ingredient is the extended functoriality of the $p$-fold Tate diagonal on the category of finite free $C_p$-sets as discussed in~\cite[\textsection III.3]{NS18}. 

\begin{notation}
Let $\Free(C_p)$ denote the category of finite free $C_p$-sets and $C_p$-equivariant maps. The assignment $T \mapsto \overline{T} = T/C_p$ defines a functor $\Free(C_p) \to \Fin$. 
\end{notation}

We will begin by recalling a result of Nikolaus--Scholze adressing the functoriality of the Tate diagonal. This result appears as~\cite[Proposition III.3.6]{NS18} combined with~\cite[Lemma III.3.7 and Corollary III.3.8]{NS18}. 

\begin{theorem} \label{theorem:NS_result_tate_diagonal}
Let $p$ denote a prime number. 
\begin{enumerate}[leftmargin=2em, topsep=5pt, itemsep=5pt]
	\item There is a natural $\B C_p$-equivariant symmetric monoidal functor
	\[
		I_p : \Free(C_p) \times_{\Fin} \Sp_{\act}^{\otimes} \to \Sp
	\]
	determined by the construction $(T, \{X_{\overline{t}}\}_{\overline{t} \in \overline{T}}) \mapsto \bigotimes_{\overline{t} \in \overline{T}} X_{\overline{t}}$.

	\item There is a natural $\B C_p$-equivariant lax symmetric monoidal functor
	\[
		\tilde{T}_p : \Free(C_p) \times_{\Fin} \Sp_{\act}^{\otimes} \to (\Sp_{\act}^{\otimes})^{\B C_p} \xrightarrow{\otimes} \Sp^{\B C_p} \xrightarrow{(-)^{\tate C_p}} \Sp,
	\]
	where the first functor is determined by the construction $(T, \{X_{\overline{t}}\}_{\overline{t} \in \overline{T}}) \mapsto (T, \{X_{\overline{t}}\}_{t \in T})$\footnote{In fact, the first functor is symmetric monoidal while the functors $\otimes$ and $(-)^{\tate C_p}$ are lax symmetric monoidal.}.

	\item There is an essentially unique $\B C_p$-equivariant lax symmetric monoidal transformation 
	\[
		\tilde{\Delta}_p : I_p \to \tilde{T}_p,
	\]
	where both $I_p$ and $\tilde{T}_p$ are regarded as functors $\Free(C_p) \times_{\Fin} \Sp_{\act}^{\otimes} \to \Sp$ as above. 
  \end{enumerate}
\end{theorem}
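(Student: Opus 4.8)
The plan is to recall the construction of Nikolaus--Scholze, which is \cite[Proposition III.3.6, Lemma III.3.7, Corollary III.3.8]{NS18}; I indicate the three ingredients. \emph{Construction of $I_p$.} The $\infty$-category $\Free(C_p) \times_{\Fin} \Sp_{\act}^{\otimes}$ carries a symmetric monoidal structure given by disjoint union of free $C_p$-sets, combining the indexed families of spectra accordingly, together with a $\B C_p$-action induced by the free $C_p$-action on the $C_p$-set $T$. Since this $C_p$-action does not change the orbit set $\overline{T} = T/C_p$, the composite which forgets $T$, keeps the $\overline{T}$-indexed family, and tensors it together, $\Free(C_p) \times_{\Fin} \Sp_{\act}^{\otimes} \to \Sp_{\act}^{\otimes} \xrightarrow{\otimes} \Sp$, is canonically $\B C_p$-equivariant for the trivial $\B C_p$-action on $\Sp$; on the active part it is in fact symmetric monoidal. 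This is $I_p$. \emph{Construction of $\tilde T_p$.} Pulling back a $\overline{T}$-indexed family along $T \to \overline{T}$ yields a $T$-indexed family carrying a genuine $C_p$-action, producing a symmetric monoidal, $\B C_p$-equivariant functor $\Free(C_p) \times_{\Fin} \Sp_{\act}^{\otimes} \to (\Sp_{\act}^{\otimes})^{\B C_p}$. Postcomposing with the lax symmetric monoidal functors $\otimes \colon (\Sp_{\act}^{\otimes})^{\B C_p} \to \Sp^{\B C_p}$ and $(-)^{\tate C_p} \colon \Sp^{\B C_p} \to \Sp$ (the latter lax symmetric monoidal by \cite[Theorem I.3.1]{NS18}), and using that $(-)^{\tate C_p}$ does not distinguish the twisted from the trivial residual $\B C_p$-action, one obtains the lax symmetric monoidal, $\B C_p$-equivariant functor $\tilde T_p$.

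\emph{Construction and uniqueness of $\tilde\Delta_p$.} The symmetric monoidal $\infty$-category $\Free(C_p) \times_{\Fin} \Sp_{\act}^{\otimes}$ is generated under its tensor product by objects of the form $(C_p, X)$, where $C_p$ denotes the single free orbit and $X$ a single spectrum; on such an object $I_p$ evaluates to $X$ and $\tilde T_p$ evaluates to $(X^{\otimes p})^{\tate C_p}$, and $\tilde\Delta_p$ is pinned to restrict there to the Tate diagonal $\Delta_p \colon X \to (X^{\otimes p})^{\tate C_p}$ of \cite[\textsection III.1]{NS18}. The assertion is that this determines $\tilde\Delta_p$ essentially uniquely, i.e. that the space of $\B C_p$-equivariant lax symmetric monoidal natural transformations $I_p \to \tilde T_p$ is contractible. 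This follows from the universal property of the symmetric monoidal envelope together with the rigidity of the Tate-valued diagonal: concretely, $(-)^{\tate C_p}$ is exact and annihilates induced $C_p$-spectra, so a Postnikov-tower/connectivity argument reduces the space of candidate transformations to the space of lax symmetric monoidal transformations from $\id_{\Sp}$ to the functor $X \mapsto (X^{\otimes p})^{\tate C_p}$, which is contractible by \cite[Theorem III.1.7]{NS18}; the $\B C_p$-equivariance and lax monoidality then fix the remaining coherence data.

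The main obstacle is the last step: carefully organizing the reduction from a $\Free(C_p)$-parametrized, $\B C_p$-equivariant, lax symmetric monoidal transformation down to the single non-equivariant transformation $\id_{\Sp} \to (-)^{\otimes p\,\tate C_p}$ while keeping track of all coherences, and then running the connectivity argument that pins down the Tate diagonal. The constructions of $I_p$ and $\tilde T_p$ and their monoidal and equivariant structures are, by contrast, a formal --- if lengthy --- manipulation of symmetric monoidal $\infty$-categories and the functoriality of $(-)^{\tate C_p}$.
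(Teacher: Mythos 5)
The paper gives no proof of this statement at all: it is recalled verbatim from Nikolaus--Scholze, namely \cite[Proposition III.3.6]{NS18} together with \cite[Lemma III.3.7 and Corollary III.3.8]{NS18}, which are exactly the references you invoke, so your proposal follows the same route as the paper, and your sketches of $I_p$ and $\tilde{T}_p$ (including the $\B C_p$-equivariance via the canonical triviality of the action of $C_p$ on its own Tate construction) are consistent with the cited constructions. Only your description of the mechanism behind the essential uniqueness in (3) is looser than the argument in \cite{NS18} --- there the reduction to the uniqueness of the Tate diagonal $\id_{\Sp}\to ((-)^{\otimes p})^{\tate C_p}$ of \cite[\textsection III.1]{NS18} is a formal universal-property argument about lax symmetric monoidal functors and the structure of $\Free(C_p)$, not a Postnikov-tower/connectivity argument --- but since the heavy lifting is in any case deferred to the quoted results, this does not constitute a gap relative to the paper.
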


Using the notation from Theorem~\ref{theorem:NS_result_tate_diagonal}, the $p$-fold Tate diagonal is given by a map of spectra
\[
	\bigotimes_{\overline{t} \in \overline{T}} X_{\overline{t}} \to \Big( \bigotimes_{t \in T} X_{\overline{t}} \Big)^{\tate C_p}
\]
for every $\overline{T}$-indexed collection of spectra $\{X_{\overline{t}}\}_{\overline{t} \in \overline{T}}$. The idea is to refine our functor $\mathrm{Cut}_{\Lambda}(- \times I)$ to a functor with values in $\Free(C_p) \times_{\Fin} (\CycBMod_I^{\otimes})_{\act}$: 

\begin{construction} \label{construction:refined_cut}
Let $I$ denote an object of $\Lambda$. Let $pI$ denote the parasimplex with the same underlying linearly ordered set as $I$ but whose $\Z$-action is given by restricting the $\Z$-action on $I$ along the homomorphism $p : \Z \to \Z$ given by multiplication by $p$. For instance, if $I = [n]_{\Lambda}$, then $pI = [pn]_{\Lambda}$. Note that the construction $Q \mapsto \mathrm{Cut}_{\Lambda}(Q \times pI)$ determines a functor 
\[
	\Delta^{\op} \to \Free(C_p),
\]
where the $C_p$-action on $\mathrm{Cut}_{\Lambda}(Q \times pI) \simeq Q \times \mathrm{Cut}_{\Lambda}(pI)$ is given by the trivial $C_p$-action on $Q$ and on $\mathrm{Cut}_{\Lambda}(pI)$ by sending an element to its consecutive element. The following diagram commutes
\[
\begin{tikzcd}
	\Delta^{\op} \arrow{r}{\mathrm{Cut}_{\Lambda}(- \times I)} \arrow[swap]{d}{\mathrm{Cut}_{\Lambda}(- \times pI)} & (\CycBMod_I^{\otimes})_{\act} \arrow{d} \\
	\Free(C_p) \arrow{r}{-/C_p} & \Fin 
\end{tikzcd}
\]
so we obtain a canonical functor of $\infty$-categories $\Delta^{\op} \to \Free(C_p) \times_{\Fin} (\CycBMod_I^{\otimes})_{\act}$.
\end{construction}

Finally, we prove the main result of this section:

\begin{theorem} \label{theorem:THH_polygonic}
Let $R$ denote an $\E_1$-ring. The topological Hochschild homology functor
\[
	\THH(R, -) : \BMod_R \to \Sp
\]
determined by the construction $M \mapsto \THH(R, M)$ canonically refines to a functor of $\infty$-categories
\[
	\THH(R, -) : \BMod_R \to \PgcSp.
\]
\end{theorem}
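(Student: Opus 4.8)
The plan is to exploit that, for $T=\N$, the $\infty$-category $\PgcSp$ is by Definition~\ref{definition:infty_cat_polygonic} the lax equalizer of two functors $\prod_{n\geq1}\Sp^{\B C_n}\to\prod_p\prod_{n\geq1}\Sp^{\B C_n}$. By the universal property of lax equalizers, promoting $\THH(R,-)$ to a functor $\BMod_R\to\PgcSp$ amounts to constructing: (i) a functor $\Phi\colon\BMod_R\to\prod_{n\geq1}\Sp^{\B C_n}$ whose $n$-th component is $M\mapsto\THH(R,M^{\otimes_R n})$ with the $C_n$-action of Corollary~\ref{corollary:c_n_action}; and (ii) for every prime $p$ and every $n\geq1$, a natural transformation $\varphi_{p,n}\colon\Phi(-)_n\Rightarrow(\Phi(-)_{pn})^{\tate C_p}$ of functors $\BMod_R\to\Sp^{\B C_n}$, the Tate construction carrying the residual $C_{pn}/C_p\simeq C_n$-action. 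No further coherences are required.

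For (i) I would carry out the construction of Corollary~\ref{corollary:c_n_action} functorially in $M$. The assignment $M\mapsto(R,M;\dots;R,M)$ labelling all vertices by $R$ and all edges by $M$ is visibly a functor $\BMod_R\to\CycBMod_{[n]_{\Lambda}}(\Sp)$, and its image is pointwise fixed by the rotation $C_n$-action on $\CycBMod_{[n]_{\Lambda}}(\Sp)$; restricting the cocartesian fibration $\Lambda^{\CycBMod}_{\Sp}\to\Lambda^{\op}$ over the full subcategory $\B C_n\hookrightarrow\Lambda^{\op}$ on $[n]_{\Lambda}$, this refines to a functor $\BMod_R\to\CycBMod_{[n]_{\Lambda}}(\Sp)^{\h C_n}$. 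Composing with $\HH_{[n]_{\Lambda}}$, which by the trace property of Theorem~\ref{theorem:HH_trace} is $C_n$-equivariant for the trivial action on $\Sp$, produces $M\mapsto\HH_{[n]_{\Lambda}}(R,M;\dots;R,M)\simeq\THH(R,M^{\otimes_R n})$ as a functor to $\Sp^{\B C_n}$, where the identification uses Lemma~\ref{lemma:relative_tensor_long_paths}; letting $n$ vary gives $\Phi$. I expect this step to be routine given the formalism already in place.

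The substance is (ii), and here I would follow the strategy of~\cite[\textsection III.3]{NS18}. Fix $p$ and $n$ and write $I=[n]_{\Lambda}$, so $pI\cong[pn]_{\Lambda}$. Construction~\ref{construction:refined_cut} produces a functor $\rho\colon\Delta^{\op}\to\Free(C_p)\times_{\Fin}(\CycBMod_I^{\otimes})_{\act}$ lying over $\mathrm{Cut}_{\Lambda}(-\times I)$ on the right and over $\mathrm{Cut}_{\Lambda}(-\times pI)$ with its $C_p$-action on the left. Post-composing with $\mathrm{id}\times_{\Fin}F_M$, where $F_M=(R,M;\dots;R,M)\in\CycBMod_I(\Sp)$, and then applying the $\B C_p$-equivariant natural transformation $\tilde{\Delta}_p\colon I_p\Rightarrow\tilde{T}_p$ of Theorem~\ref{theorem:NS_result_tate_diagonal}, I obtain a map of simplicial spectra, and I take its colimit over $\Delta^{\op}$. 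The source identifies, via $I_p(T,\{X_{\bar t}\})=\bigotimes_{\bar t\in\bar T}X_{\bar t}$ and the equality $\bar T=\mathrm{Cut}_{\Lambda}(-\times I)$ of Construction~\ref{construction:refined_cut}, with $\HH_I(F_M)\simeq\THH(R,M^{\otimes_R n})$. For the target, $\tilde{T}_p(T,\{X_{\bar t}\})=\bigl(\bigotimes_{t\in T}X_{\bar t}\bigr)^{\tate C_p}$, and since the colours on $T_k=\mathrm{Cut}_{\Lambda}([k]\times pI)$ are pulled back from $\bar T_k$, the inner simplicial spectrum $[k]\mapsto\bigotimes_{t\in T_k}X_{\bar t}$ is $[k]\mapsto M^{\otimes pn}\otimes R^{\otimes pnk}$, i.e.\ the one computing $\HH_{[pn]_{\Lambda}}(R,M;\dots;R,M)\simeq\THH(R,M^{\otimes_R pn})$, carrying the levelwise $C_p$-action permuting the $p$ blocks, which is the restriction to $C_p\subseteq C_{pn}$ of the action of Corollary~\ref{corollary:c_n_action}. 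The remaining point — that geometric realization may be commuted past $(-)^{\tate C_p}$ for this $p$-fold subdivided $C_p$-simplicial spectrum, so that the realized target is $\THH(R,M^{\otimes_R pn})^{\tate C_p}$ with residual $C_{pn}/C_p\simeq C_n$-action (this residual action being supplied by the $\B C_p$-equivariance in Theorem~\ref{theorem:NS_result_tate_diagonal}) — is exactly the technical input of~\cite[\textsection III.3]{NS18}, where the identical assertion is proved for the subdivided cyclic bar construction computing $\THH(R)^{\tate C_p}$, and it transfers without change; alternatively one may invoke~\cite{Law21}. The resulting map is $\varphi_{p,n}$; its $C_n$-equivariance is automatic since the construction is run from the $C_n$-fixed object $F_M$ and the rotation of $I$ lifts to that of $pI$, and its naturality in $M$ is clear as everything factors through $M\mapsto F_M$.

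Finally, the data of (i) and (ii) is precisely that of a functor $\BMod_R\to\PgcSp$ by Definition~\ref{definition:infty_cat_polygonic}, and by construction its value on $M$ is a polygonic spectrum with $n$-th term $\THH(R,M^{\otimes_R n})$. I expect the main obstacle to be the interchange of geometric realization with the Tate construction in step (ii) together with the bookkeeping of the residual group actions there; everything else should follow formally once Theorems~\ref{theorem:HH_trace} and~\ref{theorem:NS_result_tate_diagonal} and Construction~\ref{construction:refined_cut} are in hand.
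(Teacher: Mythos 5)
Your proposal follows the paper's proof in all essentials: the $C_n$-actions are obtained exactly as in Corollary~\ref{corollary:c_n_action} (rotation of $([n]_{\Lambda},(R,M;\dots;R,M))$ as an object of $\Lambda^{\CycBMod}$ together with the trace property of Theorem~\ref{theorem:HH_trace}), the structure maps $\varphi_{p,n}$ come from Construction~\ref{construction:refined_cut} combined with the Tate-diagonal functoriality of Theorem~\ref{theorem:NS_result_tate_diagonal}, and the assembly into a functor to $\PgcSp$ uses the lax-equalizer description of Definition~\ref{definition:infty_cat_polygonic}; the paper's own proof is, if anything, terser (it only writes out the case $n=1$). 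One correction to the step you flag as the main obstacle: you neither need, nor should claim, that geometric realization can be commuted past $(-)^{\tate C_p}$ for the subdivided simplicial spectrum, and this is not what~\cite[\textsection III.3]{NS18} proves. All that is required is the canonical interchange map $\vert X_\bullet^{\tate C_p}\vert \to \vert X_\bullet\vert^{\tate C_p}$, which exists for any simplicial object of $\Sp^{\B C_p}$ (induced by the maps $X_k\to\vert X_\bullet\vert$), composed with the identification of $\vert X_\bullet\vert$ with $\THH(R,M^{\otimes_R pn})$ carrying the restriction of the $C_{pn}$-action to $C_p$; since $\varphi_{p,n}$ is only asked to be a map into the Tate construction, no commutation statement enters, and with this replacement your argument coincides with the paper's. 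A minor slip: the subcategory of $\Lambda^{\op}$ supplying the rotation action is the maximal subgroupoid $\B C_n$ on $[n]_{\Lambda}$, not a full subcategory, since $[n]_{\Lambda}$ has non-invertible endomorphisms in $\Lambda$.
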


\begin{proof}
Recall that $\THH(R, M^{\otimes_R n})$ admits a $C_n$-action for every $n \in \N$ by virtue of Corollary~\ref{corollary:c_n_action}. For simplicity, we explain the construction of a map of spectra
\[
	\THH(R, M) \to \THH(R, M^{\otimes_R p})^{\tate C_p}.
\]
Regarding the pair $(R, M)$ as an object of $\CycBMod_1(\Sp)$, we obtain a functor of $\infty$-categories
\[
	\Delta^{\op} \to \Free(C_p) \times_{\Fin} (\CycBMod_1^{\otimes})_{\act} \to \Free(C_p) \times_{\Fin} \Sp_{\act}^{\otimes}
\]
by virtue of Construction~\ref{construction:refined_cut}. Theorem~\ref{theorem:NS_result_tate_diagonal} supplies a map of spectra from $\THH(R, M)$ to the geometric realization of the simplicial spectrum which we informally depict by
\[
\begin{tikzcd}
	\cdots \arrow[yshift=0.5ex]{r} \arrow[yshift=-0.5ex]{r} \arrow[yshift=1.5ex]{r} \arrow[yshift=-1.5ex]{r} & (M \otimes R \otimes R \otimes \cdots \otimes M \otimes R \otimes R)^{\tate C_p} \arrow{r} \arrow[yshift=1ex]{r} \arrow[yshift=-1ex]{r} & (M \otimes R \otimes \cdots \otimes M \otimes R)^{\tate C_p} \arrow[yshift=0.5ex]{r} \arrow[yshift=-0.5ex]{r} & (M^{\otimes p})^{\tate C_p}.
\end{tikzcd}
\]
Rearranging terms and using the multiplication on $R$, we obtain the desired map of spectra
\[
	\THH(R, M) \to \THH(R, M^{\otimes_R p})^{\tate C_p}
\]
which proves the desired statement.
\end{proof}

\begin{remark}
The polygonic structure on $\THH(R, M)$ can also be obtained using~\cite{Law21}.
\end{remark}

We construct a Teichm{\"u}ller map on TR using Proposition~\ref{proposition:TR_refines_qfgenZ} and the preceeding discussion.

\begin{construction} \label{construction:teichmuller}
For every $\E_1$-ring $R$ and $R$-$R$-bimodule $M$, we construct a map
\[
	\tau : \Sigma^\infty_+\underline{M} \to \underline{\TR}(R, M)
\]
where $\underline{M}$ is the quasifinitely genuine $\Z$-space with $\underline{M}^{n\Z} \simeq \Omega^\infty(M^{\times n})$. By Proposition~\ref{proposition:TR_refines_qfgenZ}, such a map of quasifinitely genuine $\Z$-spectra is adjoint to a map of polygonic spectra
\[
	\mathrm{L}\Sigma^\infty_+ \underline{M} \to \THH(R, M).
\]
By a similar analysis as above, there is a canonical $C_n$-equivariant map of spectra
\[
	M^{\times n} \to \THH(R, M^{\otimes_R n}),
\]
which then induces the desired $C_n$-equivariant map
\[
	(\mathrm{L}\Sigma^\infty_+\underline{M})_n = (\Sigma^\infty_+ \underline{M})^{\Phi n\Z} \simeq \Sigma^\infty_+ \Omega^\infty(M^{\times n}) \to M^{\times n} \to \THH(R, M^{\otimes_R n}),
\]
where the latter map is the counit of the adjunction $\Sigma^\infty_+ \dashv \Omega^\infty$. 
\end{construction}


\bibliographystyle{abbrv}
\bibliography{polygonic} 

\end{document}